\newtheorem{theorem}{Theorem}[section]
\newtheorem{lemma}[theorem]{Lemma}
\newtheorem{prop}[theorem]{Proposition}
\newtheorem{cor}[theorem]{Corollary}
\theoremstyle{definition}
\newtheorem{defi}[theorem]{Definition}
\newtheorem{example}[theorem]{Example}
\theoremstyle{remark}
\newtheorem{remark}[theorem]{Remark}
\numberwithin{equation}{section}
\newcommand{\onto}{\,\,\twoheadrightarrow\,\,}
\newcommand{\la}{\label}
\newcommand{\GL}{\mathrm{GL}}
\newcommand{\Ker}{\mathrm{Ker}}
\newcommand{\Spec}{\mathrm{Spec}}
\newcommand{\Map}{\mathrm{Map}}
\newcommand{\Ho}{\mathrm{Ho}}
\newcommand{\Cell}{\mathrm{Cell}}
\newcommand{\hocolim}{\mathrm{hocolim}}
\newcommand{\ch}{\mathrm{ch}}
\def\deg{\mathrm{deg}\,}
\newcommand{\id}{\mathrm{id}}
\newcommand{\into}{\hookrightarrow}
\newcommand{\Top}{\mathtt{Top}}
\def\bS{\mathbb S}
\def\bP{\mathbb P}
\def\C{\mathbb C}
\def\F{\mathbb F}
\def\bs#1{\boldsymbol{#1}}
\def\c{\mathbb{C}}
\def\e{\boldsymbol{e}}
\def\A{\mathcal{A}}
\def\E{\mathcal{E}}
\def\M{\mathcal{M}}
\def\cQ{\mathcal{Q}}
\def\L{\boldsymbol{L}}
\def\cL{\mathscr{
L}}
\def\Z{\mathbb{Z}}
\def\Q{\mathbb{Q}}
\def\N{\mathbb{N}}
\def\R{\mathcal{R}}
\def\D{\mathcal{D}}
\def\O{\mathcal{O}}
\def\sl2{{\mathfrak{s}\mathfrak{l}}_2}
\def\Hom{\mathrm{Hom}}
\def\Mod{\mathrm{Mod}}
\def\Alg{\mathrm{Alg}}
\def\im{\mathrm{Im}}
\def\dgca{\mathtt{DGCA}}
\def\dAff{\mathtt{dAff}}
\def\Ell{\mathcal{E}{ll}}
\def\Coh{\mathrm{Coh}}
\def\ELL{\mathrm{Ell}}
\def \cF{\mathcal{F}}
\newcommand{\CP}{\mathbb{CP}}
\newcommand{\RP}{\mathbb{RP}}
\renewcommand{\Re}{{\rm Re}}
\renewcommand{\Im}{{\rm Im}}
\begin{document}
\title[Topological Realization of Algebras of Quasi-invariants]{Topological Realization of Algebras of Quasi-invariants, I\\
(with an Appendix by M. V. Feigin and K. E. Feldman)}
\author{Yuri Berest}
\address{Department of Mathematics,
Cornell University, Ithaca, NY 14853-4201, USA}
\email{berest@math.cornell.edu}
\author{Ajay C. Ramadoss}
\address{Department of Mathematics,
Indiana University,
Bloomington, IN 47405, USA}
\email{ajcramad@indiana.edu}
\begin{abstract}
This is the first in a series of papers, where we introduce and study topological spaces
that realize the algebras of quasi-invariants of finite reflection groups. Our main result can be viewed as a generalization of a well-known theorem of A. Borel
that realizes the ring of invariant polynomials a Weyl group $W$ as a cohomology ring of the classifying space $BG$ of the associated Lie group $G$. In the present paper,
we state a general realization problem for the algebras of quasi-invariants of Weyl groups 
and give its solution in the rank one case (for $G = {\rm SU}(2)$).  We call the resulting $G$-spaces $ F_m(G,T) $ the $m$-quasi-flag manifolds and their Borel
homotopy quotients $ X_m(G,T) $ the spaces of $m$-quasi-invariants. We compute the
equivariant $K$-theory and the equivariant (complex analytic) elliptic cohomology of these spaces and identify them with exponential and elliptic quasi-invariants of $W$. 
We also extend our construction of spaces quasi-invariants to `fake' Lie groups of type ${\rm SU}(2)$, a distinguished class of finite loop spaces  $ \Omega B $ of homotopy type of $ \bS^3 $ introduced by Rector in \cite{Rec71}. We study the cochain spectra $ C^*(X_m, k) $ associated to the spaces of quasi-invariants and show that these are Gorenstein commutative ring spectra in the sense of Dwyer, Greenlees and Iyengar \cite{DGI06}.
\end{abstract}
\maketitle

\tableofcontents

\section{Introduction}
\la{S1}
Quasi-invariants are natural generalizations of classical invariant polynomials of finite reflection groups. In the case of Coxeter groups, they first appeared in mathematical physics --- in the work of O. Chalykh and A. Veselov \cite{CV90, CV93} --- in the early 1990s, and since then have found applications in many other areas:  most notably,  representation theory, algebraic geometry and combinatorics (see \cite{FV02}, \cite{EG02b}, \cite{Ch02}, \cite{BEG03}, \cite{FeV03}, \cite{GW06}, \cite{BM08}, \cite{T10}, \cite{BC11}, \cite{BEF20}, \cite{Gri21}). For arbitrary (complex) reflection groups, the quasi-invariants were introduced in \cite{BC11}. This last paper developed a general approach in the context of representation theory of rational double affine Hecke algebras, extending and refining the earlier results of \cite{BEG03} in the Coxeter case. We will use \cite{BC11} as our main reference on algebras of quasi-invariants; in particular, we will follow the notation introduced in that paper.

We begin by recalling the definition of quasi-invariants in the Coxeter case.
Let $ W $ be a finite real reflection group acting in its reflection representation $V$. Denote by 
$ \A := \{H\} $ the set of reflection hyperplanes of $W$ in $V$ and write $ s_H \in W $ for the reflection operator in $H$. The group $W$ acts naturally on the polynomial algebra $ \c[V] $ and, since the $s_{H} $'s generate $W$, the invariant polynomials $p \in \c[V]^W $ are determined by the equations
\begin{equation}
\la{Einv}
s_{H}(p) = p \ , \ \, \forall\,H \in \A\ .
\end{equation}
%
%for all $ H \in \A $.
To define quasi-invariants we `weaken' the equations \eqref{Einv} in the following way. For each reflection hyperplane $ H \in \A $, we choose a linear form $ \alpha_H \in V^* $ such that
$ H = \Ker(\alpha_H) $ and fix a non-negative integer $ m_{H} \in \Z_{+} $, assuming that $ m_{w(H)} = m_H $ for all $w \in W $.  In other words, we choose a system of roots of $W$ in $V^*$, which (abusing notation) we still denote by $\A$, and fix a $W$-invariant function $m: \A \to \Z_+,\,H \mapsto m_H $, assigning to roots (or hyperplanes) in $\A$ integral {\it multiplicities}. Now, with these extra data in hand, we replace the polynomial equations \eqref{Einv} by the polynomial congruences
\begin{equation}
\la{Qinv}
s_{H}(p) \,\equiv\, p \ \,\mbox{mod}\,\langle \alpha_H \rangle^{2 m_H}\ , \ \, \forall\,H \in \A\ ,
\end{equation}
where $ \langle \alpha_H \rangle $ denotes the principal ideal  in $ \c[V] $ generated by the form $ \alpha_H $. For each $H \in \A$, the congruence \eqref{Qinv} simply says that the polynomial $ s_H(p) - p $ is divisible in $\C[V] $ by the power of the linear form $ \alpha_H $ determined by the value of the multiplicity of $H$ in $\A$. It is easy to see that, for a fixed $m$, the set of all polynomials satisfying \eqref{Qinv} forms a graded subalgebra in $ \C[V] $.
Using the standard notation (originally introduced in \cite{FV02}), we denote this subalgebra by $Q_m(W)$ and call it (following \cite{CV90}) the algebra of {\it $W$-quasi-invariant polynomials of multiplicity $m$}. Note that, for $ m = 0 $, we have $ Q_0(W) = \C[V]$, while  $ \C[V]^W \subseteq Q_m(W) \subseteq \C[V] $ in general. Thus, for varying $m$, the quasi-invariants interpolate between the $W$-invariants and all polynomials. 

Despite their elementary definition, the algebras $Q_m(W)$ have a nontrivial structure: they do not seem to admit a simple combinatorial description, nor do they have a natural presentation in terms of generators and relations. Nevertheless, these algebras possess many remarkable properties (e.g., the Gorenstein property, see Theorem~\ref{TGoren}) and carry
very interesting structures related to representation theory (e.g., the action of the rational double affine Hecke algebras, see \cite{BEG03}).

The goal of the present work is to give a topological realization of the algebras of quasi-invariants as (equivariant) cohomology rings of certain spaces naturally attached to compact connected Lie groups. Our main result can be viewed as a generalization of a well-known theorem of A. Borel \cite{Bo53}  that realizes the algebra of invariant polynomials of a Weyl group $ W $ as the cohomology ring of the classifying space $BG$ of the associated Lie group $G$. As the algebras $Q_m(W)$ are defined over $\C$, we should clarify what we really mean by ``topological realization''. It is a fundamental consequence of Quillen's rational homotopy theory \cite{Qui69} that every reduced, locally finite, graded commutative algebra $A$ defined over a field $k$ of characteristic zero is topologically realizable, i.e. $ A \cong H^*(X, k) $ for some (simply-connected) space $X$. When equipped with cohomological grading, the algebras $Q_m(W)$ have all the above-listed properties ({\it cf.} Lemma~\ref{L1}); hence, the natural question: ``{\it For which values of $m$ the $ Q_m(W)$'s are realizable}?'' has an immediate answer: {\it for all $m$}. 

A more interesting (and much less obvious) question is whether one can realize quasi-invariants
topologically as {\it a diagram of algebras} $ \{Q_m(W)\} $ (indexed by $m$) together with  natural structures that these algebras carry. It is one of the objectives of this work to formulate a realization problem for the algebras of 
quasi-invariants in a precise (axiomatic) form by selecting a list of the desired properties.
In the present paper, we state this problem for the classical Weyl groups (i.e., the crystallographic Coxeter groups over  $\mathbb{R}$ or $\C$) in terms of classifying spaces of compact Lie groups (see Section~\ref{realizationprob}); in our subsequent paper, we will try to formulate a $p$-local version of the realization problem for non-crystallographic (in fact, non-Coxeter) groups defined over the $p$-adic numbers in terms of $p$-compact groups.
We begin with a general overview and some motivation.

\vspace*{1ex}

\noindent
{\bf Quasi-invariants and cohomology theories.} 
In mathematical physics, the quasi-invariants naturally arise in three different flavors: rational (polynomial), trigonometric (exponential) and elliptic. Having in hand the topological spaces $ X_m(G,T)$ that realize the algebras $Q_m(W)$, it is natural to expect that the above three types of quasi-invariants correspond to three basic cohomology theories evaluated at $ X_m(G,T) $: namely, the ordinary (singular) cohomology, topological $K$-theory and elliptic cohomology. We will show that this is indeed the case: in fact, quasi-invariants can be defined for an arbitrary (complex-oriented generalized) cohomology theory, though in general their properties have yet to be studied.

\vspace*{1ex}

\noindent
{\bf Quasi-flag manifolds.} 
For a compact connected Lie group $G$, our spaces of quasi-invariants can be naturally realized as the Borel homotopy quotients of certain $G$-spaces  $ F_m(G,T) $:
$$ 
X_m(G,T) = EG \times_G F_m(G,T)
$$
We call $ F_m(G,T) $ the {\it $m$-quasi-flag manifold of $G$} as in the special case $ m = 0 $, we have $ F_0(G,T) =G/T $, the classical flag manifold. We remark that, in general, the spaces $ F_m(G,T)$ are defined only as $G$-equivariant homotopy types, although our construction provides some natural models
for them as finite $G$-CW complexes. By restricting the action of the Lie group $G$ on $ F_m(G,T)$ to its maximal torus $ T \subseteq G  $, it is natural to ask for {\it $T$-equivariant} cohomology (resp., $T$-equivariant $K$-theory, elliptic cohomology, \ldots) of $F_m(G,T)$. The $T$-equivariant cohomology is related to the $G$-equivariant one by the well-known general formula
\begin{equation}\la{GTW}
H^*_G(F_m, \C) \cong H^*_T(F_m, \C)^W , 
\end{equation}
where $W$ is the Weyl group associated to $(G,T)$. Since $ H^*_G(F_m, \C) = H^*(X_m, \C) \cong Q_m(W) $, formula \eqref{GTW} shows that the $W$-quasi-invariants can be, in fact, realized as $W$-invariants: $\, Q_m(W) \cong  H^*_T(F_m, \C)^W $  in the graded commutative algebras $ H^*_T(F_m, \C) $. The latter algebras come equipped with natural $ H^*_{T}({\rm pt}, \C)$-module structure induced by the canonical map $ F_m \to {\rm pt} $. Identifying $  H^*_{T}({\rm pt}, \C)  \cong \C[V] $ and taking onto account the  $W$-action, we can view $ H^*_T(F_m, \C) $ as modules over the crossed product algebra $ \C[V] \rtimes W $. We will show that these $\C[V] \rtimes W$-modules coincide --- up to a half-integer shift of multiplicities --- with the modules of {\it $\C W$-valued quasi-invariants}, $ \mathbf{Q}_{m+\frac{1}{2}}(W) $, introduced and studied in \cite{BC11}. As observed in \cite{BC11}, for {\it integer} $m$,
the action of $ \C[V] \rtimes W $ on $ \mathbf{Q}_{m}(W) $ naturally extends to  the rational double affine Hecke (a.k.a. Cherednik) algebra $ {\mathcal H}_m(W) $ associated to $(W,m)$. We will show that the topological construction of the quasi-flag manifolds $F_m(G,T)$ generalizes to half-integer values of $m$, although at the expense of producing spaces equipped only with $T$-action. By \cite{BC11}, we get then an action of $ {\mathcal H}_{m+1}(W) $ on the $T$-equivariant cohomology of $ F_{m+\frac{1}{2}}(G,T)$. This phenomenon seems to generalize to other cohomology theories, 
defining, in particular, an action of trigonometric (resp., non-degenerate) DAHA on $T$-equivariant $K$-theoretic (resp, elliptic) quasi-invariants. Constructing these actions algebraically and giving them a topological explanation is an interesting problem that we leave for the future.

\vspace*{1ex}

\noindent
{\bf Topological refinements.}
The realization of algebras of quasi-invariants raises many natural questions regarding topological analogues (`refinements') of basic properties that these algebras possess. A general framework to deal with such questions is provided by stable homotopy theory. Indeed, our spaces of quasi-invariants  $ X_m(G,T)$ are closely related to the classifying spaces of compact Lie groups, and the latter have been studied extensively in recent years by means of stable homotopy theory (see, e.g., \cite{DGI06}, \cite{BG14},  \cite{G18}, \cite{G20}, \cite{BCHV21}). From this perspective, the main object of study is the mapping spectrum  
\begin{equation}\la{mapsp}
C^*(X, k) := \Map\,(\Sigma^{\infty} X_{+},\,H k)  
\end{equation}
called the {\it cochain spectrum} of a topological space $X$. As its notation suggests, $ C^*(X,k) $  is a commutative ring spectrum that --- for an arbitrary commutative ring $k$ --- plays the same role as the usual (differential graded) $k$-algebra of cochains on $X$ in the case when $k$ is a field of characteristic zero. In particular, the (stable) homotopy groups of the spectrum \eqref{mapsp} are isomorphic to the singular cohomology groups of the space $X$:
$$
\pi_{-i}[C^*(X, k)]\,\cong\, H^{i}(X, k)
$$
The ring spectrum \eqref{mapsp} thus refines (in a homotopy-theoretic sense) the cohomology ring $H^*(X,k)$. For example,
if $G$ is a compact connected Lie group and $k$ is a field of characteristic $0$, the Borel Theorem mentioned above identifies $ H^\ast(BG, k) $ with the
algebra $ k[V]^W $ of invariant polynomials of $W$. The cochain spectrum $ C^*(BG, k) $  of the classifying space $BG$ can thus be viewed as a refinement of the algebra $ k[V]^W $. In the same manner,  we will regard the cochain spectra $ C^*(X_m(G,T), k) $ of our spaces $ X_m(G,T) $  as  homotopy-theoretic refinements of the algebras of quasi-invariants $ Q_m(W) $. The point is that the known algebraic properties of $ Q_m(W) $ should have topological analogues for $ C^*(X_m(G,T), k) $. For example, one of the main theorems about quasi-invariants (see Theorem~\ref{TGoren}) says that the (graded) algebras $ Q_m(W) $ defined over $\C$ are Gorenstein if $W$ is a Coxeter group. It is therefore natural to expect that the corresponding ring spectra $ C^*(X_m(G,T), k) $ are also Gorenstein --- but now in a {\it topological} sense \cite{DGI06} and over an arbitrary field $k$. We will show that this expectation is indeed correct, at least in the rank one case (see Theorem~\ref{ThGor} and Theorem~\ref{ThGor1}), and the spectra of quasi-invariants have a number of other interesting properties. Our results are only first steps in this direction, and many natural questions (motivated, in particular, by representation theory) have yet to be answered.

\vspace*{1ex}

\noindent
{\bf Homotopy Lie groups.} The spaces  of quasi-invariants of compact Lie groups, $ X_m(G,T) $, can be constructed functorially in a purely homotopy-theoretic way. In the rank one case, we use to this end the so-called {\it fibre-cofibre construction} --- a classical (though not very well-known) construction in homotopy theory introduced by T. Ganea \cite{Ga65}.  A generalization of Ganea's construction allows us to define the analogues of $ X_m(G,T) $ for certain finite loop spaces closely related to compact Lie groups, and perhaps most interestingly, for $p$-compact  groups --- $p$-local analogues of finite loop spaces also known as {\it homotopy Lie groups}. In this last case, the classical Weyl groups are replaced by pseudo-reflection groups defined over the field  $\Q_p$ of $p$-adic numbers. It is well known that all such pseudo-reflection groups can be realized as complex reflection groups (see \cite{CE74}), and we thus  provide realizations of algebras of quasi-invariants of complex reflection groups defined in \cite{BC11},
albeit in a $p$-local setting. The simplest exotic examples are the rank one $p$-compact groups 
$\, \hat{\bS}^{2n-1}_p $, called the {\it Sullivan spheres}, whose `Weyl groups' are the cyclic groups $ W = \Z/n $ of order $ n>2 $ such that $\,n\,|\,(p-1)$. These examples are already quite rich: we will treat them in a separate paper.

\vspace*{1ex}

We divide our work into three parts. The present paper (Part I) focuses entirely on the
`global' rank one case: here, we define and study the spaces of quasi-invariants for the Lie group $  G = SU(2) $ and for a certain class of finite loop spaces $ \Omega B $
of homotopy type of $\, \bS^3 $ known as {\it Rector spaces}.
% (or fake Lie groups of type $SU(2)$). 
In Part II, we formulate a $p$-local version of the realization problem
for algebras of quasi-invariants defined over $ \Q_p $ and give its solution in the `local'
rank one case: namely, for the $p$-compact groups associated with Sullivan spheres $\, \hat{\bS}^{2n-1}_p $. In Part III, we then use the spaces introduced in
Part I and Part II as `building blocks' for constructing spaces of quasi-invariants for arbitrary compact connected Lie groups and for `generic' $p$-compact groups related to Clark-Ewing spaces. 

\subsection*{Contents of the present paper}
We now describe in more detail the results of the present paper. 
In Section~\ref{S2}, after  reviewing basic facts about quasi-invariants, we state our realization problem for Weyl groups in the classical framework of compact connected Lie groups. As mentioned above, we take an axiomatic approach: the properties that we choose to characterize the topological spaces of quasi-invariants are modeled on properties of algebraic varieties of quasi-invariants introduced and studied in \cite{BEG03}.
In fact, our main axioms (QI${}_1$)--(QI${}_5$) in
Section~\ref{realizationprob} are natural homotopy-theoretic analogues of basic geometric properties of the varieties of quasi-invariants listed in Section~\ref{Varquasi}.

In Section~\ref{S4}, we give a solution of our realization problem for $ G = SU(2) $ (see Theorem~\ref{MTh1}). To this end, as mentioned above, we employ the Ganea fibre-cofibre construction. This construction plays an important role in abstract homotopy theory (specifically, in the theory of LS-categories and related work on the celebrated Ganea Conjecture in algebraic topology, see e.g. \cite{CLOT03} and Example~\ref{exga} below). However, we could not find any applications of it in Lie theory or classical homotopy theory of compact Lie groups (perhaps, with the exception of the simple (folklore) Example~\ref{exmil}). We therefore regard Proposition~\ref{PP1} and Theorem~\ref{MTh1}  that describe the Ganea tower of the Borel maximal torus fibration of a compact connected Lie group as original contributions of the present paper. The $G$-spaces $ F_m(G,T) $ that we call the {\it $m$-quasi-flag manifolds of $G$} are defined to be the homotopy fibres of  
iterated (level $m$) fibrations in this Ganea tower (see Definition~\ref{qspaces}). In Section~\ref{S4.31} and Section~\ref{S4.4}, we describe some basic properties of the
$G$-spaces $F_m(G,T)$. First, we compute the $T$-equivariant cohomology of $ F_m(G,T) $ (see Proposition~\ref{PTcoh}) and identify it with a module of `nonsymmetric' ($\C W$-valued) quasi-invariants (see Corollary~\ref{RelBC}). In this way, we provide a topological interpretation of generalized quasi-invariants  introduced in \cite{BC11}.
Then, in Section~\ref{S4.4},  we define natural analogues of the classical Demazure (divided difference) operators for our quasi-flag manifolds $F_m(G,T)$. Our construction is purely topological (see Proposition~\ref{P22}): it generalizes the Bressler-Evans construction of the divided difference operators for the classical flag manifolds $ F_0(G,T) $ given in \cite{BE90}. 

In Section~\ref{S5}, we extend our topological construction of spaces of quasi-invariants to a large class of finite loop spaces $ \Omega B $ called the {\it Rector spaces} (or  fake Lie groups of type $SU(2)$). These remarkable loop spaces were originally constructed in \cite{Rec71}  as examples of nonstandard (`exotic') deloopings of $ \bS^3 $. Our construction does not apply to all Rector spaces, but only to those that accept homotopically nontrivial maps from $ \C \bP^\infty $. These last spaces admit a beautiful arithmetic characterization discovered by D. Yau in \cite{DY01}. We show that the `fake' spaces of quasi-invariants, $X_m(\Omega B,T) $, associated to the Rector-Yau spaces have the same {\it rational} cohomology as our `genuine' spaces of quasi-invariants $ X_m(G,T)$ (see Theorem~\ref{CohXMR}); however, in general, they are homotopically non-equivalent (see Corollary~\ref{rsdistinct}).

In Section~\ref{S6},  we compute the $G$-equivariant (topological) $K$-theory $ K^*_G(F_m)$ of the spaces $ F_m = F_m(G,T)$ and identify it with $ \cQ_m(W) $, the {\it exponential quasi-invariants} of the Weyl group $W = \Z/2\Z $ (see Theorem~\ref{ektmq}). Then, we relate $K^*_G(F_m) $ to the (completed) $G$-equivariant cohomology
$\,\widehat{H}^{\ast}_G(F_m, \Q) := \prod_{k=0}^\infty H^k_G(F_m, \Q)\,$ by constructing explicitly the $G$-equivariant Chern character map
\begin{equation}
\la{chGFI}
\ch_G(F):\ K^*_G(F_m) \,\to\, \widehat{H}^{\ast}_G(F_m, \Q)
\end{equation}
We show that  \eqref{chGFI} factors through the natural map $K^*_G(F_m) \to K^*(X_m) $ to the Borel $G$-equivariant $K$-theory $ K^*(X_m) = K^*(EG \times_G F_m) $ of $F_m$, inducing an isomorphism upon rationalization (see Proposition~\ref{ChMap}):
$\, K^*(X_m)_{\Q} \cong \widehat{H}^{\ast}_G(F_m, \Q) \cong \widehat{Q}_m(W)\,$. In this way, we link topologically the exponential and the usual (polynomial) quasi-invariants of $W$.
In Section~\ref{S6}, we also compute the $K$-theory of `fake' spaces
of quasi-invariants associated to the Rector-Yau loop spaces $ \Omega B $ (see Theorem~\ref{RectorQI}). The result of this computation has an important consequence --- Corollary~\ref{rsdistinct} --- that provides
a numerical $K$-theoretic invariant $ N_B $  distinguishing the spaces $ X_m(\Omega B,T) $  up to homotopy equivalence for different $B$'s.

In Section~\ref{S7},  we compute the $T$-equivariant $ \Ell_T^*(F_m) $ and $G$-equivariant $\Ell_G^*(F_m) $ complex analytic elliptic cohomology of $F_m$  (see Theorem~\ref{ThTell} and Theorem~\ref{ThElT}, respectively). We express the result in two ways: geometrically (as coherent sheaves on a given Tate elliptic curve $E$) and analytically (in terms of $\Theta$-functions and $q$-difference equations). We also compute the spaces (graded modules) of global sections of the elliptic cohomology sheaves of $F_m$ with {\it twisted} coefficients:
$$
\ELL_T^*(E, {\mathscr L}) :=
\bigoplus_{n=0}^{\infty}\, H_{\rm an}^0(E,\,\Ell_T^*(F_m) \otimes {\mathscr L}^{n})\quad 
\mbox{and}\quad  
\ELL_G^*(E, {\mathscr L}) := \ELL_T^*(E, {\mathscr L})^W\ ,
$$
where $ {\mathscr L}^n $ stands for the $n$-th tensor power of the {\it Looijenga bundle} ${\mathscr L}$ on the elliptic curve $E$, a canonical $W$-equivariant line bundle originally introduced and studied in \cite{Lo77}. This computation (see Theorem~\ref{EllTw}) is inspired by results of \cite{Ga14}, and technically, it is perhaps the most interesting cohomological computation of the paper.

Finally, in Section~\ref{S8}, we prove that our spaces of quasi-invariants $ X_m(G,T) $ are Gorenstein in the sense of stable homotopy theory: more precisely, the associated commutative ring spectra $ C^*(X_m, k) $ (see \eqref{mapsp}) are orientable Gorenstein (relative to $k$) and satisfy the Gorenstein duality of shift $ a = 1 - 4m $ (see Theorem~\ref{ThGor}). This result should be viewed as a homotopy-theoretic analogue of Theorem~\ref{TGoren} on Gorenstein property of algebras of quasi-invariants. We also prove the analogous result 
(see Theorem~\ref{ThGor1}) for the `fake' spaces of quasi-invariants $ X_m(\Omega B,T) $, although under the additional assumption that $ k = \F_p $ for some prime $p$. 

This work combines ideas and techniques from different parts of algebra and 
topology. To make it accessible to readers with different background we 
tried to include basic definitions and give references to all essential facts 
that we are using (even when these facts are considered to be obvious or well 
known by experts).

\subsection*{Appendix}
After the first version of this paper appeared in arXiv, we became aware of a very interesting
unpublished preprint \cite{FF}, which we reproduce ---  with authors' permission and minor corrections --- in Appendix \ref{ArMapCon}. In \cite{FF}, M. Feigin and K. Feldman give an explicit differential-geometric construction of spaces of quasi-invariants of rank one,  using the so-called Arnold-Maxwell Theorem (see \cite{Arnold1996}).   This last theorem provides, for any $ n \ge 1 $, a natural diffeomorphism $(\c\bP^1)^n\!/W \cong \bS^{2n}$, where $W = D(n) $ is the Coxeter group of type $D$ of rank $n$. The standard linear action of $U(2)$ on $\c\bP^1$ extends to the diagonal action on $(\c\bP^1)^n$, which, in turn, descends to the quotient $(\c\bP^1)^n/W$. This gives a $U(2)$-action on $\bS^{2n}$ for all $n\ge 1$, which we can restrict to $SU(2) \subset U(2) $. 
(We shall refer to this $SU(2)$-action as the Feigin-Feldman action). The main result of the Appendix (Theorem \ref{heqar}) asserts that, for $ n = 2m+1$, the rational $SU(2)$-equivariant cohomology of $\bS^{2(2m+1)}$ equipped with the Feigin-Feldman action is isomorphic to $Q_m(W)$. 
One might expect that this explicit construction also yields a solution to our
realization problem for $G =SU(2)$, satisfying the axioms (QI$_{1}$)-(QI$_{5}$) of Section~\ref{realizationprob}. 
Unfortunately, this is not the case. As pointed out in the Appendix, the above construction
does not appear to be natural in $m \in \Z_+$: there seem to exist no $SU(2)$-equivariant maps $\,\bS^{2(2m+1)} \to \bS^{2(2m' +1)} $, inducing the inclusions $ Q_{m'}(W) \into Q_{m}(W) $ for $ m' > m $. A comparison with the $G$-spaces $F_m(G,T)$ constructed in this paper shows (see Remark~\ref{fmvsArnld}) that there are no $T$-equivariant (and hence, {\it a fortiori}, \,no $G$-equivariant) isomorphisms between $F_m(G,T)$ and the spheres $\bS^{2(2m+1)}$ equipped with the Feigin-Feldman action. 
%Indeed, when we restrict to the $T$-actions, the representation $\xi: T=U(1) \to \End(\c^n)$ of the torus in the tangent spaces of $F_m(G,T) \cong \bS^{4m+2}$ at the fixed points has constant weight $1$ (since $F_m(G,T)$ is $T$-equivariantly isomorphic to a join of $\bS^0$ with $T^{\ast (2m+1)}$ equipped with diagonal action). On the other hand, by Proposition \ref{third}, the representation $\chi:T \to \End(\c^n)$ of the torus in the tangent spaces of $\bS^{4m+2}$ equipped with Feigin-Feldman action has weights $2,4,\ldots, 2(2m+1)$. 
However, one can construct a natural $T$-equivariant map $ \varphi: F_m(G,T) \to \bS^{4m+2}$ 
(see \eqref{fmbract4}) that does induce an isomorphism on rational $T$-equivariant  cohomology. 

\subsection*{Acknowledgements}
We would like to thank Oleg Chalykh and Pavel Etingof  for many interesting discussions, 
questions and comments related to the subject of this paper. We are particularly grateful to O. Chalykh for clarifying to us his definition of quasi-invariants in the elliptic case (see Remark~\ref{RemEll}). We are also very grateful to Misha Feigin for sending us his unpublished paper \cite{FF}. 
The work of the first author was partially supported by NSF grant DMS 1702372 and the Simons Collaboration Grant 712995. The second author was partially supported by NSF grant DMS 1702323. 

\section{Realization problem}
\la{S2}
In this section, we state our topological realization problem for algebras of quasi-invariant polynomials of Weyl groups in terms of classifying spaces of compact connected Lie groups. 

\subsection{Quasi-invariants of finite reflection groups}
\la{S2.1}
We recall the general definition of quasi-invariants from \cite{BC11}. Let $V$ be a finite-dimensional vector space over $\C$, and let $W$ be a finite subgroup of $\GL(V)$ generated by pseudoreflections. We recall that an element $ s \in \GL(V) $ is a {\it pseudoreflection} if it has finite order $ n_s > 1 $ and acts as the identity on some hyperplane $ H_s $ in $V$. We let $\, \A =\{H_s\}\,$ denote the set of all hyperplanes corresponding to the pseudoreflections of $ W $ and observe that  $ W $ acts naturally on $ \A $ by permutations. The (pointwise) stabilizer $W_H$ of $\, H \in \A \,$ in $W$ is a cyclic subgroup of order $ n_H \ge 2$ that depends only on the orbit of $H$ in $\A $. The characters of $W_H$ then also form a cyclic group of order $n_H$ generated by the determinant character 
$ \det: \, \GL(V) \to \C^* $ of $ \GL(V) $ restricted to $W_H$. We write
$$
\e_{H,\, i} := \frac{1}{n_H}\,\sum_{w \in W_H} (\det w)^{-i} \, w \ , \quad
i = 0,\,1,\,\ldots\,,\,n_H -1\ ,
$$
for the corresponding idempotents in the group algebra $ \C W_H  \subseteq \C W$. 

Now, let $\C[V] = {\rm Sym}_{\C}(V^*)$ denote the polynomial algebra of $V$. This algebra carries a natural $W$-action (extending the linear action of $W$ on $V^*$) and
can thus be viewed as a $\C W$-module. We can then characterize the invariant polynomials $ p \in \C[V]^W $  by the equations 
\begin{equation}
\la{INV} 
\e_{H, -i}(p)\,=\,0\ , \quad
i = 1,\,\ldots\,,\,n_H -1\ ,
\end{equation}
which hold for all hyperplanes $ H\in \A$. To define quasi-invariants we relax the equations \eqref{INV} in the following way ({\it cf.} \eqref{Qinv}). 
For each hyperplane $ H \in \A $, we fix a linear form $\,\alpha_H \in V^* $, such that $\,H = \Ker(\alpha_H) \,$, and choose $\, n_H - 1 \,$ positive integers
$\, \{m_{H,i}\}_{i = 1, \ldots,\, n_H - 1} $ which we refer to as {\it multiplicities} of $H$. We assume that $\, m_{H,i} = m_{H',i}\,$ for each $i$ whenever $ H $ and $ H' $ are in the same orbit of $ W $ in $\A$. 
We write $ \M(W) := \{m_{H,i} \in \Z_{+}\,:\,  i = 1, \ldots,\, n_H - 1\}_{[H] \in \A/W} $ for the set of all such multiplicities regarding them as functions on the set $\A/W$ of $W$-orbits in $\A$.
\begin{defi}[\cite{BC11}] 
\la{comq}
A polynomial $ p \in \c[V]$ is called a {\it $W$-quasi-invariant 
of multiplicity} $ m = \{m_{H, i}\} \in \M(W) $ if it satisfies the conditions
\begin{equation}
\label{IN1} 
\e_{H, -i} (p)\, \equiv 0\ \, \mbox{mod}\,\langle\alpha_H
\rangle^{n_H m_{H,i}}  \ , \quad
i = 1,\,\ldots\,,\,n_H -1\ ,
\end{equation}
for all  $ H \in \A $. We write $ Q_m(W) $ for the subspace of all such polynomials in $\C[V]$.
\end{defi}
In general, $Q_m(W)$ is {\it not} an algebra: for arbitrary $W$ and $m \in \M(W)$, the subspace of quasi-invariant polynomials may not be closed under multiplication in $ \C[V]$. In Part II of our work, we will give necessary and sufficient conditions (on $W$ and $m$) that ensure the multiplicativity property of $Q_m(W)$.
In the present paper, we simply restrict our attention to {\it Coxeter groups}, i.e. the finite subgroups $W$ of $\GL(V)$ generated by {\it real} reflections. In this case the conditions \eqref{IN1} are equivalent to \eqref{Qinv} and the above definition of quasi-invariants reduces to the original definition of Chalykh and Veselov \cite{CV90} given in the Introduction. 

\vspace{1ex}

{\it Thus, from now on, we assume that $W$ is a real finite reflection group, 
$V$ being its $($complexified$)$ reflection representation.} 

\vspace{1ex}

The next lemma collects some elementary properties of quasi-invariants that follow easily from the definition (see, e.g., \cite{BEG03}).
\begin{lemma}
\la{L1}
Let $W$ be an arbitrary Coxeter group. Then, for any $\, m \in \M(W)\,$, 

$(1)$ $\,\c[V]^W \subset Q_m(W) \subseteq \c[V]\,$ with $\, Q_0(W) = \c[V] \,$ and $\,\cap_{m} Q_m(W) = \c[V]^W $.

$(2)$ $ Q_m(W) $ is a graded subalgebra of $ \c[V] $ stable under the action of $W$.

$(3)$ $ Q_m(W) $ is a finite module over $ \c[V]^W $ and hence a finitely generated $\c$-subalgebra
of $\c[V]$. 

\end{lemma}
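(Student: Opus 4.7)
The plan is to verify each of the three parts directly from Definition~\ref{comq}, using only the fact that $\C[V]$ is a polynomial ring (hence a domain) and the standard Chevalley--Shephard--Todd structure theorem for the invariant subring of a Coxeter group.

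For part (1), the inclusion $\C[V]^W \subset Q_m(W)$ follows because any invariant $p$ satisfies $s_H(p) - p = 0$, which lies in every power of $\langle\alpha_H\rangle$. The case $m=0$ is immediate since $\langle\alpha_H\rangle^0 = \C[V]$, so the congruences \eqref{Qinv} hold vacuously. For the intersection, I would argue that if $p \in Q_m(W)$ for every $m$, then $s_H(p) - p$ is divisible by $\alpha_H^{k}$ for arbitrarily large $k$. Since $\C[V]$ is an integral domain, no nonzero polynomial is divisible by arbitrarily high powers of a fixed nonzero linear form, so $s_H(p) = p$ for every $H \in \A$; as the $s_H$ generate $W$, this forces $p \in \C[V]^W$.

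For part (2), the graded property follows because $s_H$ is a graded automorphism and the ideal $\langle\alpha_H\rangle^{2m_H}$ is homogeneous, so \eqref{Qinv} can be tested degree by degree. Closure under multiplication uses the Leibniz-type identity
\[
s_H(pq) - pq \;=\; (s_H(p) - p)\,s_H(q) \,+\, p\,(s_H(q) - q),
\]
both of whose summands lie in $\langle\alpha_H\rangle^{2m_H}$ when $p,q \in Q_m(W)$. For $W$-stability, given $w \in W$ and $p \in Q_m(W)$, the relation $s_H\, w = w\, s_{w^{-1}(H)}$ gives $s_H(wp) - wp = w\bigl(s_{w^{-1}(H)}(p) - p\bigr)$; since $w(\alpha_{w^{-1}(H)})$ is a nonzero scalar multiple of $\alpha_H$ and $m_{w^{-1}(H)} = m_H$ by $W$-invariance of the multiplicity function, the image lies in $\langle\alpha_H\rangle^{2m_H}$, as required.

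For part (3), I would appeal to the Chevalley--Shephard--Todd theorem: $\C[V]$ is a free $\C[V]^W$-module of rank $|W|$, hence in particular a finitely generated one. Since $\C[V]^W$ is a finitely generated polynomial algebra (Chevalley) and therefore Noetherian, the $\C[V]^W$-submodule $Q_m(W) \subseteq \C[V]$ of part~(1) is automatically finitely generated over $\C[V]^W$. Combining this with part~(2) and the fact that a finite module over a finitely generated subalgebra is itself a finitely generated algebra yields the final assertion. No step here looks genuinely difficult; the only subtle point is the intersection $\bigcap_m Q_m(W) = \C[V]^W$, which relies essentially on $\C[V]$ being a domain, and the $W$-stability, which requires carefully tracking how $w$ transports the ideals $\langle\alpha_H\rangle^{2m_H}$ across the orbits $[H] \in \A/W$.
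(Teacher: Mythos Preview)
Your argument is correct in every part. The paper does not actually supply a proof of this lemma: it simply notes that the properties ``follow easily from the definition'' and refers to \cite{BEG03}. Your proposal fills in exactly the kind of elementary verification the paper has in mind, so there is nothing to compare against and nothing to amend.
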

We may think of quasi-invariants of $W$ as a family of subalgebras of $\c[V]$ interpolating between the $W$-invariants and all polynomials. 
To make this more precise we will identify
the set $\M(W)$ of multiplicities on $\A$ with the set of $W$-invariant functions $ m: \A \to \Z_+ $ and put on this set the following natural partial order\footnote{Abusing notation, in the Coxeter case, we will often write $ \alpha \in \A $ instead of $ H \in \A $ 
for $ H = \Ker(\alpha)$.}:
$$
m \le  m' \quad \stackrel{\rm def}{\Longleftrightarrow} \quad m_{\alpha} \le m_{\alpha}'\ ,\ \forall\, \alpha \in \A\, ,
$$
The algebras of $W$-quasi-invariants of varying multiplicities then form a contravariant diagram of shape $\M(W) $ --- a functor $\,\M(W)^{\rm op} \to {\tt Comm Alg}_{\C} $ with values in the category of commutative algebras --- that we simply depict as a
filtration on $ \C[V] $:
\begin{equation}
\la{Qtow}
\c[V] = Q_0(W)\, \supseteq\, \ldots\, \supseteq\, Q_m(W)\, \supseteq \,Q_{m'}(W) \,\supseteq\, \ldots \, \supseteq \,\c[V]^W
\end{equation}

The most interesting algebraic property of quasi-invariants is given by the following theorem, the proof of which (unlike the proof of Lemma~\ref{L1}) is not elementary.
\begin{theorem}[see \cite{FV02}, \cite{EG02b}, \cite{BEG03}]
\la{TGoren}
For any Coxeter group $W$ and any multiplicity $ m \in \M(W) $, $\,Q_m(W) $ is a free module over $ \c[V]^W $ of rank $ |W| $. Moreover, $\,Q_m(W) $ is a graded Gorenstein algebra with Gorenstein shift $\,a = \dim(V) - 2 \sum_{\alpha \in \A} m_{\alpha} \,$. 
\end{theorem}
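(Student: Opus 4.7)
The theorem makes two independent assertions — freeness of $Q_m(W)$ as a $\c[V]^W$-module of rank $|W|$, and graded Gorenstein self-duality with the stated shift — which I would prove in that order.

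For freeness, I would first compute the rank by localization. Setting $\delta := \prod_{\alpha \in \A_+}\alpha$, each congruence in \eqref{Qinv} becomes vacuous after inverting $\alpha_H$, so $Q_m(W)[\delta^{-1}] = \c[V][\delta^{-1}]$, which is free of rank $|W|$ over $\c[V]^W[\delta^{-1}]$ by Chevalley--Shephard--Todd. Hence $Q_m(W)$ has generic rank $|W|$ over $\c[V]^W$. Since $\c[V]^W$ is a graded polynomial ring of Krull dimension $n = \dim V$, Auslander--Buchsbaum reduces freeness of the finitely generated graded module $Q_m(W)$ (Lemma~\ref{L1}(3)) to Cohen--Macaulayness. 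I would derive the CM property from rational Cherednik algebra technology, following \cite{EG02b, BEG03}: via Dunkl operators, $Q_m(W)$ is identified with the submodule $e\,\mathcal H_m(W)\,e \cdot 1 \subset \c[V]$, where $e = |W|^{-1}\sum_w w$ is the symmetrizing idempotent and $\mathcal H_m(W)$ is the rational Cherednik algebra attached to $(W,m)$. By the PBW theorem, the spherical subalgebra $e\,\mathcal H_m(W)\,e$ is a flat deformation of the Cohen--Macaulay ring $(\c[V\oplus V^*])^W$, and a flatness argument transfers the CM property to $Q_m(W)$ as a $\c[V]^W$-module.

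For Gorenstein self-duality, I would construct an explicit perfect $\c[V]^W$-bilinear pairing on $Q_m(W)$. With $J := \prod_{\alpha \in \A_+}\alpha$ the Jacobian (anti-$W$-invariant) and $\delta_m^2 := \prod_{\alpha \in \A_+}\alpha^{2 m_\alpha} \in \c[V]^W$, define
$$
\langle f, g\rangle \,:=\, \frac{A(fg)}{|W|\,J\,\delta_m^2}, \qquad A(h) := \sum_{w \in W}(-1)^w\,w(h).
$$
The crucial point is that $A(fg)$ is divisible by $J\,\delta_m^2 = \prod_{\alpha}\alpha^{2 m_\alpha + 1}$ for all $f, g \in Q_m(W)$. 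Fixing $H \in \A$ and partitioning $W$ into left $W_H$-cosets $\{w, s_H w\}$ yields $A(fg) = \sum_{[w]}(-1)^w\,(1 - s_H)(w(fg))$; quasi-invariance of $w(fg) \in Q_m(W)$ makes each summand divisible by $\alpha_H^{2 m_H}$, and the $s_H$-antisymmetry of $A(fg)$ extracts one further factor of $\alpha_H$, giving $\alpha_H^{2 m_H + 1} \mid A(fg)$; taking the product over $H$ (pairwise coprime linear forms) proves the claim. Hence $\langle f, g\rangle \in \c[V]^W$; bilinearity and symmetry are clear; and nondegeneracy is verified by localizing at $\delta$, where the pairing becomes the classical rescaled perfect pairing on $\c[V]$ over $\c[V]^W$. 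The homogeneous bidegree of the pairing equals $-(|\A_+| + 2\sum_\alpha m_\alpha)$, and combining this with the Chevalley identity $\sum_i d_i = n + |\A_+|$ for the degrees of the fundamental invariants of $W$ yields the Gorenstein shift $a = n - 2\sum_\alpha m_\alpha$.

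The main obstacle lies in Step 1: Cohen--Macaulayness of $Q_m(W)$ is the core content of \cite{EG02b} and is not at all formal from the definition. The Cherednik-algebra proof requires substantial machinery (Dunkl operators, PBW for $\mathcal H_m(W)$, and the structure of the spherical subalgebra); the alternative approach of \cite{FV02}, which constructs an explicit regular sequence via the Baker--Akhiezer function attached to $m$, is similarly nontrivial. Once CM-ness is in place, Step 2 is largely mechanical, with the essential input being the divisibility $J\,\delta_m^2 \mid A(Q_m \cdot Q_m)$, an immediate consequence of quasi-invariance.
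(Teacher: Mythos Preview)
The paper does not give its own proof of this theorem; it is stated as a result from the literature with references to \cite{EG02b}, \cite{BEG03}, and \cite{FV02}, and the subsequent remark only records the history. Your outline is broadly in line with those sources: deducing Cohen--Macaulayness of $Q_m(W)$ from rational Cherednik algebra machinery is precisely the strategy of \cite{EG02b, BEG03}, and you rightly flag it as the substantive, non-formal step. The reduction of freeness to CM-ness via Auslander--Buchsbaum over the regular ring $\c[V]^W$, and the rank computation by localizing at $\delta$, are both correct.

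There is, however, a genuine gap in Step~2. Your pairing $\langle f,g\rangle = A(fg)/(|W|\,J\,\delta_m^2)$ is indeed well-defined on $Q_m(W)$ --- the divisibility argument you give is correct --- but localizing at $\delta$ only shows that the induced map $Q_m(W) \to \Hom_{\c[V]^W}(Q_m(W),\c[V]^W)$ is \emph{injective}, not that it is an isomorphism. Over a domain, an injective map between free modules of equal rank need not be surjective; the obstruction is that the determinant (in any pair of homogeneous bases) must be a nonzero \emph{constant} in $\c[V]^W$. Checking this amounts to the identity $2\sum_i e_i = |W|\,(|\A_+| + 2\sum_\alpha m_\alpha)$ for the degrees $e_i$ of a homogeneous $\c[V]^W$-basis of $Q_m(W)$, which is equivalent to the palindromic symmetry of the Hilbert series of $Q_m(W)$ --- i.e., exactly the Gorenstein statement you are trying to prove. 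In \cite{BEG03} this circularity is broken by extracting the Hilbert series from the Cherednik-algebra description (so the Gorenstein property ultimately rests on the same machinery as Step~1); in \cite{FV02} it is done by direct computation in the dihedral case. As written, your Step~2 is not self-contained: you should either import the Hilbert series from the representation-theoretic input of Step~1, or supply an independent argument that the pairing is unimodular.
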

\begin{remark}
For $m=0$ (i.e., for the polynomial ring $\, Q_0(W) = \c[V]$), Theorem~\ref{TGoren} is a well-known result due to C. Chevalley \cite{Che55}. For $ m \not= 0 $, it was first proven in the case of dihedral groups (i.e. Coxeter groups of rank 2) in \cite{FV02}. For arbitrary Coxeter $W$, Theorem~\ref{TGoren} was proven (by different methods) in \cite{EG02b} and \cite{BEG03}. It is worth mentioning that the classical arguments of \cite{Che55}  do not work for nonzero $m$'s.
\end{remark}
\begin{remark}
The first statement of Theorem~\ref{TGoren} makes sense and holds true for an arbitrary finite pseudoreflection group $W$ and for all multiplicities. In this generality, Theorem~\ref{TGoren} was proven in \cite{BC11} (see, {\it loc. cit.}, Theorem~1.1). However, for $W$ non-Coxeter, the module $Q_m(W)$ may not be Gorenstein even when it is an algebra.
\end{remark}
\subsection{Varieties of quasi-invariants}\la{varquasi}
\la{Varquasi}
The algebraic properties of quasi-invariants can be recast in geometric terms. To this end, following \cite{BEG03}, we introduce the affine schemes 
$\, V_m(W) := \Spec\,Q_m(W) \,$ called the {\it varieties of quasi-invariants} of $W$. The schemes $V_m(W)$ come equipped with natural projections $ p_m: V_m(W) \to V/\!/W$ and form a covariant diagram (tower) over the poset $\M(W) $:
\begin{equation}
\la{Qsch}
V =V_0(W) \to \ldots \to   V_m(W)  \xrightarrow{\pi_{m,m'}}  V_{m'}(W) \to \ldots 
\end{equation}
that is dual to \eqref{Qtow}. The following formal properties of \eqref{Qsch} hold:
\vspace{1ex}
\begin{enumerate}
  \item[(1)] Each $V_m(W) $ is a reduced irreducible scheme (of finite type over $\c$) equipped with an algebraic $W$-action, all morphisms in \eqref{Qsch} being $W$-equivariant. The morphism $ p_0: V_0(W) \to V/\!/W$  coincides with the canonical projection $ p: V \to V/\!/W $, and the  triangles 
\begin{equation*}
\la{Qtri}
\begin{diagram}[small] 
                V_m(W) & & \rTo^{\pi_{m,m'}} &           & V_{m'}(W)\\
                      &\rdTo_{p_m}  &               & \ldTo_{p_{m'}}     & \\
                     &        &        V/\!/W   &           & \\
               \end{diagram}  
\end{equation*}
commute for all $m' \ge m $. Thus, \eqref{Qsch} is a diagram of $W$-schemes over $V/\!/W$.\\

\item[(2)]
The diagram \eqref{Qsch} `converges' to $V/\!/W$ in the sense that the maps $p_m$ 
induce
\begin{equation*} 
\la{converge} 
\mathrm{colim}_{\M^{\rm alg}(W)}[V_m(W)]\, \stackrel{\sim}{\to}\, V/\!/W \, .
\end{equation*}

\item[(3)] Each projection $ p_m: V_m \to  V/\!/W $ factors naturally (in $m$) through $ V_m/\!/W $,
inducing isomorphisms of schemes $\,V_m/\!/W \cong V/\!/W \,$ for all  $\, m \in \M(W)\,$.\\

\item[(4)]
Each map $ \pi_{m,m'}: V_m \to V_{m'} $ in \eqref{Qsch} is a universal homeomorphism: i.e., a finite morphism of schemes that is surjective and set-theoretically injective on closed points. 
\end{enumerate}
\vspace{1ex}

\begin{remark}
The first three properties in the above list are formal consequences of Lemma~\ref{L1}.  
In contrast, Property (4) is a nontrivial geometric fact that does not follow immediately from definitions (see \cite[Lemma 7.3]{BEG03}). We recall that a morphism of schemes $ f: S \to T $ is called a {\it universal homeomorphism} if for every morphism $ T' \to T $ the pullback map $ T' \times_T S \to T' $ is a homeomorphism in the category of schemes. For a map of algebraic varieties $ f: S \to T $ defined over $\c $, this categorical property is known to be equivalent to the geometric property (4).
\end{remark}

We will construct a topological analogue of the diagram \eqref{Qsch}, where the schemes $ V_m(W) $ are replaced by topological spaces $X_m(G,T)$, with Properties (1)--(3) holding 
in a homotopy meaningful (i.e. homotopy invariant) way. The universal homeomorphisms in the category of schemes will be modeled homotopy theoretically by the classical fibre-cofibre construction.

\subsection{Borel Theorem}
\la{S3.2}
Next, we recall a fundamental result of A. Borel on cohomology of classifying spaces of compact Lie groups  \cite{Bo53}.
Let $G$ be a compact connected Lie group. Fix a maximal torus $ T \subseteq G $ and write $ N = N_G(T)$ for its normalizer in $G$. Let $W:= N/T$ be the associated Weyl group. The $W$ acts naturally on $T$ by conjugation: $W \times T \rightarrow T$, $\,w \cdot t = ntn^{-1}$, and on the classifying space $BT=EG/T$ via the right action of $G$ on $EG$: $W \times BT \rightarrow BT$, $w \cdot [x]_T = [x n^{-1}]_T$, where $w=nT \in W$ and $[x]_T$ denotes the $T$-orbit of $x$ in $EG$. Let $p: BT \twoheadrightarrow BG$ denote the natural fibration, 
i.e. the quitient map induced by the inclusion $T \hookrightarrow G$.

\begin{theorem}[Borel] \la{tborel}
The map $p^{\ast} : H^\ast(BG , \Q) \to H^{\ast}(BT , \Q)$ induced by $p$ on rational cohomology is an injective ring homomorphism whose image is precisely the subring of $W$ invariants in $ H^{\ast}(BT, \Q)\,$:
\begin{equation} \la{borelbg} H^\ast(BG , \Q) \cong H^{\ast}(BT , \Q)^{W}\ . \end{equation} 
\end{theorem}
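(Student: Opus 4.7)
The plan is to factor the projection $p: BT \twoheadrightarrow BG$ through the classifying space of the normalizer $N := N_G(T)$, writing $p = r \circ q$ with $q: BT \to BN$ the principal $W$-bundle induced by the Galois covering $G/T \to G/N$, and $r: BN \to BG$ the fibration with fiber $G/N$. For the covering $q$, a standard averaging argument in rational cohomology gives $q^{\ast}: H^{\ast}(BN,\Q) \xrightarrow{\sim} H^{\ast}(BT,\Q)^W$. For the fibration $r$, I will invoke the Leray--Serre spectral sequence
\[
E_2^{p,q} \,=\, H^p(BG,\Q) \otimes H^q(G/N,\Q) \,\Longrightarrow\, H^{p+q}(BN,\Q)
\]
together with the identification $H^{\ast}(G/N,\Q) \cong \Q$ to conclude $r^{\ast}: H^{\ast}(BG,\Q) \xrightarrow{\sim} H^{\ast}(BN,\Q)$; composing with $q^{\ast}$ then yields \eqref{borelbg}.

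The key ingredient is therefore the computation $H^{\ast}(G/N,\Q) = \Q$. Since $W$ acts freely on $G/T$ with quotient $G/N$, one has $H^{\ast}(G/N,\Q) = H^{\ast}(G/T,\Q)^W$, so it suffices to show that $W$ acts on $H^{\ast}(G/T,\Q)$ as the regular representation, or equivalently that $H^i(G/T,\Q)^W = 0$ for all $i > 0$. One would establish this by combining two inputs: first, the Bruhat (Schubert) decomposition exhibits $G/T$ as a CW complex with one cell of even real dimension for each element of $W$, whence $H^{\ast}(G/T,\Q)$ is concentrated in even degrees and $\chi(G/T) = |W|$; second, a Morse-theoretic analysis using a generic one-parameter subgroup, together with tracking how $W$ permutes critical points, upgrades this to the claim that the induced $W$-representation on $H^{\ast}(G/T,\Q)$ is the regular one.

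The most delicate point is precisely this identification of the $W$-action, since the Bruhat decomposition is not $W$-equivariant. A cleaner alternative that bypasses the full structure of $H^{\ast}(G/T,\Q)$ is to use the Becker--Gottlieb transfer $\tau: H^{\ast}(BT,\Q) \to H^{\ast}(BG,\Q)$ associated with the compact fiber bundle $p$ with closed oriented fiber $G/T$. Two properties then suffice: $\tau \circ p^{\ast} = \chi(G/T) \cdot \id = |W| \cdot \id$ (Frobenius reciprocity applied to the unit class, using $\chi(G/T) = |W|$), and $p^{\ast} \circ \tau = \sum_{w \in W} w^{\ast}$ on $H^{\ast}(BT,\Q)$ (the double-coset formula, arising from the decomposition of the fiber product $BT \times_{BG} BT$ into graphs of Weyl translations). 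The first relation shows $p^{\ast}$ is rationally injective, while the second shows that for any $W$-invariant class $y$, the element $\tfrac{1}{|W|}\,\tau(y) \in H^{\ast}(BG,\Q)$ pulls back to $y$, giving surjectivity onto the invariant subring. Since $p^{\ast}$ is a ring map, the isomorphism \eqref{borelbg} follows.
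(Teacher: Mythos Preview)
The paper does not supply its own proof of this theorem: it is stated as a classical result of Borel with a reference to \cite{Bo53}. The paper does, however, make essential use of the closely related fact that $G/N$ is rationally acyclic (in the proof of Proposition~\ref{PP1}), again citing it as well known via \cite[Theorem~20.3]{Bo67}. So your strategy of factoring $p$ through $BN$ and reducing to $H^\ast(G/N,\Q)=\Q$ is exactly the ingredient the paper itself leans on.

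Your first approach to the key lemma is in the right spirit but vague as written. The clean way to finish it is the Lefschetz fixed point theorem rather than Morse theory: the right $W$-action on $G/T$ is free (if $gn_w^{-1}T = gT$ then $n_w\in T$, i.e.\ $w=e$), so for $w\neq e$ the Lefschetz number $\sum_i(-1)^i\mathrm{tr}(w^\ast\mid H^i(G/T,\Q))$ vanishes; since $H^\ast(G/T,\Q)$ is concentrated in even degrees this sum equals the ordinary character value, and together with $\chi(G/T)=|W|$ one reads off that $H^\ast(G/T,\Q)$ is the regular representation.

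Your second approach via the Becker--Gottlieb transfer has a genuine gap. The identity $\tau\circ p^\ast=\chi(G/T)\cdot\id$ is fine, but the claimed justification of $p^\ast\circ\tau=\sum_{w\in W}w^\ast$ is not: the fiber product $BT\times_{BG}BT$ does \emph{not} decompose into graphs of Weyl translations. It is a connected space, fibering over $BT$ with connected fiber $G/T$; the double coset space $T\backslash G/T$ is positive-dimensional, so no finite-sum double coset formula applies directly. What is true is that the $T$-fixed points of the fiber $G/T$ (under left translation) form the finite set $N/T\cong W$, and a localization argument in equivariant cohomology then produces the desired formula. But that argument is at least as much work as the Lefschetz computation above and is not what you wrote. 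As stated, the transfer route does not bypass the analysis of the $W$-action on $H^\ast(G/T,\Q)$; it presupposes it.
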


In fact, more is true. Let $ V := \pi_1(T) \otimes \Q $, which is a $\Q$-vector space of dimension $ n = {\rm rank}(G)$. The natural action of $W$ on $T$ induces a group homomorphism $W \to \mathrm{Aut}[\pi_1(T)]$ that extends by linearity to a group homomorphism
\begin{equation} \la{weylglq} \varrho:\, W \to \GL_{\Q}(V) \ .\end{equation}
The latter is known to be faithful, with image being a reflection subgroup of $\GL_{\Q}(V)$ (see, e.g., \cite[Theorem 5.16]{DW98}). 
Furthermore, since $T$ is a connected topological group, there is a natural isomorphism $\pi_1(T) \cong \pi_2(BT)$ induced by the homotopy equivalence $T \stackrel{\sim}{\to} \Omega B T$; combining this with the rational Hurewicz isomorphism  $\pi_2(BT) \otimes \Q \cong H_2(BT, \Q)$, we get a natural isomorphism of $\Q$-vector spaces 
\begin{equation} \la{borel1} V \cong H_2(BT , \Q)\end{equation}
which shows that $H_2(BT, \Q)$ carries a reflection representation of $W$ as a Coxeter group. Dualizing \eqref{borel1} gives an isomorphism 
\begin{equation} 
\la{borel2}
H^2(BT, \Q) \cong V^{\ast} 
\end{equation}
which extends to an isomorphism of graded $\Q$-algebras 
\begin{equation} 
\la{borel3} 
H^{\ast}(BT , \Q) \cong \mathrm{Sym}_{\Q}(V^{\ast}) = \Q[V] 
\end{equation}
where the linear forms on $V$ (covectors in $V^{\ast}$) are given cohomological degree $2$ (in agreement with \eqref{borel2}). Borel's Theorem \ref{tborel} thus identifies $H^{\ast}(BG , \Q)$ with the ring $\Q[V]^W$ of polynomial invariants on the (rational) reflection representation of $W$. 

We are now in a position to state our main problem --- the realization problem for algebras of quasi-invariants of Weyl groups --- in an axiomatic way.

\subsection{Realization problem} 
\la{realizationprob}
Given a compact connected Lie group $G$ with maximal torus $T \subseteq G$ and associated Weyl group $ W = W_G(T)$, construct a diagram of spaces $X_m(G,T)$ over the poset  $\M(W)$:
\begin{equation} 
\la{realize}
BT=X_0(G,T) \to \ldots \to X_m(G,T) \xrightarrow{\pi_{m,m'}} X_{m'}(G,T) \to \ldots 
\end{equation}
together with natural maps $p_m: X_m(G,T) \to BG$, one for each $m \in \M(W) $, such that

\vspace{1ex}

\begin{enumerate}
  \item[(QI$_{1}$)]  Each $X_m(G,T) $ is a $W$-space (i.e., a CW complex equipped with an action of $W$), and all maps are $W$-equivariant. The map $\, p_0: X_0(G,T) \to BG \,$ coincides with the canonical map $ p: BT \to BG $, and for all $\, m' \ge m \,$, the following diagrams commute up to homotopy:

               $$ \begin{diagram}[small] 
                X_m(G,T)& & \rTo^{\pi_{m,m'}} &           & X_{m'}(G,T)\\
                      &\rdTo_{p_m}  &               & \ldTo_{p_{m'}}     & \\
                     &        &  BG         &           & \\
               \end{diagram}  
              $$
Thus, \eqref{realize} is a diagram of $W$-spaces  over $BG$.

\vspace{1ex}

\item[(QI$_{2}$)]
The diagram \eqref{realize} `converges' to $BG$ in the sense that the maps $p_m$ 
induce a weak homotopy equivalence of spaces:
\begin{equation*} 
\la{converge} 
\mathrm{hocolim}_{\M(W)}[X_m(G,T)] \stackrel{\sim}{\to} BG \, .
\end{equation*}

\item[(QI$_{3}$)] Each map $\, p_m:\, X_m(G,T) \to BG \,$ factors naturally (in $m$) through the fibre inclusion into the space $ X_m(G,T)_{hW} $ of homotopy orbits of the action of $W$ on $ X_m(G,T) $:

$$ \begin{diagram}[small] 
                X_m(G,T)& & \rTo^{p_m} &           & BG \\
                      &\rdTo_{i_m}  &               & \ruTo_{\bar{p}_{m}}     & \\
                     &        &    X_m(G,T)_{hW}       &           & \\
               \end{diagram}  
              $$
the induced map $\, \bar{p}_m:\, X_m(G,T)_{hW} \to BG \,$ being a cohomology isomorphism: thus, for all $ m \in \M(W) $,
we have algebra isomorphisms
$$
H_W^*(X_m,\,\Q) \cong H^*(BG,\, \Q) 
$$

\vspace{1ex}

\item[(QI$_{4}$)] Each map $\pi_{m,m'}$ in \eqref{realize} induces an injective homomorphism on cohomology so that the Borel homomorphism $p^\ast$ factors into a $\M(W)^{\mathrm{op}}$-diagram of algebra maps 
\begin{equation*} 
\la{factorborel} 
{H}^{\ast}(BG,\Q) \hookrightarrow \ldots \hookrightarrow {H}^{\ast}(X_{m'}, \Q) \stackrel{\pi_{m,m'}^{\ast}}{\hookrightarrow} {H}^{\ast}(X_m,\Q) \hookrightarrow \ldots \hookrightarrow {H}^\ast(BT,\Q) 
\end{equation*}
%
%where $ V = H_2(BT, \Q) $ and the last isomorphism on the right is given by \eqref{borel3}.

\vspace{1ex}

\item[(QI$_{5}$)]
With natural identification $ H^*(BT,\Q) = \Q[V] $ (see \eqref{borel3}), the maps $\pi_{0,m}^{\ast}: H^{\ast}(X_m, \Q) \to H^*(BT, \Q) $ in (QI$_{4}$) induce isomorphisms
\begin{equation*} \la{cohqi} 
H^{\ast}(X_m, \Q) \otimes \c \cong Q_m(W)
\end{equation*}
where $Q_m(W)$ are the subalgebras of quasi-invariants in $\c[V]$.
\end{enumerate}

%
%\red{QUESTION: Replace in (QI$_{5}$): $\,H^{\ast}(X_m, \Q) \otimes \c \cong Q_m(W)\,$?}
%\vspace{1ex}
%
\begin{remark} 
The first three properties of the spaces $X_m(G,T) $ are homotopy-theoretic analogues of the corresponding geometric properties of the varieties  $V_m(W)$ listed in Section~\ref{varquasi}. Properties (QI$_{4}$) and (QI$_{5}$) reflect the fact that the diagram \eqref{realize} is a topological realization of the diagram of algebras \eqref{Qtow}: in particular, the maps $\pi_{m,m'}^* $ in  (QI$_{4}$) induced by the cohomology functor correspond to the natural inclusions \eqref{Qtow} of algebras $Q_m(W)$ determined by their multiplicities.
\end{remark}

\begin{remark}
The spaces $X_m(G,T)$ will arise naturally as homotopy $G$-orbit spaces 
$$ 
X_m(G,T) = EG \times_G F_m(G,T)\,,
$$ 
where $ F_m(G,T) $ are the homotopy fibres of the maps $\,p_m: X_m(G,T) \to BG \,$ (see  Theorem~\ref{MTh1}).
These homotopy fibres form a diagram of $G$-spaces 
$$ 
G/T = F_0(G,T) \to \ldots \to F_m(G,T) \to F_{m'}(G,T) \to \ldots  
$$  
that induces the diagram \eqref{realize}. We will call $ F_m(G,T) $ the {\it quasi-flag manifolds} of the group $G$.
\end{remark}

\section{Spaces of quasi-invariants}
\la{S4}
In this section, we give a solution of our Realization Problem (see Section~\ref{realizationprob}) in the rank one case. Our main observation (see Proposition~\ref{PP1} and Theorem~\ref{MTh1}) is that, for  $G = SU(2)$, the diagram of spaces  \eqref{realize} satisfying all five axioms  (QI$_{1}$)-(QI$_{5}$) can be obtained inductively, using the so-called `fibre-cofibre construction' introduced in homotopy theory by T. Ganea \cite{Ga65}.  

\subsection{Fibres, cofibres and joins}
\la{S3.0}
First, we recall some basic definitions from topology. If $ f: \,X \to Y $ is a map of (well) pointed spaces, its {\it homotopy fibre} is defined by
$$
{\rm hofib}_*(f) :=
X \times_{Y} P_*(Y) = \{(x,\gamma)\,:\, \gamma(0) = \ast\ , \gamma(1) =f(x)\}\ ,
$$
where $ P_*(Y) := {\rm Map}_*(I,Y) = \{\gamma:\, I \to Y\,,\ \gamma(0) = \ast\}\, $ is the (based) path space over $Y$. 
Any map $ f: \,X \to Y $   can be replaced by a fibration in the sense that it admits a factorization
$ X \stackrel{\sim}{\to} X' \stackrel{p}{\onto} Y $ in $ \Top_* $, where the first arrow is a weak homotopy equivalence and the second is a (Serre) fibration. The homotopy fibre is a homotopy invariant in $ \Top_* $ so that the pullback along a weak equivalence  $ X \stackrel{\sim}{\to} X' $ induces a weak equivalence: $\,{\rm hofib}_*(f) \stackrel{\sim}{\to} {\rm hofib}_*(p) \,$.
On the other hand, for any fibration $ p: X' \onto Y $, the natural inclusion map
$$
p^{-1}(\ast) \stackrel{\sim}{\to} {\rm hofib}_*(p)\ ,\quad x \mapsto (x, \ast)
$$
is a (based) homotopy equivalence. Thus, the homotopy fibres of fibrations can be represented in $ \Ho(\Top_*) $ by usual (set-theoretic) fibres.

Dually, the {\it homotopy cofibre} of a map $ f: \,X \to Y $ is defined by
$$
{\rm hocof}_*(f) := Y \cup_{X} C_*(X)\, ,
$$
where $ C_*(X) := (X \times I)/(\{*\} \times I) \cup (X \times \{1\}) $ is the reduced cone on $X$. 
Any map $ f: \,X \to Y $ can be replaced by a cofibration in the sense that it admits a factorization
$ X \stackrel{j}{\into} Y' \stackrel{\sim}{\to} Y $ in $ \Top_* $, where the first arrow is a cofibration (i.e., an injective map) in $ \Top_* $ and the second is (weak) homotopy equivalence. The homotopy cofibre is a homotopy invariant so that the pushout along the homotopy equivalence  $ Y' \stackrel{\sim}{\to} Y $ induces an equivalence: $\,{\rm hocof}_*(j) \stackrel{\sim}{\to} {\rm hocof}_*(f) \,$. On the other hand, for a cofibration $ j: X \into Y' $, the homotopy cofibre $\, {\rm hocof}_*(j)\, $ is simply obtained by erecting the cone $ C_*(\im \,j) $ on the image of $ j $ in $ Y' $. The natural map collapsing this cone to the basepoint gives then a natural map
$$
{\rm hocof}_*(j) \cong Y' \cup C_*(\im\, j) \,\stackrel{\sim}{\to} \,Y'/X
$$
which is a (based) homotopy equivalence. Thus, the homotopy cofibres of cofibrations can be represented in $ \Ho(\Top_*) $ by usual (set-theoretic) cofibres.

Formally, $ {\rm hofib}_*(f) $ and  $ {\rm hocof}_*(f) $ can be defined in $\Ho(\Top_*)$ by the following homotopy limit and homotopy colimit:
\begin{equation}
\la{limcolim}
{\rm hofib}_*(f) = {\rm holim}\{\ast \to Y \xleftarrow{f} X\}\quad , \quad
{\rm hocof}_*(f) = {\rm hocolim}\{\ast \leftarrow X \xrightarrow{f} Y\}
\end{equation}
The advantage of these formal definitions is that they make sense in any homotopical context: in particular, in an arbitrary pointed model category or $\infty$-category.

Next, we recall that the {\it join} $ X \ast Y $ of two spaces is defined to be the space of all line segments joining points in $X$ to points in $Y$: i.e., $ X \ast Y $ is the quotient space of $ X \times I \times Y $ under the identifications $\,(x, 0, y) \sim (x', 0, y)\,$ and $(x,1,y) \sim (x,1,y') $ for all $x, x' \in X $ and $y,y' \in Y $. If $X$ and $Y$ are both (well) pointed, it is convenient to work with a {\it reduced} version of the join obtained by collapsing to a point the line segment joining the basepoints in $X$ and $Y$ (i.e., by imposing on $\,X \ast Y\,$ the extra  identification $(\ast, t,\ast) \sim (\ast, t', \ast) $ for all $t,t' \in I $). Note that inside $ X \ast Y $, there are two cones $CX$ and $CY$ embedded via the canonical maps $\,CX \into X\ast Y\,,\, (x,t) \mapsto (x,t,\ast) $, and 
$\,CY \into X\ast Y\,,\, (y,t) \mapsto (\ast, 1-t, y) $. Collapsing these cones converts $ X \ast Y $ into the suspension of the smash product of spaces: $ \Sigma(X \wedge Y) = (X \ast Y)/(CX \vee CY ) $. Since $CX$ and $CY$ are both contractible in $X \ast Y $, the quotient map $ X \ast Y \to \Sigma(X \wedge Y)$ is a homotopy equivalence. Thus, in the homotopy category $ {\tt Ho}(\Top_*) $ of pointed spaces, we have natural isomorphisms
\begin{equation}
\la{joineq}
X \ast Y\,\cong\,\Sigma(X \wedge Y) \,\cong\,(\Sigma X) \wedge Y \,\cong\,X \wedge (\Sigma Y)
\end{equation}
These are useful in practice for computing the homotopy types of joins.

Using standard notation, we will write the points of $ X \ast Y $ as formal linear combinations
$ t_0 x + t_1 y $, where  $ x \in X$, $ y \in Y $ and $ (t_0, t_1) \in \Delta^1 :=
\{(t_0, t_1) \in {\mathbb R}^2:\, t_0 + t_1 = 1,\,t_0,t_1 \ge 0\} $. The identification with topological presentation is given by $ (x,t,y) = t x + (1-t) y $. 
The advantage of this notation is that it naturally extends to `higher dimensions': the {\it iterated joins} of spaces
\begin{equation}
\la{itjoin}
X_0 \ast X_1 \ast \ldots \ast X_n = \{t_0 x_0 + t_1 x_1 + \ldots + t_n x_n\,:\,
(t_0,\ldots, t_n) \in \Delta^n,\, x_i \in X_i\}/\!\sim    
\end{equation}
where the equivalence relation is defined by 
$\,\sum_{i=0}^n t_i x_i \sim \sum_{i=0}^{n} t_i' x_i' \,$ if and only if $ t_i = t_i' $ (for all $i$) and $ x_i = x_i' \,$ whenever $\, t_i = t_i' > 0 $. Note that, under this equivalence relation, if $ t_i = 0 $ for some $i$, the point $ x_i $ in $\, t_0 x_0 + \ldots + 0 x_i + \ldots + t_n x_n \in X_0 \ast \ldots \ast X_n $ can be chosen arbitrarily (or simply omitted). 

There is also a convenient way to represent joins by homotopy colimits. For example, it is well-known that the join of two spaces is represented by the homotopy pushout
\begin{equation} 
\la{joinhopush}  
X \ast Y = \mathrm{hocolim}[X \leftarrow X \times Y \rightarrow Y] 
\end{equation}
where the maps are canonical projections and the ``hocolim" is taken either in the category of pointed or unpointed spaces depending on whether we consider reduced or unreduced joins. Formula \eqref{joinhopush} generalizes to iterated joins
\begin{equation} 
\la{itjoinhocolim} 
X_0 \ast X_1 \ast \ldots \ast X_n = \mathrm{hocolim}_{\mathscr{P}(\Delta^n)}(F_X)
\end{equation}
where $\mathscr{P}(\Delta^n)$ is the poset of all non-empty faces of the $n$-simplex $\Delta^n$ (ordered by reversed inclusions) and the diagram $F_X:\mathscr{P}(\Delta^n) \to \Top$ is defined by assigning to a face $\Delta_I \in \mathscr{P}(\Delta^n)$ the product of spaces $\prod_{i \in I} X_i$ (with indices corresponding to the vertices of $\Delta_I$) and to an inclusion of faces $\Delta_J \subset \Delta_I$ the canonical projection $\prod_{i \in I} X_i \to \prod_{j \in J} X_j$. It is easy to see that formula \eqref{itjoinhocolim} boils down to \eqref{joinhopush} in case of two spaces (see, e.g., \cite[Example 5.8.8]{MV15}).

We recall one well-known topological fact that we will use in this paper repeatedly:
\begin{lemma}[Milnor]
\la{Millemma}
If each space $X_i$ in the iterated join \eqref{itjoinhocolim} is $(c_i-1)$-connected, then
the space $ \,X_0 \ast X_1 \ast \ldots \ast X_n \,$ is $( \sum c_i + n -1)$-connected.
\end{lemma}
The proof of Lemma~\ref{Millemma} can be found in \cite[Proposition 3.7.23]{MV15}.

\subsection{The fibre-cofibre construction}
\la{S4.1}
This construction starts with a homotopy fibration sequence with a well-pointed base
\begin{equation}
\la{F0}
F\, \xrightarrow{j} \,X \xrightarrow{p} \,B
\end{equation}
and produces another homotopy fibration sequence on the same base:
\begin{equation}
\la{F1}
F_1\, \xrightarrow{j_1} \,X_1 \xrightarrow{p_1} \,B
\end{equation}
The space $X_1$ in \eqref{F1} is defined to be the homotopy cofibre of the 
fibre inclusion in \eqref{F0}: $\, X_1 := {\rm hocof}_*(j) \,$. 
The map $ p_1 $ --- called the (first) {\it whisker map} --- is obtained by extending  
$ p: X \to B $ to $ X_1 = X \cup C_*(F) $ so that $ C_*(F) $ maps to the basepoint of $B$. 
The  $F_1 $ is then defined to be the homotopy fibre of $p_1\,$: $\, F_1 := {\rm hofib}_*(p_1) $.

The above construction can be iterated {\it ad infinitum}; as a result, one gets a tower of fibration sequences over $B$:
\begin{equation}\la{Gatow}
\begin{diagram}[small]
F               & \rTo  &  F_1      &  \rTo  & F_2 & \rTo& \ldots \\
\dTo^{j}        &       & \dTo^{j_1} &        & \dTo^{j_2} \\
X               & \rTo^{\pi_0}  &  X_1      &  \rTo^{\pi_1}  & X_2 & \rTo^{\pi_2}& \ldots \\
\dTo^{p}        &       & \dTo^{p_1} &        & \dTo^{p_2} \\
B               & \rEq  &  B     &  \rEq  & B& \rEq & \ldots \\
\end{diagram}
\end{equation}
where $X_m$ and $F_m$ are defined by 
\begin{equation}
\la{XmFm}
X_m := {\rm hocof}_*(j_{m-1})\quad ,\quad F_m :=  {\rm hofib}_*(p_m)\ ,\quad \forall\,m \ge 1\,.
\end{equation}
Note that the horizontal arrows $ p_m $ in \eqref{Gatow} are whisker maps making
each row $\, F_m \xrightarrow{j_m} X_m \xrightarrow{p_m} B\,$ of the above diagram a homotopy fibration sequence. On the other hand, the vertical arrows $ \pi_m $ are canonical maps making each triple $\, F_m \xrightarrow{j_m} X_m \xrightarrow{\pi_m} X_{m+1} \,$ a homotopy cofibration sequence. The main observation of \cite{Ga65} is that the homotopy fibres in \eqref{Gatow} can be described explicitly in terms of iterated joins of the based loop spaces $ \Omega B $. More precisely, we have
\begin{theorem}[Ganea]
\la{GThm}
$(1)$ For all $ m \ge 1 $, there are natural homotopy equivalences 
$$ 
F_m\, \simeq\, F \ast \,\Omega B \,\ast \,\ldots\, \ast\, \Omega B \quad (m\mbox{-fold join})
$$
compatible with the fibre inclusions $ F_{m} \to F_{m+1} $ in \eqref{Gatow}.

$(2)$ The whisker maps $ p_m: X_m \to B $ induce a weak homotopy equivalence
$$
\mathrm{hocolim}\,
\{X \xrightarrow{\pi_0} X_1 \xrightarrow{\pi_1} X_2 \to \ldots \to X_m \to \ldots\}
\stackrel{\sim}{\to}\, B
$$
where the homotopy colimit is taken over the telescope diagram in the middle of \eqref{Gatow}.
\end{theorem}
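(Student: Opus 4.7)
The plan is to establish (1) by induction on $m$ using Mather's cube theorem together with the very definition of the join, and to deduce (2) via a connectivity argument applied to the fibrations $F_m \to X_m \to B$ furnished by (1).

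For the base case $m = 1$ of part (1), I consider the pushout square defining $X_1$, with corners $F$, $X$, $*$, $X_1$, and augment it to a commutative cube over $B$ by taking as maps to $B$ the basepoint map on $F$ and $*$, the map $p$ on $X$, and the whisker map $p_1$ on $X_1$; commutativity of the side faces is automatic because $p \circ j$ is the constant at the basepoint (as $j$ is the fibre inclusion of $p$). The top face, obtained by taking homotopy fibres vertically, is then identified as follows: $\mathrm{hofib}(X \to B) \simeq F$, $\mathrm{hofib}(* \to B) \simeq \Omega B$, $\mathrm{hofib}(X_1 \to B) \simeq F_1$, and $\mathrm{hofib}(F \to B) \simeq F \times \Omega B$ (the last because the map in question is canonically null-homotopic), with the two remaining top-face arrows identified as the coordinate projections $F \times \Omega B \to F$ and $F \times \Omega B \to \Omega B$. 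Mather's cube theorem then implies that this top face is itself a homotopy pushout, so $F_1$ is the homotopy pushout of the projections $F \leftarrow F \times \Omega B \to \Omega B$, which is by definition the join $F \ast \Omega B$.

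The inductive step is a verbatim repetition: the same Mather-cube argument applied to the fibration $F_m \to X_m \to B$ and the defining pushout $X_{m+1} = X_m \cup_{F_m} C_*(F_m)$ yields $F_{m+1} \simeq F_m \ast \Omega B$, and associativity of the join, together with the inductive hypothesis $F_m \simeq F \ast (\Omega B)^{\ast m}$, gives $F_{m+1} \simeq F \ast (\Omega B)^{\ast(m+1)}$. Compatibility with the fibre inclusions $F_m \to F_{m+1}$ follows from naturality of the Mather cube: under the identifications, this inclusion becomes the canonical inclusion $F_m \hookrightarrow F_m \ast \Omega B$ of one of the join factors.

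For part (2), I use the standard connectivity estimate $\mathrm{conn}(A \ast B) \geq \mathrm{conn}(A) + \mathrm{conn}(B) + 2$: since $\Omega B$ is nonempty, the connectivity of $F_m \simeq F \ast (\Omega B)^{\ast m}$ grows linearly with $m$, so the long exact sequence of the fibration $F_m \to X_m \to B$ forces the whisker map $p_m$ to be a $\pi_k$-isomorphism in a range that increases with $m$. The canonical maps $\pi_m \colon X_m \to X_{m+1}$ are cofibrations (being pushout inclusions along the cone inclusion $F_m \hookrightarrow C_*(F_m)$), so homotopy groups commute with the sequential homotopy colimit, and one concludes $\pi_k(\hocolim_m X_m) \cong \mathrm{colim}_m \pi_k(X_m) \cong \pi_k(B)$. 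The induced colimit map to $B$ is therefore a weak homotopy equivalence. The main technical obstacle is the setup of the Mather cube in the base case -- specifically, the identification of the homotopy fibre of the null-composite $F \to X \to B$ as $F \times \Omega B$ equipped with the coordinate projections -- since once this is in place, the induction and the connectivity argument for (2) are essentially formal.
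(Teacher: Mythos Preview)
Your proposal is correct. The paper does not actually give a proof of this theorem---it is quoted as Ganea's result, and the only comment offered is that part~(2) follows from part~(1) by Milnor's Lemma (the connectivity estimate for iterated joins, Lemma~\ref{Millemma} in the appendix). Your argument for~(2) is exactly this: the connectivity of $F_m \simeq F \ast (\Omega B)^{\ast m}$ tends to infinity with $m$, so $p_m$ is more and more highly connected, and the telescope colimit recovers $B$.

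For part~(1), your Mather-cube argument is the standard modern proof and is sound. One small refinement: the induced map on homotopy fibres $\mathrm{hofib}(F \to B) \to \mathrm{hofib}(X \to B)$ is, under the identification $\mathrm{hofib}(F \to B) \simeq F \times \Omega B$, not literally the first projection but rather the holonomy action $(f,\gamma) \mapsto \gamma \cdot f$. This does not affect your conclusion, since the shearing automorphism $(f,\gamma) \mapsto (\gamma \cdot f,\gamma)$ of $F \times \Omega B$ converts the span $F \xleftarrow{\text{action}} F \times \Omega B \xrightarrow{\text{pr}_2} \Omega B$ into the span of projections, so the homotopy pushouts---and hence the identification with the join---agree.
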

Note that the second claim of Theorem~\ref{GThm} follows from the first by Lemma~\ref{Millemma}. 
\begin{example}[LS-categories] 
\la{exga}
Recall that the {\it LS-category} of a topological space $ B $
is defined to be  $ {\rm cat}(B) := n-1 $, where $n$ is the least cardinality of an open cover $\{U_1,\ldots, U_n\}$ of $B$ such that each $U_i$ is contractible as a subspace in $B$.
Given a pointed connected space $B$, one applies the fibre-cofibre construction to the canonical path fibration $\, \Omega B \to P_*B  \xrightarrow{p} B \,$. The result is the sequence of spaces  
$$ 
P_*B  \xrightarrow{\pi_0} (P_*B)_1 \xrightarrow{\pi_1} (P_*B)_2 \xrightarrow{\pi_2} (P_*B)_3 \to \ldots 
$$
called the {\it Ganea tower}  of the space $B$. The main theorem of \cite{Ga67} asserts that if $B$ is a normal space, its LS category $\, {\rm cat}(B) \le m \,$ if and only if the $m$-th whisker map $\,p_m: (P_*B)_m \to  B $ associated to the above tower splits 
(i.e., admits a section). Most applications of Ganea's construction in topology are related to or inspired by this observation (see, e.g., \cite{CLOT03}).
\end{example}

\begin{example}[Milnor bundles] 
 \la{exmil}
If $G$ is a topological group, we can apply the Ganea construction to the universal principal $G$-fibration $\,G \to EG \to BG \,$. Since $\, \Omega BG \simeq G \,$,
the diagram \eqref{Gatow} in this case is equivalent to
\begin{equation}\la{Mildiag}
\begin{diagram}[small]
G               & \rTo  &  E_1G      &  \rTo  & E_2G & \rTo& \ldots \\
\dTo        &       & \dTo &        & \dTo \\
EG               & \rTo  &  B_1 G      &  \rTo  & B_2G & \rTo& \ldots \\
\dTo       &       & \dTo &        & \dTo \\
BG               & \rEq  &  BG     &  \rEq  & BG& \rEq & \ldots \\
\end{diagram}
\end{equation}
\\
where $ E_n G $ is the (unreduced) iterated join of $(n+1)$ copies of the group $G$:
\begin{equation}
\la{EnG}
E_nG := G \ast G \ast \ldots \ast G \qquad \text{($n+1\,$ times)} \ .  
\end{equation}
Each space \eqref{EnG} carries a natural right $G$-action $E_nG \times G \to E_nG$ that 
makes each column of the diagram \eqref{Mildiag} a Borel fibration sequence. 
In the notation \eqref{itjoin}, we can write this right action explicitly as
\begin{equation} 
\la{rtaction} 
(t_0g_0 +t_1g_1+ \ldots+ t_ng_n) \cdot g = t_0g_0g+t_1g_1g + \ldots +t_ng_ng  \,.
\end{equation}
The $G$-action \eqref{rtaction} is free for all $n \ge 0$: hence $\,B_n G \simeq E_n G/G\,$, and therefore each column of \eqref{Mildiag} arises from the principal $G$-bundle
\begin{equation} 
\la{pbdl} 
G \rightarrow E_nG \rightarrow  B_n G 
\end{equation}
called the $n$-th {\it Milnor bundle} of $G$ (see  \cite{Mi56}). The canonical maps $E_nG \to E_{n+1}G$ in \eqref{Mildiag} are represented explicitly by the $G$-equivariant inclusions:
$$
t_0g_0 + \ldots +t_ng_n\  \mapsto\  t_0g_0 + \ldots +t_ng_n +0 \cdot e 
$$
making $\{E_nG\}_{n \geqslant 0}$ and $\{B_nG\}_{n \geqslant 0}$  into direct systems of 
spaces with
\begin{equation*} 
\la{defbg} EG\, \simeq \, \varinjlim E_nG \quad \text{and} \quad BG \,\simeq\, \varinjlim B_nG\,. 
\end{equation*}
Thus, the spaces $EG$ and $BG$ come equipped with canonical filtrations 
\begin{eqnarray*}
\la{efilt} E_0G \hookrightarrow \ldots \hookrightarrow E_nG \hookrightarrow E_{n+1}G \hookrightarrow \ldots \hookrightarrow EG\\*[1ex]
\la{bfilt} 
B_0G \hookrightarrow \ldots \hookrightarrow B_nG \hookrightarrow B_{n+1}G \hookrightarrow \ldots \hookrightarrow BG
\end{eqnarray*}
with consecutive terms being the Milnor $G$-bundles \eqref{pbdl}.
Now, by Lemma~\ref{Millemma}, the $n$-th Milnor bundle is $n$-universal in the sense that its total space is $(n-1)$-connected. In this way, the Milnor bundles provide $n$-universal
approximations the universal principal $G$-bundle $G \to EG \to BG $.
\end{example}
We remark that Example~\ref{exmil} can be viewed as a special case of Example~\ref{exga} if we take $ B = BG $.

\subsection{Derived schemes of quasi-invariants}
\la{S4.2}
The fibre-cofibre construction is essentially formal: it can be performed in an arbitrary (pointed) model category or $\infty$-category. To see why this construction is relevant
to our problem we will apply it first in a simple algebraic model category: the category $\dAff_{k, \ast}$ of pointed derived affine schemes over a field $k$ of characteristic $0$. As a model for $\dAff_{k, \ast}$, we take the category $(\dgca_{k}\! \downarrow \! k)^{\rm op} $ dual to the category of non-negatively graded commutative DG $k$-algebras $A$  equipped with augmentation map $A \to k $. Extending the standard algebro-geometric notation, we write $\, \Spec(A) \,$ for the object (an affine DG scheme) in $ \dAff_{k}  $ corresponding to the DG algebra $ A $ in $ \dgca_k $.
Since we assume that $ {\rm char}(k) = 0 $, the category $ \dgca_{k} $ carries a natural 
(projective) model structure, where  weak equivalences are the quasi-isomorphisms of DG algebras and
fibrations are the DG algebra maps which are surjective in positive (homological) degrees (see, e.g., \cite[Appendix B]{BKR13}). The category $ \dAff_{k} = \dgca_{k}^{\rm op} $ is equipped with the dual (injective) model structure. The homotopy (co)fibres  of morphisms in $ \dAff_{k} $ are defined in terms of homotopy (co)limits, using formulas \eqref{limcolim}.
Explicitly, given a morphism of pointed affine DG schemes $ f: \Spec(A) \to \Spec(B) $ corresponding to a DG algebra 
homomorphism $ f^*: B \to A $, its homotopy fibre and homotopy cofibre are given by
\begin{equation}
\la{hocof}
{\rm hofib}_*(f) \cong \Spec\left(A \otimes^{\L}_{B} k\right)\ ,\quad
{\rm hocof}_*(f) \cong \Spec\left(B \times^{\bs{R}}_{A} k\right)
\end{equation}
where $\, \otimes^{\L}_B \,$ and $\,\times^{\bs{R}}_{A}\,$ denote the derived tensor product
(homotopy pushout) and the derived direct product (homotopy pullback) in the model category $\dgca_k $.

\vspace{1ex}

We apply the fibre-cofibre construction in the category $ \dAff_{k, \ast} $ to the canonical (algebro-geometric) quotient map $\,p: V \onto V/\!/W \,$ in the situation 
of the following simple example. 
\begin{example}
\la{E2}
Let $ W = \Z/2\Z $, acting in its one-dimensional reflection representation $ V $. Choosing a basis vector in $V$, we can identify 
$ V = \c $ and $ k[V] = k[x] $, with $W$ acting on $k[x]$ by the rule $ s(p)(x) = p(-x)$.
In this case, $ \A = \{0\} $ and $m$ is a non-negative integer. Condition~\eqref{Qinv} says that 
$ p(x) $ is a quasi-invariant of multiplicity $m$ iff $ p(x) - p(-x) $ is divisible 
by $ x^{2m} $. Hence $ Q_m(W) $ is spanned by the monomials $ \{x^{2i}\,:\, i \ge 0\} $ and $ \{x^{2i+1}\,:\, i \ge m\} $, or equivalently
$$ 
Q_m(W) = k[x^2] \oplus x^{2m+1} k[x^2] = k[x^2, x^{2m+1}]\ .
$$
\end{example}
Thus, we take $ V $ to be the affine line acted upon by $ W = \Z/2\Z $ via the reflection at $ 0 $. Regarding $\, V \cong \Spec \,k[x]\,$ and $\, V/\!/W \cong \Spec\, k[x^2] \,$ as affine (DG) schemes pointed at $0$, we can compute the homotopy fibre $ F :=  {\rm hofib}_*(p) $ in $\dAff_{k, \ast}$, using formula \eqref{hocof}:
\begin{equation} \la{F}
F \cong \Spec\left(k[x] \otimes^{\L}_{k[x^2]} k\right) \cong \Spec \left(k[x] \otimes_{k[x^2]} k\right) \cong  \Spec(k[x]/x^2)\ .
\end{equation}
Note that the second isomorphism in \eqref{F} is due to the fact that $ k[x] $ is a free module (and hence, a flat algebra) over $ k[x^2] $. Thus, in $ {\rm dAff}_{k,\ast} $, we have the fibration sequence 
\begin{equation}
\la{AW}
F \,\xrightarrow{j}\, V \,\xrightarrow{p}\, V/\!/W
\end{equation}
where $F$ is given by \eqref{F}. The following simple observation, which was the starting point of the present paper, provides
 a motivation for our topological results in the next section.
\begin{prop}
\la{P1}
The fibre-cofibre construction in $ \,{\rm dAff}_{k,\ast} \,$  applied to the fibration \eqref{AW} produces the tower \eqref{Qsch}  of varieties of quasi-invariants for the reflection representation of $W = \Z/2\Z\,$:
\begin{equation}\la{Gatow2}
\begin{diagram}[small]
F               & \rTo  &  F_1      &  \rTo  & F_2 & \rTo& \ldots \\
\dTo^{j}        &       & \dTo^{j_1} &        & \dTo^{j_2} \\
X               & \rTo^{\pi_0}  &  V_1      &  \rTo^{\pi_1}  & V_2 & \rTo^{\pi_2}& \ldots \\
\dTo^{p}        &       & \dTo^{p_1} &        & \dTo^{p_2} \\
V/\!/W              & \rEq  &  V/\!/W     &  \rEq  & V/\!/W & \rEq & \ldots \\
\end{diagram}
\end{equation}
Thus, for all $\, m\ge 0 $, we have
\begin{equation}
\la{QFm}
V_m \cong  \Spec(Q_m)\ , \quad  F_m  \cong  \Spec\,[Q_m/(x^2)]\ , 
\end{equation}
where $\,Q_m = k[x^2, x^{2m+1}] $ and the maps $\pi_m,\, p_m $ and $j_m $ in \eqref{Gatow2} 
correspond to the natural inclusions $\, Q_{m+1} \into Q_m $, $\, k[x^2] \into Q_m $, and
the projection $ Q_m \onto  Q_m/(x^2) $, respectively.
\end{prop}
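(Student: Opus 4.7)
The proof proceeds by induction on $m \ge 0$. The base case is tautological: $V_0 = V = \Spec(k[x]) = \Spec(Q_0)$ since $Q_0 = k[x]$, and $F_0 = F = \Spec(k[x]/(x^2)) = \Spec(Q_0/(x^2))$, with $p_0$ and $j_0$ dual respectively to the inclusion $k[x^2] \into k[x]$ and the quotient $k[x] \onto k[x]/(x^2)$.

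For the inductive step, assume $V_m \cong \Spec(Q_m)$ with $Q_m = k[x^2, x^{2m+1}]$, $F_m \cong \Spec(Q_m/(x^2))$, and that $j_m$, $p_m$ are dual to $Q_m \onto Q_m/(x^2)$ and $k[x^2] \into Q_m$. By formula \eqref{hocof},
\[
V_{m+1} \;=\; {\rm hocof}_*(j_m)\;\cong\; \Spec\!\left(Q_m \times^{\bs{R}}_{Q_m/(x^2)} k\right),
\]
where the pullback is formed over $j_m^*: Q_m \onto Q_m/(x^2)$ and the unit $k \to Q_m/(x^2)$. Since $j_m^*$ is surjective, hence a fibration in the projective model structure on $\dgca_k$, the derived pullback reduces to the ordinary one. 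Writing every element of $Q_m$ uniquely as $q = p_0(x^2) + x^{2m+1} p_1(x^2)$ with $p_0, p_1 \in k[x^2]$, its image in $Q_m/(x^2) \cong k \oplus k\!\cdot\! x^{2m+1}$ is $p_0(0) + p_1(0)\, x^{2m+1}$. The pullback condition $j_m^*(q) = c\cdot 1$ thus forces $p_1(0) = 0$, i.e.\ $p_1(x^2) \in x^2 k[x^2]$, so that $q \in k[x^2] \oplus x^{2m+3} k[x^2] = Q_{m+1}$; conversely every such $q$ gives a pullback element paired with $c = p_0(0)$. Hence $V_{m+1} \cong \Spec(Q_{m+1})$, and the universal property of the pullback identifies $\pi_m$ and the whisker map $p_{m+1}$ with the duals of the inclusions $Q_{m+1} \into Q_m$ and $k[x^2] \into Q_{m+1}$, respectively.

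Finally, $F_{m+1} = {\rm hofib}_*(p_{m+1}) \cong \Spec(Q_{m+1} \otimes^{\L}_{k[x^2]} k)$ by \eqref{hocof}. Because $Q_{m+1} = k[x^2] \oplus x^{2m+3} k[x^2]$ is free of rank $2$ (hence flat) over $k[x^2]$, the derived tensor product collapses to the strict one $Q_{m+1}/(x^2 Q_{m+1}) = Q_{m+1}/(x^2) \cong k \oplus k\!\cdot\! x^{2m+3}$. Thus $F_{m+1} \cong \Spec(Q_{m+1}/(x^2))$ with $j_{m+1}$ dual to $Q_{m+1} \onto Q_{m+1}/(x^2)$, closing the induction.

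The substantive content lies in the explicit identification of the pullback $Q_m \times_{Q_m/(x^2)} k$ with $Q_{m+1}$: since the class of $x^{2m+1}$ survives as a nontrivial generator of $Q_m/(x^2)$, the pullback condition forces the coefficient of this class to vanish, which multiplies the lowest odd-degree generator by $x^2$ and thus increments the multiplicity from $m$ to $m+1$. The remaining reductions of derived (co)limits to strict ones (via the surjectivity of $j_m^*$ and the flatness of $Q_{m+1}$ over $k[x^2]$) and the compatibility of all induced arrows with the inclusions in \eqref{Qtow} are then formal.
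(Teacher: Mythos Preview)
Your proof is correct and follows essentially the same inductive scheme as the paper's: both reduce the derived pullback to the strict one via surjectivity of $Q_m \onto Q_m/(x^2)$, identify $Q_m \times_{Q_m/(x^2)} k$ with $Q_{m+1}$, and use freeness of $Q_{m+1}$ over $k[x^2]$ to collapse the derived tensor product. The only cosmetic difference is that you carry both $V_m$ and $F_m$ in the inductive hypothesis and give a more explicit coordinate computation of the pullback, whereas the paper recomputes $F_m$ from $p_m$ at each step and leaves the identification of the pullback with $Q_{m+1}$ to the reader.
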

\begin{proof} The proof is an easy induction in $m$. For $m=0$, we have already shown  in \eqref{F} that 
$ F = F_0 $, with \eqref{AW} corresponding (i.e. dual) to the natural algebra maps $ k[x^2] \into k[x] \onto k[x]/(x^2)\,$. Now, assuming that $V_m$ is given by \eqref{QFm} together with $p_m: V_m \onto V/\!/W $
corresponding to the inclusion $\, k[x^2] \into Q_m $, we compute the fibre $F_m$
in the same way as in \eqref{F}, using formula \eqref{hocof}:
$$
F_m \,:= \,{\rm hofib}_*(p_m)\, 
\cong \,\Spec\left(Q_m \otimes^{\L}_{k[x^2]} k\right) \,
\cong \,\Spec\left(Q_m \otimes_{k[x^2]} k\right)\,
\cong\,  \Spec\,[Q_m/(x^2)]
$$
Again, crucial here is the fact that $ Q_m $ is a free module (and hence, a flat algebra)
over $k[x]^W$, which is a general property of quasi-invariants (see Theorem~\ref{TGoren}).
Next,  we have
\begin{equation}\la{Xm1}
V_{m+1} :=  {\rm hocof}_*(j_m)  \cong  \Spec \left(Q_m \times^{\bs{R}}_{Q_m/(x^2)} k\right) 
\cong \Spec \left(Q_m \times_{Q_m/(x^2)} k\right) \cong \Spec(Q_{m+1})
\end{equation}
The first isomorphism in \eqref{Xm1} is the result of formula \eqref{hocof} for homotopy cofibres in $ \dAff_{k,\ast} $. The second isomorphism is due to the fact that the canonical map $ Q_m \onto Q_m/(x^2) $ is surjective, and hence a fibration in the standard model structure on $ \dgca_k $ (this implies that
$ {\rm hocof}_*(j_m) $ coincides with the usual cofibre of $ j_m $ in the category of affine $k$-schemes). Finally, the last isomorphism in \eqref{Xm1} is given by the composition of canonical algebra maps
\begin{equation}\la{Qmin}
Q_m \times_{Q_m/(x^2)} k\,\into \,Q_m \times k \onto Q_m 
\end{equation}
It is easy to see that the map \eqref{Qmin} is injective, and its image is precisely
$ Q_{m+1} = k[x^2, x^{2m+3}] $. This gives an identification $\, Q_m \times_{Q_m/(x^2)} k \cong Q_{m+1}\,$ together with the inclusion $ Q_{m+1} \into Q_m $ that defines the morphism of 
schemes  $\pi_m: V_{m} \to V_{m+1} $.
\end{proof}
\begin{remark}
\la{Rem14}
Proposition~\ref{P1} does not extend directly to higher rank groups: the standard fibre-cofibre construction in $\dAff_{k,\ast} $ does {\it not} produce the tower of varieties of quasi-invariants, \eqref{Qsch}, for an arbitrary Coxeter group $W$ ({\it cf.} Proposition~\ref{PP1} below). 
\end{remark}

\subsection{Spaces of quasi-invariants of $SU(2)$}
\la{S4.3}
Let $G$ be a compact connected Lie group with a fixed maximal torus $ T $ and Weyl group  $ W = W_G(T) $. Associated to $ (G,T) $ there is a natural fibration sequence\footnote{If we choose a model for the universal $G$-bundle $EG$ (for example, the Milnor model described in Example~\ref{exmil}) and let $ BG = EG/G $ and $ BT = EG/T $, then \eqref{BTG} is represented by a canonical locally trivial fibre bundle $\,G/T \to EG/T \onto EG/G \,$ (see, e.g., \cite{Huse75}, Chap 4, Sect. 7).} 
\begin{equation}
\la{BTG}
G/T \xrightarrow{j} BT \xrightarrow{p} BG \ ,
\end{equation}
where $p$ is the map induced by the inclusion $ T \into G $ and
$\, j \,$ is the classifying map for the principal $T$-bundle $ G \to G/T $. 
\begin{prop}
\la{PP1}
Assume that $W$ is simply-laced (i.e., of ADE type). Then the fibre-cofibre construction applied to \eqref{BTG} produces a tower of fibrations
\begin{equation}
\la{Gatow3}
\begin{diagram}[small]
G/T               & \rTo  &  F_1(G,T)      &  \rTo  & F_2(G,T) & \rTo&\ \ldots\  & \rTo & F_m(G,T )& \rTo & \ldots  \\
\dTo^{j}        &       & \dTo^{j_1} &        & \dTo^{j_2} & & \ \ldots\ & & \dTo^{j_m} &  \\
BT              & \rTo^{\pi_0}  &  X_1(G,T)      &  \rTo^{\pi_1}  & X_2(G,T) & \rTo^{\pi_2}& \ldots  & \rTo & X_m(G,T )& \rTo^{\pi_m} & \ldots \\
\dTo^{p}        &       & \dTo^{p_1} &        & \dTo^{p_2} & & \ \ldots\ & & \dTo^{p_m} &  \\
BG              & \rEq  &  BG     &  \rEq  & BG & \rEq & \ \ldots\  & \rEq & BG & \rEq  & \ldots \\
\end{diagram}
\end{equation}
where the diagram of $($total$)$ spaces in the middle
%
%\begin{equation}
%\la{Gatow3}
%BT  \xrightarrow{\pi_0} X_1(G,T) \xrightarrow{\pi_1} X_2(G,T) \xrightarrow{ \pi_2} \ldots \to %X_m(G,T) \xrightarrow{\pi_m}\ldots 
%\end{equation}
%
together with maps $ p_m: X_m(G,T) \to BG $ satisfy the first three properties {\rm (QI$_{1}$), (QI$_{2}$)} and {\rm (QI$_{3}$)} of Section~\ref{realizationprob}.
\end{prop}
\begin{proof} 
If $W$ is simply-laced, all reflection hyperplanes of $W$ are in the same orbit, and the poset $\M(W)$ consists
only of constant multiplicities which we identify with $ \Z_+ $. By Ganea's Theorem~\ref{GThm}, the homotopy fibre $ F_m = F_m(G,T)$  at stage $m$ in \eqref{Gatow3} can be represented by 
the iterated join
\begin{equation}
\la{eq1}
F_m \, = \,G/T \ast \Omega BG \,\ast\, \stackrel{m}{\ldots} \,\ast\, \Omega BG
\,\simeq\, G/T\, \ast\,  G \,\ast\, \stackrel{m}{\ldots} \,\ast\, G \,=\, G/T\, \ast\,  E_{m-1}G\ ,
\end{equation}
where $E_{m-1}G $ is Milnor's model for the  $(m-1)$-universal $G$-bundle (see Example~\ref{exmil}).
The fibre \eqref{eq1} carries a natural left (holonomy) action $\,\Omega BG \times F_m \to F_m \,$ that under the identification \eqref{eq1}, corresponds to the diagonal action of $G\,$:
\begin{equation}
\la{eq2}
G \times F_m \to F_m\ ,\quad  g\cdot(t_0(g_0 T) + t_1 g_1 + \ldots + t_m g_m) =  t_0(g g_0 T) + t_1 gg_1 + \ldots + t_m gg_m
\end{equation}
where $g,\,g_0,\,g_1,\,\ldots,\, g_m \in G $ and $(t_0,\ldots, t_m) \in \Delta^m $, see \eqref{itjoin}.
The space $ X_m = X_m(G,T) $ can then be represented as the homotopy quotient
\begin{equation}
\la{eq3}
X_m = (F_m)_{hG} =  EG \times_G (G/T\, \ast\,  E_{m-1}G)
\end{equation}
and the fibration $ F_m \to X_m \to BG $ in \eqref{Gatow3} is identified with the Borel fibration
\begin{equation}
\la{eq4}
F_m \to (F_m)_{hG} \to BG
\end{equation}
Now, the Weyl group $ W = N_G(T)/T $ acts on 
$ G/T $ by $\,w\cdot(gT) = g n^{-1} T\,$, where $ w = nT \in W $. With identification \eqref{eq1}, this action naturally induces a $W$-action on $ F_m = G/T \ast E_{m-1}G $. The latter commutes with the $G$-action \eqref{eq2}, and hence extends to the space $X_m$ of homotopy $G$-orbits in $F_m $. Explicitly, with identification \eqref{eq3}, the action of $W$ on
$ X_m = EG \times _G (G/T \ast E_{m-1} G) $ is given by
\begin{equation}
\la{eq5}
w\cdot(x,\,t_0(g_0 T) + t_1 g_1 + \ldots + t_m g_m) =
(x,\,t_0(g_0 n^{-1} T) + t_1 g_1 + \ldots + t_m g_m)
\end{equation}
where $x \in EG$ and $w = n T \in W\,$. The inclusions
$ F_m \into F_{m+1} $ defined by 
$$
t_0(g_0 T) + t_1 g_1 + \ldots + t_m g_m\ \mapsto\
t_0(g_0 T) + t_1 g_1 + \ldots + t_m g_m + 0\, e
$$
are obviously $ (G \times W)$-equivariant, hence induce $W$-equivariant maps on homotopy $G$-quotients:
$\, \pi_m: X_m \to X_{m+1} \,$.  The whisker maps $\,p_m: X_m \to BG \,$ are induced by the trivial maps $ F_m \to {\rm pt} $ and hence are $W$-invariant. Thus, we have established property $({\rm P1})$ for the tower \eqref{Gatow3}. Property $({\rm P2})$
follows directly from part $(2)$ of Theorem~\ref{GThm}.  

For $({\rm P3})$, it suffices to show that
\begin{equation}
\la{Bork}
H^*_W(F_m, \Q) \cong \Q
\end{equation}
Indeed, since the actions of $G$ and $W$ on $F_m$ commute, we have  
$$ 
(X_m)_{hW} = EW \times_W (EG \times_G F_m) \simeq EG \times_G (EW \times_W F_m) = EG \times_G (F)_{hW}
$$ 
Whence
\begin{equation}
\la{Bork11}
H_W^*(X_m,\,\Q) \cong H^*_G((F_m)_{hW}, \Q)\ 
\end{equation}
On the other hand, if \eqref{Bork} holds, the Serre spectral sequence of the Borel fibration 
$$
(F_m)_{hW} \to EG \times_G (F_m)_{hW} \to BG
$$
degenerates, giving an isomorphism $\, H^*_G((F_m)_{hW}, \Q) \cong H^*(BG, \Q)\,$. Combining this last isomorphism with \eqref{Bork11} yields $\, H_W^*(X_m,\,\Q) \cong H^*(BG, \Q) \,$, as required by (QI$_{3}$).

Now, since $F_m $ is connected, \eqref{Bork} is equivalent to vanishing of higher cohomology:
\begin{equation}
\la{Bork1}
H^n_W(F_m, \,\Q) = 0 \quad \forall\, n > 0\ .
\end{equation}
We prove \eqref{Bork1} by induction on $ m $. For $ m = 0 $, we have $\, F_0 = G/T $ and
$\, (G/T)_{hW} \simeq (G/T)/W \cong G/N\,$, since the action of $W$ on $G/T$ is free. 
It follows that $ H^n_{W}(F_0, \Q) \cong H^n(G/N,\,\Q) = 0 $ for all $ n > 0 $ as
it is well known that the space $G/N$ is rationally acyclic for any compact connected  Lie group (see \cite[Theorem 20.3]{Bo67}).

Now, assume that \eqref{Bork1} holds for some $ m \ge 0 $ and consider 
$ (F_{m+1})_{hW} = (F_m \,\ast\,G)_{hW} $. Representing this space by homotopy colimits (see \eqref{joinhopush})
and using the fact that the homotopy colimits commute, we have
\begin{eqnarray*}\la{eq7}
 (F_{m+1})_{hW}  & \simeq &  \hocolim_W\,\hocolim\,[\,F_m \leftarrow F_m \times G \to G\,] \\*[1ex]
% & \simeq &  \hocolim\,\hocolim_W\,[\,F_m \leftarrow F_m \times G \to G\,] \\*[1ex]
& \simeq &  \hocolim\,[\,(F_m)_{hW} \leftarrow (F_m \times G)_{hW} \to (G)_{hW}\,] \\*[1ex]
& \simeq &  \hocolim\,[\,(F_m)_{hW} \leftarrow (F_m)_{hW} \times G \to BW \times G\,]
 \end{eqnarray*}
This homotopy decomposition implies that the cohomology groups of  $ (F_{m+1})_{hW} $ and $ (F_{m})_{hW} $ are related by the following Mayer-Vietoris type long exact sequence:
$$
 H^{n-1}[(F_m)_{hW} \times G] \,\to\, 
H^{n}[(F_{m+1})_{hW}] \,\to\, H^{n}[(F_m)_{hW}] \oplus H^n[BW \times G] \,\to\, 
H^{n}[(F_m)_{hW} \times G] 
$$
Since $W$ is a finite, its rational cohomology  vanishes in positive degrees. Hence, by K\"unneth Theorem, we have $ H^\ast(BW \times G, \Q) \cong H^\ast(G, \Q) $. Furthermore, our induction assumption \eqref{Bork1} implies that $ H^\ast((F_m)_{hW} \times G, \Q) \cong H^*(G,\Q) $ and for each $ n \ge 1 $,
the last map in the above exact sequence is an isomorphism. Thus, for $ n\ge 2 $,
the above sequence breaks up into short exact sequences
$$
0 \to H^{n}((F_{m+1})_{hW}, \Q) \,\to\, H^n(G, \Q) \xrightarrow{\sim} H^n(G, \Q) \to 0
$$
which show that $ H_W^{n}(F_{m+1}, \Q) = 0 $ for all $ n \ge 2 $. On the other hand, in dimension $0$ and $1$, the above long exact sequence reads
$$
 H^{0}((F_m)_{hW},\Q) \oplus H^0(G,\Q) \,\onto\, H^0(G,\Q) \to  H^{1}((F_{m+1})_{hW}, \Q) \,\to 
H^1(G, \Q) \xrightarrow{\sim} H^1(G, \Q)
$$
where the first arrow is surjective and the last is an isomorphism. This shows that 
$ H_W^1 (F_{m+1}, \Q) $ also vanishes, thus finishing the induction and the proof of (QI$_{3}$).
\end{proof}
\begin{example}
\la{counterex}
Let us describe the cohomology $ H^*(X_1, \Q) $ of the first space $ X_1 = X_1(G,T) $ in the diagram \eqref{Gatow3} explicitly. By general properties of the Ganea construction (see Section~\ref{S4.1}), this space fits in the
homotopy cofibration sequence 
\begin{equation}
\la{cofseq1}
G/T \xrightarrow{j} BT \xrightarrow{\pi_0} X_1
\end{equation}
Since both $BT$ and $G/T$ have no cohomology classes in odd dimensions and the natural map $j^*: 
H^*(BT, \Q) \to H^*(G/T, \Q) $ is surjective, the long cohomology sequence associated to
\eqref{cofseq1} reduces to the short exact sequence
\begin{equation}
\la{seq2}
0 \to \tilde{H}^*(X_1, \Q) \xrightarrow{\pi^*_0}  \tilde{H}^*(BT, \Q) \xrightarrow{j^*} \tilde{H}^*(G/T, \Q) \to 0
\end{equation}
where $ \tilde{H}^* $ stands for the reduced cohomology. Since $X_1$ is connected, \eqref{seq2}
shows that the algebra map $ \pi^*_0: H^*(X_1, \Q) \to H^*(BT,\Q) $ is injective, and with 
identification $ H^*(BT, \Q) \cong \Q[V] $ (as in \eqref{borel3}), its image being
\begin{equation}
\la{seq3}
H^*(X_1, \Q) \cong \Q + \langle \Q[V]^W_+ \rangle\,\subset\,\Q[V]\ ,
\end{equation}
where $\langle \Q[V]^W_+ \rangle$ is the ideal in $\Q[V]$ generated by the $W$-invariant polynomials
of positive degrees. Formula \eqref{seq3} shows that $ X_1 $ has no odd cohomology; moreover, the map
$ p_1^*: H^*(BG, \Q) \to  H^*(X_1, \Q)  $ induced by the first whisker map in \eqref{Gatow3}
is injective, and $ H^*(X_1, \Q) $ is a finite module over $ H^*(BG, \Q) \cong \Q[V]^W $
via $ p_1^* $. By Hilbert-Noether Theorem, this implies that $ H^*(X_1, \Q) $ is a finitely generated
graded $\Q$-algebra, however it is {\it not} Cohen-Macaulay (and hence {\it not} Gorenstein) when $ \dim_{\Q}(V) \ge 2 $. To see this we set $ R :=  H^*(X_1, \Q) $, $\, S := H^*(BT, \Q) $ and $ S^W = H^*(BG, \Q) $ to simplify the notation. Since $S$ is a free $S^W$-module, the long exact sequence obtained by dualizing 
the short exact sequence $\,0 \to R \to S \to S/R \to 0\,$ over $S^W$ yields
$$
{\rm Ext}_{S^W}^i(R, S^W)\,\cong\, {\rm Ext}^{i+1}_{S^W}(S/R, S^W)\ ,\quad \forall \, i\ge 1
$$
Since $ S/R \cong \tilde{H}^*(G/T, \Q) $ by \eqref{seq2}, $\,\dim_{\Q}(S/R) = |W| - 1 < \infty \,$.
Hence ${\rm Ext}^n_{S^W}(S/R, S^W) \not= 0 \,$ and therefore $\, {\rm Ext}_{S^W}^{n-1}(R, S^W)  \not= 0 \,$, 
where $ n := \dim_{\Q}(V) $. It follows that when $ n > 1 \,$, $R$ is not free as a graded module over $S^W$, and hence  not Cohen-Macaulay as a graded algebra (see, e.g., \cite[Prop. 6.8]{Sm72}).
\end{example}

Example~\ref{counterex} shows that, unfortunately, the tower of spaces \eqref{Gatow3} constructed in Proposition~\ref{PP1} cannot satisfy all five axioms of our realization problem for an arbitrary compact Lie group. Indeed, if $\, {\rm rk}(G) = n \ge 2 $, then (QI$_{5}$) already fails for $ H^*(X_1(G,T), \Q) $, since $ H^*(X_1(G,T), \Q) $ is not a Gorenstein algebra, while $ Q_1(W) $ is (see Theorem~\ref{TGoren}). Note, however, that in the rank one case, for $ G = SU(2) $, we still have $\, H^*(X_1(G,T), \Q) \cong Q_1(\Z/2\Z) $ by formula
\eqref{seq3}. The next theorem shows that this is not a coincidence.
\begin{theorem}\la{MTh1} 
Assume that $ G = SU(2) $ and $W = \Z/2\Z$. Then the diagram of spaces \eqref{Gatow3} together with whisker maps $ p_m $ 
produced by the fibre-cofibre construction satisfies all five properties {\rm (QI$_{1}$)--(QI$_{5}$)} of Section~\ref{realizationprob}. 
In particular, for all $m \ge 0$, there are isomorphisms of graded commutative algebras
\begin{equation}
\la{cohalg1}
H^*(X_m(G, T),\, \Q)\,\cong\, Q_m(W)\ ,%\quad H^*(F_m(G,T),\, \Q)\,\cong\, Q_m/(x^2)\ ,
\end{equation}
where $ Q_m(W)$ is the subring of $W$-quasi-invariants of multiplicity $m$ in $\Q[V]$.
Moreover, $\,X_m(G, T)\,$ are unique, up to rational homotopy equivalence, topological spaces realizing the algebras $ Q_m(W) $.

\end{theorem}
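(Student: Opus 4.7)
Properties (QI$_1$)--(QI$_3$) have already been verified in Proposition~\ref{PP1}; the plan is thus to establish (QI$_4$), (QI$_5$) and the uniqueness assertion. The key simplification in the rank-one case is that the Ganea fibres are spheres: for $G = SU(2)$ one has $G/T \simeq S^2$ and $\Omega BG \simeq G \simeq S^3$, so by Theorem~\ref{GThm}(1) together with the standard identity $S^p \ast S^q \simeq S^{p+q+1}$, one obtains $F_m \simeq S^{4m+2}$. Thus $H^*(F_m,\Q) = \Q[u_m]/(u_m^2)$ with $|u_m|=4m+2$, and $H^*(BG,\Q)=\Q[c]$ with $|c|=4$. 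The inductive hypothesis I will carry is stronger than the statement of the theorem: the composite $\pi_{0,m}\colon BT = X_0 \to X_m$ induces an injection $\pi_{0,m}^*\colon H^*(X_m,\Q) \hookrightarrow \Q[x] = H^*(BT,\Q)$ (with $|x|=2$) whose image is precisely the subalgebra $Q_m(W) \subset \Q[x]$. Once this is established, (QI$_4$) and (QI$_5$) follow at once; the algebra relation $u_m^2 = c^{2m+1}$ in $H^*(X_m)$ comes for free from injectivity of $\pi_{0,m}^*$ together with $\pi_{0,m}^*(u_m)=x^{2m+1}$, $\pi_{0,m}^*(c) = x^2$.

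For the inductive step (the base $m=0$ being trivial), I would use the Ganea cofibration $F_m \xrightarrow{j_m} X_m \xrightarrow{\pi_m} X_{m+1}$ and the induced Puppe long exact sequence in rational cohomology. The key point is to show that $j_m^*\colon H^{4m+2}(X_m,\Q) \to H^{4m+2}(F_m,\Q)$ is surjective, and for this I invoke the Serre spectral sequence of the Borel fibration $F_m \to X_m \to BG$: since $H^*(BG,\Q)=\Q[c]$ sits in degrees $\equiv 0 \pmod{4}$, the transgression of $u_m \in E_2^{0,4m+2}$ lands in $H^{4m+3}(BG,\Q)=0$, so the spectral sequence collapses at $E_2$ and $u_m$ is hit under $j_m^*$ by a class in $H^{4m+2}(X_m)$. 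Combined with the inductive vanishing of $H^{\mathrm{odd}}(X_m,\Q)$ and of $H^n(F_m,\Q)$ for $n \notin \{0,4m+2\}$, the Puppe sequence reduces to short exact sequences $0 \to H^n(X_{m+1}) \to H^n(X_m) \to H^n(F_m) \to 0$, identifying $\pi_m^* H^*(X_{m+1})$ with $\ker(j_m^*) \subset H^*(X_m) \cong Q_m(W)$. A direct inspection then shows that this kernel is the $\Q$-span of $\{x^{2i}\}_{i\ge 0} \cup \{x^{2j+1}\}_{j \ge m+1}$, which is the subalgebra $Q_{m+1}(W)$, closing the induction.

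For the uniqueness assertion I would appeal to Sullivan's rational homotopy theory: the cohomology algebra $Q_m(W) \cong \Q[c,u_m]/(u_m^2 - c^{2m+1})$ is a simply-connected graded complete intersection whose two generators and whose single relation all sit in even degrees, and a standard Halperin--Stasheff calculation (the relevant obstructions of Harrison / Andr\'e--Quillen type vanish for parity reasons) shows that $Q_m(W)$ is intrinsically formal. Consequently the minimal Sullivan model $(\Lambda(c,u_m,v),d)$ with $dv = u_m^2 - c^{2m+1}$ and $|v|=8m+3$ is determined up to isomorphism of CDGAs by the cohomology alone, and any simply connected space $X$ with $H^*(X,\Q) \cong Q_m(W)$ is rationally equivalent to $X_m(G,T)$. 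I expect the main technical obstacle to lie precisely here: the cohomology computation is essentially forced by the Serre collapse (itself forced by parity) and the Puppe sequence, whereas intrinsic formality requires a separate (though standard) argument in rational homotopy theory.
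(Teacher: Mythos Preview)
Your proposal is correct and follows essentially the same route as the paper: identify $F_m \simeq S^{4m+2}$ via Ganea's theorem, use the parity-forced collapse of the Serre spectral sequence of $F_m \to X_m \to BG$ to control $j_m^*$, feed this into the long exact sequence of the cofibration $F_m \to X_m \to X_{m+1}$ to get injectivity of $\pi_m^*$ and compute the image in $\Q[x]$, and finally invoke intrinsic formality of the complete intersection $\Q[c,u_m]/(u_m^2 - c^{2m+1})$ for uniqueness. The only organizational difference is that the paper first uses Leray--Hirsch to describe $H^*(X_m,\Q) \cong \Q[c_2] \oplus \Q[c_2]\xi$ for each $m$ individually and then reads off the image in $\Q[x]$ by degree reasons, whereas you carry the identification $H^*(X_m,\Q)\cong Q_m(W)$ as an inductive hypothesis and recover $Q_{m+1}(W)$ as $\ker(j_m^*)$; these are reorderings of the same computation.
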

\begin{proof} Properties (QI$_{1}$)-(QI$_{3}$) have already been established for arbitrary $G$
in Proposition~\ref{PP1}; we need only to check (QI$_{4}$) and (QI$_{5}$).
As a topological space, $ SU(2) $ is homeomorphic to $ \bS^3 $ and $ G/T = \c {\mathbb P}^1 \cong \bS^2 $. Hence, applying a well-known formula for the join of spheres, we can identify the fibre \eqref{eq1}:
\begin{equation}
\la{eq6}
F_m = G/T\, \ast\, G^{\ast \,m} \,\cong \, 
\bS^2 \ast\, (\bS^3)^{\ast m} \,\cong\, \bS^{4m+2}\ .
\end{equation}
Thus, for $G = SU(2) $, \eqref{eq4} is equivalent to the sphere fibration: $\,\bS^{4m+2} \to X_m \to B\bS^3 $. We will look at the Serre spectral sequence of this fibration and apply the Leray-Hirsch Theorem. Since both the basespace and the fibre of \eqref{eq4} have no cohomology in odd dimensions, the Serre spectral sequence collapses, giving an isomorphism of graded vector spaces (see, e.g., \cite[Lemma~III.4.5(1)]{MT78})
$$
H^*(X_m, \Q) \cong H^{*}(BG, \Q) \otimes H^{*}(F_m, \Q) 
$$
Then, the Leray-Hirsch Theorem (see, e.g., \cite[Theorem~III.4.2]{MT78}) implies that $ H^*(X_m, \Q) $ is a free graded module over the algebra $ H^{*}(BG, \Q) = H^*(BSU(2),\,\Q) $, which is the rational polynomial algebra $ \Q[c_2] $  generated by the second Chern class $ c_2 \in H^4(BSU(2), \Q)\,$. 
This graded module has rank two, with  $ H^{*}(BG, \Q) $ identified with a direct summand in $ H^*(X_m, \Q) $ under the whisker map $ p_m^*: H^{*}(BG, \Q) \into H^*(X_m, \Q) $. The 
complement of $ H^{*}(BG, \Q) $ in  $ H^*(X_m, \Q) $ is generated by a cohomology class $ \xi $ of dimension $ 4m+2 $ whose image under the projection $ j_m^*: H^*(X_m, \Q) \to H^*(F_m, \Q) \cong H^*(\bS^{4m+2}, \Q) $ is the fundamental cohomology class of $ \bS^{4m+2}$. Thus, we have
\begin{equation}
\la{HXm}
H^*(X_m, \Q) \cong \Q[c_2] \oplus \Q[c_2] \xi
\end{equation}
where $ |c_2| = 4 $ and $ |\xi|= 4m+2 \,$. Next, we look at the homotopy cofibration sequence in \eqref{Gatow3}
\begin{equation}
\la{cfibm}
F_m \xrightarrow{j_m} X_m \xrightarrow{\pi_m} X_{m+1}
\end{equation}
arising from the Ganea construction. This gives a long exact sequence on (reduced) cohomology:
\begin{equation}
\la{longred}
\ldots\, \to\, \tilde{H}^{n-1}(F_m, \Q)\, \to \, \tilde{H}^{n}(X_{m+1}, \Q)\, \xrightarrow{\pi_m^*}\,  \tilde{H}^{n}(X_{m}, \Q)\, \xrightarrow{j_m^*}\, \tilde{H}^{n}(F_{m}, \Q) \,\to\, \ldots 
\end{equation}
Since neither $ F_m $ nor $ X_m $ (by \eqref{HXm}) have odd cohomology, we see
immediately from \eqref{longred} that all algebra maps $ \pi_m^* $ must be injective,
i.e. property (QI$_{4}$) holds for \eqref{Gatow3}. For each $m \ge 0 $, the composition of these maps
then gives an embedding
\begin{equation}
\la{compi}
\pi^*_0 \,\pi^*_1 \, \ldots\,\pi_{m-1}^*:\   H^*(X_m, \Q) \into H^*(X_{m-1}, \Q) \into \ldots \into H^*(BT, \Q)
\end{equation}
If we identify $ H^*(BT, \Q) = \Q[x] $ by choosing  $ x \in H^2(BT, \Q) = H^2(B\bS^1, \Q)$ to be the universal Euler class, which is the image of the canonical generator of $ H^2(B\bS^1, \Z) = H^2(K(\Z, 2), \Z)$, then the Chern class $ c_2 \in H^4(BG, \Q) $ maps by \eqref{compi} to $x^2 \in H^*(BT, \Q) $. Then, for degree reasons, the generator $ \xi \in H^{4m+2}(X_m, \Q) $
in \eqref{HXm} should map to (a scalar multiple of)  $ x^{2m+1} \in \Q[x] $.
Thus the algebra homomorphism \eqref{compi} identifies $ H^*(X_m, \Q) \cong \Q[x^2, x^{2m+1}] $, which 
is precisely the subring $Q_m$ of $W$-quasi-invariants in $ H^*(BT, \Q) = \Q[x] $. 
This gives property (QI$_{5}$) and completes the proof of the first part of the theorem.

The last claim of the theorem follows from Sullivan's formality theorem \cite{Su77}. Indeed, the algebras $ Q_m(W) $ have the presentation $ \Q[\xi, \eta]/(\xi^2 -  \eta^{2m+1})$, where $|\eta|= 4$ and $|\xi| = 4m+2$ (see Example~\ref{E2}). Hence, by \cite[Remark (v), p. 317]{Su77}, they are {\it intrinsically} formal. This means that, for each $ m\ge 0 $, there is only one rational homotopy type that realizes $Q_m\,$.
\end{proof}

From now on, we will assume that $ G = SU(2) $ and $ T = U(1) $ embedded in $SU(2)$ in the standard way as a maximal torus.
\begin{defi}
\la{qspaces}
We call the $G$-space $\, F_m(G,T) := G/T \,\ast\, E_{m-1} G \,$  the {\it $m$-quasi-flag manifold} and the associated homotopy quotient 
$$ 
X_m(G,T) := F_m(G,T)_{hG} = EG \times_G(G/T \,\ast \,E_{m-1}G) 
$$ 
the {\it space of $m$-quasi-invariants}  for $ G = SU(2) $. These spaces fit in the Borel fibration sequence 
\begin{equation}
\la{borfib}
F_m(G,T) \xrightarrow{j_m} X_m(G,T) \xrightarrow{p_m} BG 
\end{equation}
that generalizes the fundamental sequence \eqref{BTG}.
\end{defi}

\begin{remark}
\la{rem11}
By definition, $\, H^*(X_m(G,T),\, \Q) = H^*_{G}(F_m(G,T),\, \Q) \,$ for all $ m \ge 0 $.
With this identification, the algebra homomorphisms $\,H^*(X_m, \Q) \to H^*(BT, \Q)\,$ constructed in Theorem~\ref{MTh1} (see \eqref{compi}) are induced (on $G$-equivariant cohomology) by the natural inclusion maps 
\begin{equation}
\la{incl0}
i_0:\ G/T \into F_m(G,T)\ ,\quad 
gT \mapsto 1 \cdot (gT) + 0 \cdot x \ , 
\end{equation}
where $ x \in E_{m-1}G\,$. Note that the maps \eqref{incl0} are null-homotopic in the category $\Top$ of ordinary spaces, 
the null homotopy being $\, i_t:\, gT \mapsto (1-t) \cdot (gT) + t \cdot x \,$; however, they are {\it not} null-homotopic 
in the category of $G$-spaces and $G$-equivariant maps. In fact, the proof of Theorem~\ref{MTh1} shows that the maps
induced by \eqref{incl0} on $G$-equivariant cohomology are injective and hence nontrivial.
\end{remark}
%

\iffalse
We record the following consequence of Theorem~\ref{MTh1} and Ganea's Theorem~\ref{GThm} that 
may be of independent interest.
%
\begin{cor} The maps $ p_m: X_m(G,T) \to BG $ induce a weak homotopy equivalence
%
$$
\mathrm{hocolim}\,
\{BT \xrightarrow{\pi_0} X_1(G,T) \xrightarrow{\pi_1} X_2(G,T) \xrightarrow{\pi_2} \ldots \to X_m(G,T) \xrightarrow{} \ldots\,\}
\stackrel{\sim}{\to}\, BG
$$
%
that gives a homotopy decomposition of the classifying space of $ G = SU(2) $.
\end{cor}
%
\fi

\subsection{$T$-equivariant cohomology}
\la{S4.31}
Our next goal is to compute the $T$-equivariant cohomology of the $G$-spaces $F_m(G,T)$
by restricting the $G$-action to the maximal torus $ T \subset G $.
The computation is based on the following simple observations.
\begin{lemma}
\la{Teq}
For all $m \ge 0 $, there is a natural $T$-equivariant homeomorphism
\begin{equation}
\la{TFm}
F_m(G, T)  \,\cong\, \Sigma\,E_{2m}(T)\ ,
\end{equation}
where $\Sigma$ stands for the unreduced suspension in $ \Top $.
\end{lemma}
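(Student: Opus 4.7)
The plan is to exploit low-dimensional coincidences and the $T$-equivariant associativity of the join. Since $G=SU(2)\cong \bS^3$, $T=U(1)\cong\bS^1$, and the standard embedding $T\subset G$ gives $G/T\cong \bS^2$, I want to show that, as $T$-spaces,
\[
G/T \,\cong\, \bS^0\ast T \quad\text{and}\quad G\,\cong\, T\ast T,
\]
and then iterate using associativity of the join.

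First I would analyze $G/T$. Viewing $\bS^2 = G/T$ as the unreduced suspension $\Sigma\bS^1$, the standard rotation action of $T$ fixes the two suspension points and rotates the equator $\bS^1$ in the usual way. Thus $G/T\cong \bS^0\ast T$ as $T$-spaces, where $\bS^0$ is the set of two fixed points (trivial $T$-action) and the $T$-factor carries the left-multiplication action. Next, for $G=SU(2)$, I would identify it with the unit sphere in $\c^2$ via $\begin{pmatrix}a & -\bar b\\ b & \bar a\end{pmatrix}\leftrightarrow (a,b)$; left multiplication by $\mathrm{diag}(\lambda,\lambda^{-1})\in T$ then acts by $(a,b)\mapsto(\lambda a,\lambda^{-1} b)$. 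Writing $\bS^3$ as the join of the two coordinate circles via $(a,b)=(\sqrt{1-t}\,a',\sqrt{t}\,b')$ with $t\in[0,1]$ and $|a'|=|b'|=1$, each coordinate circle is a copy of $T$ (the second one up to the $T$-equivariant self-map $z\mapsto\bar z$), giving $G\cong T\ast T$ as $T$-spaces.

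Next, I would assemble these identifications using the fact that the join of (well-pointed) $T$-spaces is associative and is formed with the diagonal $T$-action on each factor. Applying this to
\[
F_m(G,T)\,=\,G/T \,\ast\, E_{m-1}G \,=\, G/T \,\ast\, G^{\ast m},
\]
substituting $G/T\cong \bS^0\ast T$ and $G\cong T\ast T$, and reshuffling parentheses yields
\[
F_m(G,T)\,\cong\,\bS^0\ast T\ast (T\ast T)^{\ast m}\,\cong\,\bS^0\ast T^{\ast(2m+1)}\,=\,\Sigma\,E_{2m}(T),
\]
where in the last step I use $\Sigma X = \bS^0\ast X$ and the convention $E_n(T)=T^{\ast(n+1)}$ from the Milnor construction recalled in Appendix~A. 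All identifications are tautologically $T$-equivariant because the $T$-action on each intermediate join is defined diagonally from the actions on the factors.

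The main thing to be careful about is the $T$-equivariance in the identification $G\cong T\ast T$: the two coordinate circles carry the characters $\lambda$ and $\lambda^{-1}$ of $T$, so strictly speaking one factor is $T$ and the other is $T$ with the inverse action. This is harmless because $z\mapsto\bar z$ gives an isomorphism of these two $T$-spaces, but I would state this explicitly rather than sweep it under the rug. Modulo that point, the argument is a clean algebraic manipulation with joins; no spectral sequences or homotopy-theoretic constructions beyond associativity of $\ast$ are needed.
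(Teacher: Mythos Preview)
Your approach is exactly the paper's: establish $T$-equivariant identifications $G/T\cong\bS^{0}\ast T$ and $G\cong T\ast T$, then reassociate the join to get $F_m\cong\bS^{0}\ast T^{\ast(2m+1)}=\Sigma E_{2m}(T)$. The paper uses quaternion coordinates where you use matrices, but the content is identical; you are in fact more careful than the paper in flagging the character twist on the two circle factors of $G\cong T\ast T$.

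There is, however, one genuine slip in your treatment of $G/T$, and the paper glosses over the same point. The left $T$-action on $G/T$ does \emph{not} rotate the equator ``in the usual way'': since $-I\in T$ is central, it acts trivially on all of $G/T$, so the action on the equatorial circle has weight~$2$, not weight~$\pm 1$. In your own coordinates, $[a:b]\mapsto[\eta a:\eta^{-1}b]=[\eta^{2}a:b]$. Hence $G/T\cong\bS^{0}\ast T_{(2)}$ (the circle with weight-$2$ action), and the assembled join is $\bS^{0}\ast T_{(2)}\ast T^{\ast 2m}$, which is \emph{not} $T$-equivariantly homeomorphic to $\bS^{0}\ast T^{\ast(2m+1)}$ with the free diagonal action: compare the $(-I)$-fixed sets, which are $\bS^{2}$ on the $F_m$ side versus $\bS^{0}$ on the $\Sigma E_{2m}(T)$ side. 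Your $z\mapsto\bar z$ trick only interchanges weights $+1$ and $-1$; a weight-$2$ circle has stabilizer $\Z/2$ at every point and cannot be repaired this way. None of this disturbs the rational consequences (Proposition~\ref{PTcoh} and what follows), since the $T$-equivariant Euler class $2x$ of a weight-$2$ circle generates the same ideal as $x$ over $\Q$; but the literal $T$-equivariant homeomorphism to $\Sigma E_{2m}(T)$ with the standard Milnor action needs this caveat.
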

\begin{proof}
First, note that $ G $ is $T$-equivariantly homeomorphic to the 
(unreduced) join of two copies of $T$: the required homeomorphism
\begin{equation}
\la{GTT}
T \ast T \,\cong \, G
\end{equation}
can be explicitly written as $\,t \lambda + (1-t) \mu \,\mapsto\, t^{1/2}\, \lambda + (1-t)^{1/2}\, \mu j\,$, where $G = SU(2) $ is identified with the group of unit quaternions in $ {\mathbb H} = \C \oplus \C j $ and $ T = U(1) $ with unit complex numbers. Similarly, we can define a $T$-equivariant homeomorphism
\begin{equation}
\la{GTT2}
(G/T)^T \ast T \,\cong\,  G/T
\end{equation}
where $ (G/T)^T $ denotes the set of $T$-fixed points in $G/T$.
Combining \eqref{GTT} and \eqref{GTT2} with natural associativity isomorphisms for joins, we get
\begin{equation}
\la{FmT}
F_m(G,T) = (G/T) \ast G^{\,\ast\,m} \,\cong\,  (G/T)^T \ast T^{\,\ast\,(2m+1)} = \bS^0 \ast E_{2m}(T)
\end{equation}
which is equivalent to formula \eqref{TFm}.
\end{proof}
\begin{lemma}
\la{TLem}
For all $ n \ge 0 $, there are natural algebra isomorphisms 
\begin{equation}
\la{Tcoh}
H_{T}^*(\Sigma E_n(T)\,,\,\Q)\,\cong\, \Q[x]\,\times_{\Q[x]/(x^{n+1})}\,\Q[x]\,.    
\end{equation}
\end{lemma}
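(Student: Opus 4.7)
\medskip

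The plan is to realize $\Sigma E_n(T)$ as a $T$-equivariant homotopy pushout and then apply the Borel construction together with a Mayer-Vietoris argument.

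First, I would view the unreduced suspension as the join $\Sigma E_n(T) = E_n(T) \ast \bS^0$, i.e.\ the homotopy pushout of
\[
\mathrm{pt} \,\leftarrow\, E_n(T) \,\to\, \mathrm{pt}
\]
in $T$-spaces, where $T$ acts trivially on the two cone points and by the standard free action on $E_n(T) = T^{\ast(n+1)}$. Since the Borel construction $ET \times_T (-)$ preserves homotopy colimits, applying it yields a homotopy pushout
\[
ET \times_T \Sigma E_n(T) \,\simeq\, \mathrm{hocolim}\bigl[\,BT \,\leftarrow\, ET \times_T E_n(T) \,\to\, BT\,\bigr].
\]
Because $T$ acts freely on $E_n(T)$ with $E_n(T) \to E_n(T)/T$ the $n$-universal $T$-bundle, one has $ET \times_T E_n(T) \simeq E_n(T)/T = B_n(T)$, which for $T = U(1)$ is $\C\bP^n$ (see Appendix~\ref{S3.1}). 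Both maps to $BT = \C\bP^\infty$ agree with the standard inclusion $\C\bP^n \hookrightarrow \C\bP^\infty$, hence on rational cohomology they realize the same surjection $\Q[x] \twoheadrightarrow \Q[x]/(x^{n+1})$, with $|x| = 2$.

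Next, I would analyze the Mayer-Vietoris sequence of this pushout:
\[
\cdots \to H^{k-1}(\C\bP^n, \Q) \to H_T^k(\Sigma E_n(T), \Q) \to \Q[x]^k \oplus \Q[x]^k \to (\Q[x]/(x^{n+1}))^k \to \cdots
\]
Since the map on the right is surjective (it is the difference $(f,g) \mapsto \bar f - \bar g$ of two surjections), the long exact sequence breaks into short exact sequences degree by degree. The kernel is exactly the fiber product $\Q[x] \times_{\Q[x]/(x^{n+1})} \Q[x]$, and this identifies $H_T^*(\Sigma E_n(T), \Q)$ with it as a graded vector space.

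For the multiplicative structure, I would observe that the two open sets $U, V \subset \Sigma E_n(T)$ obtained by removing each cone point are $T$-equivariantly contractible, so the inclusions $U, V \hookrightarrow \Sigma E_n(T)$ give two algebra restriction maps
\[
H_T^*(\Sigma E_n(T), \Q) \,\to\, H_T^*(U, \Q) \times H_T^*(V, \Q) \,=\, \Q[x] \times \Q[x].
\]
Since $U \cap V \simeq_T E_n(T)$, the image of this algebra map lies in the fiber product $\Q[x] \times_{\Q[x]/(x^{n+1})} \Q[x]$; the vector-space computation above then shows it is an isomorphism, proving \eqref{Tcoh}. Naturality in $n$ (for the evident maps $E_n(T) \to E_{n+1}(T)$) is immediate from the functoriality of each step. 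The main technical point to keep straight is that both restriction maps $BT \to B_n(T) \to BT$ induce the same quotient $\Q[x] \to \Q[x]/(x^{n+1})$ and that the MV connecting homomorphism vanishes, so that one genuinely gets a \emph{strict} pullback of algebras rather than only a homotopy pullback.
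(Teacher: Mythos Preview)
Your argument is correct and follows essentially the same route as the paper: both realize $[\Sigma E_n(T)]_{hT}$ as the homotopy pushout $BT \leftarrow B_n(T) \to BT \simeq \C\bP^\infty \vee_{\C\bP^n} \C\bP^\infty$ and then read off the fiber product via Mayer-Vietoris. Your version is in fact more explicit about the multiplicative structure and the vanishing of the connecting homomorphism than the paper's proof, which simply invokes Mayer-Vietoris at the end.
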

\begin{proof}
We compute
\begin{equation}
\la{hoBT}
[\Sigma E_n(T)]_{hT} \, \simeq\,  [\hocolim(\,{\rm pt} \leftarrow E_{n}(T) \to 
{\rm pt})]_{hT}  \,\simeq \, \hocolim(BT \leftarrow B_{n}(T) \to BT)\,,
\end{equation}
%
%\begin{eqnarray}
%\la{hoBT}
%[\Sigma E_n(T)]_{hT} & \simeq & [\hocolim(\,{\rm pt} \leftarrow E_{n}(T) \to 
%{\rm pt})]_{hT} \nonumber\\*[1ex]
%& \simeq & \hocolim(BT \leftarrow E_{n}(T)_{hT} \to BT)\\*[1ex]
%& \simeq & \hocolim(BT \leftarrow B_{n}(T) \to BT)\nonumber
%\end{eqnarray}
%
where the last equivalence follows from the fact that $ E_n(T)$ is an $n$-universal $T$-bundle, 
so that the $T$-action on $E_{n}(T) $ is free and hence $ E_{n}(T)_{hT} \simeq E_{n}(T)/T = B_{n}(T)$ (see Example~\ref{exmil}). To complete the proof it remains to note that 
$\, BT \simeq \C\bP^\infty $ and $\, B_{n}(T) \cong \C\bP^{n} $ for $T = U(1) $, with  natural map $ B_{n}T \to BT $ represented by the inclusion $ \C\bP^{n} \into 
\C\bP^{\infty}$ (see, e.g., \cite[Example 9.2.3]{Se97}). Hence, \eqref{hoBT} shows that
$\,[\Sigma\, E_n(T)]_{hT} \simeq \C\bP^\infty \bigvee_{\C\bP^n} \C\bP^{\infty} $, which, by Mayer-Vietoris
sequence, yields the isomorphism \eqref{Tcoh}.
\end{proof}
As a consequence of Lemma~\ref{Teq} and Lemma~\ref{TLem}, we get
\begin{prop}
\la{PTcoh}
For all multiplicities $ m \ge 0 $, there are natural algebra isomorphisms 
\begin{equation}
\la{GKM}
H_{T}^*(F_m(G,T),\,\Q)\,\cong\, \Q[x]\,\times_{\Q[x]/(x^{2m+1})}\,\Q[x]\,,
\end{equation}
where $ x \in H^2(BT, \Q) $ is the universal $($rational$)$ Euler class.
\end{prop}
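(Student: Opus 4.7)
The plan is to deduce Proposition~\ref{PTcoh} by stringing together the two preceding lemmas, so the work is essentially already done and the proof will be a one-line composition.

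First I would invoke Lemma~\ref{Teq} to replace the $T$-space $F_m(G,T)$ by $\Sigma E_{2m}(T)$: this homeomorphism was constructed $T$-equivariantly in the lemma (via the quaternionic splitting $T \ast T \cong G$ and $(G/T)^T \ast T \cong G/T$, iterated to get $F_m(G,T) \cong \bS^0 \ast E_{2m}(T) = \Sigma E_{2m}(T)$). Since the homeomorphism is $T$-equivariant, it induces an isomorphism on $T$-equivariant cohomology
\[
H_T^*(F_m(G,T),\Q) \,\cong\, H_T^*(\Sigma E_{2m}(T),\Q).
\]

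Then I would apply Lemma~\ref{TLem} with $n = 2m$, which gives
\[
H_T^*(\Sigma E_{2m}(T),\Q) \,\cong\, \Q[x] \times_{\Q[x]/(x^{2m+1})} \Q[x].
\]
Composing the two isomorphisms yields \eqref{GKM}. The identification of $x \in H^2(BT,\Q)$ as the universal Euler class is fixed by the proof of Lemma~\ref{TLem}, where the two copies of $\Q[x]$ arise as the cohomology of the two copies of $BT \simeq \C\bP^\infty$ in the homotopy pushout $\C\bP^\infty \vee_{\C\bP^{2m}} \C\bP^\infty$, and the map to the quotient is the restriction along the canonical inclusion $\C\bP^{2m} \hookrightarrow \C\bP^\infty$.

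There is no real obstacle here: both lemmas are established, and the proposition is a formal consequence obtained by setting $n = 2m$. The only thing worth checking carefully is the $T$-equivariance of the homeomorphism $F_m(G,T) \cong \Sigma E_{2m}(T)$, but this is already built into the proof of Lemma~\ref{Teq} since both \eqref{GTT} and \eqref{GTT2} are $T$-equivariant and the associativity isomorphisms for iterated joins respect any group action by construction.
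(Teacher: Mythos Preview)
Your proposal is correct and matches the paper's own approach exactly: the paper simply states that Proposition~\ref{PTcoh} follows as a consequence of Lemma~\ref{Teq} and Lemma~\ref{TLem}, without writing out any further details. Your explicit composition of the two lemmas with $n=2m$ is precisely what is intended.
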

\begin{remark}
\la{ReGKM}
For $m=0$, formula~\eqref{GKM} is well known: it follows, for example, from a general combinatorial description of $T$-equivariant cohomology of equivariantly formal spaces in terms of moment graphs 
(see \cite{GKM98}). In our subsequent paper, we will generalize the main localization 
theorem of \cite{GKM98} to moment graphs with multiplicities, and as an application,
extend the result of Proposition~\ref{PTcoh} to quasi-flag manifolds for an arbitrary compact connected Lie group. 
\end{remark}

Next, we recall the modules of $\C W$-valued quasi-invariants, $ \mathbf{Q}_{k}(W) $,  introduced in \cite{BC11}. 
In \cite[Section~3.2]{BC11}, these modules are considered only for integral multiplicities $ k \in \Z_{+}$; however, their definition makes sense --- in the Coxeter case --- 
for all $ k \in \frac{1}{2}\, \Z_+ \,$ ({\it cf.} \cite[(3.8)]{BC11}). We provide a natural topological interpretation of these modules.
\begin{cor}
\la{RelBC}
For all $ n \ge 0 $, there are natural isomorphisms of $\Q[x] \rtimes W$-modules
\begin{equation}
\la{TbfQ}
H_{T}^*(\Sigma E_n(T)\,,\,\C)\,\cong\,  \mathbf{Q}_{\frac{n+1}{2}}(W)\,.    
\end{equation}
In particular, $\,H_{T}^*(F_m(G,T),\,\C)\,\cong\,  \mathbf{Q}_{m+\frac{1}{2}}(W)\,$ for all  $\, m \ge 0 $.
\end{cor}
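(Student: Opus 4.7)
The strategy is to combine the two preceding lemmas and then carefully track the $\C[x] \rtimes W$-module structure. The specialization $H^*_T(F_m(G,T), \C) \cong \mathbf{Q}_{m + \frac{1}{2}}(W)$ will follow from the general case with $n = 2m$ via the $T$-equivariant homeomorphism $F_m(G,T) \cong \Sigma E_{2m}(T)$ of Lemma~\ref{Teq}, since $(n+1)/2 = m + \tfrac{1}{2}$. It therefore suffices to construct, for each $n \ge 0$, a natural $\C[x] \rtimes W$-module isomorphism between $H^*_T(\Sigma E_n(T), \C)$, which by Lemma~\ref{TLem} is the fiber product $\C[x] \times_{\C[x]/(x^{n+1})} \C[x]$, and the module $\mathbf{Q}_{(n+1)/2}(W)$ defined in \cite[(3.8)]{BC11}.

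The substantive step will be to read off the $\C[x] \rtimes W$-module structure on the fiber product. The action of $W = \Z/2\Z$ on $\Sigma E_n(T)$ is inherited from the $N_G(T)$-action on $F_m(G,T)$ through the equivalences of Lemma~\ref{Teq}: it swaps the two suspension points and is induced, in the equatorial direction, by the inversion $t \mapsto t^{-1}$ coming from $N_G(T)/T$. Under the pushout description $(\Sigma E_n T)_{hT} \simeq BT \cup_{B_n T} BT$ built in the proof of Lemma~\ref{TLem}, this action swaps the two copies of $BT$; moreover, since $T$-inversion dualizes to $-1$ on $H^2(BT, \C)$, its effect on each $BT$ sends $x \mapsto -x$ in cohomology. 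Tracing this through the Mayer-Vietoris sequence, the $W$-action on the fiber product becomes
$$s \cdot (p_1, p_2) \,=\, (p_2(-x),\, p_1(-x)),$$
with $\C[x]$ acting diagonally; one verifies the relation $sx = -xs$, so the structure is indeed that of a $\C[x] \rtimes W$-module. As a sanity check in the case $n = 2m$, the $W$-invariants of this action pick out exactly $\{p(x) \in \C[x] : p(x) - p(-x) \in (x^{2m+1})\} = Q_m(W)$, consistent with Proposition~\ref{PTcoh} and formula \eqref{GKM}.

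To conclude I would match this structure with $\mathbf{Q}_{(n+1)/2}(W)$. For $W = \Z/2\Z$, the formula \cite[(3.8)]{BC11}, specialized to half-integer multiplicity $k$, describes $\mathbf{Q}_k(W)$ as the elements $p_1 \otimes 1 + p_2 \otimes s \in \C[x] \otimes \C W$ satisfying $p_1 - p_2 \in (x^{2k})$, with $s \in W$ acting by $s \cdot (f \otimes u) = s(f) \otimes s u$ and $\C[x]$ acting by multiplication. In the $(p_1, p_2)$-coordinates this action becomes precisely $(p_1, p_2) \mapsto (p_2(-x), p_1(-x))$ with $\C[x]$ diagonal, so the bijection $(p_1, p_2) \leftrightarrow p_1 \otimes 1 + p_2 \otimes s$ delivers the required $\C[x] \rtimes W$-isomorphism upon setting $2k = n+1$. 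The main obstacle is the middle paragraph: confirming that the $x \mapsto -x$ twist on each $BT$, which is invisible at the level of the bare algebra isomorphism of Lemma~\ref{TLem}, is indeed the one induced by the geometric $W$-action on $F_m(G,T)$. Once this point is secured, the identification with \cite[(3.8)]{BC11} is essentially an unwinding of definitions.
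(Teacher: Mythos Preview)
Your proposal is correct and follows essentially the same route as the paper: identify the $W$-action on the fiber product $\C[x]\times_{\C[x]/(x^{n+1})}\C[x]$ as $(p_1,p_2)\mapsto(s(p_2),s(p_1))$, then match this with the explicit description of $\mathbf{Q}_{(n+1)/2}(W)$ from \cite{BC11}. The only cosmetic difference is that the paper writes the target in the idempotent basis $\{e_0,e_1\}$ via $(p,q)\mapsto \tfrac{1}{2}(p+qs)$, obtaining the image $\C[x]\,e_0 + x^{n+1}\C[x]\,e_1$, whereas you work in the basis $\{1,s\}$ and read off the condition $p_1-p_2\in(x^{n+1})$ directly; these are equivalent, and your justification of the geometric $W$-action is in fact more explicit than the paper's bare assertion.
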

\begin{proof} Under the isomorphism \eqref{Tcoh}, the geometric action of $W = \Z/2\Z $ on $ H_{T}^*(\Sigma E_n(T)\,,\,\Q) $ corresponds to the action $ (p,\,q) \mapsto (s(q),\,s(p)) $ on the fiber product. Relative to this action, we can then define the $W$-equivariant map 
$$
f:\,\Q[x]\,\times_{\Q[x]/(x^{n+1})} \Q[x]\,\to \,\Q[x] \otimes \Q W \ ,\quad
(p,\,q) \mapsto \frac{1}{2}(p+qs)
$$
This map is obviously injective, and it is easy to see that its image is
$\, \Q[x] e_0 + \Q[x]\, x^{n+1} e_1 \,$, where $e_0 = (1+s)/2 $ and $e_1 = (1-s)/2$ are the idempotents in $\Q W $ corresponding to the trivial and sign representations of $W$. 
Example~3.9 of \cite{BC11} shows that $ \im(f) $ is precisely $ \mathbf{Q}_{\frac{n+1}{2}}(W)$; thus, combining $f$ with the isomorphism of Lemma~\ref{TLem} 
gives the required isomorphism \eqref{TbfQ}. The last statement 
then follows from Proposition~\ref{PTcoh}.
\end{proof}
\begin{remark}\la{TGW}
Recall that, for any compact connected Lie group $G$, there is a natural isomorphism
\begin{equation}
\la{TGrel}
H^*_G(X, \Q) \,\cong\, H^*_T(X, \Q)^W 
\end{equation}
that extends the result of Borel's Theorem~\ref{tborel} to an arbitrary $G$-space $X$
(see, e.g., \cite[Chap. III, Prop. 1]{Hs75}). For $ X = F_m(G,T) $, it follows
from Corollary~\ref{RelBC} that 
$$
H_{T}^*(F_m(G,T),\,\C)^W \,\cong\, e_0 \mathbf{Q}_{m+\frac{1}{2}}(W) \,\cong\,
Q_m(W) \,.
$$
Thus the isomorphism \eqref{cohalg1} of Theorem~\ref{MTh1} can be deduced from \eqref{GKM} by \eqref{TGrel}.
\end{remark}

\begin{remark} \la{fmvsArnld}
By \eqref{FmT}, there is a $T$-equivariant homeomorphism
$\,F_m(G,T) \cong \bS^0 \ast T^{\ast (2m+1)}$, where the $T$-action on the right is the join of the trivial action on $\bS^0 $ with the left multiplication-action on each of the $T$-factors. Identifying $\bS^0 = \{ \pm 1\} \subset \mathbb{R} \subset \C $ and $T =  \{|z|=1\} \subset \c$, we get the explicit homeomorphism $
\bS^0 \ast T^{\ast (2m+1)} \xrightarrow{\sim} \bS^{4m+2} \subset \mathbb{R} \times \c^{2m+1}\, $: 
\begin{equation} 
\la{fmbract1} 
t_0x_0 + t_1 z_1 +\ldots +t_{2m+1}z_{2m+1} \mapsto  (\sqrt{t_0}x_0, \sqrt{t_1}z_1,\ldots, \sqrt{t_{2m+1}}z_{2m+1}) \,,
\end{equation}
where $t_0,\ldots,t_n \geqslant 0$ and $\sum_i t_i=1$. Under \eqref{fmbract1}, the $T$-action on $\bS^{4m+2} $ agrees with (the restriction
of) the $T$-action on $ \mathbb{R} \times \c^{2m+1} $ defined by
\begin{equation} 
\la{fmbract2} 
\lambda \cdot (x,z_1,\ldots, z_{2m+1}) \,=\,(x,\lambda z_1,\ldots, \lambda z_{2m+1})\,.\end{equation}
This $T$-action is obviously smooth: the induced representation on the tangent spaces ${\rm T}_p\bS^{4m+2} \cong \c^{2m+1}$ at each of its two fixed points is:
$\,\lambda \cdot (z_1,\ldots, z_{2m+1}) \,=\,(\lambda z_1,\ldots, \lambda z_{2m+1})\,$.

The above considerations show that there is {\it no} $T$-equivariant diffeomorphism between our spaces $F_m(G,T)$ and the $T$-spheres $\bS^{4m+2}$ constructed by M. Feigin and K. Feldman in Appendix~\ref{ArMapCon}. (Recall that the Feigin-Feldman $T$-action on $\bS^{4m+2}$  arises from the so-called Arnold-Maxwell Theorem (see Theorem~\ref{first}), 
and the weights of its induced representations on the tangent spaces at fixed points are computed in Proposition~\ref{third}. These weights ($2,4,\ldots, 2(2m+1)$) differ from those of \eqref{fmbract2}.) On the other hand, we can define a (smooth) map 
$\varphi:\, \mathbb{R} \times \c^{2m+1} \to  \c^{2m+1} \times \mathbb{R}$ by 
\begin{equation} 
\la{fmbract4}
(x,z_1,\ldots,z_{2m+1}) \,\mapsto\, (z_{2m+1}^{(4m+2)},z_{2m}^{(4m)},\ldots, z_1^{(2)},x)\,,  \end{equation}
where we write $ z^{(2k)} := |z|(z/|z|)^{2k} $ for $ 1 \leqslant k \leqslant 2m+1$. 
If we identify $ F_m(G,T) $ with a submanifold of $ \mathbb{R} \times \c^{2m+1} $ (using \eqref{fmbract1}) and $ \bS^{4m+2} \subset \c^{2m+1} \times \mathbb{R} $ as in Appendix~\ref{ArMapCon}, then $\varphi$ restricts to a map $ F_m(G,T) \to \bS^{4m+2}  $, which is easily seen to be $T$-equivariant for the 
Feigin-Feldman action.  Up to a permutation of coordinates, $\varphi$ is the join of the identity map on $\bS^0$ with the finite covering maps $p_{2k}: T \to T^{(2k)}$,$\, z \mapsto z^{2k}$, where $T^{(r)}$ stands for the torus equipped with the $T$-action $\lambda \cdot z = \lambda^r z$. Since the $p_{2k}$ induce isomorphisms on rational $T$-equivariant cohomology, we see that the map $ \varphi: F_m(G,T) \to \bS^{4m+2}  $ --- while not being a global diffeomorphism --- induces an isomorphism on rational $T$-equivariant (and hence, by \eqref{TGW}, $G$-equivariant) cohomology.     
\end{remark}

\subsection{Divided difference operators}
\la{S4.4}
As an application of Theorem~\ref{MTh1}, we give a topological construction of generalized
divided difference operators associated with quasi-invariants. Recall that the classical divided difference operators $\,\Delta_{\alpha}: \Q[V] \to \Q[V] \,$ are attached to reflections
$ s_\alpha \in W $ of a Coxeter group $W$  by the rule ({\it cf.} \cite{D73, D74}):
\begin{equation}
\la{demop}
(1 - s_{\alpha})p \,= \,\Delta_\alpha(p) \cdot \alpha_H
\end{equation}
where $ \alpha_H \subset V^* $ is a linear form vanishing on the reflection hyperplane $ H = H_\alpha $.
Note that \eqref{demop} defines $ \Delta_\alpha $ uniquely up to a nonzero constant factor.
The definition of quasi-invariants of Coxeter groups suggests the following natural generalization of \eqref{demop}:
\begin{equation}
\la{demopgen}
(1 - s_{\alpha})p \,= \,\Delta^{(m_{\alpha})}_\alpha(p) \cdot \alpha^{2 m_\alpha + 1}_H
\end{equation}
To be precise, given a $W$-invariant multiplicity function $ m: \A \to \Z_+ $, $\, \alpha \mapsto m_{\alpha} $, formulas \eqref{demopgen} define unique (up to nonzero constants) linear maps 
\begin{equation}
\la{mdelta}
\Delta^{(m_{\alpha})}_\alpha: Q_m(W) \to Q_0(W) 
\end{equation}
one for each reflection $ s_{\alpha} \in W $. Note that $ Q_0(W) = \Q[V] $, and for $ m = 0 $, the 
maps \eqref{mdelta} coincide  with the classical divided difference operators: $\,\Delta^{(0)}_{\alpha} = \Delta_{\alpha} $. 
\begin{defi}
We call \eqref{mdelta} the {\it divided difference operators of $W$ of multiplicity $m$}. 
\end{defi}

When $W$ has rank one, i.e. $W$ is generated by a single reflection $s $, the corresponding map $ \Delta_s^{(m)} $ takes values in $ \Q[V]^W $ thus defining a linear operator on $W$-quasi-invariants:
\begin{equation}
\la{mdelta1}
\Delta^{(m)}_s:\, Q_m(W) \to Q_m(W)\ . 
\end{equation}
The operator \eqref{mdelta1} has a natural topological interpretation in terms of our spaces of quasi-invariants. The proof of Theorem~\ref{MTh1} shows that the basic fibration
\eqref{borfib} is equivalent to a sphere fibration with fibre $ F_m \simeq \bS^{4m+2} $. Hence,
associated to \eqref{borfib} there is a Gysin long exact sequence of the form 
(see, e.g., \cite[Example~II.5.C]{McL01}):
\begin{equation}
\la{Gysin}
\ldots \,\to H^n(BG, \Q) \xrightarrow{p_m^*} H^n(X_m, \Q) \xrightarrow{(p_m)_*} H^{n-4m-2}(BG, \Q) \to H^{n+1}(BG, \Q) \to\, \ldots 
\end{equation}
where $ p_m^*$ is the natural pullback map induced on cohomology by the $m$-th whisker map
$p_m: X_m \to BG $ and $ (p_m)_* $ is a `wrong way' pushforward map called the Gysin homomorphism.
Combining these last two maps, we get the graded linear endomorphism on $ H^*(X_m, \Q)$ of  degree $ - (4m +2) \,$:
\begin{equation}
\la{pmpm}
p_m^* \circ (p_m)_*:\ H^*(X_m, \Q)\,\to \, H^*(X_m,\,\Q)
\end{equation}
The next proposition generalizes a well-known formula for the classical divided difference operators $ \Delta_{\alpha} $ (proven, for example, in \cite{BE90}).
\begin{prop}
\la{P22}
Under the isomorphism of Theorem~\ref{MTh1}, the operator \eqref{pmpm} coincides with the
divided difference operator \eqref{mdelta1} of multiplicity $m$: i.e.,
\begin{equation}
\la{dpmpm}
\Delta_s^{(m)} = p_m^* \circ (p_m)_*
\end{equation}
\end{prop}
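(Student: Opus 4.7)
The plan is to identify $p_m^* \circ (p_m)_*$ as an $H^*(BG, \Q)$-module endomorphism of $H^*(X_m, \Q)$ of degree $-(4m+2)$ and to compute it on the Leray-Hirsch basis constructed in the proof of Theorem~\ref{MTh1}. Recall from \eqref{eq6} that the fibre of \eqref{borfib} is $F_m \simeq \bS^{4m+2}$, so \eqref{Gysin} is the genuine Gysin sequence of an oriented sphere fibration and $(p_m)_*$ is the classical fibre-integration map. Moreover, the decomposition $H^*(X_m, \Q) = \Q[c_2] \oplus \Q[c_2]\,\xi$ in \eqref{HXm} exhibits $H^*(X_m, \Q)$ as a free $H^*(BG, \Q)$-module of rank two generated by $1$ and a class $\xi \in H^{4m+2}(X_m, \Q)$ that restricts under $j_m^*$ to the fundamental cohomology class of the fibre.

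With this set-up, the first step is to evaluate $(p_m)_*$ on this basis using the projection formula $(p_m)_*(p_m^*\alpha \cdot \beta) = \alpha \cdot (p_m)_*(\beta)$, which reduces the computation to $(p_m)_*(1)$ and $(p_m)_*(\xi)$. Since the fibre is positive-dimensional, $(p_m)_*(1) = 0$; and by the defining normalization of the Gysin map applied to a class restricting to the fibre's fundamental class, $(p_m)_*(\xi) = 1 \in H^0(BG, \Q)$. Composing with $p_m^*$ therefore yields $(p_m^* \circ (p_m)_*)(c_2^k) = 0$ and $(p_m^* \circ (p_m)_*)(c_2^k\,\xi) = p_m^*(c_2^k)$; that is, the operator annihilates the subring $p_m^*H^*(BG,\Q)$ and projects its $\Q[c_2]\xi$-summand back to $\Q[c_2]$ by $c_2^k \xi \mapsto c_2^k$.

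The second step is to translate this through the isomorphism $H^*(X_m, \Q) \cong Q_m(W)$ of Theorem~\ref{MTh1}. That identification sends $c_2 \mapsto x^2$, while $\xi$ --- being a generator of the one-dimensional space $H^{4m+2}(X_m,\Q)$ complementary to $p_m^*H^{4m+2}(BG,\Q) = 0$ --- corresponds to $\lambda \cdot x^{2m+1}$ for some nonzero constant $\lambda$. Relative to the basis $\{x^{2k},\, x^{2k+2m+1}\}_{k \ge 0}$ of $Q_m(W)$, the preceding computation reads $(p_m^* \circ (p_m)_*)(x^{2k}) = 0$ and $(p_m^* \circ (p_m)_*)(x^{2k+2m+1}) = \lambda^{-1} x^{2k}$. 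A direct expansion of \eqref{demopgen} with $\alpha_H = x$ gives, on the same basis, $\Delta_s^{(m)}(x^{2k}) = 0$ and $\Delta_s^{(m)}(x^{2k+2m+1}) = 2 x^{2k}$. Hence the two operators agree up to the nonzero scalar $2\lambda$, which is exactly the ambiguity built into the definition of $\Delta_s^{(m)}$ via \eqref{demopgen}.

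The main obstacle is justifying the two key inputs for $(p_m)_*$ --- the projection formula and the normalization $(p_m)_*(\xi) = 1$ --- since these rely on \eqref{borfib} being an orientable sphere fibration. Both follow in a standard way from the Leray-Hirsch theorem invoked in the proof of Theorem~\ref{MTh1}, and for $m = 0$ the whole argument specializes to the classical Bressler-Evans identity $\Delta_s = p^* \circ p_*$ of \cite{BE90}. Everything else is bookkeeping of scalars.
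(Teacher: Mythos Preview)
Your proof is correct and follows essentially the same approach as the paper: both compute $p_m^*\circ (p_m)_*$ on the rank-two $H^*(BG,\Q)$-module generators via the projection formula, obtaining $0$ on the invariant part and a nonzero scalar on the degree $4m+2$ generator, and then match this against $\Delta_s^{(m)}$ up to the built-in scalar ambiguity. The only cosmetic difference is that the paper extracts $(p_m)_*(x^{2m+1})=\kappa_m\neq 0$ from exactness of the broken Gysin short exact sequence, whereas you invoke the fibre-integration normalization $(p_m)_*(\xi)=1$ directly.
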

\begin{proof}
Since the algebra homomorphism $ p_m^*: H^*(BG, \Q) \to H^*(X_m, \Q) $ is injective (for all $m$),
the Gysin sequence \eqref{Gysin} breaks up into short exact sequences
\begin{equation}
\la{Gyssh}
0 \to H^*(BG, \Q) \xrightarrow{p_m^*} H^*(X_m, \Q) \xrightarrow{(p_m)_*} H^{*-4m-2}(BG, \Q) \to 0
\end{equation}
Now, if we identify $\,H^*(BG, \Q) = \Q[c_2]\,$ and $ H^*(X_m, \Q) = \Q[x^2, x^{2m+1}] $
as in (the proof of) Theorem~\ref{MTh1}, the map $ p_m^*$ takes $ c_2 $ to $ x^2 $ and hence
$ c_2^k $ to $ x^{2k} $ for all $ k \ge 0 $. By exactness of \eqref{Gyssh}, we then conclude that
$\,(p_m)_*(x^{2k}) = 0 \,$, while $ (p_m)_*(x^{2m+1}) = \kappa_m $, where $ \kappa_m \in \Q^{\times}$
is a nonzero constant. Hence, $\, p_m^* (p_m)_*(x^{2k}) = 0 \,$ for all $\, k \ge 0 $; on the other hand, by projection formula,
\begin{eqnarray*}
p_m^* (p_m)_*(x^{2k+2m+1}) &=& p_m^* (p_m)_*(x^{2k} \cdot x^{2m+1}) \\
&=& p_m^* (p_m)_*(p_m^*(c_2^k) \cdot x^{2m+1}) \\
&=& p_m^*(c_2^k) \cdot (p_m)_*(x^{2m+1})\\
&=& \kappa_m \,x^{2k}
\end{eqnarray*}
Thus, up to a nonzero constant factor, we have
$$
p_m^* (p_m)_*(x^N) =
\left\{
\begin{array}{ll}
0\ ,& \mbox{if}\ \ N = 2k  \\*[1ex]
x^{2k}\ , & \mbox{if}\ \ N = 2k+2m+1
\end{array}
\right.
$$
which agrees with the action of $\, \Delta_s^{(m)} = \frac{1}{x^{2m+1}}(1-s)\,$ on $\, Q_m(W) = \Q[x^2, x^{2m+1}]\,$.
\end{proof}
%
%In  \cite{BR3}, we will give a similar topological construction of the generalized divided difference operators \eqref{mdelta} for the Weyl group $W$ of an arbitrary compact connected Lie group $G$.

\section{`Fake' spaces of quasi-invariants}
\la{S5}
By Theorem~\ref{MTh1}, the spaces $ X_m(G, T) $ provide topological realizations for the algebras
$Q_m(W) $ that are unique up to {\it rational} equivalence. This raises the question of whether the $ X_m(G, T)$'s are actually unique up to homotopy equivalence. In this section, we answer it in the negative by constructing a natural class of counterexamples related to finite loop spaces. These remarkable loop spaces --- sometimes referred to as {\it fake Lie groups} --- were discovered by D. L. Rector \cite{Rec71}  as examples of nonstandard (`exotic') deloopings of $ \bS^3 $. We will show that the rational cohomology rings of the `fake' spaces of quasi-invariants associated to the Rector spaces are isomorphic to the `genuine' ones $X_m(G,T)$; however, the spaces themselves are {\it not}\, homotopy equivalent (in fact, as we will see in Section~\ref{S6}, they can be distinguished from each other $K$-theoretically). 
Thus, we get many different topological realizations of $ Q_m(W) $, but among these only the `genuine' spaces of quasi-invariants $X_m(G,T)$ satisfy all properties (QI$_{1}$)-(QI$_{5}$).

\subsection{Finite loop spaces}
The finite loop spaces are natural homotopy-theoretic generalizations of compact Lie groups. Their basic properties as well as many examples can be found in R. Kane's monograph \cite{K88}; for later developments, we refer to the survey papers \cite{Not95}, \cite{D98}, and \cite{Gr10}. Here we only recall the main definition.
\begin{defi}%[\cite{Rec71b}]
\la{FLS}
A {\it finite loop space} is a pointed connected space $B$ such that $ \Omega B $ is homotopy equivalent to a finite CW-complex. 
\end{defi}

It is convenient to represent a finite loop space as a triple $(X, B, e)$, where $X$ is a finite CW-complex, $ B $ is a pointed connected space, and $ e: X \stackrel{\sim}{\to} \Omega B $ is a homotopy equivalence. A prototypical example is $(G, BG, e) $, where $G$ is a compact Lie group, $BG$ its classifying space, and $ e: G \stackrel{\sim}{\to} \Omega BG $ is a canonical  equivalence. In general, finite loop spaces have many properties in common with compact Lie groups; however, the class of such spaces is much larger. In fact, if $G$ is a compact connected non-abelian Lie group, there exist uncountably many homotopically distinct spaces $B$ such that $ \Omega B \simeq G $; thus the underlying topological space of $G$ carries uncountably many finite loop structures  (see \cite{Mol92}). In the case $G = SU(2)$, this striking phenomenon was originally discovered by Rector \cite{Rec71} (see Theorem~\ref{RTh} below).

\subsection{Fake Lie groups of type $SU(2)$}
We will work with localizations of topological spaces in the sense of D. Sullivan (for basic properties of this classical construction we refer the reader to \cite[Part 2]{MP12}.) Given a space $X$ and a prime  number $p$, we denote the localization of $X$ at $p$ by $X_{(p)}$. Following \cite[8.5.1]{MP12}, we say that two (nilpotent, finite type) spaces $X$ and $Y$ are {\it in the same genus} if $X_{(p)} \simeq Y_{(p)} $ for every prime $p$. We will be interested in finite loop spaces that are in the same genus as $BG$ for some compact connected Lie group $G$. Such spaces (sometimes called the fake Lie groups of type $G$) have been studied extensively in the literature (see, e.g., \cite{NS90}) since their original discovery in \cite{Rec71}.  This last paper provided a complete classification of spaces in the genus of $ BSU(2)$, including a homotopy-theoretic characterization of the $ BSU(2) $ itself among these spaces. The main results of \cite{Rec71} can be encapsulated in the following
\begin{theorem}[Rector] 
\la{RTh}
Let $G = SU(2)$, and  let $B$ be a space in the genus of $ BG $. 
Then, for each prime $p$, there is a homotopy invariant $\,(B/p) \in \{\pm 1\}\,$ called the {\rm Rector invariant} of $B$ at $p$, such that 

 $(1)$ The set $\{(B/p)\}$, where $p$ runs over all primes, is a complete set of  invariants
  of $B$ in the genus of $ BG$.
 
 $(2)$ Every combination of values of $ (B/p) $ can occur for some $B$. In particular, the genus of $BG$ consists of uncountably many distinct homotopy types.
 
 $(3)$ The Rector invariant of $ B = BG $ equals  $1$ at all primes $p$.
 
 $(4)$ The space $B$ admits a maximal torus\footnote{We say that a finite loop space $B$ admits a maximal torus if there is a map $ p: BT_n \to B $ from the classifying space of a finite-dimensional torus with homotopy fibre being a finite CW-complex (see \cite{Rec71b}).} if and only if $B$ is homotopy equivalent to $BG$.
\end{theorem}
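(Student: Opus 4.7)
The plan is to deduce all four statements from Sullivan's arithmetic fracture square. Any nilpotent finite-type space $B$ can be reconstructed as the homotopy pullback of its $p$-completions $B^\wedge_p$ along the rationalization, so two spaces in the same genus differ only by the rational gluing data between their $p$-completions. For $BSU(2)$, this identifies the genus with the double-coset set
\[
\mathrm{genus}(BSU(2)) \;\cong\; \mathrm{Out}(BSU(2)_\Q) \Big\backslash \prod_p \mathrm{Out}\bigl((BSU(2)^\wedge_p)_\Q\bigr) \Big/ \prod_p \mathrm{Out}(BSU(2)^\wedge_p),
\]
where the products are formal and we keep only classes that agree rationally. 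Computing each factor is the first step I would carry out.

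Rationally $BSU(2)_\Q \simeq K(\Q,4)$, so $\mathrm{Out}(BSU(2)_\Q) = \Q^\times$. After $p$-completion, a standard Adams--Mahmud style computation on the polynomial ring $H^{\ast}(BSU(2)^\wedge_p;\Z_p) = \Z_p[c_2]$ identifies $\mathrm{Out}(BSU(2)^\wedge_p) \cong \Z_p^\times/\{\pm 1\}$ (the $\pm 1$ being killed because the Weyl involution of $SU(2)$ realizes $-1$). Assembling the double coset, each local factor collapses to $\Z_p^\times / (\pm(\Z_p^\times)^2)$, which for every prime is $\{\pm 1\}$. I would define the Rector invariant $(B/p)$ to be the class of the $p$-local gluing datum in this quotient; this establishes (1). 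Part (3) is then a tautology, since $B = BSU(2)$ corresponds to the trivial coset. For (2), I would invoke Zabrodsky mixing: given any prescribed family $\{\epsilon_p\}$, one explicitly constructs a pullback using a unit $\epsilon_p\in\Z_p^\times$ as the rational gluing at each prime; uncountability follows since $\prod_p\{\pm 1\}$ is uncountable.

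The main obstacle is (4), the maximal torus criterion. My strategy: assume $B$ admits a maximal torus, i.e.\ a map $\pi: BT_1 \to B$ with finite homotopy fibre $F$. Since $B$ is in the genus of $BSU(2)$, $F \simeq_\Q S^2$, and a Gysin/Euler-class argument shows that $H^4(B;\Z)$ is generated by an \emph{integral} class $c$ with $\pi^\ast(c) = x^2$ for $x\in H^2(BT_1;\Z)$ the canonical generator. Now at each prime $p$, $p$-completing this structure gives a Weyl-equivariant map $(BT_1)^\wedge_p \to B^\wedge_p$; comparing with the standard maximal torus of $BSU(2)^\wedge_p$ via the chosen identification $B^\wedge_p \simeq BSU(2)^\wedge_p$, the gluing self-equivalence differs from the standard one by the unstable Adams operation $\psi^{\lambda_p}$ with $\lambda_p \in \Z_p^\times$ representing $(B/p)$. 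The restriction of $\psi^{\lambda_p}(c_2)$ to the torus is $\lambda_p^2\, x^2$, so the existence of an integral class lifting $x^2$ from the local data forces $\lambda_p \in (\Z_p^\times)^2 \cdot \{\pm 1\}$, i.e.\ $(B/p) = +1$. The converse, that $(B/p) = +1$ at all primes implies $B \simeq BSU(2)$, follows directly from part (1).

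The technical heart will be the last step: converting the a priori weak existence of an integral generator of $H^4(B)$ restricting to the square class on $BT_1$ into the precise square-class condition $\lambda_p \in (\Z_p^\times)^2$ at every prime. This requires careful bookkeeping of how the rational gluing interacts with the integral Chern class structure inherited from the maximal torus fibration, and is where Rector's original argument does the real work.
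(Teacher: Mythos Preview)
The paper does not contain a proof of this statement. Theorem~\ref{RTh} is stated as a result due to Rector, with citation to \cite{Rec71} (and \cite{Rec71b} for the maximal torus notion), and is used as a black box; no argument is given or sketched. There is therefore nothing in the paper to compare your proposal against.

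For what it is worth, your outline via the arithmetic fracture square and the double-coset description of the genus is the standard modern way to organize Rector's classification, and parts (1)--(3) of your sketch are essentially correct in spirit. Your treatment of (4), however, is where the real difficulty lies, as you yourself flag: the step from ``there exists an integral class in $H^4(B;\Z)$ restricting to $x^2$'' to ``the gluing unit $\lambda_p$ is a square up to sign at every prime'' is not automatic, since the identification $B^\wedge_p \simeq BSU(2)^\wedge_p$ is only well-defined up to the very ambiguity you are trying to pin down. Rector's original argument for (4) is more hands-on and does not use the fracture square language; a cleaner modern route is to invoke the Dwyer--Miller--Wilkerson uniqueness theorem for $BSU(2)^\wedge_p$ together with the observation that a maximal torus forces the $p$-completions to be equivalent \emph{compatibly with the torus}, which rigidifies the gluing. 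If you want to complete your argument, that compatibility is the missing ingredient.
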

\begin{remark}
Each space $B$ in the genus of $ BSU(2)$ defines a loop structure on $\bS^3 $, i.e. $ \Omega B \simeq \bS^3 $. Conversely, a uniqueness theorem of Dwyer, Miller and Wilkerson \cite{DMW87} implies that every loop structure on $ \bS^3 $ belongs to the genus of $ BSU(2) $. Thus, Theorem~\ref{RTh} combined with results of \cite{DMW87} provides a complete classification of finite loop spaces of type $SU(2)$.
\end{remark}
\begin{remark}
It was a long-standing conjecture in homotopy theory (motivated in part by Theorem~\ref{RTh}$(4)$, {\it cf.} \cite{Wil74}) that a finite loop space with a maximal torus is homotopy equivalent to the classifying space of a compact Lie group. This conjecture was eventually proved in \cite{AG09}, using the Classification Theorem of $p$-compact groups. Thus, the existence of maximal tori provides a purely homotopy-theoretic characterization of compact Lie groups among finite loop spaces.
\end{remark}
Even though the spaces $B \not\simeq BG$ do not admit maximal tori, this does not rule out the possibility that there could exist interesting maps $ f: BT \to B $ whose homotopy fibres are {\it not} finite CW complexes. In his thesis (see \cite{DY01}), D. Yau refined Rector's classification by describing the spaces $B$ in the genus of $BSU(2)$ that can occur as targets of essential (i.e., non-nullhomotopic) maps from $BT$. Such spaces admit a beautiful arithmetic characterization: 
\begin{theorem}[Yau]
\la{esmap}
Let $G = SU(2)$, and  let $B$ be a space in the genus of $ BG $. Then

$(1)$ $B$ admits an essential map $ f: BT \to B $ if and only if there is an integer $k \neq 0$ such that $ (B/p)=(k/p) $ for all but finitely many primes $p$, where $(k/p)$ denotes the Legendre symbol\footnote{Recall that, for a prime $p$, the {\it Legendre symbol} $(k/p)$ of an integer $k$ is defined whenever $\,p \nmid k\,$: for $p$ odd, we have $ (k/p) = 1 $ (resp., $-1$) if $k$ is a quadratic residue (resp., nonresidue) mod $p$, while for $p=2$,  $\,(k/2) = 1 $ (resp., $-1$) if $k$ is  quadratic residue (resp. nonresidue) mod $8$.} of $k$.

$(2)$ If $B$ satisfies condition $(1)$, then there exists a unique $($up to homotopy$)$ map $p_B: BT \to B$ such that 
every essential map $\,f: BT \to B \,$ is homotopic to $ g \circ p_B $ for some self-map $g$ of $B$. 

$(3)$ For $ B = BG $, the map $ p_{BG}: BT \to BG $ is induced by the maximal torus inclusion.
\end{theorem}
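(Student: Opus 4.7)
The strategy is a prime-by-prime analysis via Sullivan localization, using Rector's invariants as obstruction classes for globalizing locally existing maps, and then reassembling through an arithmetic square.

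The essential input is the classification of $[BT_{(p)},BSU(2)_{(p)}]$. Since $H^*(BSU(2);\Z)=\Z[c_2]$ with $|c_2|=4$ and $H^*(BT;\Z)=\Z[x]$ with $|x|=2$, standard obstruction theory (or the $p$-compact group structure on $BSU(2)_{(p)}$) identifies this set with $\Z_{(p)}$ via $f\mapsto k$, where $f^*c_2=kx^2$; the map is essential iff $k\neq 0$. The monoid of self-maps of $BSU(2)_{(p)}$ is likewise classified by $\Z_{(p)}$ (via unstable Adams operations $\psi^u$ acting as $c_2\mapsto u^2c_2$), with self-equivalences forming the subgroup $\Z_{(p)}^\times/\{\pm1\}$ acting on the previous $\Z_{(p)}$ by multiplication by squares $u^2$. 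Consequently, the class of $k_p$ modulo $(\Z_{(p)}^\times)^2$, i.e.\ the Legendre symbol $(k_p/p)\in\{\pm1\}$, is an invariant of an essential $p$-local map $BT_{(p)}\to B_{(p)}$ independent of the chosen equivalence $B_{(p)}\simeq BSU(2)_{(p)}$. The first step is to verify that this invariant coincides with Rector's $(B/p)$, by unwinding the construction of \cite{Rec71}.

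Granting this identification, necessity in (1) is immediate: any essential $f:BT\to B$ is rationally essential, hence corresponds to a nonzero rational scalar which, after clearing denominators, yields an integer $k\neq 0$; at every prime $p\nmid k$ the local class is $k$ times a square unit, so $(k/p)=(B/p)$. For sufficiency, given $k\neq 0$ satisfying $(k/p)=(B/p)$ outside a finite set $S$ of primes, I would construct local maps $f_p:BT_{(p)}\to B_{(p)}$ with integer invariant $k$, correcting the primes in $S$ by post-composing with a self-equivalence of $B_{(p)}$ (permissible precisely because the Legendre-class ambiguity is killed), and then glue the $f_p$'s via the arithmetic fracture square. The required rational coherence datum is supplied by the single scalar $k\in\Q^\times$ defining $BT_\Q\to B_\Q$. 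The main obstacle is this gluing step, since the Mislin genus of $BSU(2)$ is uncountable; the key point that makes it tractable is that $H^*(B;\Q)\cong\Q[c_2]$ is polynomial on one generator, so rational maps $BT_\Q\to B_\Q$ form a single $\Q$-line, and there is no further obstruction once the local integrality classes agree.

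Part (2) follows from the torsor structure just established: once one essential map exists, the set of essential maps modulo post-composition with self-equivalences of $B$ is controlled by $\Z^\times=\{\pm1\}$, and one selects $p_B$ as a canonical representative (say, the one whose associated integer is minimal positive); every other essential map is then of the form $g\circ p_B$ for some self-equivalence $g$ of $B$. Part (3) is immediate from $(BG/p)=1$ for all $p$ (Theorem \ref{RTh}(3)): the value $k=1$ is a square at every prime, so the construction produces an essential map $p_{BG}:BT\to BG$, and its identification with the standard maximal-torus inclusion follows by comparing rational cohomology, since both pull back $c_2$ to $x^2$ and rational maps $BT\to BG$ are determined by this single scalar.
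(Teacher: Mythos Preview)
The paper does not prove this theorem: it is stated with attribution to Yau, drawn from \cite{Yau02} and \cite{DY01}, and then used as a black box in Section~\ref{S5.3}. There is therefore no proof in the paper against which to compare your proposal.

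That said, your outline is in the right spirit---localize at each prime, read off an obstruction class, identify it with Rector's invariant, and globalize via an arithmetic square---and this is indeed the shape of the argument in \cite{DY01}. Several points need more care, however. First, the identification of the quadratic-residue class of $k_p$ with Rector's $(B/p)$ is the heart of part~(1), and you only announce it; Rector's invariant is defined via the homotopy pullback \eqref{hopull}, and one must trace how the twisting integers $n_p$ there interact with your degree classification of $[BT_{(p)},BG_{(p)}]$. Second, in your argument for~(2) you conflate self-maps with self-equivalences of $B$: the statement allows $g$ to be an arbitrary self-map, and the torsor picture you sketch (modding out by self-equivalences) yields $p_B$ only up to a unit; it is the full monoid of self-maps (Adams operations $\psi^k$ for all $k$, not just units) that makes the ``minimal degree'' normalization work. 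Third, your argument for~(3)---both maps pull back $c_2$ to $x^2$, hence agree rationally, hence agree---does not close, since rational agreement of maps $BT\to BG$ is far from integral homotopy. The route actually used in this circle of ideas is the Notbohm--Smith bijection $[BT,B]_*\cong\Hom_\lambda(K^*(B),K^*(BT))$ (quoted in the paper just before \eqref{bgen}), which reduces~(3) to an identification of $\lambda$-ring maps.
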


\subsection{`Fake' spaces of quasi-invariants}
\la{S5.3}
Let $B$ be a space in the genus of $BG$ (for $G = SU(2)$) that admits an essential map from $BT$. Theorem~\ref{esmap} shows that, for such a space, there is a natural generalization of the maximal torus: namely, the  `maximal' essential map $ p_B: BT \to B $. We let $F(\Omega B, T)$ denote the homotopy fibre of this map and apply the Ganea construction to the associated fibration sequence:
\begin{equation}
\la{RectorTow}
\begin{diagram}[small]
F(\Omega B, T)               & \rTo  &  F_1(\Omega B, T)      &  \rTo  & F_2(\Omega B, T) & \rTo& \ldots \\
\dTo^{j_B}        &       & \dTo^{j_{1,B}} &        & \dTo^{j_{2,B}} \\
BT               & \rTo^{\pi_0\quad }  &  X_1(\Omega B, T)      &  \rTo^{\pi_1}  & X_2(\Omega B, T) & \rTo^{\pi_2}& \ldots \\
\dTo^{p_B}        &       & \dTo^{p_{1,B}} &        & \dTo^{p_{2,B}} \\
B               & \rEq  &  B     &  \rEq  & B& \rEq & \ldots \\
\end{diagram}
\end{equation}
As a result, we construct a tower of spaces $ X_m(\Omega B, T) $ which we will refer to as the {\it `fake' spaces of quasi-invariants} associated to the Rector space $B$. Note, if $ B = BG $, then $\, \Omega B \simeq G \,$,
and by Theorem~\ref{esmap}$(3)$, the map $\, p_B: BT \to BG \,$ is the maximal torus inclusion; hence, in this case, 
$\, X_m(\Omega B, T) $ are equivalent to the `genuine' spaces $ X_m(G, T) $ of quasi-invariants (see Definition~\ref{qspaces}). %This justifies our notation and terminology.

To compute the cohomology of $ X_m(\Omega B, T) $ we recall ({\it cf.} \cite{Rec71}) that any space $B$ in the genus of $BG$ can be represented as a (generalized) homotopy pullback:
\begin{equation}
\la{hopull}
B = {\rm holim}_{\{p\}}\{ \begin{diagram}[small] BG_{(p)} & \rTo^{r_p} & BG_{(0)} & \rTo^{n_p} & BG_{(0)}\} \end{diagram} \ ,
\end{equation}
where the indexing set $\{p\}$ runs over all primes, $\,r_p $ denotes the natural map from the 
$p$-localization to the rationalization of $BG$, and the map $ n_p$ is induced by multiplication by an integer $ n_p $ which is relatively prime to $p$ and such that $(n_p/p)=(B/p)$ for every $p$ (for $p=2$, one requires, in addition, that $\,n_p \equiv 1 (\mathrm{mod}\, 4)$). 

Now, if a space $B$ admits an essential map from $BT$, part $(1)$ of Theorem \ref{esmap} implies that the set of integers $ \{n_p \in \Z \,:\,p \ \text{prime}\} $ appearing in \eqref{hopull} can be chosen to be finite. Hence, for such spaces, we can define the natural number
\begin{equation} 
\la{fdegree} N_B\,:=\,\mathrm{min}\{ \mathrm{lcm}(n_p) \in \N\,:\, B=\mathrm{holim}_{\{p\}}(n_p \circ r_p) \}\,,
\end{equation}
which is clearly a homotopy invariant of $B$.
Note that $ N_B = 1 $ iff $ B = BG $; however, in general, $N_B $ does not determine
the homotopy type of $B$ (see \cite[(1.8)]{DY01} for a counterexample).
\begin{lemma} 
\la{CohB}
For any space $B$ in the genus of $BG$, $\, H^{\ast}(B, \Z) \cong \Z[u]\,$, where $|u|=4$. If $B$ admits an essential map from $BT$, then, with natural identification $ H^{\ast}(BT,\Z) \cong \Z[x] $ as in Theorem~\ref{MTh1},  
the map $p_B^{\ast}: H^{\ast}(B, \Z) \to H^{\ast}(BT,\Z) $ is given by $\,p_B^{\ast}(u)= N_B\,x^2\,$, where $N_B$ is defined by \eqref{fdegree}.
\end{lemma}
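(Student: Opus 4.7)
The first claim follows from the arithmetic fracture theorem. Since $B$ lies in the genus of $BSU(2)$, we have $B_{(p)} \simeq BSU(2)_{(p)}$ for every prime $p$ and $B_{(0)} \simeq BSU(2)_{(0)}$. Because $H^*(BSU(2),\Z) = \Z[c_2]$ with $|c_2|=4$ is torsion-free and finitely generated in each degree, integral cohomology is determined by its localizations; combining them via the arithmetic square yields $H^*(B, \Z) \cong \Z[u]$ with $|u|=4$, where the generator $u$ is chosen (up to sign) so that its image under each equivalence $B_{(p)} \simeq BSU(2)_{(p)}$ corresponds to $c_2$.

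For the second claim, I first reduce the problem to an integer minimization. Any essential map $f: BT \to B$ yields an integer $d_f$ defined by $f^*(u) = d_f\cdot x^2$, since $H^4(B, \Z) = \Z\cdot u$ and $H^4(BT, \Z) = \Z\cdot x^2$. By part (2) of Theorem~\ref{esmap}, every essential $f$ factors as $g \circ p_B$ for some self-map $g$ of $B$; since $H^4(B, \Z) = \Z\cdot u$, any such $g$ acts on $H^4$ by integer multiplication, so $d_f \in d_{p_B}\cdot \Z$. Consequently $|d_{p_B}|$ is the minimal positive value attained by $|d_f|$ over essential maps $BT \to B$, and the task reduces to identifying this minimum with $N_B$.

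To carry out the identification, I would work prime by prime using the pullback presentation \eqref{hopull}. At each prime $p$, the localization of an essential map $f$ is a map $f_{(p)}: BT_{(p)} \to B_{(p)} \simeq BSU(2)_{(p)}$; by the Adams--Mahmud/Dwyer--Zabrodsky classification of $p$-local maps $BT \to BSU(2)_{(p)}$, this is determined on $H^4$ by multiplication by a $p$-local integer $a_p \in \Z_{(p)}$. The arithmetic compatibility in the pullback \eqref{hopull} translates, on applying $H^4$, into the existence of a single rational class $b \in \Q$ satisfying $b = n_p\, a_p$ for every prime $p$ used in the representation of $B$; equivalently, $b$ is an integer divisible by each $n_p$. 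The smallest positive such $|b|$ equals $\mathrm{lcm}(n_p)$, and taking the infimum over all representations of $B$ in the form \eqref{hopull} yields exactly $N_B$. Tracking the normalization of $u$ identifies $d_f$ with (the appropriate multiple of) $b$, so $|d_{p_B}| = N_B$, and choosing signs of $u$ compatibly gives $p_B^*(u) = N_B\, x^2$.

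The main technical obstacle is making precise the step linking the local integers $a_p$, the rational degree $b$, and the integer $d_f$: this requires an explicit Mayer--Vietoris computation of $H^4(B,\Z)$ in terms of the pullback data, together with a careful match between the canonical map $B \to BSU(2)_{(p)}$ coming from \eqref{hopull} and the genus equivalence $B_{(p)} \simeq BSU(2)_{(p)}$ used to normalize $u$. This matching is essentially the content of the corresponding construction in Yau's thesis \cite{DY01}, which can be invoked directly to close the argument.
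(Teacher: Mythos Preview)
Your proposal is essentially correct, but both parts differ from the paper's approach in instructive ways.

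For the first claim, the paper argues much more directly: since $B$ is in the genus of $BSU(2)$ we have $\Omega B \simeq \bS^3$, and running the Serre spectral sequence of the path fibration $\Omega B \to P_*B \to B$ immediately gives $H^*(B,\Z)\cong \Z[u]$ with $|u|=4$ (the generator of $H^3(\bS^3)$ transgresses to a generator of $H^4(B)$). Your arithmetic-fracture argument is valid, but it requires checking that integral cohomology in each degree is finitely generated (so that it is recovered from its localizations) and then verifying the ring structure separately; the Serre spectral sequence argument sidesteps all of this and does not even use the genus hypothesis beyond $\Omega B \simeq \bS^3$.

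For the second claim, the paper simply invokes \cite[Theorem~1.7]{DY01}, which identifies $N_B$ (up to sign) with the degree of $p_B^{\ast}$ on $K$-theory, hence on $H^4$. Your sketch via the prime-by-prime pullback \eqref{hopull} is the right conceptual picture, and your observation that Theorem~\ref{esmap}(2) forces $|d_{p_B}|$ to be the minimal positive degree among essential maps is a nice reduction. However, as you yourself note, the step matching the local data $(a_p)$ with a single integer $b$ and identifying the minimum with $N_B$ requires exactly the arithmetic bookkeeping carried out in \cite{DY01}; so in the end both arguments rest on the same reference. One small gap in your reduction: you should also note that $d_{p_B}\ne 0$, i.e.\ that an essential map cannot induce zero on $H^4$; this follows from the Notbohm--Smith bijection $[BT,B]_\ast \cong \Hom_\lambda(K^\ast(B),K^\ast(BT))$ quoted in Section~\ref{S6.3}, since $p_B^\ast(u)=0$ in $K$-theory would force $p_B$ to be null.
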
 
\begin{proof}
The first claim can be deduced easily from the fact that $ \Omega B \simeq \bS^3 $ by looking at the Serre spectral sequence of the path fibration
$\,\Omega B \to P_*B \to B \,$ ({\it cf.} \cite[\S 4]{Rec71b}). The second claim is
a consequence of the last part of \cite[Theorem 1.7]{DY01}, which shows that \eqref{fdegree} equals (up to sign) the degree of the map $\,p_B^{\ast}\,$ on $K$-theory with coefficients in $\Z$ and hence on cohomology.
\end{proof}
\begin{theorem} 
\la{CohXMR}
Let $B$ be a space in the genus of $BG$ admitting an essential map from $BT$. 

$(i)$ All maps $ \pi_m $ in \eqref{RectorTow} are injective on rational cohomology. For each $m\ge 0$, the composite map 
$ \tilde{\pi}_m = \pi_{m-1} \ldots \pi_1 \pi_0
$ induces an embedding $ H^{\ast}(X_m(\Omega B, T), \Q) \into H^{\ast}(BT, \Q) = \Q[x] $ with image $ Q_m(W) \subseteq \Q[x] $. Thus, $\,
H^{\ast}(X_m(\Omega B, T), \Q) \cong Q_m(W)\,$ for all $m \ge 0 $.

$(ii)$ For each $ m\ge 0 $, there is an algebra isomorphism 
$$ 
H^{\ast}(X_m(\Omega B, T), \Q) \xrightarrow{\sim} H^{\ast}(X_m(G, T), \Q) 
$$
making commutative the diagram
$$
\begin{diagram}[small]
H^{\ast}(B, \Q) & \rInto^{p_{m,B}^{\ast}} & {H}^{\ast}(X_m(\Omega B, T), \Q) & \rInto^{\tilde{\pi}_m^{\ast}} & {H}^{\ast}(BT, \Q)\\
\dTo^{(p_{BG}^*)^{-1} p_B^*} & & \dTo^{\wr} & & \Big\|\\
{H}^{\ast}(BG, \Q) & \rInto^{p_m^{\ast}} & {H}^{\ast}(X_m(G,T), \Q) & \rInto^{\tilde{\pi}_m^{\ast}} & {H}^\ast(BT, \Q)
\end{diagram}
$$
where the map $ (p_{BG}^*)^{-1} p_B^*$ is given explicitly by $\, u \mapsto N_Bx^2 $
$($see Lemma~\ref{CohB}$)$.
\end{theorem}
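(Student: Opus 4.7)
The plan is to parallel the argument of Theorem~\ref{MTh1}, exploiting the fact that rationally the situation for the Rector space $B$ coincides with the genuine case $G = SU(2)$ up to the nonzero scalar $N_B$. Since $B$ lies in the genus of $BG$, Lemma~\ref{CohB} gives $H^*(B,\Q) \cong \Q[u]$ with $|u|=4$ and $p_B^*(u) = N_B x^2 \ne 0$ in $\Q[x] = H^*(BT,\Q)$, and $\Omega B \simeq \bS^3$ as a space.

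First I would compute the rational cohomology of the fiber $F(\Omega B, T)$ via the Serre spectral sequence of $F(\Omega B,T) \to BT \to B$. Since $p_B^*: \Q[u] \to \Q[x]$ is injective (and by Poincaré-series comparison $(1-t^4)/(1-t^2) = 1+t^2$), $F(\Omega B, T)$ has rational cohomology concentrated in degrees $0$ and $2$, i.e.\ that of $\bS^2$. Ganea's Theorem~\ref{GThm} then gives $F_m(\Omega B, T) \simeq F(\Omega B, T) \ast (\Omega B)^{\ast m}$, and the standard formula $\tilde H^{*+1}(X\ast Y) \cong \tilde H^*(X)\otimes \tilde H^*(Y)$ (over $\Q$) yields $\tilde H^*(F_m(\Omega B, T),\Q) \cong \Q$ concentrated in degree $4m+2$, exactly as in the genuine case.

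Next I would run the Serre spectral sequence of the Borel-type fibration $F_m(\Omega B, T) \to X_m(\Omega B, T) \to B$ appearing in \eqref{RectorTow}. Both base and fiber have rational cohomology concentrated in even degrees, so the spectral sequence collapses and, by Leray--Hirsch,
$$H^*(X_m(\Omega B, T), \Q) \cong \Q[u] \oplus \Q[u]\,\xi \quad \text{with}\ |\xi| = 4m+2,$$
a free $\Q[u]$-module of rank $2$. Applied to the cofibration sequence $F_m \xrightarrow{j_{m,B}} X_m \xrightarrow{\pi_m} X_{m+1}$ (from the Ganea construction), the resulting long exact sequence in reduced rational cohomology breaks into short exact sequences thanks to vanishing of odd classes, which gives injectivity of every $\pi_m^*$. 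To identify the image of the composite $\tilde\pi_m^* : H^*(X_m(\Omega B, T), \Q) \hookrightarrow \Q[x]$, observe that $\tilde\pi_m^*(u) = p_B^*(u) = N_B x^2$ by Lemma~\ref{CohB}, while degree considerations force $\tilde\pi_m^*(\xi) = \lambda\, x^{2m+1}$ for some $\lambda \in \Q^{\times}$ (the generator $\xi$ restricts to the fundamental class of $F_m \simeq_\Q \bS^{4m+2}$, which is detected by the inclusion $BT = X_0 \hookrightarrow X_m$ as shown in Remark~\ref{rem11}). Hence the image is $\Q[N_B x^2, \lambda x^{2m+1}] = \Q[x^2, x^{2m+1}] = Q_m(W)$, completing (i).

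For (ii), part (i) gives embeddings $\tilde\pi_m^*$ of both $H^*(X_m(\Omega B, T),\Q)$ and $H^*(X_m(G,T),\Q)$ into $\Q[x]$ with identical image $Q_m(W)$; thus there is a unique algebra isomorphism, call it $\phi$, making the right-hand square of the diagram commute with the identity on $H^*(BT,\Q)$. Commutativity of the left square reduces to checking the identity after applying the injective map $\tilde\pi_m^*$ on the bottom row: the top-then-down path yields $\tilde\pi_m^* \phi\, p_{m,B}^*(u) = \tilde\pi_m^*\, p_{m,B}^*(u) = p_B^*(u) = N_B x^2$, while the down-then-bottom path yields $p_{BG}^* \circ (p_{BG}^*)^{-1} p_B^*(u) = p_B^*(u) = N_B x^2$. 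I expect the only delicate point to be the scalar $\lambda$ in $\tilde\pi_m^*(\xi) = \lambda x^{2m+1}$, which needs to be shown nonzero; this follows from the Leray--Hirsch argument (the class $\xi$ is a free generator over $\Q[u]$ and restricts nontrivially to the fiber), exactly as in the proof of Theorem~\ref{MTh1}, but here the intermediate step $\tilde\pi_m^*$ into $\Q[x]$ must be checked to preserve this nonvanishing, which is forced once we know the image has the correct Poincaré series.
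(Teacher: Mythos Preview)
Your argument is correct and reaches the same conclusion, but the route differs from the paper's in an instructive way. The paper proceeds by induction on $m$: it defines subrings $Q'_m := \Q + N_B x^2 \cdot Q'_{m-1}$ of $\Q[x]$, uses the Eilenberg--Moore spectral sequence of $F_m \to X_m \to B$ at each stage to identify $H^*(F_m,\Q) \cong Q'_m/(N_B x^2)$, and then reads off $H^*(X_{m+1},\Q) \cong Q'_{m+1}$ from the cofibration long exact sequence. You instead compute $H^*(F_m,\Q)$ once and for all via Ganea's Theorem and the join formula, and compute $H^*(X_m,\Q)$ once and for all via the Serre spectral sequence of $F_m \to X_m \to B$, mirroring the proof of Theorem~\ref{MTh1}. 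This avoids the explicit induction and is arguably cleaner; the paper's approach has the advantage that it tracks the image in $\Q[x]$ algebraically at every step, making the identification with $Q_m(W)$ automatic.

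Two small points. First, your ``Serre spectral sequence plus Poincar\'e series'' computation of $H^*(F(\Omega B,T),\Q)$ is running the spectral sequence backwards (known base and total space, unknown fibre); injectivity of $p_B^*$ alone does not force collapse. The clean argument is Eilenberg--Moore: since $H^*(BT,\Q)$ is free over $H^*(B,\Q)$ via $u \mapsto N_B x^2$, one gets $H^*(F,\Q) \cong \Q[x]/(x^2)$ immediately. Second, your ``delicate point'' about $\lambda \ne 0$ is not actually delicate: once you have shown each $\pi_k^*$ is injective (which follows, as you note, from the even-degree concentration of $H^*(X_k,\Q)$ and $H^*(F_k,\Q)$), the composite $\tilde\pi_m^*$ is injective, so $\tilde\pi_m^*(\xi) \ne 0$ and by degree it must be a nonzero multiple of $x^{2m+1}$. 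The appeal to Remark~\ref{rem11} is unnecessary (and does not literally apply, since there is no $G$-action here).
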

\begin{proof}
We prove part $(i)$ by induction on $m$. First, note that for $m=0$, $(i)$ as well as $(ii)$ follow from Lemma \ref{CohB}. To perform the induction we define the subalgebras $\,Q'_m \subseteq \Q[x]\,$ for $m > 0$ by 
$$
Q'_0 := Q[x]\,,\qquad Q'_m := \Q+ N_Bx^2 \cdot Q'_{m-1}\,, \ m>0\ . 
$$
Clearly, 
$$Q'_m = \Q + \Q \cdot N_Bx^2 + \ldots +\Q \cdot (N_Bx^2)^{m-1} + (N_Bx^2)^m \Q[x] \ . $$
It follows that $Q'_m=Q_m$ as subrings of $\Q[x]$ for all $m$. Now assume that 
$$
H^{\ast}(X_m(\Omega B, T), \Q) \cong Q'_m\\,,
$$
and that $\tilde{\pi}_m^{\ast}$ is the inclusion $Q'_m \hookrightarrow \Q[x]$. To compute the cohomology of the fibre $F_m(\Omega B, T)$, we use the Eilenberg-Moore spectral sequence for the fibration sequence $F_m(\Omega B, T) \to X_m(\Omega B, T) \to B$, whose $E_2$-term is 
$$ E_2^{\ast, \ast}\,=\, \mathrm{Tor}_{\ast,\ast}^{{H}^{\ast}(B)}({H}^{\ast}(\mathrm{pt}), {H}^{\ast}(X_m(\Omega B, T))) \,\cong\,  \mathrm{Tor}^{\ast,\ast}_{\Q[u]}(\Q, Q'_m)$$ 
By Lemma \ref{CohB}, $\mathrm{Tor}_{\ast,\ast}^{\Q[u]}(\Q, Q'_m)$ is the (co)homology of the complex %
$$\begin{diagram}[small] 0 & \rTo & Q'_m & \rTo^{\cdot N_B x^2} & Q'_m & \rTo & 0\end{diagram} \ .$$
Since $Q'_m \subseteq \Q[x]$ is an integral domain, $\mathrm{Tor}^{\Q[x]}_i(\Q, Q'_m) = 0$ for $i>0$. The Eilenberg-Moore spectral sequence for the fibration sequence $F_m(\Omega B, T) \to X_m(\Omega B, T) \to B$ therefore collapses to give 
$${H}^{\ast}(F_m(\Omega B, T), \Q) \cong\, Q'_m/(N_B x^2) \ . $$ 
Further, since the Eilenberg-Moore spectral sequence is multiplicative, $j_{m,B}^{\ast}$ is the canonical quotient map. In particular, note that the cohomology of $F_m(\Omega B, T)$  is concentrated in even degree. The long exact sequence of cohomologies associated with the cofibration sequence $F_m(\Omega B, T) \to X_m(\Omega B, T) \to X_{m+1}(\Omega B, T)$ yields (for $n$ even)
$$\begin{diagram}[small] \tilde{{H}}^{n}(X_{m+1}[\Omega B, T)] & \rInto^{\pi_m^{\ast}}&  \tilde{{H}}^{n}[X_m(\Omega B, T)] & \rTo^{j_{m,B}^{\ast}} & \tilde{{H}}^{n}[F_m(\Omega B, T)] & \rOnto^{\partial} & \tilde{{H}}^{n+1}[X_{m+1}(\Omega B, T)] \end{diagram} $$
Since $j_{m,B}^{\ast}$ is surjective, we have
$$\tilde{{H}}^{n+1}(X_{m+1}(\Omega B, T), \Q) = 0 \quad \mbox{for}\ n \ \mbox{even}\,. 
$$
Hence, 
$$
{H}^{\ast}(X_{m+1}(\Omega B, T), \Q) \cong \Q + \Ker(j_{m,B}^{\ast}) \,=\, \Q+ (N_B x^2) \cdot Q'_{m} \,=\, Q'_{m+1}\,, $$
with $\pi_m^{\ast}$ being the inclusion $Q'_{m+1} \hookrightarrow Q'_m$. This completes the induction step, proving part $(i)$. 

Part $(ii)$ follows immediately from $(i)$ combined with Lemma \ref{CohB} (since $p_B = p_{m,B} \circ \tilde{\pi}_m$). 
\end{proof}
\begin{cor}
\la{Cor5.8}
For a fixed $m\ge0$, all spaces $ X_m(\Omega B, T) $ are rationally  equivalent to $X_m(G,T)$ $($and hence to each other$)$.
\end{cor}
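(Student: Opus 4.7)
The plan is to combine the cohomology computation of Theorem~\ref{CohXMR}$(ii)$ with the intrinsic formality of $Q_m(W)$ already invoked at the end of the proof of Theorem~\ref{MTh1}. Nothing new has to be constructed; the point is that the algebra $Q_m(W)$ is so rigid that its isomorphism class alone pins down the rational homotopy type.

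First I would check that rational homotopy theory applies: all the spaces in the tower \eqref{RectorTow} are simply connected and of finite type. Indeed, $B$ lies in the genus of $BSU(2)$, so $\pi_1(B)\cong \pi_0(\Omega B)\cong \pi_0(\bS^3)=0$; together with $\pi_1(BT)=0$ this forces the homotopy fibre $F(\Omega B,T)$ of $p_B:BT\to B$ to be connected, and the successive Ganea homotopy cofibres $X_m(\Omega B,T)$ inherit simple connectivity (cone attachments along maps out of connected spaces preserve simply connected bases). Finite type is automatic from the construction (finite joins of finite complexes, fibred over $B$).

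Next I would apply Theorem~\ref{CohXMR}$(ii)$ to obtain an isomorphism of graded $\Q$-algebras
\[
H^{\ast}(X_m(\Omega B,T),\Q)\;\cong\; H^{\ast}(X_m(G,T),\Q)\;\cong\; Q_m(W),
\]
for every $m\ge 0$. As was observed in the last paragraph of the proof of Theorem~\ref{MTh1}, the algebra $Q_m(W)$ admits the presentation $\Q[\eta,\xi]/(\xi^{2}-\eta^{2m+1})$ with $|\eta|=4$ and $|\xi|=4m+2$; by Sullivan's theorem (\cite{Su77}, Remark (v), p.~317) such an algebra is \emph{intrinsically formal}, i.e.\ there is exactly one simply connected rational homotopy type whose rational cohomology ring is $Q_m(W)$.

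It follows at once that the rationalizations $X_m(\Omega B,T)_{\Q}$ and $X_m(G,T)_{\Q}$ are weakly equivalent, so $X_m(\Omega B,T)$ and $X_m(G,T)$ are rationally equivalent, and transitivity yields that all the spaces $X_m(\Omega B,T)$ (for varying $B$ and fixed $m$) are rationally equivalent to one another. The only thing that might appear to be an obstacle is that Theorem~\ref{CohXMR}$(ii)$ gives an isomorphism of rings rather than an explicit map realizing it; intrinsic formality is precisely what removes this obstacle, since it says the rational homotopy type is determined by the cohomology ring alone.
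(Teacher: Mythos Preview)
Your proof is correct and follows essentially the same approach as the paper: the paper's one-line proof simply invokes Theorem~\ref{CohXMR} together with ``the uniqueness part of Theorem~\ref{MTh1}'', and that uniqueness clause is precisely the intrinsic formality of $Q_m(W)$ via Sullivan's remark that you unpack explicitly. Your additional verification of simple connectivity and finite type (needed to stay within the scope of Sullivan's rational homotopy theory) is a welcome bit of care that the paper leaves implicit.
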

\begin{proof}
This follows from Theorem~\ref{CohXMR} and the uniqueness part of Theorem~\ref{MTh1}.
\end{proof}
In Section~\ref{S6.3} (see Corollary~\ref{rsdistinct}), 
we will show that $ X_m(\Omega B, T) \not\simeq X_m(\Omega B', T) $ whenever $\, N_B \not= N_{B'} \,$. Thus Theorem~\ref{CohXMR} provides many different topological realizations\footnote{It is tempting to conjecture that the (homotopy types of the) spaces $ X_m(\Omega B, T) $ associated with the Rector spaces $B$ admiting an essential map from $BT$ constitute the set of {\it all} such realizations. Unfortunately, besides Theorem~\ref{MTh1}$(2)$, we do not have much evidence for this conjecture.} for the algebras $Q_m(W)$. 
However, these do not give us different solutions to our realization problem (see Section~\ref{realizationprob}), since none of the spaces $ B $ in the genus of $BG$ (except for $BG$ itself) admits a maximal torus and hence none carries a natural $W$-action. In addition, by Ganea's Theorem~\ref{GThm}, 
$\, \hocolim_{m}[X_m(\Omega B, T)] \simeq B \,$, which shows that property (QI$_{2}$)
  fails for $ X_m(\Omega B, T)$  
when $ B \not\simeq BG\,$.

\section{Equivariant $K$-theory}
\la{S6}
In this section, we compute the $G$-equivariant $K$-theory $ K_G(F_m) $ of the $m$-quasi-flag manifold $F_m = F_m(G,T) $ associated to $ G = SU(2)$. We find that $ K_G(F_m) $  is isomorphic to the ring $ \cQ_m(W) $ of {\it exponential} quasi-invariants of $W$. By the Atiyah-Segal Theorem, the (ordinary) $K$-theory of $X_m(G,T) $ is then isomorphic to the completion $\widehat{\cQ}_m(W)$ of $ \cQ_m(W) $ with respect to the canonical augmentation ideal of $R(G)$. For the `fake' spaces of quasi-invariants, $ X_m(\Omega B, T)$, associated to Rector spaces, the $K$-theory rings $ K[X_m(\Omega B, T)] $ are new invariants that are not isomorphic to $\widehat{\cQ}_m(W) $ in general and are strong enough to distinguish the $ X_m(\Omega B, T)$ up to homotopy equivalence.
\subsection{Equivariant $K$-theory}
\la{S6.1}
Recall that, for a compact Lie group $G$ acting continuously on a compact topological space $X$, the $K_G(X)$ is defined to be the Grothendieck group of $G$-equivariant (complex topological) vector bundles on $X$. As shown in  \cite{Seg68}, this construction extends to a
$\Z/2$-graded multiplicative generalized cohomology theory $ K_G^* $ on the category of (locally compact) $G$-spaces that is called the {\it $G$-equivariant $K$-theory}. We write $ K^*_G(X) := K_G^0(X) \oplus K_G^1(X)$, with understanding that $ K_G^0(X) \cong K_G^{2n}(X) $ and  $ K_G^1(X) \cong K_G^{2n+1}(X) $ for all $ n \in \Z $.
When $G$ is trivial, $K^{\ast}_G(X)$ coincides with the ordinary complex $K$-theory $K^*(X) $, while for $\,X = \mathrm{pt}\,$, $ K^{\ast}_G(\mathrm{pt}) $ is the representation ring $R(G)$ of $G$ (in particular, we have $\,K^{1}_G(\mathrm{pt}) = 0$). In general, 
by functoriality of $K^{\ast}_G $, the trivial map $\,X \to \mathrm{pt}\,$ gives a canonical $R(G)$-module structure on the ring $K^{\ast}_G(X)$ for any $G$-space $X$. The ring $ K^*_G(X) $ has nice properties for which we refer the reader to \cite{Seg68}. Here we only mention two technical results needed for our computations.

The first result is a well-known K\"unneth type formula for equivariant $K$-theory first studied by Hodgkin (see, e.g., \cite[Theorem 2.3]{BrZ00}).
\begin{theorem}[Hodgkin] \la{HSS}
Let $G$ be a compact connected Lie group, such that $\pi_1(G)$ is torsion-free. Then, for any two $G$-spaces
$X$ and $Y$, there is a spectral sequence with $E^2$-term
$$ 
E^2_{\ast,\ast} = \mathrm{Tor}_{\ast,\ast}^{R(G)}(K^{\ast}_G(X), K^{\ast}_G(Y))
$$
that converges to $\, K^{\ast}_G(X \times Y) \,$, where $X \times Y$
is viewed as a $G$-space with the diagonal action.
\end{theorem}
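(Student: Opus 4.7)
The plan is to build a geometric projective resolution of $Y$ in the $G$-equivariant stable homotopy category and derive the spectral sequence from the resulting filtration of $X \times Y$. The hypothesis that $\pi_1(G)$ is torsion-free enters through the Pittie--Steinberg theorem: under this assumption $R(G)$ is a free module over $R(T)^W$, hence a regular Noetherian ring of finite Krull dimension, so every finitely generated $R(G)$-module admits a \emph{finite} free resolution. This regularity is essential for the resulting spectral sequence to be finite and hence strongly convergent; without it one runs into $\lim^1$-issues that can obstruct convergence.

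The geometric input is a supply of $G$-spaces (or spectra) whose equivariant $K$-theory is a free $R(G)$-module. The basic building blocks are representation spheres $S^V$ for $G$-representations $V$: equivariant Bott periodicity identifies $K_G^*(S^V) \cong R(G)$ up to a degree shift. Given a set of $R(G)$-module generators for $K_G^*(Y)$, one realizes them by a $G$-map $W_0 \to Y$ from a wedge of such spheres; the map is surjective on $K_G^*$. Forming the cofiber $Y \to Y_1$ and iterating the procedure produces a finite tower of cofiber sequences $W_i \to Y_i \to Y_{i+1}$ whose long $K_G^*$-sequence, after choosing surjectivity at each stage, assembles into a finite free resolution of $K_G^*(Y)$ as an $R(G)$-module. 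The process terminates thanks to the Pittie--Steinberg bound on global dimension.

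Smashing the whole tower with $X_+$ (with the diagonal $G$-action) yields a finite filtration of $X \times Y$ whose associated exact couple gives the desired spectral sequence. The $E^1$-page is $\bigoplus_i K_G^*(X \times W_i)$, and the key K\"unneth identification at this level is
\[
K_G^*(X \times W_i) \,\cong\, K_G^*(X) \otimes_{R(G)} K_G^*(W_i),
\]
which holds because the $W_i$ are wedges of representation spheres and multiplication by the corresponding Bott class is an $R(G)$-module isomorphism on $K_G^*(X)$. Under this identification the $d^1$-differential is obtained by tensoring the resolution differential with $\mathrm{id}_{K_G^*(X)}$, so the $E^2$-page is exactly $\mathrm{Tor}_{*,*}^{R(G)}(K_G^*(X), K_G^*(Y))$, and convergence to $K_G^*(X \times Y)$ is automatic from the finiteness of the filtration. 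The main obstacle is the first step: constructing the geometric resolution in a way that is simultaneously K\"unneth-compatible (forcing the building blocks to be representation spheres, so that Bott periodicity applies) and terminates after finitely many stages (forcing use of the Pittie--Steinberg regularity of $R(G)$, which is precisely the place where the torsion-free hypothesis on $\pi_1(G)$ is needed).
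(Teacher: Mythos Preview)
The paper does not prove this theorem; it is stated as a known result with a reference to \cite[Theorem~2.3]{BrZ00}, so there is no proof in the paper to compare your proposal against.

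Your outline correctly isolates the two essential ingredients of any proof: the Pittie--Steinberg theorem, which under the torsion-free hypothesis on $\pi_1(G)$ makes $R(G)$ a regular Noetherian ring of finite global dimension, and a geometric filtration realizing a projective $R(G)$-resolution from which the spectral sequence is extracted. Two technical points would need care in a complete argument. First, there is a variance slip: $K_G^*$ is contravariant, so to obtain a surjection from a free $R(G)$-module onto $K_G^*(Y)$ one needs a $G$-map $Y \to W_0$ rather than $W_0 \to Y$; moreover, realizing such a surjection with $W_0$ a wedge of representation spheres is not automatic (an arbitrary $K_G$-class need not lift to equivariant stable cohomotopy), and the cleanest fix is to pass to $K_G$-module spectra, where the free objects are suspensions of $K_G$ itself. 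Second, the claim that the tower terminates presupposes that $K_G^*(Y)$ is finitely generated over $R(G)$, which is not among the hypotheses; in general one works with a possibly infinite tower and deduces convergence from the vanishing of $\mathrm{Tor}^{R(G)}_i$ above the global dimension. Hodgkin's own construction proceeds somewhat differently --- via a bar-type filtration built from the free $G$-space $G$ rather than from representation spheres --- but the role of Pittie--Steinberg regularity is exactly as you describe.
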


The second result is the following Mayer-Vietoris type formula, which is also --- in one form or another --- well known to experts. 
\begin{lemma} 
\la{KHoPush}
Let $f:U \to X$ and $g:U \to Y$ be proper equivariant maps of $G$-spaces. Let $\,Z = \mathrm{hocolim}(X \stackrel{f}{\leftarrow} U \stackrel{g}{\to} Y)$, where `$\hocolim$' is taken in the category of $G$-spaces.
Then, the abelian groups $ K_G^*(X) $, $K_G^*(Y)$ and $ K_G^*(Z)$ are related by the six-term exact sequence
$$
\begin{diagram}[small]
K^0_G(Z) & \rTo & K^0_G(X) \oplus K^0_G(Y) & \rTo^{f^{\ast}-g^{\ast}} & K^0_G(U)\\
\uTo^{\partial} & &  & & \dTo_{\partial}\\
K^1_G(U) & \lTo^{f^{\ast}-g^{\ast}} & K^1_G(X) \oplus K^1_G(Y) & \lTo^{} &  K^1_G(Z)\\
\end{diagram}
$$
\end{lemma}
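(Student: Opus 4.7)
The plan is to realize $Z$ as a $G$-equivariant pushout along a cofibration, cover it by two $G$-invariant open sets with suitable homotopy types, and then invoke the standard Mayer--Vietoris machinery for equivariant $K$-theory as a generalized $\Z/2$-graded cohomology theory on $G$-spaces.

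First I would replace $f$ and $g$ by equivariant cofibrations using the equivariant double mapping cylinder. Set
$$
M \,:=\, X \,\sqcup\, (U \times I) \,\sqcup\, Y \,/\! \sim\ ,
$$
where $(u,0) \sim f(u)$ and $(u,1) \sim g(u)$, with the obvious $G$-action. Since $G$ is a compact Lie group and $f,g$ are proper equivariant maps, the canonical map $M \to Z$ is a $G$-equivariant homotopy equivalence, so we may replace $Z$ by $M$ for the purposes of computing $K_G^{\ast}$. Now define the $G$-invariant open subsets
$$
A \,:=\, X \,\sqcup\, (U \times [0, 2/3))/\!\sim\ ,\qquad B \,:=\, Y \,\sqcup\, (U \times (1/3, 1])/\!\sim\ ,
$$
so that $M = A \cup B$, the inclusion $U \times (1/3,2/3) \into A \cap B$ is a $G$-homotopy equivalence, and the natural collapses $A \to X$, $B \to Y$, $A\cap B \to U$ are also $G$-equivariant homotopy equivalences compatible (up to homotopy) with the structure maps $f$ and $g$.

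Next I would apply the Mayer--Vietoris sequence for the $\Z/2$-graded equivariant cohomology theory $K_G^\ast$ associated to this equivariant excisive triad $(M; A, B)$. This follows formally from the long exact sequence of an equivariant cofibre pair combined with equivariant excision for $K_G^\ast$; both are established in Segal \cite{Seg68} for compact Lie group actions on compact (more generally, proper) $G$-spaces, and under the properness hypothesis on $f,g$ the relevant pairs here are of the required type. The resulting six-term exact sequence reads
$$
\begin{diagram}[small]
K^0_G(M) & \rTo & K^0_G(A) \oplus K^0_G(B) & \rTo & K^0_G(A \cap B)\\
\uTo^{\partial} & & & & \dTo_{\partial}\\
K^1_G(A \cap B) & \lTo & K^1_G(A) \oplus K^1_G(B) & \lTo & K^1_G(M)
\end{diagram}
$$
with horizontal arrows being differences of restrictions.

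Finally, I would use the $G$-homotopy equivalences from the first step to identify $K_G^\ast(M) = K_G^\ast(Z)$, $K_G^\ast(A) = K_G^\ast(X)$, $K_G^\ast(B) = K_G^\ast(Y)$, and $K_G^\ast(A \cap B) = K_G^\ast(U)$, with the restriction maps $A \into M$, $B \into M$ corresponding to the maps induced by $Z \to X$, $Z \to Y$ on the pushout, and the two maps $A \cap B \to A$ and $A \cap B \to B$ corresponding respectively to $f^\ast$ and $g^\ast$. Substituting these identifications yields the desired Mayer--Vietoris sequence. The main technical point (and the only real obstacle) is justifying that the triad $(M;A,B)$ is $K_G^\ast$-excisive in the equivariant setting, which requires the properness of $f$ and $g$ to ensure that the mapping cylinder $M$ has well-behaved equivariant neighborhoods; once this is in place, the six-term exact sequence is purely formal.
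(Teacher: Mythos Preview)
Your proposal is correct and follows essentially the same approach as the paper. In the published text the authors simply cite \cite{JO99} for the proof, but the draft argument included after \verb|\end{document}| in the source proceeds exactly as you do: model $Z$ by the equivariant double mapping cylinder, cover it by two pieces $X' = X \coprod_f (U \times [0,2/3])$ and $Y' = (U \times [1/3,1]) \coprod_g Y$ (they use closed sets and $G$-NDR pairs rather than your open sets, but this is immaterial), identify the intersection with $U$ up to $G$-homotopy, and invoke the standard Mayer--Vietoris sequence for $K_G^\ast$.
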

\noindent
The proof of Lemma~\ref{KHoPush} can be found, for example, in \cite{JO99}.

\subsection{$K$-theory of quasi-flag manifolds} 
\la{S6.2}
We first introduce rings $ \cQ_m(W) $ of {\it exponential quasi-invariants} of a Weyl group $W$. Let $G$ be a compact connected Lie group with maximal torus $T$ and associated Weyl group $W$. Let $ \hat{T} := \Hom(T, U(1)) $ denote the character lattice and $R(T)$ the representation ring of $T$. It is well known that $ R(T) \cong \Z[\hat{T}] $ via the canonical map induced by taking characters of representations, and $ R(T)^W \cong R(G) $ via the restriction map $ i^*: R(G) \to R(T) $ induced by the inclusion $ i: T \into G $ (see, e.g., \cite[Chap. IX, Sect. 3]{Bou82}). Using the first isomorphism we identify $ R(T) = \Z[\hat{T}] $ and write $ e^{\lambda} $ for the elements of $ R(T) $ corresponding to characters $ \lambda \in \hat{T} $. Next, we let $ \R \subseteq \hat{T} $ denote the root system of $W$ determined by $(G,T)$ and choose a subset $ \R_+ \subset \R $ of positive roots in $ \R $. If $ s_\alpha \in W $ is the reflection in $W$ corresponding to $ \alpha \in \R_+ $, then the difference $ e^{\lambda} - e^{s_{\alpha}(\lambda)} $ in $ R(T)$ is uniquely divisible by $ 1 - e^{\alpha} $ for any $\lambda \in \hat{T}$. Following \cite{D74},  we define a linear endomorphism $\, \Lambda_{\alpha}: R(T) \to R(T)\,$ for each $\alpha \in \R_+ $, such that
\begin{equation}
\la{extdem}
(1 - s_\alpha) f = \Lambda_{\alpha}(f) \cdot (1 - e^{\alpha})\ .
\end{equation}
The operator $ \Lambda_\alpha $ is an exponential analogue of the divided difference operator $ \Delta_\alpha $ introduced
in Section~\ref{S4.4}(see \eqref{demop}). Note that the conditions \eqref{Qinv} defining the usual quasi-invariant polynomials 
can be written in terms of the divided difference operators as $ \Delta_\alpha(p) \equiv 0 \ \mbox{mod}\, (\alpha)^{2 m_\alpha} $.
This motivates the following definition  of quasi-invariants in the exponential case.
\begin{defi}
\la{Expqi}
An element $ f \in R(T)$ is called an {\it exponential quasi-invariant of $W$ of multiplicity} $ m \in \M(W) $ if
\begin{equation}
\la{EQinv}
\Lambda_{\alpha}(f) \,\equiv\, 0 \ \,\mbox{mod}\ (1 - e^{\frac{\alpha}{2}})^{2 m_\alpha}\ ,\quad 
\forall\,  \alpha \in \R_+\ .
\end{equation}
\end{defi}
\begin{remark}
\la{R22}
In general, it may happen that $ \frac{\alpha}{2} \not\in \hat{T} $ for some $ \alpha \in \R_+ $, so that 
$\, e^{\frac{\alpha}{2}} \not\in R(T) $. We view
\eqref{EQinv} as a congruence in the extended group ring $ \Z[\frac{1}{2} \hat{T}] $ that naturally contains $ R(T) $.
\end{remark}
We write  $\cQ_m(W)$ for the set of all  $ f \in R(T) $ satisfying \eqref{Expqi} for a fixed multiplicity $ m $. This 
set is closed under addition and multiplication in $R(T)$, i.e. $\cQ_m(W)$ is a commutative subring of $ R(T) $. 
(The latter can be easily seen from the twisted derivation property of
Demazure operators: $ \Lambda_{\alpha}(f_1 f_2) = \Lambda_\alpha(f_1)\cdot f_2 + s_{\alpha}(f_1) \cdot \Lambda_{\alpha}(f_2) $ that holds for all $  \alpha \in \R $, see \cite[Sect. 5.5]{D74}.) 
\begin{example}
\la{ExSU2}
We describe $\cQ_m(W)$ explicitly in the case of $ G = SU(2) $ and $ T = U(1) $ the diagonal torus. In this case 
$ \hat{T} $ coincides with the weight lattice $P(\R)$  which is generated by the fundamental weight $ \varpi: T \to U(1) $ defined by $ \varpi \left(\begin{array}{cc} t & 0\\ 0 & t^{-1}\end{array}\right) = t $. The corresponding (simple) root is $ \alpha = 2 \varpi $, and the Weyl group $W = \langle s_{\alpha}\rangle \cong \Z/2\Z$ acts on $ \hat{T} $ by $ s_{\alpha}(\varpi) = - \varpi $. We have
\begin{equation}
\la{RTG}
R(T) \cong \Z[z, z^{-1}]\ ,\quad R(G) = R(T)^W \cong \Z[z+z^{-1}]
\end{equation}
where $ z = e^{\varpi} = e^{\frac{\alpha}{2}}\,$. Now, with these identifications, we claim that
\begin{equation}
\la{mulquasi}
\cQ_m(W) = \Z \oplus \Z \cdot (z^{1/2}-z^{-1/2})^2 \oplus \Z \cdot (z^{1/2}-z^{-1/2})^{4} \oplus \ldots \oplus (z^{1/2}-z^{-1/2})^{2m} \cdot \Z[z,z^{-1}]\ .   
\end{equation}
Indeed, if $ f \in \Z[z,z^{-1}] $ can be written in the form \eqref{mulquasi}, then 
$$ 
f - s_\alpha(f) \in  (z^{1/2}-z^{-1/2})^{2m} (1-s_\alpha)\,\Z[z,z^{-1}] = (z^{1/2}-z^{-1/2})^{2m} \,(z-z^{-1})\, \Z[z,z^{-1}]\ ,
$$
which shows that $ \Lambda_{\alpha}(f) = (1-z^2)^{-1}(f - s_\alpha f) $ is divisible by $\,(1-z)^{2m} = (1 - e^{\frac{\alpha}{2}})^{2m} $ in $ \Z[z,z^{-1}] $. Thus $ f \in \cQ_m $. 
To see the converse denote the right-hand side of \eqref{mulquasi} by $ \tilde{\cQ}_m$. Note that there is a natural  $\Q[z+z^{-1}]$-module decomposition
$$
\Q[z,z^{-1}] \cong \Q[z+z^{-1}] \oplus \Q[z+z^{-1}]\cdot \delta\,,
$$
where $\delta := z-z^{-1}$.  Writing $f = p + q \cdot \delta$ with $p, q \in \Q[z+z^{-1}]$, we find that $f-s_\alpha(f) = 2q \delta$. Thus, if  $ f \in \cQ_m $ then $\,f - s_\alpha(f) \in (z^{1/2}-z^{-1/2})^{2m}\,(z-z^{-1})\,\Z[z,z^{-1}]\,$ and hence $q \in (z^{1/2}-z^{-1/2})^{2m}\,\Q[z,z^{-1}]$. It follows that $ f \in \tilde{\cQ}_m \otimes \Q $.  On the other hand, $ (\tilde{\cQ}_m \otimes \Q) \cap \Z[z, z^{-1}] = \tilde{\cQ}_m$ which implies that $ \cQ_m \subseteq \tilde{\cQ}_m$. 
\end{example}

Let $ F_m = F_m(G,T) $ be the $m$-quasi-flag manifold of $G = SU(2) $ introduced in Section \ref{S4.3} (see Definition~\ref{qspaces}). Recall that $ F_m $ is a $G$-space of homotopy type of a finite CW-complex. 
The next theorem computes the $G$-equivariant $K$-theory of $F_m$, which is the main result of this section.

\begin{theorem} 
\la{ektmq} 
This is a natural isomorphism of $\Z/2$-graded commutative rings
$$
K^{\ast}_G(F_m) \cong \cQ_m(W)
$$
Thus $\,K^0_G(F_m) \cong \cQ_m (W)\,$ and $\,K^1_G(F_m) = 0\,$ for all $ m \in \Z_+ $.
\end{theorem}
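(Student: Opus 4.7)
The plan is to first compute $K^*_T(F_m)$ via Mayer--Vietoris (Lemma~\ref{KHoPush}) applied to the $T$-equivariant decomposition $F_m \cong \Sigma E_{2m}(T)$ of Lemma~\ref{Teq}, and then obtain $K^*_G(F_m)$ by taking $W$-invariants. The reduction $K^*_G(X) \cong K^*_T(X)^W$ is valid for $G = SU(2)$ since $R(T) = \Z[z,z^{-1}]$ is free of rank $|W|$ over $R(G) = \Z[z+z^{-1}]$: concretely, applying Theorem~\ref{HSS} with $X = G/T$ and using $K_G^*(G/T \times Y) \cong K_T^*(Y)$ (via the change-of-groups identification $G/T \times Y \cong G \times^T Y$), the flatness forces the spectral sequence to collapse to $K_T^*(Y) \cong R(T) \otimes_{R(G)} K_G^*(Y)$, and taking $W$-fixed points yields the claim.

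Writing $\Sigma E_{2m}(T) \simeq \hocolim(\mathrm{pt} \leftarrow E_{2m}(T) \to \mathrm{pt})$, the freeness of the $T$-action on $E_{2m}(T)$ gives $K^0_T(E_{2m}(T)) \cong K^0(\C\bP^{2m}) \cong R(T)/(1-z)^{2m+1}$ (with $z$ sent to the class of the tautological line bundle) and $K^1_T(E_{2m}(T)) = 0$. The six-term sequence of Lemma~\ref{KHoPush} thus collapses to
$$0 \to K^0_T(F_m) \to R(T) \oplus R(T) \xrightarrow{(a,b) \mapsto \bar a - \bar b} R(T)/(1-z)^{2m+1} \to 0,$$
so $K^0_T(F_m) \cong R(T) \times_{R(T)/(1-z)^{2m+1}} R(T)$ and $K^1_T(F_m) = 0$. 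Tracing through the proof of Lemma~\ref{Teq}, the nontrivial $w \in W$ acts on $F_m$ via its action on the $G/T$ factor, which under $G/T \cong (G/T)^T \ast T = \bS^0 \ast T$ swaps the two points of $\bS^0$; on the associated suspension this exchanges the two cone points, so $W$ acts on the fiber product by $(a,b) \mapsto (s(b), s(a))$ with $s:\, z \mapsto z^{-1}$. Taking $W$-invariants yields
$$K^0_G(F_m) \cong \{a \in R(T) :\, a - s(a) \in (1-z)^{2m+1} R(T)\}, \qquad K^1_G(F_m) = 0.$$

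The remaining task is to identify this subring with $\cQ_m(W)$ in its explicit form \eqref{mulquasi}. Using the free $R(G)$-module decomposition $R(T) = R(G) \oplus (z-z^{-1}) R(G)$, I would write $a = a_0 + (z-z^{-1}) a_1$ with $a_0,a_1 \in R(G)$; then $a - s(a) = 2(z-z^{-1}) a_1 = -2(1-z)(1+z) a_1/z$, and the congruence condition becomes $(1+z)(2a_1) \in (1-z)^{2m} R(T)$. The crux is the integral arithmetic: the element $1+z = 2-(1-z)$ is a non-zero divisor in $R(T)/(1-z)^{2m}$, as the coefficient expansion of $(2-w)f = \sum_j (2f_j - f_{j-1}) w^j$ forces all $f_j$ for $j < 2m$ to vanish once $(2-w) f \equiv 0 \pmod{w^{2m}}$. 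Hence the condition collapses to $2 a_1 \in (1-z)^{2m} R(T) \cap R(G) = u^m R(G)$ (with $u = z+z^{-1}-2 = -(1-z)^2/z$), and torsion-freeness of $R(G) = \Z[u]$ then gives $a_1 \in u^m R(G)$. This produces $K^0_G(F_m) \cong R(G) \oplus u^m (z-z^{-1}) R(G)$, which is precisely $\cQ_m(W)$.

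The hard part will be this last integral matching. Over $\Q$ the argument is trivial because $1+z$ is a unit modulo $(1-z)^{2m}$ (it reduces to $2$ at $z=1$); but over $\Z$ the $2$-torsion obstructs invertibility, and one must instead exploit the finer non-zero-divisor property together with the torsion-freeness of $R(G)$ to extract the sharp conclusion $a_1 \in u^m R(G)$.
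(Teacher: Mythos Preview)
Your strategy---compute $K^*_T(F_m)$ from the suspension decomposition of Lemma~\ref{Teq} and then pass to $K^*_G$ by taking $W$-invariants---is a genuinely different route from the paper's and mirrors its computation of $T$-equivariant ordinary cohomology (Proposition~\ref{PTcoh} and Corollary~\ref{RelBC}). The paper instead works directly with $K^*_G$ by induction on $m$: using $F_{m+1}\simeq F_m \ast G$, Hodgkin's spectral sequence gives $K^*_G(F_m\times G)\cong\cQ_m(W)/(z+z^{-1}-2)$, and Mayer--Vietoris for the join then yields $K^0_G(F_{m+1})=\Z+(z+z^{-1}-2)\,\cQ_m(W)=\cQ_{m+1}(W)$. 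This sidesteps both the descent step and the $2$-adic bookkeeping you anticipate.

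However, your final algebraic identification contains a real error. The ``free $R(G)$-module decomposition'' $R(T)=R(G)\oplus(z-z^{-1})R(G)$ is \emph{false} over~$\Z$: since $z-z^{-1}=2z-(z+z^{-1})$, the change of basis from the honest $R(G)$-basis $\{1,z\}$ to $\{1,z-z^{-1}\}$ has determinant $2$, and $R(G)+(z-z^{-1})R(G)$ sits in $R(T)$ with cokernel $\F_2[t]$. Your conclusion $K^0_G(F_m)\cong R(G)\oplus u^m(z-z^{-1})R(G)=R(G)+2u^m z\,R(G)$ is therefore a \emph{proper} subring of $\cQ_m(W)=R(G)+u^m R(T)=R(G)+u^m z\,R(G)$; already at $m=0$ it misses $z$ itself. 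The fix is to use the correct basis: writing $a=a_0+z a_1$ with $a_i\in R(G)$ gives $a-s(a)=(z-z^{-1})a_1$ with \emph{no factor of $2$}, and then your non-zero-divisor argument for $1+z$ yields $a_1\in u^m R(G)$ directly, hence the correct answer. A smaller gap: the reduction $K^*_G\cong(K^*_T)^W$ does not follow from freeness of $R(T)$ over $R(G)$ alone, since $(R(T)\otimes_{R(G)}M)^W=M$ requires $M$ to be $2$-torsion-free; but this is recoverable a posteriori, because your $K^*_T(F_m)$ is visibly torsion-free and $K^*_T\cong R(T)\otimes_{R(G)}K^*_G$ then forces $K^*_G(F_m)$ to be torsion-free as well.
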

\begin{proof}
Recall that $K^{\ast}_G({\rm pt}) = R(G) \cong \Z[t]$, where $t$ corresponds to the $2$-dimensional irreducible representation of $G = SU(2)$. The natural map $\,K^{\ast}_G({\rm pt}) \to K^{\ast}_G(G) \cong K^{\ast}({\rm pt})\,$ is then identified with the projection $\,\Z[t] \to \Z\,$ taking $\, t \mapsto 2\,$. For $m=0$, by definition, we have $\,F_0=G/T\,$, and hence  ({\it cf.} Example~\ref{ExSU2})
\begin{equation} \la{kggt}
K^{\ast}_G(G/T) \cong 
K^{\ast}_T({\rm pt}) = 
R(T) \cong \Z[z,z^{-1}]\ . 
\end{equation}
Thus $K^0_G(F_0) \cong \Z[z,z^{-1}] = \cQ_0(W)$ and $K^1_G(F_0)=0$ as is well known. Further, the map $\,R(G) \to R(T)\,$ induced on $G$-equivariant $K$-theory by $\, G/T \to {\rm pt} \,$ is identified with $\,\Z[t] \to \Z[z,z^{-1}]\,,\ t \mapsto z+z^{-1}$.

Now, recall that $ F_{m+1} = F_m \ast G $, which means
\begin{equation} \la{fiberjoin} F_{m+1} \simeq {\rm hocolim}[F_m \leftarrow F_m \times G \to G]\ . \end{equation}
There is a canonical $G$-equivariant map $F_m \to F_{m+1}$ which we denote by $i_{m,m+1}$, which is nontrivial (not null-homotopic) in the homotopy category of $G$-spaces (see Remark \ref{rem11}). Let $ i_{m,n}: F_m \to F_n $ denote the composite map
$ i_{m,n} := i_{n-1,n} \circ \ldots \circ i_{m,m+1}$ for $n>m$.  We claim that the map $\,i_{0,m}^{\ast}: K_G^{\ast}(F_m) \to K^{\ast}_G(G/T)\,$ induced by $ i_{0,m}: G/T \to F_m $ is injective, and under the isomorphism \eqref{kggt}, it is identified with the inclusion of $ \cQ_m(W)$ in $\Z[z,z^{-1}]$. We prove our claim by induction on $m$. For $m=0$, this is just \eqref{kggt}.

Assume, for some $m \ge 0$, that  $\,K^{\ast}_G(F_m) \cong \cQ_m(W)\,$ and that the map $ i^*_{0,m}: K^{\ast}_G(F_m) \to K^{\ast}_G(G/T)$ is identified with the inclusion of $\cQ_m(W)$ in $\Z[z,z^{-1}]$ as a subring. Then the image of $t \in K^{\ast}_G({\rm pt})$ in $\,K^{\ast}_G(F_m) \cong \cQ_m(W)\,$ is $z+z^{-1}$.
 Since $K^{\ast}_G(G) \cong \Z$ has the free $K^{\ast}_G({\rm pt}) \cong \Z[t]$-module resolution 
$\, 0 \to \Z[t] \to  \Z[t]  \stackrel{\cdot (t-2)}{\longrightarrow} \Z \to 0\,$, the Tor-group
$$
{\rm Tor}_{*}^{R(G)}(K^{\ast}_G(F_m), K^{\ast}_G(G))\ \cong\  {\rm Tor}_*^{\Z[t]}(\cQ_m, \Z)
$$ 
is identified with the homology of the two-term complex 
$\, 0 \to  \cQ_m(W) \stackrel{\cdot (z+z^{-1}-2)}{\longrightarrow}  \cQ_m(W) \to 0 \,$, whose first homology vanishes since $ \cQ_m(W)$ is an integral domain. It follows that Hodgkin's spectral sequence (see Theorem \ref{HSS}) that
$$ 
K^{\ast}_G(F_m \times G) \cong \cQ_m(W)/(z+z^{-1}-2)\, ,
$$
and that the map $K^{\ast}_G(F_m) \to K^{\ast}_G(F_m \times G)$ induced by the projection $F_m \times G \to F_m$ is the canonical quotient map $\pi: \cQ_m(W) \to \cQ_m(W)/(z+z^{-1}-2)$. Next, applying Lemma \ref{KHoPush} to the homotopy pushout \eqref{fiberjoin}, we obtain the 
four-term exact sequence 
\begin{equation} \la{6tes}
0 \to K^0_G(F_{m+1}) \xrightarrow{(i_{m, m+1}, f)^{\ast}} \cQ_m(W) \oplus \Z 
\xrightarrow{ i^*-\pi^*}  
\cQ_m(W)/(z+z^{-1}-2) \xrightarrow{\partial}
 K^1_G(F_{m+1}) \to 0\ ,
\end{equation}
where $\,i:\Z \to \cQ_m(W)\,$ is the structure map of the ring $ \cQ_m(W)) $ and $\,f: G \to F_{m+1}\,$ is the natural map associated to  \eqref{fiberjoin}. It follows from \eqref{6tes} that $K^1_G(F_{m+1})=0$, and 
$$ K^0_G(F_{m+1}) \cong \Ker(i^*-\pi^*) = \Z + (z+z^{-1}-2) \cdot \cQ_m(W) = \cQ_{m+1}(W) \ .
$$
Furthermore, the inclusion $ \cQ_{m+1}(W) \into \cQ_{m}(W)$ is identified with the map $i_{m,m+1}^{\ast}$. This completes the induction step, completing the proof of the theorem.  
\end{proof}

\subsection{The equivariant Chern character} 
Recall that the space $ X_m = X_m(G,T) $ of $m$-quasi-invariants is defined as the homotopy $G$-quotient $ X_m := EG \times _G F_m $. The Borel construction yields a natural map
\begin{equation}\la{alpha}
\alpha: \, K_G^*(F_m) \to K^*(X_m)
\end{equation}
where $ K^*(X) = K^0(X) \oplus K^1(X) $ is the (complex) topological $K$-theory defined by
$\,K^0(X) = [X, \,BU]\,$ and $ K^1(X) = [X,\,U]\,$. Theorem~\ref{ektmq} shows that 
$K_G^*(F_m)$ is a finitely generated $R(G)$-module for all $ m \in \Z_+ $. Hence, by Atiyah-Segal 
Completion Theorem \cite{AS69}, the map \eqref{alpha} extends to an isomorphism
\begin{equation}\la{alpha1}
\widehat{K}_G^*(F_m)_{I_G} \cong K^*(X_m)
\end{equation}
where $ \widehat{K}_G^*(F)_{I_G}  $ denotes the (adic) completion of $ K_G^*(F) $ (as an $R(G)$-module) with respect to the augmentation ideal of $ R(G) $ defined as the kernel of the dimension function $\, I_G := \Ker[\dim:\, R(G) \to \Z]\,$. 
If we identify $ R(G) \cong \Z[z + z^{-1}] $ as the invariant subring of $R(T) \cong \Z[z,z^{-1}] $ as in the proof of
Theorem~\ref{ektmq}, then $ I_G = (z + z^{-1} - 2) $. Thus, as a consequence of \eqref{alpha1}, we get
\begin{cor}
\la{Cor111}
For all $ m \ge 0 $, there is an isomorphism
$$ 
K^*(X_m) \cong \widehat{\cQ}_m(W)_I 
$$ 
where $ (\widehat{\cQ}_m)_I $ denotes the completion of \eqref{mulquasi} with respect to the ideal $ I = (z + z^{-1} - 2) \subset \Z[z+z^{-1}]$.
\end{cor}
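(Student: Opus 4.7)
The plan is to derive the corollary directly by combining the three ingredients already placed in front of us: the identification $K^*_G(F_m) \cong \cQ_m(W)$ from Theorem~\ref{ektmq}, the Atiyah--Segal completion isomorphism \eqref{alpha1}, and the explicit description of $R(G)$ and its augmentation ideal. First I would check that the hypothesis of Atiyah--Segal is satisfied, i.e.\ that $K^*_G(F_m)$ is finitely generated over $R(G)$. Theorem~\ref{ektmq} identifies $K^*_G(F_m)$ with the subring $\cQ_m(W) \subset R(T) \cong \Z[z,z^{-1}]$ (concentrated in degree $0$). Since $R(T)$ is free of rank $|W|=2$ over $R(G)$ and $R(G) \cong \Z[z+z^{-1}]$ is a Noetherian ring, the submodule $\cQ_m(W) \subseteq R(T)$ is automatically finitely generated as an $R(G)$-module; alternatively, the explicit formula \eqref{mulquasi} displays an explicit finite list of generators.

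Next I would pin down the augmentation ideal. Under $R(G) \cong \Z[z+z^{-1}]$, the dimension map $R(G) \to \Z$ corresponds to $z^{\pm 1} \mapsto 1$, hence $z+z^{-1} \mapsto 2$, and therefore $I_G = (z+z^{-1}-2)$, which is exactly the ideal $I$ appearing in the statement. Moreover, the proof of Theorem~\ref{ektmq} already records that the image of the generator $t \in R(G) = R(SU(2))$ in $K^*_G(F_m) \cong \cQ_m(W)$ is $z+z^{-1}$, so the $R(G)$-module structure on $K^*_G(F_m)$ transported through Theorem~\ref{ektmq} coincides with the tautological $\Z[z+z^{-1}]$-module structure on $\cQ_m(W)$ as a subring of $\Z[z,z^{-1}]$.

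Having matched the ring and the ideal, I would then simply apply the Atiyah--Segal isomorphism \eqref{alpha1}. Completing $K^*_G(F_m)$ at $I_G$ corresponds under Theorem~\ref{ektmq} to completing $\cQ_m(W)$ at $I = (z+z^{-1}-2)$, yielding
\[
K^*(X_m) \;\cong\; \widehat{K}^*_G(F_m)_{I_G} \;\cong\; \widehat{\cQ}_m(W)_I ,
\]
which is exactly the statement of the corollary. Since $K^1_G(F_m) = 0$ by Theorem~\ref{ektmq}, the $\Z/2$-grading collapses and no further bookkeeping is needed.

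There is no serious obstacle here: the only points that require a word of justification are (a) finite generation over $R(G)$, which is immediate from Noetherianity or from \eqref{mulquasi}, and (b) the identification $I_G = (z+z^{-1}-2)$, which follows from the explicit description of the dimension character of $SU(2)$-representations. Everything else is a formal consequence of Theorem~\ref{ektmq} and \eqref{alpha1}.
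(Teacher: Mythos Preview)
Your proposal is correct and follows essentially the same approach as the paper: the paper states the corollary as an immediate consequence of \eqref{alpha1}, after noting that Theorem~\ref{ektmq} provides the finite generation over $R(G)$ needed for Atiyah--Segal and that $I_G = (z+z^{-1}-2)$ under the identification $R(G)\cong\Z[z+z^{-1}]$. Your write-up simply spells out these same steps in more detail.
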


Next, we compute a Chern character map relating
equivariant $K$-theory to equivariant cohomology.
Recall that the Chern character of an equivariant vector bundle on
a $G$-space $F$ is defined as the (non-equivariant) Chern character of the associated vector bundle on $ EG \times_G F $. This gives a natural map
\begin{equation}
\la{chGF}
\ch_G(F):\ K^*_G(F) \,\to\, \widehat{H}^{\ast}_G(F, \Q)
\end{equation}
where $\,\widehat{H}^{\ast}_G(F, \Q) := \prod_{k=0}^\infty H^k_G(F, \Q)\,$. The following proposition describes the map \eqref{chGF} for $ F = F_m(G,T) $ explicitly, using the identifications of Theorem~\ref{MTh1} and Theorem~\ref{ektmq}.

\begin{prop}\la{ChMap}
$(1)$ The Chern character map $ \ch_G(F_m): K^*_G(F_m) \to \widehat{H}^{\ast}(X_m, \Q) $ is given by 
\begin{equation}
\la{eqchern}
\exp:\ \cQ_m(W)\,\to \, \widehat{Q}_m(W)\ , \quad 
z \,\mapsto \, \sum_{n=0}^{\infty} \frac{x^n}{n!}\ ,
\end{equation}
where $\, \widehat{Q}_m(W) := Q_m(W) \otimes_{\Q[x^2]} \Q[[x^2]] \,$ is the completed ring of quasi-invariants of $ W = \Z/2\Z$. 

$(2)$ The map $\, \ch_G(F_m)\,$ factors through \eqref{alpha} inducing an isomorphism on rational $K$-theory
$$
K(X_m)_{\Q} \,\cong\, \widehat{H}^{\ast}(X_m, \Q)\, \cong \,\widehat{Q}_m(W)
$$
\end{prop}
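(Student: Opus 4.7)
The plan is to establish part (1) via naturality of the equivariant Chern character along the inclusion $i_{0,m}\colon G/T \hookrightarrow F_m$, and then to deduce part (2) by combining it with the Atiyah--Segal completion theorem together with a simple identification of completed base rings.

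For part (1), I would compare $\ch_G(F_m)$ with $\ch_G(G/T)$ through the naturality square
\[
\begin{diagram}[small]
K^*_G(F_m) & \rTo^{i_{0,m}^*} & K^*_G(G/T) \\
\dTo^{\ch_G(F_m)} & & \dTo_{\ch_G(G/T)} \\
\widehat{H}^*_G(F_m,\Q) & \rTo & \widehat{H}^*_G(G/T,\Q)
\end{diagram}
\]
By (the proof of) Theorem~\ref{ektmq}, the top arrow is the inclusion $\cQ_m(W) \hookrightarrow R(T) = \Z[z,z^{-1}]$, while by (the proof of) Theorem~\ref{MTh1}, the bottom arrow completes to the inclusion $\widehat{Q}_m(W) \hookrightarrow \widehat{\Q[x]} = \Q[[x]]$; both are injective. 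On $K^*_G(G/T) = R(T)$, the equivariant Chern character is classical and sends a line bundle class $[L_\lambda]$ to $e^{c_1(L_\lambda)}$, so $z = [L_\varpi] \mapsto e^x$. Commutativity of the square then forces $\ch_G(F_m)$ to coincide with the restriction of $z \mapsto e^x$ to $\cQ_m(W)$, which by injectivity of the bottom arrow must land inside $\widehat{Q}_m(W) \subset \Q[[x]]$. This establishes \eqref{eqchern}.

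The factorization $\ch_G(F_m) = \ch(X_m) \circ \alpha$ in part (2) is immediate from the definition of the equivariant Chern character via the Borel construction. For the rational isomorphism, I would combine Corollary~\ref{Cor111}, which identifies $K^*(X_m) \cong \widehat{\cQ}_m(W)_I$ with $I = (z+z^{-1}-2) R(G)$, with the algebraic observation that under $z \mapsto e^x$ one has $z+z^{-1}-2 = 2\cosh(x)-2 = x^2\, u(x)$ for some unit $u(x) \in \Q[[x]]^\times$. Hence $\exp$ induces a ring isomorphism $\widehat{R(G)}_{I,\Q} = \Q[[z+z^{-1}-2]] \xrightarrow{\sim} \Q[[x^2]]$ identifying the rationalized $I$-adic completion of $R(G)$ with $\widehat{H}^*(BG,\Q)$. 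Since $\cQ_m(W)$ is finitely generated over $R(G)$ (see \eqref{mulquasi}), tensoring up yields
\[
K^*(X_m)_\Q \,\cong\, \cQ_m(W) \otimes_{R(G)} \widehat{R(G)}_{I,\Q} \,\xrightarrow{\exp}\, Q_m(W) \otimes_{\Q[x^2]} \Q[[x^2]] \,=\, \widehat{Q}_m(W),
\]
with the middle arrow determined on generators by part~(1).

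The main technical hurdle is to verify that this displayed map is bijective rather than merely well defined. Both sides are free of rank $|W|=2$ over their completed base rings --- on the right by Theorem~\ref{TGoren}, and on the left via the decomposition $\cQ_m(W) = R(G) \oplus (z+z^{-1}-2)^m z \cdot R(G)$ readable off from \eqref{mulquasi}. Under $\exp$, the generator $(z+z^{-1}-2)^m z$ maps to $x^{2m} u(x)^m(\cosh x + \sinh x)$, whose projection onto the summand $x^{2m+1}\Q[[x^2]]$ of $\widehat{Q}_m(W)$ is $u(x)^m \cdot (\sinh(x)/x) \cdot x^{2m+1}$. Since both $u(x)$ and $\sinh(x)/x$ are units in $\Q[[x^2]]$, the resulting $2 \times 2$ change-of-basis matrix has invertible diagonal entries and is therefore invertible, completing the argument.
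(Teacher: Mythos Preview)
Your argument is correct and follows essentially the same approach as the paper for part~(1): both use naturality of the equivariant Chern character along $i_{0,m}\colon G/T \hookrightarrow F_m$ together with the classical formula $z \mapsto e^x$ on $K^*_G(G/T)$ to pin down $\ch_G(F_m)$ as the restriction of the exponential. For part~(2), the paper simply asserts that the claim follows from part~(1) and Corollary~\ref{Cor111}, whereas you supply an explicit verification---identifying the rationalized $I$-adic completion of $R(G)$ with $\Q[[x^2]]$ via $z+z^{-1}-2 = x^2 u(x)$, and then checking invertibility of the resulting $\Q[[x^2]]$-linear map on rank-$2$ free modules by computing the odd part of the image of $(z+z^{-1}-2)^m z$. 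This extra detail is a genuine addition: it makes precise what the paper leaves implicit, and the triangular $2\times 2$ matrix argument is the cleanest way to close the gap.
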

\begin{proof}
For $ F_0 = G/T $, we can identify $ K^*_G(G/T) \cong R(T) \cong \Z[z,z^{-1}] $ and $ \widehat{H}^{\ast}_G(G/T, \Q) = \widehat{H}^{\ast}(BT, \Q) \cong \Q[[x]]$ as in (the proofs of) Theorem \ref{MTh1} and Theorem \ref{ektmq}. With these identifications, it is well known that the equivariant Chern character is given by exponentiation
 (see, e.g., \cite[Example A.5]{FRW}): 
\begin{equation}
\la{exp1}
\ch_G(G/T):\,K^*_G(G/T)\, \to\,\widehat{H}^{\ast}(BT, \Q) \,,\qquad z \mapsto \exp(x) \ .
\end{equation}
Now, by functoriality of the Chern character, the maps $\, G/T \xrightarrow{i_{0,m}} F_m \to {\rm pt}\,$ give a commutative diagram of ring homomorphisms
\begin{equation} 
\la{Chernc2} 
\begin{diagram}[small]
K^*_G(\mathrm{pt}) & \rInto^{\ch_G({\rm pt})} & \widehat{H}^{*}(\mathrm{pt}, \Q)\\
\dInto & & \dInto\\
K^*_G(F_m) & \rTo^{\ch_G(F_m)} & \widehat{H}^{*}(F_m, \Q)\\
\dInto^{i^*_{0,m}} & & \dInto_{i^*_{0,m}}\\
K^*_G(G/T) & \rInto^{\ \ch_G(G/T)} & \widehat{H}^{*}(G/T, \Q)\\
\end{diagram} \ .
\end{equation}
where the vertical maps as well as the top and the bottom horizontal maps are injective. Hence, the
map in the middle, $ \ch_G(F_m): K^*_G(F_m) \to \widehat{H}^{*}(F_m, \Q) $, is also injective, and it is 
given by restriction of the exponential map \eqref{exp1}. This proves the first claim of the proposition.
The second claim follows from the first and Corollary~\ref{Cor111}.
\end{proof}

\subsection{$K$-theory of `fake' spaces of quasi-invariants}
\la{S6.3}
In this section, we compute the $K$-theory of `fake' spaces of quasi-invairants $X_m(\Omega B, T)$ constructed in Section \ref{S5.3}. We will keep the notation $ G = SU(2) $ and $\, T = U(1) $ and use the identification $ K^*(BT) \cong \Z[[t]] $ as in the previous section. Let $B$ be a space in the genus of $BG$ that admits an essential map from $BT$. By \cite[Proposition 2.1]{DY01}, there is an isomorphism of rings $ K^{\ast}(B) \cong \Z[[u]]\,$, such that 
for any essential map $f: BT \to B$, the induced map $\,f^{\ast}:\,K^{\ast}(B) \to K^{\ast}(BT)\,$ is given by
$$
f^{\ast}(u) = \mathrm{deg}(f) t^2 + \text{higher order terms in } t\ ,
$$
where the integer $ \deg(f)$ coincides (up to sign) with the degree of $f$ in integral (co)homology in dimension $4$ ({\it cf.} Lemma~\ref{CohB}). In fact, by a general result of Notbohm and Smith (see \cite[Theorem 5.2]{NS90}), the assignment $ f \mapsto f^* $ 
gives a bijection between the homotopy classes of maps from $BT$ to $B$ an the 
$\lambda$-ring homomorphisms from $K^{\ast}(B)$ to $K^{\ast}(BT)$:
$$
[BT, B]_* \cong \Hom_{\lambda}(K^{\ast}(B), K^{\ast}(BT))\,.
$$
Next, recall that, by Theorem~\ref{esmap}, among all essential maps $ BT \to B $,
there is a `maximal' one $ p_B: BT \to B $, for which $\deg(p_B) = N_B$, where $N_B$ is
the integer defined by \eqref{fdegree}: the corresponding power series
\begin{equation} 
\la{bgen} 
p_B^{\ast}(u) = N_B t^2+ \text{higher order terms in } t\ . 
\end{equation}
is a useful $K$-theoretic invariant of $B$ that depends on the Rector invariants $(B/p)$ (see \cite{DY01}).
Using \eqref{bgen}, we define a sequence of subrings $ \cQ_m(B) $ in $ \Z[[t]]$ inductively by the rule:
\begin{equation}\la{ringB}
\cQ_{0}(B) := \Z[[t]]\ ,\qquad \cQ_{m}(B) := \Z + p_B^{\ast}(u) \cQ_{m-1}(B)
\ ,\quad m \ge 1 \ .
\end{equation}
Note that there are natural inclusions 
\begin{equation*}
\la{natin}
\cQ_{0}(B) \supseteq \cQ_1(B) \supseteq \ldots \supseteq \cQ_m(B) \supseteq \cQ_{m+1}(B) \supseteq \ldots
\end{equation*}
which are all ring homomorphisms. 

\begin{example}
\la{ExBBG}
For $ B = BG $, one can easily compute the power series $ p_B(u)$ in an explicit form.
Recall that the Atiyah-Segal completion theorem gives an isomorphism $K^{\ast}(BG) \cong \widehat{K}^{\ast}_G({\rm pt})_{I}$, where $ I = I_G$ is the ideal of virtual representations 
in $\, K^{\ast}_G({\rm pt})\cong R(G) \,$ of dimension $0$. If we identify $ K^{\ast}_G({\rm pt}) \cong \Z[v]$, where $v$ is the standard $2$-dimensional representation of $G$, then $ \,I = (v-2)\,$, and $K^{\ast}(BG) \cong \Z[[u]]$, where $u=v-2$. Similarly, $K^{\ast}(BT) \cong \Z[[t]]$, where $t=z-1$, with $z$ standing for the generating character of $T$. The naturality of \eqref{alpha} (with respect to the $G$-equivariant map $p: G/T \to {\rm pt}$) yields the commutative diagram 
$$ 
\begin{diagram}[small]
K^{\ast}_G({\rm pt}) \cong \Z[u]  & \rTo^{\alpha} & K^{\ast}(BG) \cong \Z[[u]]\\
\dTo^{p^{\ast}} & & \dTo^{p_B^{\ast}}\\
K^{\ast}_G(G/T) \cong \Z[t] & \rTo^{\alpha} & K^{\ast}(BT) \cong \Z[[t]] \\ 
\end{diagram}\ .
$$
\vspace*{1ex}
Since $p^{\ast}(v)$ is the restriction of $v$ to $T$, we have $p^{\ast}(v)=z+z^{-1}$. Hence, 
$$
p_B^{\ast}(u) \,=\,  p_B^{\ast}(v-2)
              \, = \, z+z^{-1}-2 
              \,= \, (1+t) +\frac{1}{1+t} -2
               \,=\,  \frac{t^2}{1+t}
               %=  t^2-t^3+t^4-t^5+ \cdots 
$$
It follows that $ \cQ_m(BG) \cong \widehat{\cQ}_m(W)$, where the right-hand side is the completion of $\cQ_m(W)$ with respect to the ideal generated by $z+z^{-1}-2$ ({\it cf.} Corollary~\ref{Cor111}).
\end{example}

Now, we state the the main result of this section.
\begin{theorem} 
\la{RectorQI}
There are isomorphisms of rings 
\begin{equation}
\la{isoB}
K^{\ast}[X_m(\Omega B, T)] \cong \cQ_{m}(B) \ , \ \forall\,m \ge 0\,. 
\end{equation}
In particular, $\,K^1[X_m(\Omega B,T)] = 0\,$ for all $m\ge 0$. The maps $\,\pi_m^{\ast}:\,K^{\ast}[X_{m+1}(\Omega B, T)] \to K^{\ast}[X_m(\Omega B, T)]\,$ induced by the Ganea maps $ \pi_m $ in \eqref{RectorTow}  correspond under \eqref{isoB} to the natural inclusions $ \cQ_{m+1}(B) \into \cQ_{m}(B)$, and hence are all injective.
\end{theorem}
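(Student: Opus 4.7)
The plan is to mirror the induction of Theorem \ref{ektmq}, but working in ordinary (non-equivariant) complex $K$-theory, since the target $B$ is not a classifying space of a Lie group in general. I would induct on $m$. The base case $m=0$ is immediate: $X_0(\Omega B, T) = BT$, so $K^0(BT) \cong \Z[[t]] = \cQ_0(B)$ and $K^1(BT) = 0$, and the map $p_{0,B}^{\ast} = p_B^{\ast}$ sends $u$ to $p_B^{\ast}(u) \in \cQ_0(B)$ tautologically. For the inductive step, I will assume $K^0[X_m(\Omega B,T)] \cong \cQ_m(B)$, $K^1[X_m(\Omega B,T)] = 0$, that $\tilde{\pi}_m^{\ast}$ is identified with the inclusion $\cQ_m(B) \hookrightarrow \Z[[t]]$, and that $p_{m,B}^{\ast}:K^{\ast}(B) \cong \Z[[u]] \to K^{\ast}(X_m) \cong \cQ_m(B)$ sends $u \mapsto p_B^{\ast}(u)$. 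The last condition is forced by the factorization $p_B = p_{m,B} \circ \tilde{\pi}_m$ together with the inductive identification of $\tilde{\pi}_m^{\ast}$, and is propagated automatically by the relation $p_{m+1,B} \circ \pi_m = p_{m,B}$.

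The first step of the inductive pass is to compute $K^{\ast}[F_m(\Omega B, T)]$ using the Eilenberg--Moore spectral sequence for the fibration sequence $F_m(\Omega B, T) \to X_m(\Omega B, T) \to B$; its $E_2$-term is $\mathrm{Tor}^{\ast,\ast}_{K^{\ast}(B)}(K^{\ast}(X_m(\Omega B,T)), K^{\ast}(\mathrm{pt}))$. Feeding in the inductive data and the Koszul resolution $0 \to \Z[[u]] \xrightarrow{\cdot u} \Z[[u]] \to \Z \to 0$, this becomes the homology of the two-term complex
$$ 0 \to \cQ_m(B) \xrightarrow{\cdot p_B^{\ast}(u)} \cQ_m(B) \to 0\ . $$
By Lemma \ref{CohB}, $p_B^{\ast}(u) = N_B t^2 + \ldots \neq 0$, and $\cQ_m(B) \subseteq \Z[[t]]$ is an integral domain, so multiplication by $p_B^{\ast}(u)$ is injective. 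Hence the higher $\mathrm{Tor}$ vanishes, the EMSS collapses at $E_2$, and I obtain $K^{\ast}[F_m(\Omega B, T)] \cong \cQ_m(B)/p_B^{\ast}(u)\,\cQ_m(B)$, concentrated in even degree, with $j_{m,B}^{\ast}$ identified with the canonical quotient map.

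The second step invokes the $K$-theory long exact sequence associated to the cofibration $F_m(\Omega B, T) \xrightarrow{j_{m,B}} X_m(\Omega B, T) \xrightarrow{\pi_m} X_{m+1}(\Omega B, T)$. Since $K^1$ vanishes on both $F_m(\Omega B,T)$ and $X_m(\Omega B,T)$, and $j_{m,B}^{\ast}$ is surjective, the six-term sequence collapses to
$$ 0 \to \tilde{K}^0[X_{m+1}(\Omega B,T)] \xrightarrow{\pi_m^{\ast}} \tilde{K}^0[X_m(\Omega B,T)] \xrightarrow{j_{m,B}^{\ast}} \tilde{K}^0[F_m(\Omega B,T)] \to 0 $$
together with $K^1[X_{m+1}(\Omega B,T)] = 0$. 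Since $p_B^{\ast}(u)$ vanishes at the basepoint, $p_B^{\ast}(u)\cdot\cQ_m(B) \subseteq \tilde{K}^0(X_m)$, and passing to kernels gives $K^0[X_{m+1}(\Omega B,T)] \cong \Z + p_B^{\ast}(u)\cdot\cQ_m(B) = \cQ_{m+1}(B)$, with $\pi_m^{\ast}$ identified with the inclusion $\cQ_{m+1}(B) \hookrightarrow \cQ_m(B)$. This completes the induction. The main technical obstacle is verifying the convergence and multiplicativity of the Eilenberg--Moore spectral sequence in $K$-theory; however, since $B$ is simply connected (being in the genus of $BSU(2)$) and the inputs are well-behaved (finite-type, concentrated in even degree), this fits squarely into the framework of classical convergence theorems for the $K$-theoretic EMSS, so the argument should go through without serious issue.
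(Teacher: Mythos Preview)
Your proposal is correct and follows essentially the same route as the paper: induct on $m$, use the $K$-theoretic Eilenberg--Moore spectral sequence for the fibration $F_m \to X_m \to B$ to identify $K^{\ast}(F_m) \cong \cQ_m(B)/(p_B^{\ast}(u))$, and then pass to $X_{m+1}$ via the cofibre structure. The only cosmetic difference is that the paper phrases the second step as Mayer--Vietoris for the pushout $X_{m+1} \simeq \hocolim[\mathrm{pt} \leftarrow F_m \to X_m]$ (Lemma~\ref{KHoPush} with trivial $G$), whereas you use the equivalent reduced long exact sequence of the cofibration $F_m \to X_m \to X_{m+1}$; and your concern about convergence of the EMSS is exactly what the paper isolates as Lemma~\ref{EMK}, whose hypothesis is met because $\Omega B \simeq \bS^3$ has $K^{\ast}(\Omega B)$ exterior on a single odd generator.
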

To prove Theorem~\ref{RectorQI} we will use an Eilenberg-Moore spectral sequence for $K$-theory  in the following form.
\begin{lemma} 
\la{EMK}
Let $F \to E \to B$ be a $($homotopy$)$ fibration sequence over a base $B$ such that $\,K^{\ast}(\Omega B)
\,$ is an exterior algebra in a finite number of generators of odd degrees. Then there is a multiplicative spectral sequence with 
$\,E_2^{i, \ast} \cong \mathrm{Tor}^{K^{\ast}(B)}_i(\Z, K^{\ast}(E)) \,$
that strongly converges to $K^{\ast}(F)$.
\end{lemma}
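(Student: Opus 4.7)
The plan is to construct the spectral sequence as a Bousfield--Kan spectral sequence associated to a cosimplicial resolution of the homotopy pullback defining $F$, and then identify its $E_2$-page with a $\Tor$ group via a Kunneth-type argument. Concretely, writing $F$ as the homotopy fibre of $E \to B$ amounts to presenting $F$ as the homotopy pullback $E \times_B^{h} \ast$, which admits a canonical cosimplicial (cobar) resolution $X^\bullet$ whose $n$-th level is $E \times (\Omega B)^n$; equivalently, one can present $B$ itself as the geometric realization of the bar construction $B_\bullet(\ast, \Omega B, \ast)$ and pull back along $E \to B$, obtaining a simplicial filtration whose filtration quotients involve iterated products with $\Omega B$.

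Applying $K^*$ to this cosimplicial space produces a Bousfield--Kan type spectral sequence converging (under suitable hypotheses) to $K^*(F)$, with $E_1$-term
\[
E_1^{i,*} \cong K^*(E \times (\Omega B)^i)\ .
\]
The key step is to identify this $E_1$-page using a Kunneth formula. Here the hypothesis that $K^*(\Omega B)$ is an exterior algebra on finitely many generators of odd degree enters crucially: it guarantees in particular that $K^*(\Omega B)$ is a finitely generated free $\Z$-module concentrated in known parity, so the Atiyah--Hirzebruch/Kunneth Theorem yields
\[
K^*(E \times (\Omega B)^i) \cong K^*(E) \otimes K^*(\Omega B)^{\otimes i}\ .
\]
The cosimplicial differentials then make this into the standard bar complex $B_\bullet(\Z, K^*(B), K^*(E))$ computing $\Tor^{K^*(B)}_*(\Z, K^*(E))$, yielding the stated $E_2$-term. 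Multiplicativity of the spectral sequence follows from the fact that the cosimplicial resolution is compatible with the diagonal/product structure, as in Smith's and Rector's treatments.

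The main obstacle is \emph{strong convergence}. For generalized cohomology theories, convergence of an Eilenberg--Moore spectral sequence is notoriously delicate and fails in general. What saves the situation is precisely our hypothesis: because $K^*(\Omega B)$ is exterior on a finite number of odd-degree generators, the $E_2$-page is concentrated in a bounded range of cohomological bar degrees (an exterior algebra has finite projective dimension when finitely generated, and the $\Tor$ vanishes above a fixed index). Combined with standard finiteness and connectivity hypotheses coming from $B$ simply connected with $\Omega B$ having torsion-free $K$-theory, this ensures that the cosimplicial totalization spectral sequence converges strongly to $K^*(F)$; this is the $K$-theoretic analogue of the convergence criteria of Rector and L.\ Smith for the Eilenberg--Moore spectral sequence. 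Once convergence is in hand, the identification of the $E_2$-page with $\Tor^{K^*(B)}_*(\Z, K^*(E))$ and the multiplicativity complete the proof.
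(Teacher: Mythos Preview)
The paper does not actually prove this lemma; it simply cites \cite{JO99} (Main Theorem, Part 3) and moves on. So there is no ``paper's own proof'' to compare against beyond that reference, and your sketch is essentially an outline of the standard construction that \cite{JO99} carries out in detail.

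Your plan is sound in broad outline, but the convergence paragraph contains a slip worth flagging. You write that the $E_2$-page is concentrated in a bounded range because ``an exterior algebra has finite projective dimension when finitely generated, and the $\mathrm{Tor}$ vanishes above a fixed index.'' But the $\mathrm{Tor}$ in question is over $K^*(B)$, not over $K^*(\Omega B)$; the finiteness you need is that $\Z$ (or $K^*(E)$) has finite projective dimension over $K^*(B)$. The hypothesis that $K^*(\Omega B)$ is exterior on finitely many odd generators is what forces $K^*(B)$ to be a polynomial algebra on the corresponding even generators (via the Rothenberg--Steenrod/bar spectral sequence or a direct Koszul duality argument), and \emph{that} is what gives finite global dimension and hence vanishing of $\mathrm{Tor}_i$ for $i$ large. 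You should make this step explicit; as written, the argument conflates the two algebras. Once that is repaired, the bounded filtration gives strong convergence as you say, and the rest of the sketch (cobar resolution, K\"unneth to identify $E_1$, multiplicativity) is the standard route.
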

The proof of Lemma~\ref{EMK} can be found, for example, in \cite{JO99} (see Main Theorem, Part 3).
\begin{proof}[Proof of Theorem~\ref{RectorQI}]
We further claim that the ring homomorphisms
$$
j_{m,B}^{\ast}:\ K^{\ast}[X_m(\Omega B, T)] \to K^{\ast}[F_m(\Omega B, T)]
$$
induced by the fibre maps $ j_{B,m} $ in
\eqref{RectorTow} are surjective, and with \eqref{isoB}, they induce isomorphisms
$$
K^{\ast}[F_m(\Omega B, T)] \cong  \cQ_{m}(B)/(p_B^{\ast}(u))
$$
We prove these facts together with the claims of Theorem~\ref{RectorQI} by induction on $m$. 

For $m=0$, we need only to compute $K^{\ast}[F_0(\Omega B, T)]$. This can be done using Lemma \ref{EMK}. Note that $ K({\rm pt}) \cong \Z$ has the obvious free resolution over $\,K^{\ast}(B) \cong \Z[[u]]\,$:
\begin{equation} 
\la{zres} 
0 \to \Z[[u]] \xrightarrow{\cdot u}  \Z[[u]] \to \Z \to 0  
\end{equation}
Hence $\mathrm{Tor}^{K^{\ast}(B)}_\ast(\Z, K^{\ast}(BT))$ can be identified with homology of the two-term complex $ 0 \to \Z[[t]] \xrightarrow{p^*_B(u)} \Z[[t]] \to 0\,$, where the map in the middle is given by the power series \eqref{bgen}. Since $\Z[[t]]$ is an integral domain, it follows that $\mathrm{Tor}^{K^{\ast}(B)}_i(\Z, K^{\ast}(BT))=0$ for $i>0$. The Eilenburg-Moore spectral sequence of Lemma \ref{EMK} therefore collapses, giving
an isomorphism
$$
K^{\ast}[F_0(\Omega B, T)] \cong \Z[[t]]/(p_B^{\ast}(u))
$$

Next, assume that $\,K^{\ast}[X_m(\Omega B, T)] \cong \cQ_{m}(B)$ and that $\,K^{\ast}[F_m(\Omega B, T)] \cong \cQ_{m}(B)/(p_B^{\ast}(u))$, with $j_{m,B}^{\ast}$ being the canonical quotient map. Since 
$$
X_{m+1}(\Omega B, T) \,\simeq\, \hocolim \,[\, 
\mathrm{pt} \xleftarrow{i_m} F_m(\Omega B, T) \xrightarrow{j_{m,B}} X_m(\Omega B, T)\,]\,,
$$
and since $\,K^1(\mathrm{pt})= K^1[F_m(\Omega B, T)] = K^1[X_m(\Omega B, T)]= 0\,$, Lemma \ref{KHoPush} (with $G$ trivial group) yields the four-term exact sequence 
$$
0 \to K^0[X_{m+1}(\Omega B, T)] \xrightarrow{(i^{\ast}_m,\, \pi^{\ast}_m)} \Z \oplus \cQ_{m}(B)  \xrightarrow{p^{\ast}_m - j^{\ast}_{m,B}} \cQ_{m}(B)/(p_B^{\ast}(u)) \xrightarrow{\partial}\\
 K^1(X_{m+1}(\Omega B, T))\to 0\,.
$$
Here $p_m$ is the trivial map from $F_m(\Omega B, T)$ to the point. Since $j_{m,B}^{\ast}$ is surjective, $K^1[X_{m+1}(\Omega B, T)] = 0$. The above six-term exact sequence also shows that $\,K^0[X_{m+1}(\Omega B, T)] \cong \Ker(p^{\ast}_m -j^\ast_{m,B}) \subseteq \Z \oplus \cQ_{m}(B)$ (with isomorphism given by the map $\,(i^{\ast}_m,\,\pi^{\ast}_m))\,$. Projection to $ \cQ_{m}(B)$ identifies this kernel with $ \cQ_{m+1}(B) =
\Z + p_B^{\ast}(u) \cQ_{m}(B) \subset \cQ_{m}(B)\,$. It follows that $\,K^{\ast}[X_{m+1}(\Omega B, T)] \cong \cQ_{m+1}(B)\,$, with $\pi^{\ast}_m$ being the inclusion of $ \cQ_{m+1}(B)$ into $\cQ_{m}(B) $. Finally, by taking the (completed) tensor product of the resolution \eqref{zres} with $ \cQ_{m+1}(B) $, we see that $\,\mathrm{Tor}^{K^{\ast}(B)}_i(\Z, \cQ_{m+1}(B))\,$ is the homology of the complex
$$
0 \to \cQ_{m+1}(B) \xrightarrow{\,p_B^{\ast}(u)\,} \cQ_{m+1}(B) \to 0   
$$
where the map in the middle is given by multiplication by
the formal power series \eqref{bgen}.
Since $\cQ_{m+1}(B) \subseteq \Z[[t]]$ is an integral domain, $\,\mathrm{Tor}^{K^{\ast}(B)}_i(\Z, K^{\ast}(X_{m+1})) =0 \,$ for $i>0$. The spectral sequence of Lemma \ref{EMK} associated with the fibration sequence $\,F_{m+1} \to X_{m+1} \to B\,$ therefore collapses, giving 
$$
K^{\ast}[F_{m+1}(\Omega B, T)] \cong \cQ_{m+1}(B)/(p_B^{\ast}(u))\ ,  
$$
with $j^\ast_{m+1,B}$ being the canonical quotient map. This completes the induction step and thus finishes the proof of the theorem.
\end{proof}
Theorem~\ref{RectorQI} allows one to distinguish spaces
of quasi-invariants of the same multiplicity associated to homotopically distinct spaces in the genus of $BG$. First, we recall
that the topological $K$-theory $K^{\ast}(X)$
of any space $X$ of homotopy type of a CW complex carries a natural filtration 
$$ F^0K^{\ast}(X) \supseteq F^1K^*(X) \supseteq \ldots \supseteq F^nK^*(X) \supseteq F^{n+1}K^*(X) \supseteq \ldots 
$$
where $F^n K^{\ast}(X)$ is defined to be the kernel of the restriction map $\,K^{\ast}(X) \to K^{\ast}(X_{n-1})\,$ corresponding to the $(n-1)$-skeleton of $X$. This filtration is functorial: any map $f: X \to X'$  of spaces, each of which has homotopy type of a CW complex, induces a morphism of filtered rings $\,f^\ast:K^{\ast}(X') \to K^{\ast}(X)$. Moreover, by  Cellular Approximation Theorem, it is independent of the CW structure in the sense that using a different CW structure on $X$ will not change the isomorphism type of $K^*(X)$ as a filtered ring.

\begin{cor} 
\la{rsdistinct}
Let $B$ and $B'$ be two spaces in the genus of $BG$ admitting essential maps from $ BT $. Assume that
$\,N_B \neq N_{B'}\,$. Then $\,X_m(\Omega B, T) \not\simeq X_m(\Omega B', T)\,$ for any $m \geq 0$. 
In particular, $ X_m(\Omega B,T)$ is not homotopy equivalent to $ X_m(G, T) $ for any $B \not\simeq BG $. 
\end{cor}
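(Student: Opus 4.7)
The plan is to use Theorem~\ref{RectorQI} to translate the topological assertion into a ring-theoretic one about the $\cQ_m(B)$'s, and then to exhibit $N_B$ as a ring-theoretic invariant of $\cQ_m(B)$. Since each $X_m(\Omega B, T)$ is simply connected, the rank homomorphism $K^0(X_m(\Omega B, T)) \to \Z$ is canonical, so both the ring $K^0(X_m(\Omega B, T))$ and its augmentation ideal are invariants of the unpointed homotopy type. A hypothetical equivalence $X_m(\Omega B, T) \simeq X_m(\Omega B', T)$ would, via Theorem~\ref{RectorQI}, induce a ring isomorphism $\cQ_m(B) \cong \cQ_m(B')$ carrying augmentation ideal to augmentation ideal, so it suffices to recover $N_B$ from the pair $(\cQ_m(B), J)$, where $J$ is the augmentation ideal.

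The main computation will be that of $J/J^2$. Under the embedding $\cQ_m(B) \hookrightarrow \Z[[t]]$ of Theorem~\ref{RectorQI} the augmentation is evaluation at $t = 0$, so for $m \ge 1$ one has $J = p_B^*(u)\,\cQ_{m-1}(B)$. Since $\cQ_{m-1}(B)$ is an integral domain containing $1$, we have $J^2 = p_B^*(u)^2\,\cQ_{m-1}(B)$ and multiplication by $p_B^*(u)$ is injective, whence
$$ J/J^2 \;\cong\; \cQ_{m-1}(B)\,/\,p_B^*(u)\,\cQ_{m-1}(B)\, . $$
Using the direct-sum decomposition of $\Z$-modules $\cQ_k(B) = \Z \oplus p_B^*(u)\cQ_{k-1}(B)$ (valid because $p_B^*(u)\cQ_{k-1}(B) \subset t^2\Z[[t]]$ meets $\Z$ trivially), a short induction on $k$ yields $\cQ_{k-1}(B)/\cQ_k(B) \cong \cQ_{k-2}(B)/\cQ_{k-1}(B)$ for $k \ge 2$, and hence $\cQ_{k-1}(B)/\cQ_k(B) \cong \cQ_0(B)/\cQ_1(B)$ for every $k \ge 1$. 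Because $p_B^*(u) = N_B t^2 \cdot u(t)$ with $u(t) \in \Z[[t]]^\times$, we have $p_B^*(u)\Z[[t]] = N_B t^2 \Z[[t]]$, and so
$$ \cQ_0(B)/\cQ_1(B) \;=\; \Z[[t]]\,/\,(\Z + N_B t^2 \Z[[t]]) \;\cong\; \Z \oplus \prod_{i \ge 2}\Z/N_B\, , $$
the isomorphism given by reading off the coefficient of $t^1$ (free) and the coefficients of $t^i$ for $i \ge 2$ (modulo $N_B$). Combining these two observations one obtains, uniformly for every $m \ge 1$,
$$ J/J^2 \;\cong\; \Z^2 \oplus \prod_{i \ge 2}\Z/N_B\, , $$
whose torsion subgroup has exponent exactly $N_B$.

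Since the exponent of the torsion subgroup of $J/J^2$ is preserved by any ring isomorphism, any equivalence $X_m(\Omega B, T) \simeq X_m(\Omega B', T)$ with $m \ge 1$ forces $N_B = N_{B'}$, contradicting the hypothesis (the case $m = 0$ is vacuous, as $X_0(\Omega B, T) = BT$ does not depend on $B$). For the last assertion, definition \eqref{fdegree} shows $N_B = 1$ if and only if $B \simeq BG$: hence $B \not\simeq BG$ gives $N_B \ne 1 = N_{BG}$, and the first part applies with $B' = BG$. The most delicate technical point is the inductive identification $\cQ_{k-1}(B)/\cQ_k(B) \cong \cQ_{k-2}(B)/\cQ_{k-1}(B)$: one must correctly match the subring $\cQ_k(B) = \Z + p_B^*(u)\cQ_{k-1}(B)$ with the subgroup $\Z \oplus p_B^*(u)\cQ_{k-1}(B)$ of $\cQ_{k-1}(B) = \Z \oplus p_B^*(u)\cQ_{k-2}(B)$, after which cancellation and injectivity of multiplication by $p_B^*(u)$ give the claim.
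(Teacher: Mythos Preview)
Your argument is correct and takes a genuinely different route from the paper's. The paper distinguishes the spaces via the \emph{skeletal} (Atiyah--Hirzebruch) filtration on $K$-theory: it computes $K^{\ast}[X_m(\Omega B,T)]/F^5K^{\ast}[X_m(\Omega B,T)] \cong \Z + \Z\cdot N_B t^2$ and then reduces modulo primes $p$ to see whether $p\mid N_B$. Your approach is purely ring-theoretic: you show that the exponent of the torsion in $J/J^2$ for the augmentation ideal $J\subset K^0(X_m)$ equals $N_B$. This avoids any appeal to CW structures or cellular approximation and needs only that the rank map $K^0\to\Z$ is natural for connected spaces; in that sense it is somewhat more elementary. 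The paper's approach, on the other hand, lands in a finite-rank invariant and makes the dependence on the skeleton explicit.

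Two small points. First, the step ``combining these two observations'' is a bit compressed: your second observation computes $\cQ_{m-1}(B)/\cQ_m(B)$, not $\cQ_{m-1}(B)/p_B^*(u)\cQ_{m-1}(B)$. The missing link is the (split) short exact sequence
\[
0 \;\longrightarrow\; \Z \;\longrightarrow\; \cQ_{m-1}(B)/p_B^*(u)\cQ_{m-1}(B) \;\longrightarrow\; \cQ_{m-1}(B)/\cQ_m(B) \;\longrightarrow\; 0
\]
coming from $\cQ_m(B)=\Z\oplus p_B^*(u)\cQ_{m-1}(B)$, with splitting given by the constant-term map; this accounts for the extra $\Z$ in your final formula $J/J^2\cong\Z^2\oplus\prod_{i\ge 2}\Z/N_B$. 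Second, your remark about $m=0$ is well taken but ``vacuous'' is not quite the right word: since $X_0(\Omega B,T)=BT$ independently of $B$, the assertion is actually \emph{false} at $m=0$; neither your proof nor the paper's covers this case, and the statement should be read for $m\ge 1$.
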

\begin{proof}
Let $\,\tilde{\pi}_m: BT \to X_m(\Omega B, T)$ denote 
the composite map $\pi_{m-1} \circ \ldots \circ \pi_0$
in \eqref{RectorTow}. By Theorem~\ref{RectorQI}, this map
induces an embedding
$$\tilde{\pi}_m^{\ast}: K^{\ast}[X_m(\Omega B, T)] \cong \cQ_{m}(B) \hookrightarrow \Z[[t]] \cong K^{\ast}(BT) 
$$
which is a morphism of filtered rings. Now, recall that
$ BT \simeq \c\bP^\infty $; the generator $t$ in $\,K^{\ast}(BT) \cong K^*(\c\bP^\infty) = \Z[[t]]\,$ can be taken in the form $ t = b \xi $, where $ \xi \in F^2 K^2(BT) $ and $ b \in K^{-2}({\rm pt})$ is the Bott element (see \cite[Sect. 3]{DY01}). Hence $\,t \in F^2K^0(BT) $, and therefore, by \eqref{bgen}, we have 
\begin{equation*}
\la{congB}
p_B^{\ast}(u) \equiv N_B t^2\, (\mathrm{mod}\, F^5K^{\ast}(BT))\quad \mbox{in}\ \Z[[t]] 
\end{equation*}
Now, by Theorem~\ref{RectorQI},
$$
K^{\ast}[X_m(\Omega B, T)] \cong \cQ_{m}(B) = \Z + \Z \cdot p_B^{\ast}(u) + \ldots + \Z \cdot p_B^{\ast}(u)^{m-1} +   p_B^{\ast}(u) \cdot \Z[[t]]\,.$$
Hence
$$
K^{\ast}[X_m(\Omega B, T)]/F^5K^{\ast}[X_m(\Omega B, T)] \cong \Z + \Z \cdot N_B  t^2\ ,
$$
where the generator $\,N_B t^2\,$ is square zero. It follows that if $p$ is a prime then 
$$ K^{\ast}[X_m(\Omega B, T)]/(p,\, F^5K^{\ast}[X_m(\Omega B, T)]) \, \cong \, \begin{cases} 
                   (\Z/p\Z) + (\Z/p\Z) \cdot \bar{N}_B t^2 & \text{ if }\ p \nmid N_B\\
                   (\Z/p\Z) & \text{ if }\ p\, |\,N_B
                   \end{cases} 
                   $$
where $\,(p,\,F^5K^*(X_m))\,$ denotes the ideal in $K^*(X_m)$ generated by $ p \in \Z $ and $F^5K^*(X_m)$.
This shows that $X_m(\Omega B, T)$ is not homotopy equivalent to $X_m(\Omega B', T)$ unless $\,N_B = N_{B'}\,$.
\end{proof}
\begin{remark}
The converse of Corollary~\ref{rsdistinct} also holds true in the following sense: if two spaces $B$ and $B'$ in the genus of $BG$ have homotopy equivalent towers of spaces
of quasi-invariants $\{ 
X_m(\Omega B,T), \pi_m\}_{m\ge 0} $ and $ \{X_m(\Omega B',T), \pi_m'\}_{m \ge 0}$, then 
$ B \simeq B' \,$. This simply follows from the fact that 
$$
\hocolim_{m \in \Z_+} X_m(\Omega B, T) \simeq B\ ,
$$
which is a consequence of Ganea's Theorem~\ref{GThm}.
\end{remark}
\section{Elliptic cohomology}
\la{S7}
In this section, we compute complex analytic $T$- and
$G$-equivariant elliptic cohomology of the quasi-flag manifolds $ F_m(G,T)$. We express the result in two ways: geometrically (in terms of coherent sheaves on a given elliptic curve $E$) and analytically (in terms of $\Theta$-functions and $q$-difference equations). We also compute the spaces of global sections of the elliptic cohomology sheaves of $F_m(G,T)$ with coefficients twisted by tensor powers of the Looijenga line bundle on $E$. This last computation provides a motivation for our definition of {\it elliptic quasi-invariants} 
of $W$.

\subsection{Equivariant elliptic cohomology}
\la{S7.1}
Complex analytic elliptic cohomology was introduced by I. Grojnowski (see \cite{Gr07}). We will follow the approach of \cite{Ga14} that relies on earlier topological results of \cite{An00} and \cite{Ros01}. We begin by briefly recalling the main definitions.

Let $E$ be an elliptic curve defined analytically over $ \c$. Given a compact connected
abelian Lie group $T$ (i.e., $ T \cong U(1)^n $), we write $ \breve{T} := \Hom(U(1),\,T) $ for its cocharacter lattice and set
$$ 
\M_T := \breve{T} \otimes_{\Z} E \,,
$$
which is an abelian variety of rank $ n = {\rm rk}(T) $ over $\c$. The {\it $T$-equivariant elliptic cohomology} is defined as a functor on the (homotopy) category of finite $T$-CW complexes with values in the category of (complex-analytic) coherent sheaves on $ \M_T $:
\begin{equation}
\la{EllT}
\Ell_T^*:\ \Ho(\Top_T^{\rm fin})\,\to\, \Coh(\M_T)\,.
\end{equation}
This functor has the basic property that $\,\Ell_T^*(T/T')\,\cong\, \O_{\M_{T'}} $
for any closed subgroup $ T' \subseteq T $, where  $\O_{\M_{T'}} = i^* \O_{\M_{T}}$ is the restriction of the 
structure sheaf of $ \M_T $ to $ \M_{T'}$ (see \cite[2.1(1)]{Ga14}). In particular, we have 
\begin{equation}
\la{prEll}
\Ell_T^*({\rm pt}) \cong \O_{\M_T}
\end{equation}

Now, if $G$ is a compact connected Lie group with maximal torus $T$ and Weyl
group $W$, we define the {\it $G$-equivariant} elliptic cohomology of a $G$-space $X$ by
\begin{equation}
\la{TWG}
 \Ell_G^*(X) := \Ell_T^*(X)^W \,,  
\end{equation}
where $X$ is viewed as a $T$-space by restricting $G$-action (see \cite[3.4]{Ga14}). To compute the $G$-equivariant elliptic cohomology we thus need to compute the $T$-equivariant elliptic cohomology of a $G$-space $X$ and take its $W$-invariant sections.

The coherent sheaves $\Ell_T^*(X) $ usually do not have many interesting global sections. To remedy this deficiency one considers a twisted version of elliptic cohomology, where the sheaves $\Ell_T^*(X)$ are tensored with
a certain ample line bundle on $\M_T$. Recall that, if $G$ is a simple, simply connected compact Lie group
with a root system $\R$, there is a canonical $W$-equivariant line bundle $ {\mathscr L} $ on $ \M_T $ associated to the invariant symmetric bilinear form $I$ on the coroot lattice of $\R$ such that $ I(\alpha, \alpha) = 2 $
for all roots of smallest length in $\R$; this line bundle has $I$ as its Chern class and 
has degree equal to the order of the center of $G$  (see \cite{Lo77}). Following \cite{An00, Ga14}, we will refer to $\cL$ as the {\it Looijenga bundle} on $\M_T$ and define the $T$- and $G$-equivariant elliptic cohomology of $X$ with coefficients twisted by $ {\mathscr L}$ by
\begin{eqnarray}
\ELL_T^*(X, {\mathscr L}) &:=&
\bigoplus_{n=0}^{\infty}\, H_{\rm an}^0(\M_T,\,\Ell_T^*(X) \otimes {\mathscr L}^{n})\la{ELLL}\\
\ELL_G^*(X, {\mathscr L}) &:=&
\bigoplus_{n=0}^{\infty}\, H_{\rm an}^0(\M_T,\,\Ell_T^*(X) \otimes {\mathscr L}^{n})^W\la{ELLLW}
\end{eqnarray}
where $H_{\rm an}^0$ stands for the global (holomorphic) sections of the coherent sheaf $ \Ell_T^*(X) $  twisted by the tensor powers of ${\mathscr L}$. Note that \eqref{ELLL} and \eqref{ELLLW} are naturally graded modules over the graded commutative rings 
\begin{equation}
\la{SEW}
\ELL_T^*({\rm pt}, {\mathscr L}) = \bigoplus_{n=0}^{\infty}\, H_{\rm an}^0(\M_T,\,{\mathscr L}^{n})\quad \mbox{and}\quad \ELL_G^*({\rm pt}, {\mathscr L}) = \bigoplus_{n=0}^{\infty}\, H_{\rm an}^0(\M_T,\,{\mathscr L}^{n})^W\,
\end{equation}
which we denote by $S(E)$ and $ S(E)^W$, respectively. Following \cite{Lo77}, we also write $ S(E)^{-W}$
for the subspace of $S(E)$ consisting of all $W$-anti-invariant sections. The main theorem of \cite{Lo77} asserts that $ S(E)^W$ is a graded polynomial algebra generated freely by $\,l+1\,$ homogeneous elements, while $ S(E)^{-W} $ is a free module over $S(E)^W$ of rank one (see \cite[(3.4)]{Lo77}).  The generators of $S(E)^W$
are called the {\it Looijenga theta functions} on $ \M_T $.

\subsection{Elliptic cohomology of quasi-flag manifolds}
\la{S7.2}
In the rank one case $(T = U(1))$, we can identify $ \M_T = E $ and take for a model of $E$ 
the {\it Tate curve} $ E_q := \c^*\!/q^{\Z} $ with $ 0 < |q| < 1 $. The latter is defined
as the quotient of the punctured line $ \c^* = \c\setminus\{0\} $ (viewed as a complex
analytic manifold) by the free action of the infinite cyclic group $\Z\,$:
\begin{equation}
\la{anact}
\c^* \to \c^*\ ,\ z \mapsto q^n z\,.  
\end{equation}
We write $ A := \O_{\rm an}(\c^*) $ for the ring of global analytic functions on $ \c^* $ and 
define $ \A_q := A \rtimes_q \Z $ to be the crossed product algebra for the action of $\Z$ on $A$
induced by \eqref{anact}. Letting $ \xi $ be the (multiplicative) generator of $\Z$, we 
can present $ \A_q $ as a skew-polynomial algebra $A[\xi, \xi^{-1}]$ with multiplication determined by the commutation relation $\,\xi \cdot a(z) = a(qz) \cdot \xi \,$ for $ a(z) \in A $.  The left modules over $\A_q$
can be identified with spaces of global sections of $\Z$-equivariant quasi-coherent sheaves on $\c^*$. The natural projection $ \pi: \c^* \to E_q $ induces then the additive functor
\begin{equation}
\la{funeq}
\Coh(E_q) \to \Mod^{\rm f.p.}_{A}(\A_q)\ ,\quad
\cF \mapsto \widetilde{\cF} := H_{\rm an}^0(\c^*,\,\pi^*\cF)\,,    
\end{equation}
that maps the coherent sheaves on the analytic curve $E_q$ to left $\A_q$-modules admitting a finite presentation $\,A^{\oplus m} \to A^{\oplus n} \to M \to 0\,$ over the subalgebra $ A \subset \A_q $. 
The following proposition is a well-known result that provides a convenient algebraic description of the category $ \Coh(E_q) $; its proof can be found in various places (see, for example, \cite[Thm 2.2]{SV03} or \cite[Sect. 2]{vdPR07}).
\begin{prop}
\la{PSV}
The functor \eqref{funeq} is an exact equivalence of abelian tensor categories.
\end{prop}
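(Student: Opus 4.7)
The plan is to factor the functor $\cF \mapsto \widetilde{\cF}$ as a composition of two natural equivalences: descent along the $\Z$-covering $\pi:\c^*\to E_q$, followed by the analytic Serre--Stein equivalence between coherent sheaves on $\c^*$ and finitely presented $A$-modules. Concretely, pullback along $\pi$ will produce an equivalence
\[
\pi^{*}:\ \Coh(E_q)\ \stackrel{\sim}{\longrightarrow}\ \Coh^{\Z}(\c^*)
\]
onto the category of $\Z$-equivariant coherent sheaves on $\c^*$, while taking global sections will give an equivalence $\Gamma(\c^*,-):\Coh(\c^*)\stackrel{\sim}{\to}\Mod^{\rm f.p.}(A)$. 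Composing the two, the $\Z$-equivariance on the left translates into a compatible $\Z$-action on the $A$-module on the right, i.e., into the structure of a left $\A_q=A\rtimes_q\Z$-module, which recovers the functor $\widetilde{(-)}$ by construction.

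For the first equivalence I would verify effective descent directly. Since $\pi$ is a local biholomorphism and the $\Z$-action on $\c^*$ is free and properly discontinuous, a coherent sheaf on $E_q$ is the same data as a coherent sheaf $\mathcal{G}$ on $\c^*$ together with an isomorphism $\phi:T_q^*\mathcal{G}\stackrel{\sim}{\to}\mathcal{G}$ (where $T_q(z)=qz$) satisfying the cocycle condition, which collapses to a single equivariance isomorphism because $\Z$ is free on one generator. Exactness of $\pi^*$ and of its quasi-inverse is formal (both are local constructions along a local isomorphism), and compatibility with tensor products is automatic since $\pi^*$ is a symmetric monoidal functor.

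For the second equivalence I would invoke Cartan's Theorems A and B on the Stein curve $\c^*$. Theorem B yields $H^i(\c^*,\cF)=0$ for $i\ge 1$ and every coherent $\cF$, hence exactness of $\Gamma$ on coherent short exact sequences. Theorem A produces a global presentation $\mathcal{O}^{\oplus m}\to\mathcal{O}^{\oplus n}\to\cF\to 0$ for any coherent $\cF$, so applying $\Gamma$ shows that $\widetilde{\cF}$ is finitely presented over $A$; the quasi-inverse sends a presentation $A^m\to A^n\to M\to 0$ to the cokernel of the associated map of free $\mathcal{O}$-modules. Full faithfulness follows by applying Theorem A to the coherent sheaves $\mathcal{H}om_{\mathcal{O}}(\cF,\cF')$. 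Preservation of tensor products reduces to a vanishing-of-Tor statement, which holds because $A$ is a Bezout domain --- its finitely generated ideals are principal, so every finitely presented $A$-module has projective dimension at most one --- and on a one-dimensional Stein space the analytic tensor product of two coherent sheaves is again coherent with vanishing higher Tor.

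The main obstacle is the Stein-theoretic step $\Coh(\c^*)\simeq\Mod^{\rm f.p.}(A)$: since $A=\mathcal{O}_{\rm an}(\c^*)$ is non-Noetherian, care is required in distinguishing finite generation from finite presentation, and one must use the full strength of Cartan's theorems to bridge analytic coherence with the module-theoretic notion of finite presentation. Once this bridge is built, the equivariant enhancement via descent along $\pi$, together with exactness and tensor-compatibility of the composite, are all formal consequences.
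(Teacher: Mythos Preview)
Your approach is correct and is essentially the standard argument found in the references the paper cites (\cite{SV03}, \cite{vdPR07}); the paper itself does not give a proof but defers to those sources. The two-step factorization---descent along the free $\Z$-cover $\pi:\c^*\to E_q$, followed by the Cartan A/B equivalence $\Coh(\c^*)\simeq\Mod^{\rm f.p.}(A)$ on the Stein curve $\c^*$---is exactly the expected route, and your identification of the $\Z$-equivariance with the $\A_q$-module structure is correct.

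One minor point: your justification of monoidality via ``vanishing of higher Tor'' is slightly off (on a curve $\mathcal{T}or_1$ need not vanish, e.g.\ for two skyscrapers at the same point), but it is also unnecessary. Monoidality follows formally from exactness of $\Gamma$ on coherent sheaves: choose a presentation $\O^m\to\O^n\to\cF\to 0$, tensor with $\cG$ (right-exact), apply $\Gamma$ (exact by Theorem~B), and compare with $\Gamma(\cF)\otimes_A\Gamma(\cG)$ computed from the induced presentation $A^m\to A^n\to\Gamma(\cF)\to 0$. Both are cokernels of the same map $\Gamma(\cG)^m\to\Gamma(\cG)^n$. Similarly, full faithfulness follows by the same presentation trick applied to $\Hom$ rather than via $\mathcal{H}om$-sheaves. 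Your observation that $A$ is a B\'ezout domain (hence coherent) is, however, genuinely needed to ensure that $\Mod^{\rm f.p.}(A)$ is an abelian category.
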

We remark that the tensor structure on $ \Coh(E_q) $ is the standard geometric one (defined by tensor product of sheaves of $\O_{E_q}$-modules), while the tensor structure on $ \Mod^{\rm f.p.}_{A}(\A_q) $
is defined by tensoring $\A_q$-modules over the subalgebra $A$ with the action of $\A_q$ on $ M_1 \otimes_A M_2 $ given by $ \xi \cdot (m_1 \otimes m_2 ) = (\xi\cdot m_1) \otimes (\xi \cdot m_2) $.
The vector bundles on $E_q$ correspond under \eqref{funeq} to $\A_q$-modules that are free of finite rank over $ A$; such modules form a full subcategory of $ \Mod^{\rm f.p.}_{A}(\A_q) $ closed under the tensor product. The cohomology of a coherent sheaf $\cF$ on $E_q$ can be computed algebraically in terms of $\A_q$-modules as invariants and coinvariants of the induced action of $ \Z $ on the corresponding $A$-module $\widetilde{\cF}$ (see \cite[Lemma 2.1]{vdPR07}):
\begin{equation}\la{H01}
H^0_{\rm an}(E_q,\,\cF) \,\cong\, \Ker\,(\xi- \id\,:\,\widetilde{\cF})
\ ,\quad  
H^1_{\rm an}(E_q,\,\cF) \,\cong\, {\rm Coker}\,(\xi- \id\,:\,\widetilde{\cF})
\,.
\end{equation}
where $\, \xi \,$ is the multiplicative generator of the copy of $\Z $ in $ \A_q$ acting on the $\A_q$-module $\widetilde{\cF}$. 

\begin{example}\la{Ex222}
The structure sheaf $ \O_{E_q}$ of $E_q$ corresponds under \eqref{funeq} to the cyclic module
$\,\widetilde{\O}_{E_q} = \A_q/\A_q(\xi-1)\,$, which can be identified as $ \widetilde{\O}_{E_q} \cong A e $
with generator $e$ satisfying the relation $ \xi e = e $. The line bundle $ \O_{E_{q}}([1])$ corresponds to $\, \A_q/\A_q(\xi+z) \cong A e\,$, with $e$ satisfying $ \xi e = - z e$. 
More generally, any line bundle on $E_q$ of degree $d$
corresponds to a cyclic $\A_q$-module $ A e $, where the generator $e$ satisfies the relation
$ \xi e = c z^d e $ for some $ c\in \c^* $ (see \cite[Example 2.2]{vdPR07}). 
\end{example}
\vspace{.5ex}

We now proceed with computing elliptic cohomology of the spaces $ F_m = F_m(G,T) $.
For a fixed Tate curve $ E_q $, we first describe the $T$-equivariant cohomology,
presenting the answer in two ways: in terms of coherent sheaves on $E_q$ and 
in terms of $\A_q$-modules via the equivalence \eqref{funeq}.
\begin{theorem}
\la{ThTell}
For all $m\ge0$, there are natural isomorphisms of coherent sheaves in $ \Coh(E_q)$
\begin{equation}\la{cohFm}
\Ell_T^*(F_m)\,\cong\, \O_{E_q} \times_{\O_{E_q}/{\mathcal J}^{2m+1}} \O_{E_q}\ ,    
\end{equation}
where $ {\mathcal J} $ is the subsheaf of ideals in the structure sheaf $ \O_{E_q}$ vanishing at
the identity of $ E_q $. Under the equivalence \eqref{funeq}, the coherent
sheaf \eqref{cohFm} corresponds to the $\A_q$-module
\begin{equation}
\la{cohFm1}
\widetilde{\Ell}_T^*(F_m) \cong A \times_{A/\langle\Theta\rangle^{2m+1}} A \,,   
\end{equation}
where the action of $\A_q $ on the fibre product is induced by the natural action of $\A_q$ on $A$
and $ \langle \Theta\rangle $ denotes the $($principal$)$ ideal in $ A = \O_{\rm an}(\c^*) $ generated by the classical theta function
\begin{equation}
\la{Theta}
\Theta(z) :=  (1-z)\,\prod_{n>0}(1-q^n z)(1-q^{n}z^{-1})\,=\, \frac{1}{(q;q)_{\infty}} \ \sum_{n \in \Z}\, q^{\frac{n(n-1)}{2}} (-z)^n 
\,.
\end{equation}
\end{theorem}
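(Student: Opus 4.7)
The plan is to compute $\Ell_T^*(F_m)$ by exploiting the $T$-equivariant homeomorphism $F_m(G,T) \cong \Sigma E_{2m}(T)$ of Lemma~\ref{Teq}, in close parallel with the ordinary cohomology computation of Lemma~\ref{TLem}. I would model the unreduced suspension as the $T$-equivariant homotopy pushout
$$
\Sigma E_{2m}(T) \,\simeq\, \hocolim\bigl(\mathrm{pt} \leftarrow E_{2m}(T) \to \mathrm{pt}\bigr)
$$
and apply the Mayer--Vietoris long exact sequence in $T$-equivariant elliptic cohomology, viewed as a sequence of coherent sheaves on $E_q$. The crucial intermediate step is to establish
$$
\Ell_T^*(E_n(T)) \,\cong\, \O_{E_q}/\mathcal{J}^{n+1},
$$
with both restriction maps $\Ell_T^*(\mathrm{pt}) = \O_{E_q} \to \Ell_T^*(E_n(T))$ induced by the two projections in the pushout coinciding with the canonical surjection.

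To justify this identification I would use the defining local property of Grojnowski's complex-analytic equivariant elliptic cohomology: the stalk of $\Ell_T^*(X)$ at a point $e \in E_q$ is controlled by the Borel cohomology of the fixed locus $X^{T_e}$, where $T_e \subseteq T$ is the closure of the subgroup generated by $e$. Since $E_n(T)$ is a free $T$-space, $E_n(T)^{T_e} = \emptyset$ for every nontrivial $T_e$, so $\Ell_T^*(E_n(T))$ is set-theoretically supported at the identity $0 \in E_q$. At $0$, the formal completion reduces to the Borel $T$-equivariant cohomology $H_T^*(E_n(T); \Q) \cong H^*(\c\bP^n; \Q) \cong \Q[x]/(x^{n+1})$, with $x$ identified (via the exponential isomorphism at the identity) with a local uniformizer on $E_q$. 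This is precisely the structure sheaf of the $n$-th infinitesimal neighborhood of the origin, namely $\O_{E_q}/\mathcal{J}^{n+1}$. With this in hand the odd parts of Mayer--Vietoris vanish (each term lies in even degree) and the sequence collapses to a short exact sequence of coherent sheaves
$$
0 \to \Ell_T^*(\Sigma E_{2m}(T)) \to \O_{E_q} \oplus \O_{E_q} \to \O_{E_q}/\mathcal{J}^{2m+1} \to 0,
$$
whose right-hand arrow is the difference of the two canonical surjections. This identifies the left-hand term with the claimed fibre product, proving \eqref{cohFm}.

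To pass to the $\A_q$-module description \eqref{cohFm1} I would transport \eqref{cohFm} through the exact tensor equivalence of Proposition~\ref{PSV}. The structure sheaf $\O_{E_q}$ corresponds to $A$ with its tautological $\A_q$-module structure, so it remains to identify the ideal sheaf $\mathcal{J}$. Pulling back along $\pi: \c^* \to E_q$, the sheaf $\pi^*\mathcal{J}$ is the ideal sheaf on $\c^*$ of the orbit $q^{\Z}\subset \c^*$, and its space of global holomorphic sections is the ideal in $A$ of functions vanishing on $q^{\Z}$. By Weierstrass factorization this ideal is principal, generated by any entire function on $\c^*$ with simple zeros exactly on $q^\Z$; the canonical normalized choice is the theta function $\Theta(z)$ of \eqref{Theta}, whose quasi-periodicity $\Theta(qz) = -z^{-1}\Theta(z)$ ensures that $\langle \Theta \rangle$ is $\A_q$-stable. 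Hence $\mathcal{J} \leftrightarrow \langle \Theta \rangle$ under \eqref{funeq}, so $\mathcal{J}^{2m+1} \leftrightarrow \langle \Theta \rangle^{2m+1}$, and exactness of the equivalence transports the fibre product to \eqref{cohFm1}.

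The main obstacle will be the rigorous justification of $\Ell_T^*(E_n(T)) \cong \O_{E_q}/\mathcal{J}^{n+1}$: while morally transparent from the fixed-point description of the stalks, assembling the local data into a globally defined coherent sheaf and verifying that the two maps from $\Ell_T^*(\mathrm{pt})$ coincide with the obvious quotient requires care with the Grojnowski/Ganter gluing construction and a compatible identification of the $T$-equivariant Euler class of the standard representation of $T=U(1)$ with the local parameter of $E_q$ at the identity.
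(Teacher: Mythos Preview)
Your proposal is correct and follows essentially the same route as the paper: the paper also uses the $T$-equivariant decomposition $F_m \simeq \hocolim(\mathrm{pt} \leftarrow E_{2m}(T) \to \mathrm{pt})$ from Lemma~\ref{Teq}, applies Mayer--Vietoris in $\Ell_T^*$, and proves the key input $\Ell_T^*(E_n T) \cong \O_{E_q}/\mathcal{J}^{n+1}$ as a separate lemma via the very stalkwise argument you outline (vanishing away from the identity since $T$ acts freely, and $H_T^*(E_nT) \cong \c[x]/x^{n+1}$ at the identity). For the $\A_q$-module translation the paper likewise identifies $\mathcal{J}$ with $\langle\Theta\rangle$ by noting that $\Theta$ has simple zeros exactly on $q^{\Z}$ and then invokes exactness and tensor-compatibility of the equivalence \eqref{funeq}, so your ``main obstacle'' is precisely what the paper isolates and dispatches in Lemma~\ref{ecohtj}.
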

\begin{proof} Recall that, by Lemma~\ref{Teq}, there is a $T$-equivariant homeomorphism
$$ 
F_m  \,\cong\, \Sigma\,E_{2m}(T) = \hocolim\,({\rm pt} \leftarrow E_{2m}(T) \rightarrow {\rm pt})\,,
$$ 
where $ E_{2m}(T) = T^{\ast (2m+1)} $ is Milnor's $2m$-universal $T$-bundle.  As equivariant $K$-theory, 
the $T$-equivariant elliptic cohomology is known to satisfy the Mayer-Vietoris property (see, e.g., \cite[Theorem 3.8]{Ros01}). Hence, as in Lemma \ref{KHoPush}, there is a six term long exact sequence of sheaves on $E_q$:
$$ 
\begin{diagram}[small]
\Ell^0_T(F_m) & \rTo & \Ell^0_T({\rm pt}) \times \Ell^0_T({\rm pt}) & \rTo^{p_1^{\ast}-p_2^{\ast}} & \Ell^0_T( E_{2m}(T))\\
\uTo & & & & \dTo \\
\Ell^1_T(E_{2m}T) & \lTo & \Ell^1_T({\rm pt}) \times \Ell^1_T({\rm pt}) & \lTo & \Ell_T^1(F_m)
\end{diagram}\ . 
$$
where the arrow on top of the diagram is given on sections by $(x_1, x_2) \mapsto p_1^\ast(x_1) - p_2^\ast(x_2)$, with $p_1$ and $ p_2 $ representing two copies of the canonical map $ E_{2m}(T) \to {\rm pt} $. By \eqref{prEll},
we know that $\Ell^{\ast}_T({\rm pt}) \cong \O_{E_q}$; on the other hand, by Lemma \ref{ecohtj} (see below),
$$ \Ell^{\ast}_T(E_{2m}T) \cong \O_{E_q}/{\mathcal J}^{2m+1} ,
$$
where $ {\mathcal J} \subset \O_{E_q} $ is the subsheaf of sections vanishing at $ 1 \in E_q $.
Hence, by exactness, the above commutative diagram shows that $\,\Ell_T^1(F_m(G,T))=0\,$ and 
$$
\Ell_T^0(F_m)= {\rm Ker}(p_1^{\ast}-p_2^{\ast}) \cong \O_{E_q} \times_{\O_{E_q}/{\mathcal J}^{2m+1}} \O_{E_q}\,.
$$
This proves the first claim of the theorem. 

Now, to prove the second  claim we observe that the skyscraper sheaf $\cF := i_{1,*}\C $ on $ E_q $ (with stalk $\c$ supported at $ \{1\} $) corresponds under \eqref{funeq} to the quotient module $ \widetilde{\cF} \cong A/\langle \Theta\rangle $, where the action of $ \A_q $ is induced by the natural action of $\A_q$ on $A$. Indeed, $ \widetilde{\cF} $ is isomorphic to the cokernel of the map $ \O_{E_q} \to \O_{E_q}([1]) $, which is given (with identifications of Example~\ref{Ex222}) by $ e \mapsto \Theta e $. This follows from the fact that as a global analytic function on $\c^*$, $\, \Theta = \Theta(z) $ has simple zeroes exactly at the points $ z = q^n\, (n \in \Z) $. Hence the ideal sheaf ${\mathcal J} \subset \O_{E_q}$ corresponds to the ideal $ \langle \Theta \rangle = A \Theta $ in $A$, and more generally,  since \eqref{funeq} is a tensor functor, ${\mathcal J}^{2m+1}$ corresponds to $\langle\Theta\rangle^{2m+1} = A \Theta^{2m+1}$ for all $m \ge 0$. Now, since \eqref{funeq} is an exact additive functor, it takes the fibre product $ \O_{E_q} \times_{\O_{E_q}/{\mathcal J}^{2m+1}} \O_{E_q}$ in $\Coh(E_q)$ to the module $A \times_{A/\langle\Theta\rangle^{2m+1}} A$ in $\Mod^{\rm f.p.}_{A}(\A_q)$, thus completing the proof of the theorem.
\end{proof}

\begin{lemma} \la{ecohtj} There are isomorphisms of sheaves
$\,\Ell^{\ast}_T(E_n T) \cong \O_{E_q}/{\mathcal J}^{n+1}$ for all $\,n\geq 0\,$.
\end{lemma}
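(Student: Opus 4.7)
The plan is to proceed by induction on $n$, using a $T$-equivariant cofibration sequence together with the equivariant Thom isomorphism for elliptic cohomology. Let $V$ be the standard one-dimensional complex representation of $T = U(1)$; the Milnor construction provides a natural $T$-equivariant homeomorphism $E_n T = T^{\ast(n+1)} \cong S(V^{n+1})$, the unit sphere of $V^{n+1}$, given by $(t_0 g_0, \ldots, t_n g_n) \mapsto (\sqrt{t_0}\, g_0, \ldots, \sqrt{t_n}\, g_n)$ under the standard inclusion $T = U(1) \subset \mathbb{C}$.

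The base case $n = 0$ is immediate: $E_0 T = T = T/\{e\}$, so by the defining property of $\Ell^*_T$ recalled in Section~\ref{S7.1}, $\Ell^*_T(T) \cong \O_{\M_{\{e\}}}$, which under the canonical embedding $\M_{\{e\}} \hookrightarrow E_q$ is precisely the skyscraper sheaf $\O_{E_q}/\mathcal{J}$.

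For the inductive step, apply $\Ell^*_T$ to the $T$-equivariant cofibration
$$S(V^{n+1}) \hookrightarrow D(V^{n+1}) \to S^{V^{n+1}},$$
where $S^{V^{n+1}}$ denotes the one-point compactification of $V^{n+1}$. Since $D(V^{n+1})$ is $T$-contractible, $\Ell^*_T(D(V^{n+1})) \cong \O_{E_q}$. By the equivariant Thom isomorphism, $\tilde{\Ell}^*_T(S^{V^{n+1}})$ is a free $\O_{E_q}$-module of rank one on the Thom class $\mathfrak{u}(V^{n+1})$, and the map $\tilde{\Ell}^*_T(S^{V^{n+1}}) \to \Ell^*_T(D(V^{n+1}))$ induced by the collapse is identified with multiplication by the equivariant elliptic Euler class $e(V^{n+1}) \in \O_{E_q}$. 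Combined with the vanishing $\tilde{\Ell}^1_T(S^{V^{n+1}}) = 0$ coming from the even (complex) dimension of $V^{n+1}$, the long exact sequence in $\Z/2$-graded cohomology reduces to
$$0 \to \O_{E_q} \xrightarrow{\ \cdot\, e(V^{n+1})\ } \O_{E_q} \to \Ell^0_T(E_n T) \to 0,$$
together with $\Ell^1_T(E_n T) = 0$.

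The main point, and the principal technical obstacle, is to identify the divisor of $e(V^{n+1})$ on $E_q$. For complex analytic elliptic cohomology over the Tate curve, the equivariant Euler class of the standard character of $T = U(1)$ is, up to a unit, the theta function $\Theta(z)$ of \eqref{Theta}, which vanishes to order one precisely at the identity of $E_q$. Hence $e(V^{n+1})$ generates the ideal sheaf $\mathcal{J}^{n+1}$, and the cokernel in the short exact sequence above is $\O_{E_q}/\mathcal{J}^{n+1}$, completing the induction. This theta-function description of equivariant Thom classes for $U(1)$-representations is already implicit in Example~\ref{Ex222}, where $\Theta$ appears as the canonical section generating $\mathcal{J}$, and is the complex-analytic counterpart of the constructions of \cite{An00, Ros01}.
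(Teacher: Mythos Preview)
Your proof is correct and takes a genuinely different route from the paper's. The paper argues stalkwise: since $T$ acts freely on $E_nT$, the stalk of $\Ell^*_T(E_nT)$ at any $a \neq 1$ vanishes (no $T_a$-fixed points), and near $a=1$ one has $\Ell^*_T(E_nT)|_U \cong H^*_T(E_nT;\c)\otimes_{\c[x]}\O_{E_q}|_U$ with $H^*_T(E_nT;\c)\cong H^*(B_nT;\c)\cong \c[x]/x^{n+1}$, which immediately gives $\O_{E_q}/\mathcal{J}^{n+1}$. Your argument instead realizes $E_nT$ globally as the unit sphere $S(V^{n+1})$ and reads off the answer from the cofibre sequence $S(V^{n+1})\to D(V^{n+1})\to S^{V^{n+1}}$ together with the equivariant Thom isomorphism, identifying the Euler class $e(V^{n+1})$ with a generator of $\mathcal{J}^{n+1}$.

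The trade-off: the paper's approach is elementary and uses only the defining stalk formula, so it needs nothing beyond what is already recalled in Section~\ref{S7.1}. Your approach is more structural and explains \emph{why} $\mathcal{J}^{n+1}$ appears (as the divisor of the elliptic Euler class, i.e.\ a power of the theta function), connecting nicely with Example~\ref{Ex222} and the later theta-function formulas. The cost is that you must invoke the equivariant Thom isomorphism for complex analytic $\Ell^*_T$, which is true (and implicit in the references \cite{An00,Ros01}) but not stated in the paper; in a self-contained write-up you would want to cite it precisely. One small stylistic point: your ``induction'' is really a direct argument for each $n$---the inductive hypothesis is never used, and the base case is subsumed by the general step.
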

\begin{proof}
 Note that $T$ acts freely on $E_n(T) := T^{\ast (n+1)}$. Recall (see \cite[Sect. 3.2]{Ros01}) that if $X$ is a finite $T$-space, the stalk at $a \in E$ of $\Ell^{\ast}_T(X)$ is isomorphic to $H^{\ast}_T(X^a;\c) \otimes_{\c[z]} \O_{\c^{\ast},1}$, where $\O_{E_q,1}$ stands for the ring of germs of analytic functions at $1 \in E_q$. Here, $X^a$ stands for the fixed point space $X^{T_a}$, where $T_a=\Z/k\Z \subset T$ if $a$ is of finite order $k$ in $E$, and $T_a=T$ if $a$ is not of finite order in $E$. It follows that of $T$ acts freely on $X$, the stalk $\Ell^{\ast}_T(X)_a$ of $\Ell^{\ast}_T(X)$ at $a$ vanishes for $a \neq 1$. Hence, $\Ell_T(E_n T)_a=0$ for $a \neq 1$, and for $U$ a small neighborhood of $1$ in $E_q$, $\Ell^{\ast}_T(X)|_U \cong H^{\ast}_T(E_n T;\c) \otimes_{\c[x]} \O_{E_q}|_U$, where $\O_{E_q}|_U$ acquires the structure of a sheaf of $\c[x]$-modules via the map $\c[x] \to \O_{E_q}(U), x \mapsto \theta$, where $\theta$ is a generator of the maximal ideal of the local ring $\O_{\c^{\ast},1}$. The desired lemma therefore, follows from the fact that $H^{\ast}_T(E_n T;\c) \cong H^\ast(B_n T;\c) \cong \c[x]/x^{n+1}$ (see the proof of Lemma \ref{TLem} above).
\end{proof}
To compute the $G$-equivariant elliptic cohomology of $ F_m $ we need to refine the result of
Theorem~\ref{ThTell} by taking into account the action of $W = \Z/2\Z\,$ on $\,\Ell_T^*(F_m)$. To this end we first refine the result of Proposition~\ref{PSV}. Observe that the equivalence \eqref{funeq} extends to the category  of $W$-equivariant coherent sheaves on $E_q$: 
\begin{equation}
\la{funeqW}
 \Coh_W(E_q) \,\xrightarrow{\sim}\, \Mod^{\rm f.p.}_{A}(\A_q \rtimes W)\ ,
 %\quad \cF \mapsto  H_{\rm an}^0(\c^*,\,\pi^*\cF)\,,  
 \end{equation}
where the category of $\A_q$-modules finitely presented over $A$ is replaced by a similar category of modules over the crossed product algebra $ \A_q \rtimes W $ associated to the geometric action of $W $ on $ \c^* $. The algebra $ \A_q \rtimes W $ has the canonical presentation $  A \langle \xi, \xi^{-1}, s \rangle $, where the generators
$ \xi $, $s$ and $ a(z) \in A $ are subject to the relations
$$
s \cdot a(z) = a(z^{-1}) \cdot s\ ,\quad s \cdot \xi = \xi^{-1} \cdot s\ ,\quad \xi \cdot a(z) = a(qz) \cdot \xi\ ,\quad s^2 = 1
$$
We let $ e_+ := (1+s)/2 $ denote the symmetrizing idempotent 
in $ \A_q \rtimes W $ and consider the  subalgebra $ e_+ (\A_q \rtimes W) e_+  $ of $ \A_q \rtimes W $ (with identity element $e_+$). This subalgebra can be naturally identified with the invariant subalgebra $ \A_q^W $ of $ \A_q$ via the isomorphism:
$\, \A_q^{W} \xrightarrow{\sim} e_+ (\A_q \rtimes W) e_+ $,$\ a \mapsto e_+ a \,e_+\,$.
With this identification, we can define the additive functor
\begin{equation}
\la{ResW}
 \Mod(\A_q \rtimes W)\,\to\, \Mod(\A_q^W)\ ,\quad M \mapsto e_{+} M\ ,
\end{equation}
that assigns to a $W$-equivariant $\A_q$-module its subspace of $W$-invariant elements viewed as a module over $\A_q^W$. 
The next result is well known for the  algebra  $ \A_q^{\rm alg} := \O_{\rm alg}(\c^*) \rtimes_q \Z $ which is an algebraic (polynomial) analogue\footnote{The algebra $\A_q^{\rm alg}  $ is usually referred to as a quantum Weyl algebra.} of $ \A_q = \O_{\rm an}(\c^*)\rtimes_q \Z $. The analytic case easily reduces to the algebraic one as $ \A_q^{\rm alg}  $ is naturally a subalgebra of $\A_q  $.
\begin{lemma}
 \la{weylalg}
The functor \eqref{ResW} is an equivalence of categories, its inverse being given by
$$
\A_q \otimes_{\A_q^{W}}\,(\mbox{---}):\ \Mod(\A_q^W)\, \to\, \Mod(\A_q \rtimes W)
$$
\end{lemma}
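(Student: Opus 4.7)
The plan is to realize the lemma as a classical Morita equivalence induced by the idempotent $e_+$. Since $|W|=2$ is invertible, $e_+=(1+s)/2\in\A_q\rtimes W$ is an idempotent with $e_+(\A_q\rtimes W)e_+\cong\A_q^W$ (already established in the paragraph preceding the lemma). A direct computation shows that every element of $(\A_q\rtimes W)e_+$ has the form $ae_+$ with $a\in\A_q$ (using $se_+=e_+$) and that the map $a\mapsto ae_+$ is an isomorphism of $(\A_q\rtimes W,\A_q^W)$-bimodules $\A_q\xrightarrow{\sim}(\A_q\rtimes W)e_+$. By the standard Morita theorem for idempotents, the functor $M\mapsto e_+M$ is therefore an equivalence with quasi-inverse $(\A_q\rtimes W)e_+\otimes_{\A_q^W}(-)\cong\A_q\otimes_{\A_q^W}(-)$, \emph{provided} that $e_+$ is a full idempotent, i.e.\ the two-sided ideal $J:=(\A_q\rtimes W)e_+(\A_q\rtimes W)$ equals all of $\A_q\rtimes W$.

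The substance of the proof thus reduces to verifying $J=\A_q\rtimes W$. Conjugating $e_+$ by the generators $z$ and $\xi$ of $\A_q$ and using the relations $sz=z^{-1}s$ and $s\xi=\xi^{-1}s$, one computes
$$ze_+z^{-1}=\tfrac{1}{2}(1+sz^{-2}),\qquad \xi e_+\xi^{-1}=\tfrac{1}{2}(1+s\xi^{-2}),$$
so that $(1-z^2)s=2(e_+-ze_+z^{-1})$ and $(1-\xi^2)s=2(e_+-\xi e_+\xi^{-1})$ both lie in $J$. Multiplying on the left and right by arbitrary elements of $\A_q$, we deduce that $Is\subseteq J$, where $I\subseteq\A_q$ is the two-sided ideal generated by $1-z^2$ and $1-\xi^2$.

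The crucial step is to show $I=\A_q$. Using $\xi z\xi^{-1}=qz$, we have $\xi(1-z^2)\xi^{-1}=1-q^2z^2\in I$, and subtracting $1-z^2$ gives $(1-q^2)z^2\in I$, whence $z^2\in I$ since $q^2\neq 1$. Symmetrically, $z\xi^2z^{-1}=q^{-2}\xi^2$ yields $\xi^2\in I$. Therefore $1=(1-z^2)+z^2\in I$, and hence $s\in J$. Combined with $2e_+=1+s\in J$, this gives $1\in J$, i.e.\ $J=\A_q\rtimes W$, as required. Since all the elements used ($z$, $\xi$, $s$ and their commutators) lie inside $\A_q^{\rm alg}\rtimes W$, the same argument settles the algebraic case referred to in the statement.

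The main obstacle, or rather the indispensable ingredient, is precisely the fullness of $e_+$: this is where the quantum deformation is essential. In the classical commutative limit $q=1$, the ideal generated by $1-z^2$ and $1-\xi^2$ cuts out the non-empty fixed locus of the $W$-action on $\c^*\times\c^*$ and is \emph{proper}, so the analogous Morita statement for $A\rtimes W$ versus $A^W$ fails (as it must, since the $W$-action on the Tate curve is ramified). The relation $\xi z=qz\xi$ with $q^2\neq 1$ is exactly what removes this fixed-point obstruction and makes $e_+$ a full idempotent in $\A_q\rtimes W$.
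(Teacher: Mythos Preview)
Your proof is correct and follows the same overall strategy as the paper: reduce the equivalence to Morita theory by showing that the idempotent $e_+$ is full, i.e.\ that the two-sided ideal it generates is all of $\A_q\rtimes W$. The difference lies in how fullness is established. The paper argues indirectly: it cites the (known) simplicity of the algebraic quantum Weyl algebra $\A_q^{\rm alg}\rtimes W$ for $q$ not a root of unity, so any nonzero two-sided ideal---in particular the one generated by $e_+$---must be everything; since $\A_q^{\rm alg}\rtimes W$ sits as a unital subalgebra in $\A_q\rtimes W$, the same conclusion transfers. You instead verify fullness by an explicit computation, exhibiting $1$ as a combination of conjugates of $e_+$ via the identity $(1-q^2)z^2=\xi(1-z^2)\xi^{-1}-(1-z^2)$. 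Your route is more self-contained (it does not appeal to the simplicity result) and makes transparent exactly where the hypothesis $q^2\neq 1$ enters; the paper's route is terser but relies on a black-box fact. Both are valid, and your closing remark about the failure at $q=1$ nicely complements the paper's footnote on the algebraic analogue.
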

\begin{proof}
Lemma can be restated by saying that the algebra $ \A_q\rtimes W$ is Morita equivalent to $\A_q^W $. To prove this, by standard Morita theory (see \cite[3.5.6]{MR01}), it suffices to
check that the idempotent $ e_+ $ generates the whole $ \A_q \rtimes W $ as its two-sided ideal.
This last condition holds for $\A_q^{\rm alg} \rtimes W $, since $\A_q^{\rm alg} \rtimes W $
is a simple algebra (has no proper two-sided ideals), if $q$ is not a root of unity. But then
it also holds for $ \A_q \rtimes W $, since $ \A_q^{\rm alg} \rtimes W $ is a unital
subalgebra of $ \A_q \rtimes W $ containing $e_+$.
\end{proof}
Now, combining \eqref{funeqW} with Morita equivalence \eqref{ResW}, we get the equivalence
\begin{equation}
\la{funeq2}
\Coh_W(E_q)\,\xrightarrow{\sim}\,\Mod^{\rm f.p.}_{A^W}(\A_q^W)\ ,\quad \cF \mapsto 
H^0_{\rm an}(\c^*, \,\pi^* \cF)^{W}\,,
\end{equation}
that allows us to describe the $W$-equivariant coherent
sheaves on $E_q$ in terms of $\A_q^W$-modules. Recall that
$ \Ell_G^*(F_m) $ is defined to be the subsheaf of $W$-invariant sections of the coherent sheaf
$ \Ell_T^*(F_m)$ (see \eqref{TWG}). In the next theorem, we describe $ \Ell_G^*(F_m) $ explicitly as an 
$\A_q^W$-submodule of $A$, where the action of $ \A_q^W $ on $A$ is obtained by restricting the
natural action of $ \A_q $.
\begin{theorem}
\la{ThElT}
Under the equivalence \eqref{funeq2}, the $W$-equivariant  sheaf $\,\Ell_T^*(F_m)\,$ maps to the $ \A_q^W$-module representing the $G$-equivariant elliptic cohomology of $ F_m:$
\begin{equation}
\la{EllG}
\widetilde{\Ell}_G^*(F_m)\,\cong\,
A^W + \,A^W (\Theta(z) - \Theta(z^{-1}))\,\vartheta(z)^{2m}\ \subseteq \ A\,,
\end{equation}
where $A^W$ is the subspace of $W$-invariant functions in $A = \O_{\rm an}(\c^*)$ and
$ \vartheta(z) \in A[z^{\pm 1/2}] $ is the Jacobi theta function
\begin{equation}
\la{jacth}
\vartheta(z) :=  (z^{1/2}-z^{-1/2})\,\prod_{n>0}(1-q^n z)(1-q^{n}z^{-1})
\end{equation}
\end{theorem}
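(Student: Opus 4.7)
The plan is to bootstrap from Theorem~\ref{ThTell}, which realizes the $T$-equivariant elliptic cohomology as the $\A_q$-module $A \times_{A/\langle \Theta \rangle^{2m+1}} A$; under the equivalence \eqref{funeq2}, the $G$-equivariant version is obtained by taking $W$-invariants. So the task decomposes into (i) making the $W$-action on the fibre product explicit, (ii) computing the invariants as an $\A_q^W$-submodule of $A$, and (iii) matching this submodule with the closed form on the right-hand side of \eqref{EllG}.

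For step (i), the description $F_m \cong \Sigma E_{2m}(T)$ of Lemma~\ref{Teq} arises from $G/T = (G/T)^T \ast T \cong \bS^0 \ast T$, under which the $W$-action on $G/T$ swaps the two $T$-fixed points; accordingly, $W$ swaps the two apex points of the suspension, i.e.\ the two copies of $\mathrm{pt}$ in the homotopy pushout used in the proof of Theorem~\ref{ThTell}. Combining this with the $W$-action $z \mapsto z^{-1}$ on $E_q$ induced from inversion on $T$, the Weyl group acts on the fibre product by
$$
s \cdot (f(z), g(z)) = (g(z^{-1}), f(z^{-1})).
$$
For step (ii), $W$-invariance forces $g(z) = f(z^{-1})$, so the invariant pairs are $(f(z), f(z^{-1}))$ and the fibre product condition reduces to $f(z) \equiv f(z^{-1}) \pmod{\Theta(z)^{2m+1}}$. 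Projection on the first component identifies $\widetilde{\Ell}_G^*(F_m)$ as $\A_q^W$-module with
$$
M := \{\, f \in A : f(z) - f(z^{-1}) \in \Theta(z)^{2m+1} A\,\} \subseteq A.
$$

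For step (iii), I would decompose $f = f_{+} + f_{-}$ into $W$-invariant and $W$-anti-invariant parts; the congruence reduces to $f_{-} \in \Theta(z)^{2m+1} A$, so $M = A^W \oplus N$, where $N$ is the space of anti-invariants in $A$ divisible by $\Theta(z)^{2m+1}$. The key algebraic input is the pair of functional equations $\vartheta(z^{-1}) = -\vartheta(z)$ and $\Theta(z) = -z^{1/2}\vartheta(z)$, which follow directly by comparing \eqref{Theta} and \eqref{jacth} and together yield $\Theta(z^{-1}) = -z^{-1}\Theta(z)$. Given $f_{-} = \Theta(z)^{2m+1} g$ with $g \in A$, anti-invariance of $f_{-}$ translates to $g(z^{-1}) = z^{2m+1} g(z)$; setting $g_1 := z^{m+1} g$ turns this into $g_1(z^{-1}) = z^{-1} g_1(z)$, which forces $g_1(-1) = 0$ and hence $g_1(z) = (z+1) h(z)$ for some $h \in A$. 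Back-substitution then shows $h \in A^W$, and the closed-form identity
$$
(\Theta(z) - \Theta(z^{-1}))\,\vartheta(z)^{2m} = z^{-m-1}(z+1)\,\Theta(z)^{2m+1}
$$
(derived from the same functional equations) exhibits $(\Theta(z) - \Theta(z^{-1}))\vartheta(z)^{2m}$ as a free $A^W$-generator of $N$, yielding the asserted description of $\widetilde{\Ell}_G^*(F_m)$.

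The main obstacle is this last step: producing the explicit free $A^W$-generator of $N$ in closed form. The difficulty lies not in the conceptual structure but in careful bookkeeping of the $W$-transformation laws of $\Theta$ and $\vartheta$, and in reconciling the half-integer powers of $z$ implicit in \eqref{jacth} with the integer-power structure of $A$. Once these functional equations are established, the remaining manipulations are elementary.
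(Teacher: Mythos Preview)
Your proof is correct and follows essentially the same route as the paper: identify the $W$-action on the fibre product $A \times_{A/\langle\Theta\rangle^{2m+1}} A$ as $s\cdot(f,g) = (g(z^{-1}),f(z^{-1}))$, take $W$-invariants, and reduce to showing that the anti-invariant elements of $A$ lying in $\Theta^{2m+1}A$ form a free rank-one $A^W$-module on the generator $(\Theta(z)-\Theta(z^{-1}))\vartheta(z)^{2m}$. The only difference is packaging---the paper rewrites the fibre product as the $(\A_q\rtimes W)$-submodule $Ae_+ + A\Theta^{2m+1}e_-$ of $A[W]$ and manipulates idempotents, whereas you project to the first coordinate and solve the functional equation $g(z^{-1}) = z^{2m+1}g(z)$ directly; your explicit derivation of the generator identity $(\Theta(z)-\Theta(z^{-1}))\vartheta(z)^{2m} = z^{-m-1}(z+1)\Theta(z)^{2m+1}$ is in fact more detailed than the paper's corresponding step.
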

\begin{proof}
Observe that the $T$-space $ F_m $  comes together with a natural $T$-equivariant map
\begin{equation}
\la{FmT1}
 (G/T)^T \,\into\, (G/T)^T \ast E_{2m}(T)\,\cong\, F_m(G,T)\,,   
\end{equation}
where $(G/T)^T \subset G/T $ is the set of $T$-fixed points in $G/T$ (see \eqref{FmT}). On $T$-equivariant elliptic cohomology, the map \eqref{FmT1} induces an injective map $\,\Ell_T^*(F_m) \into \Ell^*_T [(G/T)^T]\,$, which under the isomorphism \eqref{cohFm} of Theorem~\ref{ThTell}, corresponds to the canonical inclusion
\begin{equation}
\la{injFm}
 \O_{E_q} \times_{\O_{E_q}/{\mathcal J}^{2m+1}}  \O_{E_q} \,\into\, \O_{E_q} \times \O_{E_q} 
\end{equation}
Now, the map \eqref{FmT1} is also equivariant under the action of $W$ which is given on $ (G/T)^T = \bS^0 $  simply by transposition of points.
It follows that \eqref{injFm} is a morphism of $W$-equivariant sheaves on $E_q$ that, under equivalence
\eqref{funeqW}, corresponds to the $W$-equivariant inclusion
$\,A \times_{A/\langle \Theta\rangle^{2m+1} } A \into A \times A \,$, where $W$ acts on $A \times A$ by
$s \cdot (f(z), g(z)) =  (g(z^{-1}), f(z^{-1}))$.
As a $ (\A_q \rtimes W)$-module, the product $ A \times A $ is thus isomorphic to 
$ A[W] := A \otimes \c W $, where the action of $\A_q \rtimes W$ is given by
\begin{eqnarray}\la{relAW}
a \cdot (f(z) \otimes w) &=& 
a(z)f(z) \otimes w\ ,\nonumber\\
\xi \cdot (f(z) \otimes w) &=& f(qz) \otimes w\ ,\\
s \cdot (f(z) \otimes w) &=& f(z^{-1}) \otimes sw\,.\nonumber
\end{eqnarray}
Choosing a basis in $\c W$ consisting of the idempotents $ \{e_+,\, e_{-}\}$, we can describe $ \widetilde{\Ell}_T^*(F_m) $ as the $(\A_q \rtimes W)$-submodule of $A[W]$ 
\begin{equation}
\la{EllW}
 \widetilde{\Ell}_T^* (F_m) \,\cong\, A \,e_{+} + A \,\Theta(z)^{2m+1}\, e_{-}\,, 
\end{equation}
where the isomorphism is explicitly given by $\,(f,g) \mapsto (f+g) e_{+} + (f-g) e_{-} \,$. Now, applying to \eqref{EllW} the restriction functor \eqref{ResW} and using the (obvious) algebraic
identities for theta functions $\vartheta(z)= - z^{-1/2} \Theta(z)$ and $\Theta(z^{-1}) = - z^{-1} \Theta(z)$, we get
\begin{eqnarray*}
  \widetilde{\Ell}_T^*(F_m)^W & \cong & e_+ A \,e_{+} + e_+ A \,\Theta(z)^{2m+1}\, e_{-} \\
  & = & e_+ A^W + e_+ A \,\Theta(z) \vartheta(z)^{2m} e_{-}\\
  & = & e_+ A^W + e_+ A \,\Theta(z) e_{-} \vartheta(z)^{2m} \\
  & = &e_+ A^W + e_+ A \,e_+ \,(\Theta(z)-\Theta(z^{-1}))\, \vartheta(z)^{2m}\\
  & = &e_+ \left(A^W + A^W \,(\Theta(z)-\Theta(z^{-1}))\, \vartheta(z)^{2m}\right)\,,
\end{eqnarray*}
which, with our identifications $ \widetilde{\Ell}_G^*(F_m) = \widetilde{\Ell}_T^*(F_m)^W $ (see \eqref{TWG}) 
and $\, e_+(\A_q \rtimes W)e_+ = \A_q^W $, is precisely the isomorphism \eqref{EllG}.
\end{proof}
\subsection{Elliptic cohomology with twisted coefficients}
\la{S7.3}
The coherent sheaves $ \Ell_T^*(F_m) $ (and a fortiori $\, \Ell_G^*(F_m) $) do not have nontrivial global sections. Indeed, by Theorem~\ref{ThTell}, $\,\Ell_T^*(F_m)$ fits in the short exact sequence in $\Coh(E_q)$:
\begin{equation}
\la{seqs}
0 \to \Ell_T^*(F_m) \,\to\, \O_{E_q} \oplus \O_{E_q}\,  \to\, \O_{E_q}/{\mathcal J}^{2m+1} \to 0
\end{equation}
that shows at once that $ H_{\rm an}^0(E_q, \Ell^*_T(F_m)) \cong \c \,$ for all $ m \ge 0 $. 
With a little more effort, using the long exact cohomology sequence associated to \eqref{seqs}
we can also find that $\, H_{\rm an}^1(E_q, \Ell^*_T(F_m)) \cong \c^{2m+2} $, which ---
as a $W$-module --- admits decomposition
\begin{equation}
\la{Hcoh}
H_{\rm an}^1(E_q, \Ell^*_T(F_m))\,\cong\, \c_+^{\oplus (m+1)}\,\oplus\, \c_{-}^{\oplus (m+1)}\,,
\end{equation}
where `$\c_{+}$' and `$\c_{-}$' denote the trivial and the sign representations of $W$, respectively.

A much richer picture emerges if we twist the elliptic cohomology sheaves $ \Ell_T^*(F_m) $ with 
the Looijenga line bundle $ \cL $ on $E_q$ (see definitions \eqref{ELLL} and \eqref{ELLLW}).
Under the equivalence \eqref{funeq}, this line bundle corresponds to the rank one free $A$-module $ \tilde{\cL} = A\,v $,  where the action of $\A_q$ and  $W$ are determined by the relations $\, \xi\cdot v = q\, z^2\, v$ and $ s \cdot v = v $ ({\it cf.} Example~\ref{Ex222}).
Since  \eqref{funeq} preserves tensor products, the tensor powers $\cL^n = \cL^{\otimes n} $ of $\cL$ in $ \Coh(E_q)$ correspond to the $\A_q$-modules $\widetilde{\cL}^n = A \,v_n$ with
$\xi \cdot v_n = q^n \,z^{2n}\,v_n$ and 
$ s \cdot v_n = v_n $.
By \eqref{H01}, we can then identify the spaces of global sections of these line bundles as
\begin{equation}\la{glsn}
H_{\rm an}^0(E_q,\,\cL^n)\,\cong\, 
\{f(z) \in A\ :\ f(qz) = q^{-n}\,z^{-2n}\,f(z)\}\ ,\quad \forall\,n \ge 0\,.
\end{equation}
Following \cite{Lo77}, we set
\begin{equation}\la{SE0}
S(E) := \bigoplus_{n \ge 0}\, H_{\rm an}^0(E_q,\,\cL^n) \,, 
\end{equation}
which, with identifications \eqref{glsn}, is a graded subalgebra of $A$ stable under the action of $W$. 
To describe this subalgebra we decompose it as the direct sum of $W$-invariants and anti-invariants:
\begin{equation}\la{SE}
 S(E) \,=\,S(E)^W \oplus\, S(E)^{-W}   
\end{equation}
Then, by Looijenga Theorem (see\cite[(3.4)]{Lo77}), we know that $ S(E)^W $ is a free polynomial algebra on $ 2 $ generators, while $ S(E)^{-W} $ is a free module over $S(E)^W$ of rank one.
The generators of $ S(E)^W $ and $S(E)^{-W}$ can be explicitly given in terms of the Jacobi theta function \eqref{jacth}: namely, $S(E)^W$  is generated (as an algebra) by  $ \vartheta^2(z) $ and $\vartheta^2(-z) $, which are both invariant functions in $S(E)$ of degree $1$, while $ S(E)^{-W}$ is generated (as a module) by the function $\vartheta(z^2) $ which is an anti-invariant in $ S(E)$ of degree $2$. %Thus $\,S(E) \cong  \c[\vartheta^2(z), \vartheta^2(-z)]\,\oplus\,  \c[\vartheta^2(z), \vartheta^2(-z)]\,\vartheta(z^2)\,$.

Now to state our last result in this section we recall the definitions  of equivariant
elliptic cohomology with twisted coefficients: see formulas \eqref{ELLL} and \eqref{ELLLW} (with $ \M_T = E_q$). 
For $ X = G/T $, it is well known that  (see, e.g., \cite{Ga14}):
\begin{equation}
\la{Ell0}
\ELL_G^*(G/T, {\mathscr L}) \cong \ELL_T^*({\rm pt}, \cL) = S(E)
\end{equation}
We extend this result to the quasi-flag manifolds $ F_m = F_m(G,T) $.
\begin{theorem}
\la{EllTw}
The natural maps 
$$
G/T = F_0(G,T) \to F_1(G,T) \to \ldots \to F_{m-1}(G,T) \to F_m(G,T) \to \ldots
$$
induce injective homomorphisms on twisted elliptic cohomology:
$$
\ldots \into \ELL_G^*(F_m, {\mathscr L}) \into \ELL_G^*(F_{m-1}, {\mathscr L}) \into \ldots \into
\ELL_G^*(G/T, \cL)\,.
$$
Under the identification \eqref{Ell0}, the composite map $\ELL_G^*(F_m, {\mathscr L})\into \ELL_G^*(G/T, \cL)$ corresponds to the inclusion $\,S(E)^W \,\oplus \, \vartheta^{2m}(z)\,S(E)^{-W} \into S(E) \,$, so that
\begin{equation}
\la{ELLG}
\ELL_G^*(F_m, {\mathscr L}) \,\cong\, 
S(E)^W \,\oplus \, \vartheta^{2m}(z)\,S(E)^{-W}\,,
\end{equation}
where $ \,S(E)^W = \c[\vartheta^2(z), \vartheta^2(-z)] \,$ and $\, S(E)^{-W} = \c[\vartheta^2(z), \vartheta^2(-z)]\,\vartheta(z^2)\,$.
\end{theorem}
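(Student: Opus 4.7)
The strategy is to reduce the twisted cohomology computation to an algebraic intersection inside $A = \O_{\mathrm{an}}(\c^*)$ and exploit the $W$-grading. Under the tensor equivalence \eqref{funeq2}, Theorem~\ref{ThElT} identifies $\widetilde{\Ell}_G^*(F_m)$ with $A^W + A^W\cdot h \subset A$, where $h := (\Theta(z)-\Theta(z^{-1}))\vartheta(z)^{2m}$. Using the identities $\Theta(z)=-z^{1/2}\vartheta(z)$ and $\vartheta(z^{-1})=-\vartheta(z)$, one checks
$$h \,=\, -(z^{1/2}+z^{-1/2})\,\vartheta(z)^{2m+1} \,=\, -(z-z^{-1})\,P(z)\,\vartheta(z)^{2m} \,\in\, A,$$
where $P(z) := \prod_{n\ge 1}(1-q^n z)(1-q^n z^{-1})$; note $h$ is $W$-anti-invariant. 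Tensoring with $\widetilde{\cL}^n = A v_n$ (where $\xi\cdot v_n = q^n z^{2n} v_n$ and $s\cdot v_n = v_n$) and using \eqref{H01}, the cohomology unpacks as
$$\ELL_G^*(F_m,\cL)\,\cong\,(A^W + A^W\cdot h)\cap S(E),$$
where $S(E) = \bigoplus_{n\ge 0} S(E)_n \subset A$ is cut out by the transformation law $f(qz)=q^{-n}z^{-2n}f(z)$.

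Since $A^W$ is $W$-invariant while $A^W\cdot h$ is $W$-anti-invariant, and $S(E) = S(E)^W \oplus S(E)^{-W}$, the intersection splits:
$$\ELL_G^*(F_m,\cL)\,\cong\, S(E)^W \,\oplus\,\bigl(A^W\cdot h \cap S(E)^{-W}\bigr).$$
The main technical task is therefore to identify $A^W\cdot h \cap S(E)^{-W}$ with $\vartheta(z)^{2m}\cdot S(E)^{-W}$.

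For the inclusion $\vartheta(z)^{2m}\cdot S(E)^{-W} \subseteq A^W\cdot h$, since $S(E)^{-W}=S(E)^W \cdot \vartheta(z^2)$ by Looijenga, it suffices to show $\vartheta(z)^{2m}\,\vartheta(z^2) \in A^W\cdot h$. Using $\vartheta(z^2) = (z-z^{-1})Q(z^2)$ with $Q(w) := \prod_{n\ge 1}(1-q^n w)(1-q^n w^{-1})$, I will verify the identity
$$\vartheta(z)^{2m}\,\vartheta(z^2) \,=\, -\,\frac{Q(z^2)}{P(z)}\cdot h,$$
and a zero-counting argument on $\c^*$ (each simple zero $z=q^k$ of $P$, $k\ne 0$, is matched by a simple zero of $Q(z^2)$) shows $Q(z^2)/P(z) \in A$; being $W$-invariant, it lies in $A^W$. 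For the reverse inclusion, take $g_0 \in A^W$ with $g_0 h \in S(E)^{-W}_n$; the quotient $g_0 h/\vartheta(z)^{2m} = -g_0(z-z^{-1})P(z)$ lies in $A$ and is $W$-anti-invariant, and using $\vartheta(qz)^{2m} = q^{-m}z^{-2m}\vartheta(z)^{2m}$, it transforms as a section of $\cL^{n-m}$. If $n \ge m$, this places $g_0 h/\vartheta(z)^{2m}$ in $S(E)^{-W}_{n-m}$ and we conclude $g_0 h \in \vartheta(z)^{2m}S(E)^{-W}$. If $n < m$, the standard vanishing $H^0(E_q,\cL^{n-m}) = 0$ (since $\deg\cL^{n-m} < 0$) forces $g_0 h = 0$.

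The injectivity and compatibility claims then follow formally: under the embedding $\ELL_G^*(F_m,\cL) \hookrightarrow S(E)$ afforded by our computation, one has $\vartheta(z)^{2m_2}S(E)^{-W} \subseteq \vartheta(z)^{2m_1}S(E)^{-W}$ whenever $m_1 \le m_2$ because $\vartheta(z)^{2(m_2-m_1)} \in S(E)^W$, giving the decreasing chain of inclusions. Compatibility with the natural maps $G/T \to F_m$ from the Ganea construction follows from the naturality of Theorem~\ref{ThElT}. I expect the main obstacle to be the reverse inclusion in the key identity---specifically the careful $q$-transformation bookkeeping needed to read off the $\cL^{n-m}$ weight and the appeal to vanishing of negative-degree line bundles on the Tate curve.
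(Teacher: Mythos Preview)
Your proof is correct and reaches the same conclusion, but it is organized differently from the paper's argument. The paper stays at the $T$-equivariant level throughout: it uses the $A[W]$-model $\widetilde{\Ell}_T^*(F_m)=A\,e_+\oplus A\,\Theta^{2m+1}e_-$ from the \emph{proof} of Theorem~\ref{ThElT}, tensors with $\widetilde{\cL}^n$, and identifies $H^0$ as $S(E)\,e_+\oplus (S(E)\,|\,\Theta^{2m+1})\,e_-$; the main divisibility step is then to show that $(S(E)\,|\,\Theta^{2m+1})=\vartheta^{2m+2}S(E)^W+\vartheta^{2m}S(E)^{-W}$, after which taking $W$-invariants immediately yields~\eqref{ELLG}. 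You instead start from the \emph{statement} of Theorem~\ref{ThElT} (the $\A_q^W$-module $A^W+A^W h\subset A$) and work directly at the $G$-level, reducing everything to the single identity $A^W h\cap S(E)^{-W}=\vartheta^{2m}S(E)^{-W}$, which you verify by an explicit theta-function calculation. Your route is more hands-on and gives slightly more detail at the divisibility step; the paper's route has the by-product of computing $\ELL_T^*(F_m,\cL)$ as well (formula~\eqref{ELLTT}).

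One point in your argument deserves a sentence of justification. When you write ``the cohomology unpacks as $(A^W+A^W h)\cap S(E)$'', you are implicitly using that the identification $M^W\cong A^W+A^W h\subset A$ of Theorem~\ref{ThElT} intertwines the $\xi$-action (needed for~\eqref{H01}) with the standard shift $f(z)\mapsto f(qz)$ on $A$. This is true---under the map $a\,e_++c\,e_-\mapsto a+c$ one has $\xi(a\,e_++c\,e_-)=a(qz)\,e_++c(qz)\,e_-\mapsto (a+c)(qz)$---but since $\xi\notin\A_q^W$ and Theorem~\ref{ThElT} only records the $\A_q^W$-module structure, you should say this explicitly. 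Equivalently, you could note that $H^0(E_q,\Ell_T^*(F_m)\otimes\cL^n)^W=\Ker(\xi_{\rm new}-1)\cap M^W$ (using that $s\,\xi_{\rm new}\,s=\xi_{\rm new}^{-1}$ makes $\Ker(\xi_{\rm new}-1)$ $W$-stable) and then pass to $A$ via the identification above.
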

\begin{proof}
We use the description of $ \widetilde{\Ell}_T^*(F_m) $ given in the proof of Theorem~\ref{ThElT}: namely, $\, \widetilde{\Ell}_T^*(F_m) = A\,e_+ \,+\,A \,\Theta^{2m+1}\, e_{-} \,$ as an $ (\A_q\rtimes W) $-submodule of  $ A[W] = A\,e_+ + A\,e_{-}  \,$. Under the equivalence \eqref{funeq}, the twisted sheaves $\,\Ell_T^*(F_m) \otimes \cL^n \,$ can then be described by
\begin{equation}\la{Elln}
\widetilde{\Ell}_T^*(F_m) \otimes_A \widetilde{\cL}^n \,=\, A\, v_n \otimes e_+ \,\oplus\, A \,\Theta^{2m+1}\, v_n\otimes e_{-} 
\end{equation}
and we can compute their global sections using formula \eqref{H01}: 
\begin{eqnarray*}\la{HH0}
H^0_{\rm an}(E_q,\, \Ell_T^*(F_m) \otimes \cL^n) &\cong& \Ker(\xi-\id:\,\widetilde{\Ell}_T^*(F_m) \otimes_A \widetilde{\cL}^n\,)\\
& \cong & \Ker(\xi- \id:\, A v_n \otimes e_+) \oplus \Ker(\xi-\id:\, A \Theta^{2m+1} v_n \otimes e_-)\\
&\cong& H^0_{\rm an}(E_q, \cL^n)\,e_+ \,\oplus\, 
(H^0_{\rm an}(E_q, \cL^n)\,|\, \Theta^{2m+1}) \, e_{-}\,,
\end{eqnarray*}
where $ (H^0_{\rm an}(E_q, \cL^n)\,|\, \Theta^{2m+1}) $ denotes the subspace of all sections in
$ H^0_{\rm an}(E_q, \cL^n) $ that are divisible by $ \Theta^{2m+1} $  under the identification \eqref{glsn}. Summing up over all $ n\ge 0 $, we find 
$$ 
\ELL_T^*(F_m, \cL) \,\cong\, S(E)\,e_+ \,\oplus\, (S(E)\,|\,\Theta^{2m+1})\, e_{-}\,,
$$
where $S(E)$ is the Looijenga ring \eqref{SE0}. To compute $(S(E)\,|\,\Theta^{2m+1})$ we observe that
an element of $ S(E) $ is divisible by $\Theta^{2m+1}$ in $A$ if and only if its invariant
and anti-invariant parts in $S(E)^W$ and $S(E)^{-W}$ are both divisible by
$\Theta^{2m+1}$. Now, for $ f(z) \in S(E)^W$, $\Theta^{2m+1}(z)$ divides $f(z)$ if and only if $\vartheta^{2m+2}(z)$ divides $f(z)$, while for $ f(z) \in S(E)^{-W}$, $\Theta^{2m+1}(z)$ divides $f(z)$ if and only if $\vartheta^{2m}(z)\,\vartheta(z^2)$ divides $f(z)$. Thus
\begin{equation}
\la{ELLTT}
\ELL_T^*(F_m, \cL) \,\cong\, S(E)\,e_+ \,\oplus\,(\vartheta^{2m+2}(z)\, S(E)^W +  \vartheta^{2m}(z)\,S(E)^{-W})\,e_{-} 
\end{equation}
Now, applying to \eqref{ELLTT} the restriction functor \eqref{ResW}, we get 
\begin{eqnarray*}
\ELL_T^*(F_m, \cL)^W & \cong & 
e_{+}\,S(E)\,e_+ \,\oplus \,e_+\,\left(\vartheta^{2m+2}(z)\,S(E)^W  + \vartheta^{2m}(z)\,S(E)^{-W}\right) e_{-}\\
& \cong & 
S(E)^W \, \oplus \, \vartheta^{2m}(z)\,S(E)^{-W}
\end{eqnarray*}
which gives  \eqref{ELLG} since $ \ELL_T^*(F_m, \cL)^W = \ELL_G^*(F_m, \cL)$. To complete the proof it suffices to note that the map of spaces $ G/T  \to F_m $ induces the natural inclusion
$$
S(E)\,e_+ \,\oplus\,(\vartheta^{2m+2}(z)\, S(E)^W +  \vartheta^{2m}(z)\,S(E)^{-W})\,e_{-} 
 \,\into\,
S(E)\,e_+\,\oplus\, (\vartheta^{2}(z)\,S(E)^W  + S(E)^{-W})\,e_{-} 
$$
as a map representing  $\ELL_T^*(F_m, {\mathscr L}) \to \ELL_T^*(G/T, \cL)$ under the isomorphism \eqref{ELLTT}.
When restricted to $W$-invariants this yields the inclusion
$$
S(E)^W  \oplus \vartheta^{2m}(z)\,S(E)^{-W} \into S(E)^W \oplus S(E)^{-W} = S(E) 
$$
that represents $\ELL_G^*(F_m, {\mathscr L})\into \ELL_G^*(G/T, \cL)$.
\end{proof}
\begin{remark}
\la{RemEll}
The above calculation of elliptic cohomology suggests a natural algebraic definition of
quasi-invariants in the elliptic case ({\it cf.} \eqref{ELLG}). This differs, however, 
from the definition of elliptic quasi-invariants that has already been used in the literature (see, e.g., the beautiful work of O. Chalykh on Macdonald's conjectures \cite{Ch02}). The 
difference seems to be an instance of `elliptic-elliptic' duality studied in the theory of integrable systems (see, e.g., \cite{KS19}).
\end{remark}

\section{Topological Gorenstein duality}
\la{S8}
The realization of algebras of quasi-invariants raises natural questions about homotopy-theoretic analogues (refinements) of basic properties and structures associated with these algebras. In this section, we make first steps in this direction by showing that the spaces of quasi-invariants $ X_m(G,T) $  satisfy Gorenstein duality in the sense of stable homotopy theory. Our main result --- Theorem~\ref{ThGor} --- should be viewed as a 
topological analogue of Theorem~\ref{TGoren} on Gorensteinness of rings of quasi-invariants.
For reader's convenience, we begin this section with a brief introduction to spectral (`higher') algebra, where we recall basic definitions concerning  duality and regularity properties of commutative ring spectra. Our main reference is the beautiful paper \cite{DGI06}, where many concepts that we use were originally introduced (we also recommend the lecture notes \cite{G18} which supplement \cite{DGI06} with many interesting examples). The readers familiar with this material may skip Section~\ref{S7.1} and go directly to Sections~\ref{S7.2} and \ref{S7.3}.

\subsection{Spectral algebra}
\la{S7.1}
Following \cite{DGI06}, we choose the category of {\it symmetric spectra} as our basic model for 
stable homotopy theory. This category can be succinctly described as the (stable model) category $ \Mod_{\bS} $ of modules\footnote{Unfortunately, the term `$\bS$-module' in reference to spectra is very ambiguous: apart from symmetric, other popular types of spectra (e.g., orthogonal and EKMM ones) are also $\bS$-modules. A nice recent survey comparing properties and applications of different types of spectra can be found in \cite{D22}.} over the symmetric sphere spectrum $ \bS = ((S^1)^{\wedge n})_{n \ge 0}$ (see \cite{HHS00}). The category $\Mod_{\bS}$ is equipped with a symmetric monoidal product which is denoted  as a smash $ A \wedge B $ or tensor product $ A \otimes_{\bS} B $ (depending on the context). A {\it ring spectrum} is then, by definition, an $\bS$-algebra, i.e. an $\bS$-module  $ R $ given with two structure maps $ \bS \to R $ and $ R \wedge R \to R $ satisfying the usual unitality and associativity properties.  We denote the category of ring spectra by $\Alg_{\bS}$. There is
a natural (Eilenberg-MacLane) functor $\, H: \Alg_{\Z} \to \Alg_{\bS} $, $\,k \mapsto Hk \,$
that embeds the category $ \Alg_{\Z}$ of usual (discrete) associative rings into $\bS$-algebras by identifying a ring $k$
with its symmetric Eilenberg-MacLane spectrum $ Hk= (K(k,n))_{n \ge 0} $ (see \cite[1.2.5]{HHS00}).
The category $\,\Alg_{\bS}\,$ can be thought of as a homotopical refinement (`thickening') of  $ \Alg_{\Z} $ in the same way as the category $ \Mod_{\bS}$ is a homotopical refinement of 
the category $ \Mod_{\Z} $ of (discrete) abelian groups. 

For a ring spectrum $R \in \Alg_{\bS} $, we let $ \Mod_{R}$ denote the category of left module spectra over $R$. This is a stable model category enriched over $ \Mod_{\bS} $. The latter means that, for two $R$-modules $A$ and $B$, there is a mapping spectrum of $R$-module maps $ A \to B $ that we denote $ \Map_R(A,B) $. Moreover, if $ A $ is a right $R$-module and $B$ is a left $R$-module, there is an
associated smash product $ A \wedge_R B $ defined as the (homotopy) coequalizer $\, A\wedge R \wedge B 
\rightrightarrows A \wedge B \,$ of structure maps $ A \wedge R \to A $ and $ R \wedge B \to B $
in $ \Mod_{\bS}$. Note that both $ \Map_R(A,B) $ and $ A \wedge_R B $ are understood as `derived' objects
in the sense that their first arguments are (replaced by) cofibrant objects in $\Mod_R $. In particular,
if $A$ and $B$ are usual (discrete) modules over a usual (discrete) ring $R$, viewed as symmetric 
spectra via the Eilenberg-MacLane functor, then $\,\pi_i\, \Map_R(A,B) \cong {\rm Ext}_{R}^{-i}(A,B)\,$ and $\, 
\pi_i(A \wedge_R B) \cong {\rm Tor}^R_{i}(A,B)\,$, where $\, \pi_i \,$ stand for the (stable) homotopy groups of spectra. If $R$ is a commutative ring spectrum, then both $ \Map_R(A,B) $ and $ A \wedge_R B $ are naturally $R$-modules, i.e. objects in $ \Mod_R $.

Next, we recall that a subcategory of a (stable) model category $\M$ is called {\it thick} if it is closed under weak equivalences,  cofibration sequences (distinguished triangles) and retracts in $\M$. Further, a subcategory of $ \M $ is called {\it localizing} if it is thick and, in addition, closed under arbitrary coproducts (and hence  homotopy colimits) in $\M$. Given two objects $A$ and $B$ in $ \M$, we say that $B$ is {\it built} from $A$ if $B$ belongs to the localizing subcategory of $ \M $ generated by $A$, and $B$ is {\it finitely built} from $A$ if it belongs to the thick subcategory generated by $A$ (\cite[3.15]{DGI06}). Now, if $ \M = \Mod_R $, an $R$-module $A $ is called {\it small} if it is finitely built from $R$ in $ \Mod_R $. 
This agrees with the usual definition of small (compact) objects in $\Mod_R$: an $R$-module
$A$ is small iff $ \Map_R(A,\,-\,)$ commutes with arbitrary coproducts.

The notion of a localizing subcategory is closely related to that of cellularization. For a fixed object $A \in \Mod_R $, we say that a morphism $ f: M \to N $ in $ \Mod_R $   is an {\it $A$-cellular equivalence} if $f$ induces a (weak) equivalence on mapping spectra:
\begin{equation*}
\la{Keq}
f_*:\, \Map_R(A, M)\,\xrightarrow{\sim}\, \Map_R(A, N)
\end{equation*}
Note that every equivalence in $\Mod_R$ is automatically an $A$-cellular equivalence, but the class of
$A$-cellular equivalences is, in general, larger than that of weak equivalences.
Now, an $R$-module $B$ is called {\it $A$-cellular} if any $A$-cellular equivalence $ f: M \to N $ induces an equivalence $\,\Map_R(B, M) \xrightarrow{\sim} \Map_R(B, N)\,$. 
This terminology is motivated by the fact that the $A$-cellular modules are precisely those objects of $\Mod_R$ that are built from $A$ (see \cite[5.1.15]{Hir03}). Moreover, for any $R$-module $B$, there is a $A$-cellular module $ \Cell^R_A(B) $  together with a $A$-equivalence in $ \Mod_R$:
$$ 
\Cell^R_A(B) \to B 
$$ 
called an {\it $A$-cellular approximation\footnote{Cellularization is an example of a general model-categorical construction called right Bousfield localization (colocalization) with respect to an object $A$. In this language, $A$-cellular equivalences are called $A$-colocal equivalences, $A$-cellular objects are $A$-colocal objects,
and $A$-cellular approximations are functorial cofibrant
replacements in the $A$-colocal model structure on $ \Mod_R $ (see \cite[3.1.19]{Hir03}).}} of $B$. Such an approximation is determined by $B$ uniquely up to  canonical equivalence; we will use the simpler notation $ \Cell_A(B) $ for $ \Cell^R_A(B) $ when the ring spectrum $R$ is understood.

The above categorical notions can be used to impose some finiteness and regularity conditions on commutative ring spectra. First, we say that a morphism of commutative ring spectra $ R \to k $ is called {\it regular} if $k$ is small as an $R$-module. This definition is motivated by the fact that, in classical commutative algebra, a local Noetherian ring $ R $ with residue field $ k = R/\mathfrak{m}$ is regular iff $k$ has a finite length resolution by f.g. free $R$-modules (see \cite{Se00}); for the associated Eilenberg-MacLane spectra, the latter means that $Hk$ is finitely built from $HR$. A more flexible and technically useful condition is obtained by weakening the regularity assumption on $ R \to k$ in the following way.
%to that of proxy-smallness\footnote{This seems to be the correct analogue of the Noetherian property for ring spectra (see \cite[Sect. 5]{G18}).}:
%
\begin{defi}[\cite{DGI06}, 4.6]
\la{preg}
A morphism of commutative ring spectra $ R \to k $ is called {\it  proxy-regular} if $k$ is a {\it proxy-small} $R$-module via $ R \to k $ in the sense that there is a small $R$-module $K$ that builds $k$ and is finitely built from $k$ in $\Mod_R$. Note that if $K=k$, then $ R\to k $ is {\it regular}. On the other extreme, if $ K = R $ then $ R \to k $ is called {\it cosmall}.
\end{defi}

Let $ E := \Map_R(k,k)$ denote the endomorphism ring spectrum of $k$ viewed as a left $R$-module via the morphism $ R \to k $. There is a standard Quillen adjunction relating right $E$-modules to left $R$-modules:
\begin{equation}\la{TEadj}
(\,\mbox{--}\,) \wedge_E k\, :\, \Mod_{E^{\rm op}}\,\leftrightarrows\,\Mod_R\,:\, \Map_R(k,\,\mbox{--}\,)
\end{equation}

If $R \to k $ is regular, the functors  \eqref{TEadj} induce an 
equivalence between $ \Ho(\Mod_{E^{\rm op}})$ and the full subcategory of $ \Ho(\Mod_R)$ consisting of $k$-cellular $R$-modules (see \cite[Theorem 6.1]{G18}).

If $ R \to k $ is proxy-regular, \eqref{TEadj} does not induce an 
equivalence in general, but the counit of this adjunction still provides a $k$-cellular approximation for modules in $ \Mod_R$ (see \cite[Lemma 6.3]{G18}): %that is,
\begin{equation}
\la{kcell}
\Cell_k(M) \simeq \Map_R(k, M) \wedge_E k    
\end{equation}
Moreover, for all $R$-modules $ M $, there is a natural equivalence (see \cite[Lemma 6.6]{G18})
\begin{equation}
\la{kcell1}
\Cell_k(M) \simeq \Cell_k(R) \wedge_R M    
\end{equation}
Formula \eqref{kcell} shows that when $ R\to k $ is proxy-regular, the $k$-cellular approximation $ \Cell_k(M)$ is functorial and effectively constructible in $  \Mod_R $ ({\it cf.} \cite[Definition 4.3]{DGI06}).

Now, we come to the key definition of a Gorenstein ring spectrum that we state under
the regularity assumptions of Definition~\ref{preg} (which is a slightly less general form than in \cite{DGI06}):
\begin{defi}[{\it cf.} \cite{DGI06}, 8.1 and 8.4]
\la{defGor}
A morphism of commutative ring spectra $R \to k $ is called {\it Gorenstein of shift} $a \in \Z$, if $R \to k $ is proxy-regular and there is an equivalence of $k$-modules
\begin{equation}
\la{Gor1}
\Map_{R}(k, R) \,\simeq\, \Sigma^a k    
\end{equation}
where $\Sigma$ denotes the suspension functor on $ \Mod_k $.
\end{defi}

We will be mostly interested in ring spectra $R$ that are {\it augmented $k$-algebras} over a field $k$. For such algebras, we will always assume that $ R \to k $ is the given augmentation morphism on $R$, and we will simply say that $R$ is Gorenstein if so is $ R \to k $. The Gorenstein condition \eqref{Gor1} can be slightly refined in this case. Note that, if $R$ is a $k$-algebra, using the $k$-module structure on $R$, we can rewrite \eqref{Gor1} in the form
\begin{equation}
\la{Gor11}
\Map_{R}(k, R) \,\simeq\, \Sigma^a\, \Map_{R}(k,\, \Map_k(R,k))    
\end{equation}
Both sides of \eqref{Gor11} have natural right module structures over the endomorphism
ring $ E = \Map_R(k,k) $ but, in general, these module structures need not to agree under the equivalence \eqref{Gor11}. Following \cite{DGI06} (see also \cite[Section 18.2]{G18})), we say that an augmented $k$-algebra $R$ is {\it orientable Gorenstein} if \eqref{Gor11} is an equivalence of right $E$-modules. 

If $R$ is a local Noetherian ring of Krull dimension $d$ with residue field $ k = R/\mathfrak{m}$, then $R$ is Gorenstein (in the sense of commutative algebra) iff
\begin{equation}
\la{extd} 
{\rm Ext}^i_R(k,R) \cong \left\{
\begin{array}{cc}
k     & i= d  \\
0     & {\rm otherwise}
\end{array}
\right.
\end{equation}
The isomorphism \eqref{extd} can be written as an equivalence $\, {\rm RHom}_R(k,R) \simeq \Sigma^d k \,$ in the derived category $ \D(R)$ and thus corresponds to the Gorenstein condition \eqref{Gor1} of Definition~\ref{defGor}. In classical commutative algebra, there is another well-known characterization of Gorenstein rings in terms of local cohomology:  
\begin{equation}
\la{lcoh} 
H_{\mathfrak{m}}^i(R) \cong \left\{
\begin{array}{cc}
\Hom_k(R,k)     & i=d  \\
0     & {\rm otherwise}
\end{array}
\right.
\end{equation}
which can be viewed as a special case of Grothendieck's local duality theorem.
The following definition is a topological analogue of \eqref{lcoh}.
\begin{defi}
\la{Gordual}
An augmented $k$-algebra $R$ satisfies {\it Gorenstein duality of shift} $a$ if there is an equivalence of 
$R$-modules
\begin{equation}
\la{Gor2}
\Cell_k(R) \,\simeq\, \Sigma^a\,\Map_k(R,k)
\end{equation}
\end{defi}
While the algebraic conditions \eqref{extd} and \eqref{lcoh} are known to be equivalent, their topological analogues \eqref{Gor1} and \eqref{Gor2} are, in general, not (see, e.g., \cite[Remark 2.11]{BCHV21} for a counterexample). This necessitates two separate definitions for Gorensteinness of commutative ring spectra.

\vspace{1ex}

%Still, under additional assumptions, we have
%
%\begin{prop}[\cite{G18}, 18.1]
%\la{coconn}
%Let $R$ be an augmented coconnected $k$-algebra over field $k$ (i.e. $ \pi_0(R) \cong k $ and $ \pi_{i}(R) = 0 $ %for $i>0$). If $R$ is orientable Gorenstein, then $R$ satisfies the Gorenstein duality \eqref{Gor2}
%\end{prop}
%

The last property of ring spectra that we want to review is concerned with double centralizers. Recall, for a morphism $ R \to k $, the {\it double centralizer of $R$} is defined to be $ \hat{R} := \Map_E(k,k) $, where
$ E = \Map_R(k,k) $ is the endomorphism spectrum of $k$ in $\Mod_R$. The left multiplication on $k$ gives a morphism of ring spectra $\, R \to \hat{R} \,$, and following \cite{DGI06}, we say 
\begin{defi}[\cite{DGI06}, 4.16]
\la{dc}
$ R\to k $ is {\it dc-complete} if $ R \xrightarrow{\sim} \hat{R} $ is an equivalence in $ \Alg_{\bS}$.
\end{defi}
Note that, in algebra, a surjective homomorphism $ R \to k $ from a Noetherian commutative ring $R$ to a field $k$ is dc-complete iff $ R \cong \hat{R}_I $, where $ \hat{R}_I := \varprojlim R/I^n $ is the $I$-adic completion of $R$ with respect to the ideal $ I = \Ker(R \to k)$. This motivates the above terminology. One can show that if 
$ R \to k $ is dc-complete, the regularity properties of the ring spectra $R$ and $ E $ are strongly connected
(see, e.g., \cite[Proposition 4.17]{DGI06}).

\subsection{Spectra of quasi-invariants}
\la{S7.2}
It is well known that, if $X$ is a pointed connected topological space, the singular cochain complex $ C^*(X, \Q) $, computing cohomology of $X$ with coefficients in $\Q$, admits a commutative DG algebra model\footnote{Such a model can be constructed in a functorial way, using, for example, piecewise polynomial differential forms on $X$ defined over $\Q$ (see \cite{BG76}).}. When $\Q$ is replaced by an arbitrary field $k$, this last fact is no longer true:  in general, the cochain complex $ C^*(X,k) $ is not quasi-isomorphic to any commutative DG algebra over $k$ if ${\rm char}(k) \not= 0 $. A natural way to remedy this problem is to use commutative ring spectra -- instead of DGAs -- as models for $C^*(X,k)$. Specifically, for any commutative ring $k$, the {\it cochain  spectrum} of the space $X$ with coefficients in $k$ is defined by
({\it cf.} \cite{Man01})
\begin{equation}
\la{coch}
C^*(X, k) := \Map_{\bS}(\Sigma^{\infty}X_+, \,Hk)    
\end{equation}
where $ \Sigma^\infty X_+$ is the suspension spectrum associated to $X$, $Hk$ is the Eilenberg-MacLane spectrum of $k$, and $\Map_{\bS}$ denotes the mapping spectrum in the category of (symmetric) spectra. By definition, \eqref{coch} is a commutative ring spectrum with multiplication induced by the multiplication map on $Hk$ and the diagonal map on $X$.
In addition, following \cite{DGI06}, we introduce the {\it chain spectrum} of $X$:
\begin{equation}
\la{chain}
C_*(\Omega X, k) := Hk \wedge \Sigma^{\infty}(\Omega X)_+
\end{equation}
which is a noncommutative ring spectrum that models the singular chain complex of the based loop space of $X$. Both $C^*(X,k)$ and $ C_*(\Omega X,k) $ are augmented $k$-algebras, with augmentation on $C^*(X,k) $ induced by the basepoint inclusion $ {\rm pt} \to X $ and on $ C_*(\Omega X, k)  $ by the trivial map $ \Omega X \to {\rm pt} $. For all $ i \in \Z\,$, there are natural isomorphisms
\begin{equation}
\la{isococh}
\pi_i\,[C^*(X,k)] \cong H^{-i}(X,k)\ ,\quad
\pi_i\,[C_*(\Omega X, k)] \cong H_i(\Omega X, k)
\end{equation}
which show that $ C^*(X,k) $ and $C_*(\Omega X, k) $ are coconnective and connective spectra, respectively.

\vspace*{1ex}

We are now in position to state and prove the main theorem of this section.
\begin{theorem}
\la{ThGor}
Let $ X_m = X_m(G,T) $ be the space of $m$-quasi-invariants associated to $ G = SU(2) $.
Let $ R_m := C^*(X_m, k) $ and $ E_m := C_*(\Omega X_m, k) $ denote the cochain and chain spectra
of $X_m$ with coefficients in an arbitrary field $k$.
Then, for any $ m \ge 0 $,

$(1)$ $\, R_m $ and $\, E_m $ are proxy-regular $($Definition~\ref{preg}$)$ and dc-complete $($Definition~\ref{dc}$)$ with
$$
\Map_{R_m}(k,k) \simeq E_m\quad \mbox{and}\quad\Map_{E_m}(k,k) \simeq R_m
$$

$(2)$ $\, R_m $ is orientable Gorenstein of shift $ a= 1- 4 m \,$ $($Definition~\ref{defGor}$)$

$(3)$ $\,R_m$ satisfies Gorenstein duality of shift $ a = 1-4m\,$ $($Definition~\ref{Gordual}$)$ 
\end{theorem}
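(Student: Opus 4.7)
My approach is to deduce all three claims from the corresponding known results for the classifying space $BG$ (with $G = SU(2)$), established in \cite{DGI06}, using the Borel fibration $F_m \to X_m \xrightarrow{p_m} BG$ of Theorem~\ref{MTh1} whose fiber $F_m$ is homotopy equivalent to the sphere $S^{4m+2}$. The starting point is that $A := C^*(BG,k)$ is known to be proxy-regular and dc-complete, to satisfy the Koszul duality $\Map_A(k,k) \simeq C_*(G,k)$ (and vice versa), and to be orientable Gorenstein of shift $\dim G = 3$ over $k$. I would regard $R_m$ as an $A$-algebra via $p_m^*$ and exploit the Eilenberg--Moore equivalence
\[
C^*(F_m,k) \,\simeq\, k \otimes^{L}_A R_m,
\]
which is available because $BG$ is simply connected and $F_m$ is a finite CW complex; this equivalence is the bridge between the relative and absolute properties of $R_m$.

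For part $(1)$, the Eilenberg--Moore equivalence exhibits $k$ as a two-cell $A$-module after pullback to $R_m$. The extension $A \to R_m$ is thus small in the sense of \cite[Section 4]{DGI06} and preserves proxy-smallness of $k$: since $k$ is proxy-small over $A$, it remains proxy-small over $R_m$. This yields proxy-regularity of $R_m$ and, dually (using the looped Borel fibration $\Omega F_m \to \Omega X_m \to G$), proxy-regularity of $E_m$. The dc-completeness of $R_m$ and the double-centralizer equivalence $\Map_{R_m}(k,k) \simeq E_m$ then follow from the general double-centralizer machinery of \cite[Section 4]{DGI06} applied to the augmentation $R_m \to k$; the symmetric identity $\Map_{E_m}(k,k) \simeq R_m$ follows by the same argument applied to $E_m \to k$.

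For part $(2)$, I would invoke the Gorenstein ascent principle of \cite[Proposition 8.6]{DGI06}: if $A \to R$ is a map of commutative ring spectra such that $A$ is orientable Gorenstein of shift $a_0$ over $k$ and $R$ is orientable Gorenstein of shift $a_1$ over $A$, then $R$ is orientable Gorenstein of shift $a_0 + a_1$ over $k$. Here $A = C^*(BG,k)$ contributes $a_0 = 3$, while the relative shift of $R_m$ over $A$ equals, by the Eilenberg--Moore base change, the absolute shift of $C^*(S^{4m+2},k)$. This last shift is $-(4m+2)$: the fibre sequence $\Sigma^{-(4m+2)} k \to C^*(S^{4m+2}, k) \to k$ coming from the $2$-cell structure gives at once $\Map_{C^*(S^n)}(k, C^*(S^n)) \simeq \Sigma^{-n}k$ for any $n>0$. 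Orientability of the Borel fibration is automatic because $\pi_1(BG) = \pi_0(G) = 0$, so the monodromy acts trivially on the top cohomology of the fiber. Summing the two shifts gives
\[
a \,=\, 3 + (-(4m+2)) \,=\, 1 - 4m,
\]
as asserted.

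Part $(3)$ is then a formal consequence of $(1)$ and $(2)$: by the main duality theorem of \cite[Section 8]{DGI06}, an augmented proxy-regular, dc-complete commutative ring spectrum $R/k$ with $k$ proxy-small that is orientable Gorenstein of shift $a$ automatically satisfies Gorenstein duality of shift $a$, i.e.\ $\Gamma_{\mathfrak m} R \simeq \Sigma^a k^\vee$. All the hypotheses are supplied by $(1)$ and $(2)$, so this closes the argument. The main technical obstacle I anticipate is the careful tracking of orientations through the Eilenberg--Moore base change: one must verify that the relative orientation datum of $R_m$ over $A$, arising from the spherical fibration $F_m \to X_m \to BG$ via a Thom isomorphism, matches the absolute orientation of $C^*(S^{4m+2}, k)$ as an oriented spectrum. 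Once this is established, the shifts add coherently and the argument goes through. A conceivable back-up route is an induction along the Ganea tower using the fibre sequence $R_{m+1} \to R_m \to C^*(S^{4m+2}, k)$, with base case $R_0 = C^*(BT, k)$ (shift $a_0 = 1$) treated in \cite{DGI06}, and each step decreasing the shift by $-4$; however, this bypasses ascent only at the cost of a subtler analysis of how the orientation propagates across the cofiber sequence.
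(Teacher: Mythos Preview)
Your approach is essentially the same as the paper's: both arguments pivot on the Borel fibration $F_m \to X_m \to BG$ with $F_m \simeq \bS^{4m+2}$, the Eilenberg--Moore equivalence $C^*(F_m,k) \simeq k \wedge_{C^*(BG,k)} R_m$, and the Dwyer--Greenlees--Iyengar machinery to propagate proxy-regularity and the Gorenstein property along this fibration; the shift computation $3 + (-(4m+2)) = 1-4m$ and the orientability argument (via simple connectivity) are likewise identical in content.

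One point where the paper is more careful than your sketch: to invoke the relevant ascent result from \cite{DGI06} (the paper uses Prop.~8.10 rather than your stated Prop.~8.6, which is used only for the shift calculation), one needs that $R_m$ is \emph{finitely built} from $S = C^*(BG,k)$ as an $S$-module. The paper establishes this explicitly by passing to the Koszul dual side, observing that $C_*(F_m,k)$ is a $C_*(G,k)$-module whose homotopy is finite-dimensional over $k$, hence finitely built from $k$, and then applying $\Map_{C_*(G,k)}(-,k)$. Your formulation of ascent (``$R$ Gorenstein over $A$ plus $A$ Gorenstein over $k$ implies $R$ Gorenstein over $k$'') glosses over this smallness hypothesis; you should either verify it along the same lines or make precise which version of the ascent theorem you are invoking and check its hypotheses.
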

\begin{proof}
$(1)$ We start with Borel fibration sequence that comes from the Ganea construction of spaces of quasi-invariants (see \eqref{borfib}):
\begin{equation}
\la{Bfib}
F_m(G,T) \to X_m(G,T) \xrightarrow{p_m} BG     
\end{equation}
To simplify the notation we set
$$
Q_m := C^*(F_m, k)\ ,\quad 
R_m := C^*(X_m, k)\ , \quad 
S := C^*(BG, k)\ .
$$
Since $\,F_m\,$ is a finite connected complex (see \eqref{eq6}), by \cite[Prop.~5.3]{DGI06}, the augmentation morphism $ Q_m \to k $ is cosmall (see Definition~\ref{preg}). Since $G$ is connected,
the classifying space $ BG $ is simply-connected; moreover, the cohomology of $BG$ is free, of finite type over $\Z$, and hence, a fortiori, over any field $k$ (see, e.g., \cite[III.3.17]{MT78}). Therefore, in terminology of \cite[Sect. 4.22]{DGI06}, the pair $(BG, k)$ is of Eilenberg-Moore type. Since $ H_*(\Omega BG, k) \cong 
H_*(G, k) $ is  finite-dimensional over $k$, it follows from \cite[5.5(3)]{DGI06} that $ S \to k $ is a 
regular morphism, i.e. $k$ is small as an $S$-module. Next, since $(BG, k)$ is of Eilenberg-Moore type,
the fibration sequence \eqref{Bfib} gives an equivalence of cochain spectra (see, e.g., \cite[Lemma 3.7]{BCHV21})
\begin{equation}
\la{wedgep}
Q_m \simeq k \wedge_S R_m 
\end{equation}
Now, by \cite[Prop.~4.18(1)]{DGI06}, we conclude from \eqref{wedgep} together with our earlier observations 
that $\,S \to k $ is regular and $ Q_m \to k $ is cosmall that $ R_m \to k $ is proxy-regular.
To complete the proof of part $(1) $ it remains to note that the pair $(X_m, k)$ is of Eilenberg-Moore
type for any field $k$. Indeed, from the fibration sequence \eqref{Bfib} it follows that  
$X_m$ is simply-connected (since so are $F_m$ and $BG$); on the other hand, from the homotopy cofibration sequences (see \eqref{cfibm})
$$ 
F_m \to X_m \xrightarrow{\pi_m} X_{m+1} 
$$
it follows (by induction) that 
$X_m$ is of finite type over $k$ for any $m \ge 0$. By construction of the Eilenberg-Moore
spectral sequence, for $ E_m = C_*(\Omega X_m, k)$, we have $ E_m \simeq \Map_{R_m}(k,k) $, while the
equivalence $ R_m \simeq \Map_{E_m}(k,k) $ holds in general (see remarks in \cite[Sect. 4.22]{DGI06}).
It follows that the augmented $k$-algebras $ R_m $ and $ E_m $ are both dc-complete, and then,
by \cite[Prop. 4.17]{DGI06}, $ E_m $ is proxy-regular (since so is $R_m$).

$(2)$ By the proof of Theorem~\ref{MTh1}, we know that \eqref{Bfib} is a sphere fibration with $ F_m \simeq \bS^{4m+2} $. Hence, $F_m$ is a Poincar\'e duality space of dimension $ 4m+2 $, then its cochain
spectrum $ Q_m = C^*(F_m, k) $ satisfies Poincar\'e duality of dimension $ a = - 4m - 2 $ (in the sense
of \cite[8.11]{DGI06}). Since $ Q_m $ is cosmall, by \cite[Prop. 8.12]{DGI06},  we conclude  that
$ Q_m $ is Gorenstein of shift $ a = -4m-2$. Further, by \cite[10.2]{DGI06}, we also know that
$ S = C^*(BG,k)$ is Gorenstein of dimension $ a = \dim(G) = 3 $. 

Now, consider the morphism of cochain spectra $\, p_m^*: \, S \to R_m \,$ induced by the 
whisker map $ p_m: X_m \to BG $ in \eqref{Bfib}. We claim that $R_m$ is finitely built from $S$ via $ p_m^* $.
To see this denote by $ \E := C_*(\Omega BG, k) \cong C_*(G,k) $ the chain spectrum of $BG$. Since
$G$ is simply-connected, $\E$ is a connective $k$-algebra with $ \pi_0(\E) \cong k[\pi_1(G)] = k $
(see \eqref{isococh}). Since $S$ is of Eilenberg-Moore type, there is an equivalence $\, 
S \simeq \Map_{\E}(k,k) \,$. Furthermore, if we set $ M_m := C_*(F_m, k) $, the action of $G$ on $F_m$ 
induces a left $ \E$-module structure on $ M_m $, and by a standard Eilenberg-Moore spectral sequence argument
there is an equivalence $\,R_m \simeq \Map_{\E}(M_m, k)\,$. Since $ \pi_*(M_m) \cong H_*(F_m, k) $ is 
finite-dimensional over $k$, the $ \E$-module $M_m$ is finitely built from $k$. Now, Proposition~3.18 of \cite{DGI06} implies that $R_m \simeq \Map_{\E}(M_m, k) $ is finitely built from $ S \simeq \Map_{\E}(k,k) $ as we claimed. Since $ R_m $ is proxy-regular and both $ S $ and $ Q_m $ are Gorenstein, it follows from
\cite[Prop. 8.10]{DGI06} that $ R_m $ is Gorenstein as well. The Gorenstein shift of $R_m$ can be 
computed from the following equivalence of $k$-modules induced by \eqref{wedgep} (see \cite[Prop. 8.6]{DGI06}):
\begin{eqnarray*}
\Map_{R_m}(k, \,R_m) &\simeq & \Map_{Q_m}(k, \,\Map_{S}(k, S) \wedge_{k} Q_m)\\
&\simeq& \Map_{Q_m}(k,\, (\Sigma^3 k) \wedge_{k} Q_m)\\
&\simeq& \Sigma^3\, \Map_{Q_m}(k, \,Q_m)\\
&\simeq& \Sigma^3 (\Sigma^{-4m-2} k) \\
&\simeq& \Sigma^{1-4m} k
\end{eqnarray*}
To complete part $(2)$ it remains to note that, for a simply-connected space $X$ of finite type
over $k$, the cochain spectrum $ C^*(X, k) $ is automatically orientable Gorenstein whenever it is 
Gorenstein. This follows from the fact that, under the above assumptions, $k$ carries a unique action of $ E = \Map_{C^*(X,k)}(k,k) \simeq C_*(\Omega X, k)$ (see \cite[Sect. 18.3]{G18} and also the proof of \cite[Lemma 3.8]{BCHV21}).

$(3)$ follows from $(2)$ by a standard argument. If an augmented $k$-algebra $R$ is orientable Gorenstein of shift $a$, then
\begin{equation}
\la{Gor21}
\Cell_k(R)\, \simeq \,\Map_R(k, R) \wedge_E k\, \simeq\, \Sigma^a\,\Map_R(k,\,\Map_k(R,k)) \wedge_E k 
\,\simeq \,\Sigma^a\,\Cell_k[\Map_k(R,k)]
\end{equation}
where the first and the last equivalences are given by \eqref{kcell} and the one in the middle is induced by
\eqref{Gor11}. For $\, R = C^*(X, k) \,$ with $ \pi_0(R) \cong H^0(X,k) \cong k $, we have $ \pi_i \Map_k(R,k) = 0 $ for $ i \ll 0 $. By \cite[Remark 3.17]{DGI06}, the $R$-module $ \Map_k(R,k) $ is then 
built from $k$ and therefore $k$-cellular in $ \Mod_R $. Condition~\ref{Gor2} thus follows from \eqref{Gor21}.
This completes the proof of the theorem.
\end{proof}
\subsection{Generalized spaces of quasi-invariants}
\la{S7.3}
It is natural to ask whether the result of Theorem~\ref{ThGor}, i.e. the topological Gorenstein property, 
holds for generalized (`fake') spaces of quasi-invariants introduced in Section~\ref{S5}.  
In view of Corollary~\ref{Cor5.8}, the answer is obviously affirmative when $k$ is a field of 
characteristic $0$. The next theorem shows that this is also true when $ k = \F_p $. We keep the notation $ G = SU(2) $ and $ T = U(1) $; however, as in Section~\ref{S5}, we do not identify $T$ as a maximal torus in $G$.
\begin{theorem}
 \la{ThGor1}
Let $B$ be a space in the genus of $ BG $ that admits an essential map from $BT$, and let $ X_m = X_m(\Omega B,T) $ be the space of $m$-quasi-invariants associated to $B$. Then, for any prime $p$, the morphism
$ C^*(X_m, \F_p) \to \F_p $ is Gorenstein of shift $ a = 1 - 4m $.
\end{theorem}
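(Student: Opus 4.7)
The plan is to follow the proof of Theorem~\ref{ThGor}, the new input being that $B$ lies in the genus of $BG$. This immediately gives $B^{\wedge}_{p} \simeq BG^{\wedge}_{p}$ for every prime $p$, and hence an equivalence of augmented commutative ring spectra $C^{\ast}(B,\F_{p}) \simeq C^{\ast}(BG,\F_{p})$. Applying Theorem~\ref{ThGor} to $G$ itself then shows that $C^{\ast}(B,\F_{p}) \to \F_{p}$ is regular and Gorenstein of shift $3$. Proxy-regularity of $R_{m} := C^{\ast}(X_{m}(\Omega B,T),\F_{p}) \to \F_{p}$ is established as in the proof of Theorem~\ref{ThGor}(1), using the Borel fibration $F_{m}(\Omega B,T) \to X_{m}(\Omega B,T) \to B$ together with the fact that $X_{m}(\Omega B,T)$ is simply connected and of finite $\F_{p}$-type, so that the pair $(X_{m}(\Omega B,T),\F_{p})$ is of Eilenberg--Moore type.

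The case $p \nmid N_{B}$ is immediate. Under the genus equivalence $B^{\wedge}_{p} \simeq BG^{\wedge}_{p}$, the $p$-completion $(p_{B})^{\wedge}_{p}$ is related to the maximal torus inclusion by a self-equivalence of $BG^{\wedge}_{p}$, since $N_{B}$ is a unit in $\Z^{\wedge}_{p}$. Functoriality of the Ganea fibre-cofibre construction then yields $X_{m}(\Omega B, T)^{\wedge}_{p} \simeq X_{m}(G, T)^{\wedge}_{p}$, and the Gorenstein property of shift $1 - 4m$ follows from Theorem~\ref{ThGor}.

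The substantive case is $p \mid N_{B}$. Here $p_{B}^{\ast}(u) = N_{B}x^{2} \equiv 0 \pmod{p}$, and the Eilenberg--Moore spectral sequence for $F_{0}(\Omega B,T) \to BT \to B$ collapses with $E_{2} = \mathrm{Tor}_{\F_{p}[u]}^{\ast}(\F_{p},\F_{p}[x])$, forcing $H^{\ast}(F_{0}(\Omega B,T),\F_{p})$ to be infinite-dimensional over $\F_{p}$; in particular, the fibre is no longer a mod-$p$ sphere, so the Poincar\'e duality step of Theorem~\ref{ThGor}(2) is unavailable. The strategy is to compute $H^{\ast}(X_{m}(\Omega B,T),\F_{p})$ directly, by induction on $m$ via the Ganea cofibration sequence $F_{m} \to X_{m} \to X_{m+1}$ and the associated long exact sequence in cohomology, establishing the isomorphism
\begin{equation*}
H^{\ast}(X_{m}(\Omega B, T), \F_{p}) \,\cong\, \F_{p}[y, z]/(z^{2} - y^{2m+1}),\qquad |y| = 4,\ |z| = 4m+2,
\end{equation*}
matching the rational computation of Theorem~\ref{CohXMR}. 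This graded algebra is a complete intersection on two generators, hence Gorenstein of shift $1-4m$.

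The final step is to lift the algebra-level Gorenstein shift to the spectrum $R_{m}$. Here one combines proxy-regularity with a careful analysis of the Eilenberg--Moore spectral sequence computing $\pi_{\ast}\Map_{R_{m}}(\F_{p},R_{m})$ from $H^{\ast}(X_{m},\F_{p})$, and shows that the sequence degenerates in the relevant range so that $\Map_{R_{m}}(\F_{p},R_{m}) \simeq \Sigma^{1-4m}\F_{p}$. The main obstacle is the inductive cohomology identification when $p \mid N_{B}$: unlike the case $p \nmid N_{B}$, the composite map $\tilde{\pi}_{m}^{\ast}: H^{\ast}(X_{m},\F_{p}) \to H^{\ast}(BT,\F_{p})$ will typically fail to be injective, so the multiplicative structure on $H^{\ast}(X_{m},\F_{p})$ cannot be read off from an embedding into $\F_{p}[x]$; identifying the cup product, and in particular the relation $z^{2} = y^{2m+1}$, requires tracking the connecting homomorphisms of the Ganea cofibrations and their interaction with the Eilenberg--Moore edge maps.
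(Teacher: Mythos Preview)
Your opening paragraph correctly identifies the equivalence $C^{\ast}(B,\F_{p})\simeq C^{\ast}(BG,\F_{p})$ via $p$-completion, and your treatment of the case $p\nmid N_{B}$ is reasonable. The difficulty is that your argument for $p\mid N_{B}$ is only a sketch, and you yourself flag the main obstacle (identifying the ring structure on $H^{\ast}(X_{m},\F_{p})$ when $\tilde{\pi}_{m}^{\ast}$ is not injective) without resolving it. Even granting the cohomology computation, the passage from ``$H^{\ast}(X_{m},\F_{p})$ is algebraically Gorenstein'' to ``$C^{\ast}(X_{m},\F_{p})$ is Gorenstein as a ring spectrum'' via an unspecified spectral sequence for $\Map_{R_{m}}(\F_{p},R_{m})$ is not a standard move and would require substantial additional work.

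The paper's proof avoids all of this by a change of perspective that works uniformly for every prime, with no case split on $N_{B}$. Instead of analyzing the fibration $F\to BT\to B$ (whose induced map $p_{B}^{\ast}$ collapses mod $p$ when $p\mid N_{B}$), the paper looks at the \emph{other} fibration sequence $\Omega B\to F\to BT$ obtained from $p_{B}$. The point is that $\Omega B\simeq\bS^{3}$ for \emph{every} $B$ in the genus of $BG$, independently of $p$. Since $(BT,\F_{p})$ is of Eilenberg--Moore type and $\F_{p}$ is small over $C^{\ast}(BT,\F_{p})$, one gets $C^{\ast}(\Omega B,\F_{p})\simeq C^{\ast}(F,\F_{p})\wedge_{C^{\ast}(BT,\F_{p})}\F_{p}$, and the cosmallness of $C^{\ast}(\bS^{3},\F_{p})\to\F_{p}$ together with \cite[Prop.~4.18 and Prop.~8.6]{DGI06} yields that $C^{\ast}(F,\F_{p})\to\F_{p}$ is proxy-regular and Gorenstein of shift $1+(-3)=-2$. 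Then Ganea's Theorem gives $F_{m}\simeq F\ast(\Omega B)^{\ast m}\simeq F\ast(\bS^{3})^{\ast m}\simeq\Sigma^{4m}F$, so $C^{\ast}(F_{m},\F_{p})$ is Gorenstein of shift $-2-4m$, and combining with the shift $3$ for $C^{\ast}(B,\F_{p})$ gives $1-4m$ as before. The key idea you are missing is to trade the fibration over $B$ for the fibration over $BT$: the latter has a fibre $\Omega B\simeq\bS^{3}$ whose homotopy type is insensitive to the arithmetic of $N_{B}$.
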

\begin{proof}
We give the part of the proof that differs from that of Theorem~\ref{ThGor}. First, observe that, for any space $B$ in the genus of $BG$, we have equivalences of cochain spectra
$$
C^*(B,\, \F_p)\, \simeq\, C^*(B^{\wedge}_p,\, \F_p)
\,\simeq\, C^*((BG)^{\wedge}_p,\, \F_p) \,\simeq\,
C^*(BG,\,\F_p)\ ,
$$
where $ (\,-\,)^{\wedge}_p $ denotes the $ \F_p$-completion functor on pointed spaces. This follows from the fact that both $B$ and $BG$ are $\F_p$-good spaces (in the sense of \cite{Bou75}) and $ B^{\wedge}_p \simeq (BG)^{\wedge}_p $ for any prime $p$. The above equivalences are compatible with augmentation; hence, by \cite[10.2]{DGI06}, we conclude that $ C^*(B, \F_p) \to \F_p $ is a regular map, Gorenstein of shift $ \dim(G) = 3 $. 

Now, assume that $B$ satisfies the conditions of Theorem~\ref{esmap}. Let $ F = F(\Omega B, T)$ denote the homotopy fibre of the maximal essential map $ p_B: BT \to B $. Recall that this last space is not equivalent to a finite CW complex (unless $ B \simeq BG $), and hence its cochain spectrum $C^*(F, \F_p) $ need not be cosmall (as in the case of $BG$). Nevertheless, we claim that $C^*(F, \F_p) \to \F_p $ is always proxy-regular and satisfies the Gorenstein property of shift $ (-2)$. To see this consider the homotopy fibration sequence $\,\Omega B \to F \to BT \,$ associated to the map $ p_B: BT \to B $. 
Since $ BT \simeq \C\bP^\infty $ is of Eilenberg-Moore type (see \cite[4.22]{DGI06}), we have
$$
C^*(\Omega B, \F_p) \simeq C^*(F, \F_p) \wedge_{C^*(BT, \,\F_p)} \F_p 
$$
In view of the fact that $ \Omega B \simeq \bS^3 $, the map $C^*(\Omega B, \F_p)\to \F_p$ is cosmall, and
hence, by \cite[Prop. 4.18]{DGI06},   $C^*(F, \F_p) \to \F_p $ is proxy-regular. Furthermore, since  $\F_p $ is small over $ C^*(BT) = C^*(BT, \F_p)$, we have a natural equivalence of $C^*(BT)$-modules
$$
\Map_{C^*(BT)}(\F_p,\,C^*(BT))\,\wedge_{C^*(BT)}\,C^*(F)\,\xrightarrow{\sim}\,
\Map_{C^*(BT)}(\F_p,\,C^*(F))\,,
$$
which, by the proof of \cite[Prop. 8.6]{DGI06}, implies that $ C^*(F, \F_p) \to \F_p$ is Gorenstein of shift
$a = 1 + (-3) = -2$. 

The rest of the proof is parallel to that of Theorem~\ref{ThGor}. In brief, by Theorem~\ref{GThm}, 
the fibre of the $m$-th Ganea fibration $ F_m \to X_m \to B $ defining the space $ X_m = X_m(\Omega B, T)$
has the homotopy type of $\, \Sigma^{4m} F $. Hence its cochain spectrum $ C^*(F_m, \F_p)  $ is
Gorenstain of shift $ a = - 2 - 4m$. By induction, each space $X_m$ is of finite type over $\F_p$.
Since $ C^*(B, \F_p) \to \F_p $ is a regular Gorenstein map of shift $ 3 $, it follows from the above fibration sequence that $C^*(X_m, \F_p) \to \F_p $ is Gorenstein of shift $ a = -2 -4m + 3 = 1-4m$. 
 \end{proof}
\begin{remark}
\la{shiftpar}
We point out that the topological Gorenstein shifts $ a $ of Theorem~\ref{ThGor} and Theorem~\ref{ThGor1} agree with the algebraic one of Theorem~\ref{TGoren}: to see this it suffices to change the standard polynomial grading on $ Q_m(W) $ to the cohomological one (by `doubling' degrees of the generators).
\end{remark}

\appendix

\section{Arnold-Maxwell Theorem and Quasi-invariants}
\la{ArMapCon}
\begin{center}
\author{M. V. Feigin and K. E. Feldman}
\end{center}

\vspace*{2ex}

    In this Appendix, we calculate the equivariant cohomology ring of the sphere $S^{2n}$ with respect to the action of the unitary group $U(2)$ induced by the Arnold diffeomorphism ${\CP}(1)^n\!/W \cong S^{2n}$, where $W$ is the Coxeter group of type $D_n$ of rank $n$. We verify that this cohomology ring is isomorphic to the ring of quasi-invariants in the case of two-element group.
    
 %It is more covenient for us to realise this group acting in two-dimensional space by permuting the two coordinates. We start with setting up the required notations. 

 %Following Chalykh and Veselov \cite{Chalykh1990} (see also \cite{Feigin2001}) for any $m\in \mathbb N$ we consider $m$-\textit{quasi-invariant} polynomials $\mathcal{Q}_m$  defined as polynomials $q(x)=q(x_1, \ldots, x_N)$ over $\mathbb R$ satisfying
%\[
%\left(\frac{\partial}{\partial x_i}-\frac{\partial}{\partial x_j}\right)^{2s-1}q(x) = 0 \quad \mathrm{at} \quad x_i=x_j
%\]
%for all $s = 1,2,\dots, m,$ and $1 \leq i < j \leq N.$ 
%Quasi-invariants interpolate between symmetric and all the polynomials in $N$ variables, and the following inclusions take place:
%\[
%\mathbb{R}[x]:= \mathbb{R}[x_1, \dots ,x_N] = \mathcal{Q}_0 \supset \mathcal{Q}_1 \supset \mathcal{Q}_2 \cdots \supset \mathcal{Q}_m \supset \cdots \supset \mathcal{Q}_{\infty},
%\]
%where $\mathcal{Q}_{\infty} = \mathbb{R}[x]^{S_N}$ is the ring of symmetric polynomials in $N$ variables $x_1, \dots ,x_N$ with real coefficients. Moreover, $\mathcal{Q}_m$ is a free module over $\mathbb{R}[x]^{S_N}$ \cite{EG}.  

\subsection{Setting}
Let $\mathbb{R}[x]= \mathbb{R}[x_1, \dots ,x_N]$, $N \in \mathbb N$. Recall that the ring of symmetric polynomials $\mathbb{R}[x]^{S_N}$ is isomorphic to the cohomology ring of the classifying space $BU(N)$ for the Lie group $U(N)$. Furthermore, the classifying space $BT^N$ of an $N$-dimensional torus $T^N$ is the Cartesian product $BT^N = {\CP}(\infty)^N$. Its cohomology ring is isomorphic to the ring ${\mathbb R}[x]$, and the projection in the bundle $p\colon BT^N \to BU(N)$ induces the canonical inclusion on cohomology $p^*\colon  {\mathbb R}[x]^{S_N} \subset \mathbb{R}[x]$.

We restrict ourselves to $N=2$. In this case we explicitly construct a collection of nice spaces $\mathcal P_n$, where $n \in \mathbb N$.
%with the cohomology rings isomorphic to $\mathcal{Q}_m, m = 0,1,2,\dots$. 
For $n=1$ the space we consider is $BT^2 = {\CP}(\infty)\times {\CP}(\infty),$ in the limit $n= \infty$ the corresponding space is homotopically equivalent to $BU(2)$, and for any $n\in \mathbb N$ our space is fibred over $BU(2)$ with the spherical fibre.

In our construction we rely on the Arnold--Maxwell Theorem \cite{Arnold1996}. Maxwell's theorem from the theory of spherical functions, also known as spherical harmonics, may be interpreted as a continuous one-to-one correspondence between the symmetric power of the projective plane $Sym^n ({\RP}^2)$ and the projective space ${\RP}^{2n}$ (see \cite{Arnold1996}). Consideration of a covering led Arnold to a theorem that a quotient of $\CP(1)^n$ 
%\cong \underbrace{\CP(1) \times \cdots \times \CP(1)}_n$ 
by an action of the Coxeter group $W$  is diffeomorphic to the $2n$-dimensional sphere $S^{2n}$. 

We observe  that the sphere $S^{2n}$ inherits from this diffeomorphism a canonical $U(2)$ action that comes from the diagonal action of $U(2)$ on $\CP(1)^n$.
We calculate the equivariant cohomology ring of $S^{2n}$ with respect to this action. The equivariant cohomology are simply the ordinary cohomology of the space ${\mathcal P}_n$ given by the Borel construction:
\[
{\mathcal P}_n := EU(2) \times_{U(2)}S^{2n} \cong EU(2) \times S^{2n}/\!\sim,
\]
where $EU(2)$ is the total space of the universal $U(2)$-bundle $EU(2) \xrightarrow{U(2)} BU(2),$ and the equivalence relation $\sim$ is given by the $U(2)$-action: $(xg,y)\sim(x,gy)$, for $g \in U(2)$,
$x \in EU(2), y \in S^{2n}$. %(see e.g.\cite{Dold1975} for further details). 
It turns out that the cohomology ring of this space is isomorphic to the ring of $m$-quasi-invariants $\mathcal{Q}_m$ for the two-element group when $n=2m+1$.

We hope that this connection between quasi-invariants and the Arnold--Maxwell topological theorem sheds light on a direction which one could take in order to find the corresponding topological spaces in a more general situation. An interesting class of spaces of possible relevance which may be worthy to investigate are the Bott--Samelson varieties (see e.g. \cite{AF23}). We also note that a general construction of spaces whose even cohomology are isomorphic to quasi-invariants was proposed recently in \cite{BLR} by making use of homotopy theory.

We compose material of this appendix in the following way. In Section \ref{AMT} we remind the construction of the Arnold diffeomorphism $\CP(1)^n/W \cong S^{2n}$. In Section \ref{Actionsec} we show that this diffeomorphism leads to an action of $U(2)$ on the sphere $S^{2n}$, and we calculate the induced representation of the torus $T^2 \subset U(2)$ in the tangents spaces of the fixed points on the sphere. Section \ref{ECsec} is devoted to the description of the rings of $T^2$ and $U(2)-$equivariant cohomologies of $S^{2n}$, which are obtained using the 
localisation property of the Gysin map. 
%fixed point theorem for the Gysin map. 
In Section \ref{rqisec} we discuss the structure of the rings of quasi-invariants for the two-particle case and verify that they are isomorphic to the $U(2)-$equivariant cohomology rings of $S^{2n}$ for odd~$n$. 
%In Conclusion we discuss some open problems.

\subsection{Arnold--Maxwell Theorem}
\label{AMT}

Let $\alpha$ be a diffeomorphism of the complex projective line $\CP(1)$ given at every point $(z:w)\in \CP(1)$ by the formula 
\[
\alpha \big( (z : w) \big) = (\bar{w} : -\bar{z}).
\]
It is easily seen that $\alpha$ has order 2.

The Coxeter group $W$ of type $D_n$ of rank $n \in \N$ has a faithful representation in the group of diffeomorphisms of the Cartesian product $\CP(1)^n$. %\cong CP(1) \times \cdots \times CP(1)$. 
The image of this representation is generated by the symmetric group $S_n$, which permutes factors, and by the action of $\alpha$ on any two copies of $\CP(1)$ simultaneously. We assume that the group $W$ has one element for $n=1$, and that it is isomorphic to the product $S_2 \times S_2$ of the symmetric group $S_2$ when $n=2$. For $n=3$ the group is isomorphic to the symmetric group $S_4$. 

It is well known that the orbit space $\CP(1)^n/S_n \cong \CP(n)$ and the diffeomorphism associates to a collection of $n$ linear forms 
$$
z_j x + w_j y, \quad (z_j, w_j)\in \mathbb C^2\setminus (0,0),\quad  \, j = 1,\dots,n,
$$ 
which correspond to points $(z_j: w_j) \in \CP(1)$, a form of degree $n$: $\prod_{j=1}^n (z_j x + w_j y)$, defined up to a proportionality.  The next result was established by V. Arnold. 
\begin{theorem}[\cite{Arnold1996}]
\label{first}
The map
$
Ar\colon  
%\underbrace{\mathbb{c}^2 \times \dots \times \mathbb{c}^2}_n 
(\mathbb C^2)^n\to \mathbb{R}^{2n+1}
$
given by
\[
Ar\left((z_1,w_1),\dots,(z_n,w_n)\right) = (\mathrm{Re} f_{0}, \mathrm{Im} f_0,\dots, \mathrm{Re} f_{n-1}, \mathrm{Im} f_{n-1},f_n=\mathrm{Re}f_n),
\]
where
\begin{equation}
    \label{formexpanded}
\prod_{j=1}^n(z_j x +w_j y)(\bar{w}_j x - \bar{z}_jy) = f_0x^{2n} + \cdots + f_{2n}y^{2n},
\end{equation}
induces a diffeomorphism
\begin{equation}
\label{isomquotient}
\widehat{Ar}\colon \CP(1)^n/W \cong S^{2n}.
\end{equation}
\end{theorem}
The diffeomorphism \eqref{isomquotient} associates to a representative $\{(z_1,w_1),\dots, (z_n,w_n)\}$ of a point from $\CP(1)^n/W$ the $2n$-form  
\begin{equation}\label{f}%\tag{2}
    f(x,y) = \prod_{j=1}^n (z_jx+w_jy)(\bar{w}_j x-\bar{z}_jy),
\end{equation}
which is described by $2n+1$ real parameters and is unique up to proportionality. The change of a representative $(z_j,w_j) \to (\lambda z_j, \lambda w_j)$, $\lambda \in \mathbb C^\times$ of a point $(z_j: w_j)$ in the $j$-th copy of $\CP(1)$ in $\CP(1)^n$ leads to the multiplication of the form \eqref{f} by a positive real number $|\lambda|^2,$ while the action of $\alpha$ on a single copy of the projective line inside the Cartesian product changes the sign of the form. Hence the map $\widehat{Ar}$ is well-defined on the quotient. 

\begin{remark}
    As discussed in \cite{Arnold1996}, the diffeomorphism \eqref{isomquotient} is a higher-dimensional generalisation of the classical diffeomorphism
    \[
    \CP(2)/\mathrm{conj} \cong S^4, \quad \mathrm{conj}(z_1:z_2:z_3)=(\bar{z}_1: \bar{z}_2: \bar{z}_3),
    \]
    written in different linear coordinates.
\end{remark}

Further to that, Arnold explains in \cite{Arnold1996} a relation of Theorem \ref{first} with a theorem of Maxwell. 
    Maxwell's theorem states that every spherical function in $\mathbb{R}^3$ of degree $n$, that is the restriction to the sphere $S^2$ of a homogeneous harmonic polynomial of degree $n$, is proportional to the derivative of the function $1/r$ along $n$ constant directions in $\mathbb{R}^3$, where $r$ 
    %= (x^2+y^2+z^2)^{1/2}$ 
    is the distance to the origin. The dimension of the space of spherical functions of degree $n$ in $\mathbb{R}^3$ is $2n+1$ and, in particular, Maxwell's theorem establishes a homeomorphism
    \[
    Sym^n(\RP(2)) = \RP(2)^n/S_n \cong \RP(2n).
    \]
    Arnold gives a topological proof of this statememt in \cite{Arnold1996}. Then by considering a two-fold cover $\CP(1) \to \RP(2)$ he concludes that $\CP(1)^n/W$ is a two-fold cover of
    \[
    \CP(1)^n/(S_n \ltimes (\mathbb{Z}_2)^n) \cong \RP(2n),
    \]
    and $\CP(1)^n/W$ is homeomorphic (in fact, diffeomorphic) to the sphere $S^{2n}$.

\subsection{Action of $U(2)$ on $S^{2n}$}
\label{Actionsec}

The standard linear action $\rho$ of the group $U(2)$ on $\CP(1)$ extends to the diagonal action $\Delta$ of $U(2)$ on $\CP(1)^n$. We show now that $\Delta$ commutes with the action of $W$ and, therefore, descends to the action on the quotient $\CP(1)^n/W \cong S^{2n}$.

It is clear that $\Delta$ commutes with the action of the subgroup of permutations $S_n \subset W$. Let us check that the action $\rho$ commutes with the action of $\alpha$ on a single copy of $\CP(1)$. Let $A= (a_{ij}) \in U(2), p= (z:w) \in \CP(1)$. Then 
\begin{equation}
\label{3}
\begin{aligned} 
    &\alpha\circ A(p) = \alpha\big((a_{11}z+a_{12}w:a_{21}z+a_{22}w)\big)   \\
    &=(\bar{a}_{21}\bar{z}+\bar{a}_{22}\bar{w}:-\bar{a}_{11}\bar{z}-\bar{a}_{12}\bar{w}) = \widetilde{A}\big( (\bar{w}:-\bar{z})\big), 
\end{aligned}
\end{equation}
where
\[
\widetilde{A}=
\begin{pmatrix}
    \bar{a}_{22} &-\bar{a}_{21} \\
    -\bar{a}_{12} & \bar{a}_{11}
\end{pmatrix}.
\]
%Because $A^{-1} = \bar{A}^t$, using the formula for the inverse matrix, 
We note that 
\begin{equation}
%\tag{4}
\label{4}
\widetilde{A}=(\det\bar{A})(\bar{A}^t)^{-1}= (\det\bar{A})A.
\end{equation}
%since $A^{-1} = \bar{A}^t$.
%\begin{equation}\tag{4}\label{4}
%    \begin{aligned}
%        \tilde{A}=\begin{pmatrix}
%    \bar{a}_{22} &-\bar{a}_{21} \\
%    -\bar{a}_{12} & \bar{a}_{11}
%\end{pmatrix}
%&= \\
%=\det(\bar{A})\begin{pmatrix}
%    \bar{a}_{22}\det(\bar{A})^{-1} &-\bar{a}_{21}\det(\bar{A})^{-1} \\
%    -\bar{a}_{12}\det(\bar{A})^{-1} & \bar{a}_{11}\det(\bar{A})^{-1}
%\end{pmatrix} &= \det(\bar{A})(\bar{A}^t)^{-1}=\det(\bar{A})A.
%    \end{aligned}
%\end{equation}
Combining \eqref{3} and \eqref{4} we obtain
\[
\alpha \circ A\big(p\big) = \widetilde{A}\big((\bar{w}:-\bar{z})\big) = A \circ \alpha \big((\det(A)z:\det(A)w)\big) = A \circ \alpha\big(p\big).
\]
Since the action of the group $W$ is generated by $S_n$ and by the action of $\alpha$ simulatenously on (an even number of) copies of $\CP(1)$ in $\CP(1)^n$ we get the following statement.

\begin{theorem}\label{second}
    The diagonal action $\Delta$ of $U(2)$ on $\CP(1)^n$ commutes with the action of $W$ and can be extended to the quotient action of $U(2)$ on $\CP(1)^n/W \cong S^{2n}.$
\end{theorem}

Let $T=T^2 \subset U(2)$ be the real two-dimensional torus given by the diagonal matrices. The action $\Delta$ of $U(2)$ on $S^{2n}$ induces the action of the torus on $S^{2n}$ with two fixed points. The first fixed point $p_1$
% = (0,0,\dots,1)$ 
is the image under the map $\widehat{Ar}$  of the point 
\[
q_1=\big((0:1), (0:1), \ldots, (0:1)\big)\in \mathbb CP(1)^n/W.
\]
The second fixed point $p_2 = \widehat{Ar}\big((1:0), (0:1), \ldots, (0:1) \big).$
%=(0,0,\dots,-1)$ 
%is the image of
%\[
%\big((1:0), (0:1), \ldots, (0:1) \big).
%\]

%\begin{prop}OLD\label{third}
%    The coefficients $(f_0,f_1,\dots,f_{n-1})$ of the form $Ar((z_1,w_1),\dots,(z_n,w_n))$ induce the structure of a complex vector space in the tangent spaces to $S^{2n}$ at the fixed points  $p_1$ and $p_2$ of the $T$-action. The representation $\chi\colon T \to {\rm End}(\mathbb C^n)$ of the torus $T$ in the tangent spaces of $S^{2n}$ at the points $p_1$ and $p_2$ with respect to this complex structure is 
%    \[
%\chi\left( (t_1, t_2) \right) = {\rm diag}(t_1^nt_2^{-n},t_1^{n-1}t_2^{-n+1},\dots,t_1t_2^{-1}).
%    \]
%    \begin{proof}
%        The action of the torus on $S^{2n}$ is induced by the map $Ar$: 
%        \[
%        Ar((z_1:w_1),\ldots,(z_n:w_n)) = (f_0:\ldots:f_{n-1}:f_n).
%        \]
%        Therefore, at the point $p_1$
%        \[
%       (t_1,t_2)\big((f_0:\ldots:f_{n-1}:f_n)\big) = Ar\Big((t_1,t_2)\big((z_1:1),\dots,(z_n:1)\big)\Big)
%        \]
%        \[
%        =Ar\big((t_1t_2^{-1}z_1:1),\dots,(t_1t_2^{-1}z_n:1))=(t_1^n t_2^{-n}f_0:t_1^{n-1}t_2^{-n+1}f_1:\ldots:f_n\Big),
%        \]
%        which is, precisely, representation $\chi$. The statements for $p_2$ follows similarly.
%    \end{proof}
%\end{prop}

\begin{prop}\label{third}
    The functions $\frac{f_0}{f_n},\frac{f_1}{f_n},\dots,\frac{f_{n-1}}{f_n}$ of the coefficients of the form \eqref{formexpanded} induce the structure of a complex vector space in the tangent spaces to $S^{2n}$ at the fixed points  $p_1$ and $p_2$ of the $T$-action. The representation $\chi\colon T \to {\rm End}(\mathbb C^n)$ of the torus $T$ in the tangent spaces of $S^{2n}$ at the points $p_1$ and $p_2$ with respect to this complex structure has weights 
    \[
    (-n,n), (-n+1, n-1), \ldots, (-1, 1). 
    \]
\end{prop}
    \begin{proof}
    
        Near the point $p_1\in S^{2n}$ we can assume that its preimage under the map $\widehat{Ar}$ 
        has a representative $\left((z_1,1), \ldots, (z_n, 1)\right)$, where 
%        satisfies $w_i=1$ for $1\le i \le n$, and 
$|z_i|$ is small. Let $\sigma_i$ be the $i$th elementary symmetric polynomial in $z_1, \ldots, z_n$, and let $\bar \sigma_i$ be its complex conjugate, which is the elementary symmetric polynomial in $\bar z_1, \ldots, \bar z_n$. We can take $\sigma_i, \bar\sigma_i$ as local coordinates near the preimage $\tilde q_1$ of $p_1$  under the map 
    $$
    G\colon \mathbb{C}P(1)^n/S_n\cong \mathbb{C}P(n) \to \mathbb{C}P(1)^n/W\cong S^{2n}.
    $$ Note that (cf. \cite{Arnold1996})
\begin{equation}
    \label{fsigma}
f_k = \sum_{j=0}^k (-1)^j \bar \sigma_j \sigma_{n-k+j}, \quad 0\le k \le n,
\end{equation}
and $f_{2n-k} = (-1)^{n-k} \bar f_k$. In particular, $f_n$ is real and $f_n(0)=1$. The latter implies that $f_i/f_n$ for  $0\le i \le n-1$ can be taken as complex local coordinates on $S^{2n}$ near the point $p_1$. 
 
The action of the torus on $S^{2n}$ is induced by the map $G$. Note that for $t=(t_1, t_2) \in T$ we have 
\begin{equation*}
\label{tz}   
t (z_i:1) = ( t_1 t_2^{-1} z_i:1).
\end{equation*}
  In the local coordinates $\sigma_i,\bar \sigma_i$ we have 
$t(\sigma _i) = (t_1 t_2^{-1})^i \sigma_i$ and $t(\bar \sigma_i) = (t_1 t_2^{-1})^{-i} \bar\sigma_i$. Therefore $t(f_k/f_n) = (t_1 t_2^{-1})^{n-k} f_k/f_n$ by formula \eqref{fsigma}, which implies the statement. 

The claim for $p_2$ follows similarly. In this case points near a point $\tilde q_2\in \CP(1)^n/S_n$ such that $G(\tilde q_2)=p_2$ can be represented as $\left((z_1, 1), \ldots, (z_{n-1}, 1), (1, z_n)\right)$, where $|z_i|$ is small for all $i$. 
Let $\sigma_i$ now be the $i$th elementary symmetric polynomial in $z_1, \ldots, z_{n-1}$ and $-\bar z_n$, and let the coordinate $\bar \sigma_i$ be its complex conjugate. Then  formula \eqref{fsigma} holds after multiplying the right-hand side by -1. The action of an element $t=(t_1, t_2)\in T$  on the coordinate $z_i$ is given by $t(z_i) = t_1 t_2^{-1} z_i$ for $1\le i \le n-1$, and $t(z_n)=t_1^{-1} t_2 z_n$, $t(\bar z_n)=t_1 t_2^{-1} \bar z_n$, which implies the statement. 
    \end{proof}

One can use any of the two complex structures of Proposition \ref{third} to define an orientation of the sphere $S^{2n}$. We will need the following statement later.

\begin{lemma}
    \label{firstrem}
The two orientations of $S^{2n}$ induced by the complex structure of Proposition~\ref{third} at $p_1$ or $p_2$ are opposite one to another.
\end{lemma}
\begin{proof}
Our sphere $S^{2n}$ is realized as the set of rays in the space $\mathbb R^{2n+1}$ with coordinates $x_1=\Re f_0, x_2=\Im f_0, \ldots, x_{2n-1}=\Re f_{n-1}, x_{2n}=\Im f_{n-1}, x_{2n+1}=f_n$. Let us identify this topological sphere with the standard sphere given as $\sum_{i=1}^{2n+1} x_i^2=1$. Then $p_1$ corresponds to the northern pole, and $p_2$ corresponds to the southern pole, since $f_n=1$ or $f_n=-1$, respectively, and $f_i=0$ for $0\le i \le n-1$ at both $p_1$ and $p_2$. The two bases at the tangent spaces at the poles are $\partial_{x_1}, \ldots, \partial_{x_{2n}}$.  
%and they are obtained one from another by a parallel translation along the coordinate axis $Ox_{2n+1}$. 
Consider a path on the sphere along the big semicircle given by $x_i=0$ for $2\le i \le 2n$. Note that a continuation of the basis  $\partial_{x_1}, \partial_{x_2}, \ldots, \partial_{x_{2n}}$ at one of the poles leads to the basis  $-\partial_{x_1}, \partial_{x_2},\ldots, \partial_{x_{2n}}$ at the other pole. The statement follows.
\end{proof}

%\begin{rem}\label{firstrem}
%    If we introduce an orientation on $S^{2n}$ which agrees with the orientation of $\mathbb C^n$ at $p_1$, then the orientation on the tangent bundle to $S^{2n}$ at $p_2$ is opposite to the induced orientation from $\mathbb{C}^n.$
%\end{rem}

\subsection{Equivariant cohomology of the sphere}
\label{ECsec}

We calculate now the equivariant cohomology of the sphere $S^{2n}$ with respect to the $U(2)-$action $\Delta$. Let $\mathcal{P} = EU(2) \times_{\Delta}S^{2n}$ be the total space of the bundle 
$$
p\colon {\mathcal P} \xrightarrow{S^{2n}} BU(2)
$$ 
given by the Borel construction. The equivariant cohomology of $S^{2n}$ with respect to the action $\Delta$ is the ordinary cohomology of the space $\mathcal{P}$. We denote by $\tau$ the vector bundle with the base $\mathcal{P}$ and the fibre being the tangent space along the fibre of $\mathcal{P}$.
We fix an orientation of $S^{2n}$ and consider induced orientation on $\tau$. 
%{\color{blue} why is it orientable?}

We consider all the cohomology over the field $\mathbb R$ unless other coefficients are specified.  The next lemma is a standard corollary from the structure of bundles with spherical fibres. Note that all the differentials on the  second and subsequent pages of the spectral sequence are trivial 
when the fibre is an even-dimensional sphere and all the cohomology of a base are in even degrees (cf. e.g. \cite{Tu_EqCoh}).
%\cite{Dold1975}.

\begin{lemma}\label{firstlem}
    The cohomology ring 
    \[
 H^*(\mathcal{P}) \cong  H^*(BU(2))[s]/\left(s^2-p^*(\beta_1)s - p^*(\beta_2)\right)
    \]
    for some $\beta_1, \beta_2 \in H^*(BU(2)).$ Any element $s \in H^{2n}(\mathcal{P})$ whose restriction to every fibre generates $H^{2n}(S^{2n})$ can be chosen as the  generator of $H^*(\mathcal{P})$ as a module over $H^*(BU(2))$. In particular, the Euler class $e(\tau)$ of the vector bundle $\tau$ is a generator for $H^*(\mathcal{P}).$
%    \begin{proof}
%        This is a standard corollary from the structure of bundles with spherical fibres %\cite{Dold1975}.
%    \end{proof}
\end{lemma}

Our aim now is to find $\beta_1$ and $\beta_2$ in the relation 
\[
e(\tau)^2= p^*(\beta_1) e(\tau)+p^*(\beta_2).
\]
Let $j\colon BT \to BU(2)$ be the canonical map induced by the diagonal embedding $T \subset U(2)$. The induced map on cohomology
\[
j^*\colon H^*(BU(2)) \to H^*(BT)
\]
is monomorphic, and  the images of the first and the second Chern classes $c_1(\eta)$ and $c_2(\eta)$ of the tautological complex vector bundle $\eta$ over $BU(2)$ satisfy
$$
j^*(c_1(\eta))= x_1+x_2, \quad 
j^*(c_2(\eta))= x_1 x_2, 
$$
where $x_1=c_1(\zeta_1),\, x_2=c_1(\zeta_2)$ are the first Chern classes of the tautological complex line bundles over the respective factors of $BT=\mathbb CP(\infty)\times \mathbb CP(\infty)$.
%(see \cite{Dold1975} for details). 

Let us consider the pullback bundle $\mathcal{P}' = j^*\mathcal{P}$,  
%induced from $\mathcal{P}$ by $j$, 
\[
p'\colon \mathcal{P}'
\xrightarrow{S^{2n}} BT.
\]
Since the map $j^*$ is monomorphic, in order to find  $\beta_1$ and $\beta_2$ it is sufficient  to calculate their images under $j^*$ in the cohomology ring $H^*(BT)$. Under  $j^*$ the Euler class $e(\tau)\in H^*(\mathcal P)$ is mapped to the Euler class $e(\tau')\in H^*(\mathcal P')$ of the bundle $\tau'$ with the base $\mathcal{P}'$ and fibres being the tangent spaces along the fibres of the bundle $\mathcal{P}'$. Here we assume that the vector bundle $\tau'$ gets orientation from the same orientation of $S^{2n}$ as the one used to orient the vector bundle $\tau$. Equivalently, the map $j$ extends to an orientation preserving bundle map between $\tau$ and $\tau'$. 

We have
\[
e(\tau')^2 = ({p'}^* \circ j^*)(\beta_1)e(\tau')+({p'}^*\circ j^*)(\beta_2).
\]

In order to calculate $j^*(\beta_k), k=1,2,$ we  apply the localisation property of the Gysin map which allows one to compute the Gysin map in terms of the fixed point data of the corresponding structure group action (see e.g. \cite{Audin1991book} or \cite{Tu} and references therein). 

Consider the normal bundle 
%\mathcal P'\supset 
${\mathcal N}_k \to i_k(BT)$ of the section $i_k\colon BT \to \mathcal{P}'$ corresponding to the fixed point $p_k, k=1,2$. Let ${\xi}_k$ be the pullback of this bundle under the map $i_k$, which is a vector bundle over $BT$, and let $e({\xi}_k)\in H^*(BT)$ be the Euler class of the bundle ${\xi}_k$. Equivalently, the bundle $\xi_k$ can be given by the Borel construction for the action of the torus $T$ in the tangent space $T_{p_k} S^{2n}$. 
%In the case of the bundle $\mathcal{P}'$ it states
The localisation property takes the form 
\[
p'_!(\alpha) = \frac{i^*_1 \alpha}{e({\xi}_1)} + \frac{i^*_2 \alpha}{e({\xi}_2)},
\]
where $\alpha\in H^*(\mathcal P')$.
%where $i_k$ is the section $i_k\colon BT \to \mathcal{P}'$ corresponding to the fixed point $p_k, k=1,2$, and $e(i_k)$ is the Euler class of the pullback under the section $i_k$ of the normal bundle $\mathcal P'\supset \mathcal N_k \to BT$ of the section $i_k$. 

The Euler classes  can be computed by considering  induced representations of the torus in the tangent spaces to the fixed points. Indeed, it follows from Proposition \ref{third} that the bundle ${\xi}_k$ splits into a direct sum of $n$ one-dimensional complex bundles. 
%If we denote the two canonical generators of $CP(\infty)\times CP(\infty)=H^*(BT^2)$ by $x,y$ then 
The Euler class is equal to the product of the first Chern classes of these bundles. For a $k$th tensor power $L^{\otimes k}$ of a line bundle $L$ we have its first Chern class $c_1(L^{\otimes k})=k c_1(L)$. It follows from Proposition \ref{third} and Lemma \ref{firstrem} that, for a suitable choice of orientation of $S^{2n}$,  
\[
e({\xi}_1) = n!(x_1-x_2)^n, \quad e({\xi}_2) = -n! (x_1-x_2)^n.
\]
Therefore,
\begin{align*}
p'_!(1)=\frac{1}{e(i_1)}+\frac{1}{e(i_2)}=0, \qquad &
p'_!e(\tau')=\frac{i^*_1e(\tau')}{e(\xi_1)}+\frac{i^*_2e(\tau')}{e(\xi_2)}=2,
\\
p'_!e(\tau')^2=\frac{e(\xi_1)^2}{e(\xi_1)}+\frac{e(\xi_2)^2}{e(\xi_2)}=0, \qquad &
p'_!e(\tau')^3 = \frac{e(\xi_1)^3}{e(\xi_1)}+\frac{e(\xi_2)^3}{e(\xi_2)}=2(n!)^2(x_1-x_2)^{2n}.
\end{align*}
By applying the Gysin map $p'_!$ to the relation
\[
e(\tau')^2=(p'^*\circ j^*)(\beta_1)e(\tau') + (p'^* \circ j^*)(\beta_2)
\]
we deduce that $\beta_1=0$. By applying $p'_!$ to the relation
\[
e(\tau')^3=(p'^*\circ j^*)(\beta_2)e(\tau'),
\]
and using that the Gysin map is linear over the cohomology of the base we get
$$
(n!)^2 (x_1-x_2)^{2n} = j^*(\beta_2).
$$
Thus, we have established the following.

\begin{theorem}\label{fourth}
    The Euler class $e(\tau')$ satisfies the relation 
    \[
    e(\tau')^2 = (n!)^2p'^*(x_1-x_2)^{2n}.
    \]
\end{theorem}
\begin{cor}\label{firstcor}
    The equivariant cohomology ring of the sphere $S^{2n}$ with respect to the $U(2)-$action $\Delta$ as a module over $H^*(BU(2))$ is generated by an element $\gamma \in H^{2n}(\mathcal P)$ subject to the relation
    \[
    \gamma^2 = p^*(c_1(\eta)^2-4c_2(\eta))^n.
    \]
%    where $p:EU(2) \times_{\rho}S^{2n} \to BU(2)$ is the projection and $\eta_2$ is the tautological vector bundle over $BU(2)$.
    \begin{proof}
     By Lemma \ref{firstlem} and Theorem \ref{fourth} one can take $\gamma=e(\tau)/n!$ as the required generator.
    \end{proof}
\end{cor}
Notice that it also follows from Theorem \ref{fourth} that the ring of equivariant cohomology of $S^{2n}$ with respect to the $T$-action is isomorphic to the quotient of the polynomial ring
$ 
\mathbb R[x_1, x_2, \gamma']
$
by the ideal generated by the relation $
(\gamma')^2=(x_1-x_2)^{2n}$.

\subsection{Relation to quasi-invariants}
\label{rqisec}

%Let us consider the two-particle Calogero-Moser Hamiltonian \eqref{hamiltonian}:
%\[
%\mathcal{L}=\frac{\partial^2}{\partial %x_1^2}+\frac{\partial^2}{\partial x_2^2} - %\frac{2m(m+1)}{(x_1-x_2)^2}.
%\]
Recall that in dimension two the ring $\mathcal{Q}_m$ of $m$-quasi-invariants for the symmetric group $S_2$ consists of real polynomials $p(x_1,x_2)$ satisfying 
\[
\left(\frac{\partial}{\partial x_1}-\frac{\partial}{\partial x_2} \right)^{2i-1}p(x_1,x_2)|_{x_1=x_2}=0, \quad i=1,\dots,m, 
\]
where $m \in \mathbb N$ (see \cite{CV90}, \cite{FV02}). Since any symmetric in $x_1,x_2$ polynomial belongs to $\mathcal{Q}_m$, as well as any polynomial divisible by $(x_1-x_2)^{2m+1}$, we have the following direct sum decomposition:
\begin{equation}
\label{5}
%\tag{5}
    \mathcal{Q}_m={\mathbb R}[x_1,x_2]^{S_2}\oplus (x_1-x_2)^{2m+1}{\mathbb R}[x_1,x_2]^{S_2}.
\end{equation}
Quasi-invariants interpolate between symmetric and all the polynomials, and the following inclusions take place:
\[
\mathbb{R}[x_1,x_2] = \mathcal{Q}_0 \supset \mathcal{Q}_1 \supset\cdots \supset \mathcal{Q}_m \supset \cdots \supset \mathcal{Q}_{\infty} = \mathbb{R}[x_1, x_2]^{S_2}.
\]
As a ring $\mathcal{Q}_m$ is generated by $y_1=x_1 + x_2,\, y_2=x_1x_2$ and $u=(x_1-x_2)^{2m+1}$ with the only relation $u^2=(y_1^2-4y_2)^{2m+1}$.

From Corollary \ref{firstcor} we get
\begin{theorem} \la{heqar}
    The ring $\mathcal{Q}_m$ of $m$-quasi-invariants is isomorphic to the $U(2)-$equivariant cohomology ring of $S^{2(2m+1)}$ with the $U(2)-$action induced from the Arnold diffeomorphism 
    $$
    \mathbb CP(1)^{2m+1}/D_{2m+1} \cong S^{2(2m+1)}\,.
    $$
\end{theorem}
Let us consider relations of equivariant cohomology rings for different values of the parameter $m$. More generally, let ${\mathcal P} ={\mathcal P}_n$ and ${\mathcal P'}={\mathcal P}_n'$ be the bundles with the fibre $S^{2n}$ from Section \ref{ECsec}, and let $\tau=\tau_n$, $\tau'=\tau_n'$ be the vector bundles of tangent spaces along the fibres for ${\mathcal P}_n$ and ${\mathcal P}'_n$, respectively. Let $p_n'=p'\colon {\mathcal P_n'} \to BT$. Denote the  Euler classes $e_n=e(\tau_n)$ and $e_n'=e(\tau_n')$. 

Consider two maps $g_n^{\pm}\colon {\mathcal P_n'} \to {\mathcal P_{n+1}'}$ which are induced by the respective maps $h_n^{\pm}\colon \mathbb C P(1)^n \to \mathbb C P(1)^{n+1}$ sending a point $(a_1, \ldots, a_n)$ to $(a_1, \ldots, a_n, z)$, where $z=(1:0)$ for $g_n^+$ and $z=(0:1)$ for $g_n^-$. The map $h_n^{\pm}$ descends to the embedding $S^{2n} \subset S^{2n+2}$ through Arnold diffeomorphism. It leads to the map $h_n^{\pm}$ between Borel constructions since $z$ is a fixed point for the torus $T$. These maps induce the following maps on the cohomology.

\begin{prop}
The induced maps ${g_n^{\pm}}^*\colon H^*({\mathcal P}_{n+1}') \to H^*({\mathcal P}_{n}')$ are injections which  map the generator $e'_{n+1}$ of $H^*({\mathcal P}_{n+1}')$ as $H^*(BT) \cong \mathbb R[x_1, x_2]-$module to   
$$
{g_n^\pm}^* (e'_{n+1}) = \delta (n+1) (x_1-x_2) e'_n, 
$$
where $\delta \in \{1, -1\}$, and the maps are linear over $H^*(BT)$.
\end{prop}
\begin{proof}
The pullback ${g_n^{\pm}}^*(\tau_{n+1})$ of the bundle $\tau_{n+1}$ is the direct sum of the bundle $\tau_n$ and the rank 2 vector bundle $\sigma_n$ over the base $\mathcal P_n'$. The fibre of the bundle $\sigma_n$ over a point $w\in \mathcal P_n'$ consists of normals to $S^{2n}$ inside the sphere $S^{2n+2}$ containing $g_n^{\pm}(w) \in \mathcal P_{n+1}'$. By the functoriality of the Euler class and applying the Whitney sum formula we have
\begin{equation}
\label{homf}
{g_n^{\pm}}^*(e_{n+1}')=e_n' e(\sigma_n),   
\end{equation}
where $e(\sigma_n)$ is the Euler class for the bundle $\sigma_n$. 

Since the normal bundle to $S^{2n}$ in $S^{2n+2}$ is trivial, the restricton of the class $e(\sigma_n)$ to any fibre of the bundle $\mathcal P_n'$ is 0. Similarly to Lemma \ref{firstlem}, $H^*(\mathcal P_n')$ is generated by a single generator over $H^*(BT)$ whose restriction to any fibre is nonzero. It follows that $e(\sigma_n) \in p_n'^*(H^*(BT))$. 

Let us identify $p_n'^*(H^*(BT))$ and $p_{n+1}'^*(H^*(BT))$ with $\mathbb R[x_1, x_2]$. By Theorem \ref{fourth} we have relations 
\begin{equation}
\label{rels}
e_{n+1}'^2=((n+1)!)^2(x_1-x_2)^2, \quad e_{n}'^2=(n!)^2(x_1-x_2)^2.   
\end{equation}
Since ${g_n^{\pm}}^*$ is a homomorphism and $e(\sigma_n) \in \mathbb R[x_1, x_2]$ it follows from \eqref{homf} and \eqref{rels} that
$e(\sigma_n)^2 = (n+1)^2(x_1-x_2)^2$, which implies the statement.
\end{proof}
By composing maps $g_n^\pm$ for different $n$ we get the maps
$$
g_{n_1, n_2}^\varepsilon\colon \mathcal P_{n_1}' \to \mathcal P_{n_1+n_2}',
$$
where $g_{n_1,n_2}^\varepsilon = g_{n_1}^{\varepsilon_1} g_{n_1+1}^{\varepsilon_2} \ldots g_{n_1+n_2-1}^{\varepsilon_{n_2}}$, $\varepsilon = (\varepsilon_1, \ldots, \varepsilon_{n_2})$, $\varepsilon_i \in \{1,-1\}$ and $n_1, n_2 \in \mathbb N$. The corresponding inclusion on cohomology
$$
(g_{n_1,n_2}^{\varepsilon})^*\colon H^*(\mathcal P_{n_1+n_2}') \to H^*(\mathcal P_{n_1}')
$$
is determined by $(g_{n_1, n_2}^\varepsilon)^*(e_{n_1+n_2}')=\delta \prod_{k=1}^{n_2} (n_1+k) (x_1-x_2)^{n_2} e'_{n_1}$, where $\delta \in \{1, -1\}$. 

Note that we do no not seem to have natural maps between spaces $\mathcal P_n$ for different $n$ which for odd $n$ would induce inclusion of cohomology $\mathcal Q_{m_1} \subset \mathcal Q_{m_2}$, $m_1>m_2$.

%    The collection of isomorphisms built in this way is compatible with the embeddings $Q_{m_2} \subset Q_{m_1}$ for $m_1 < m_2$ and the embeddings $S^{2(2m_1+1)} \subset S^{2(2m_2+1)}$ induced from the embedding of $\CP(1)^{2m_1+1}$ into the first $2m_1+1$ factors in $\mathbb CP(1)^{2m_2+1}$.

%Note that
 %   we may interpret the limit $B_{\infty}=\lim_{n \to \infty}B_n,\,B_n = EU(2) \times_{\Delta}S^{2n}$ as a fibre bundle over $BU(2)$ with the fibre $S^{\infty}$. In this  case, as the fibres are contractible we obtain a homotopy equivalence $B_\infty \simeq  BU(2)$ which induces the inclusion $\mathcal{Q}_{\infty} \cong H^*(BU(2)) \subset \mathcal{Q}_m$. 

%\section*{Conclusion}
%In this paper we have verified that the equivariant cohomology of the sphere with respect to the $U(2)-$action induced from the Arnold diffeomorphism $(CP(1))^n/D_n \cong S^{2n}$ is isomorphic to the ring of quasi-invariants $\mathcal{Q}_m$ of the two-particle quantum Calogero-Moser system when $n=2m+1$. It would be interesting to find a generalisation of this result to the case of the Calogero-Moser system with more then two particles. As a consequence of such a generalisation, on the one hand, we could get new examples of smooth factors of classical manifolds by non-free finite group action, and, on the other hand, we could better understand the structure of the ring of quantum integrals for the Calogero-Moser Hamiltonian \eqref{hamiltonian} by means of topology.

\begin{remark}
The cohomology class $s_n=\frac12 e_n$ generates $H^*_{U(2)}(S^{2n},\mathbb{Z})$ since the value of the Euler class $e_n$ on a fibre of the bundle $\mathcal P_n$ is equal to the Euler characteristic $\chi(S^{2n})=2$.  This generator satisfies the  relation 
\[
4 s_n^2={(n!)^2}(c_1(\eta)^2-4c_2(\eta))^n
\]
in $H^*_{U(2)}(S^{2n},\mathbb{Z})$.
It may be interesting to clarify the structure inside the ring of quasi-invariants corresponding to the sub-lattice of integral cohomology in $H^*_{U(2)}(S^{2n},\mathbb{R})$ for odd~$n$.
%as it may clarify the nature of the constructed rings' isomorphisms.   
\end{remark}

\subsection*{Acknowledgements} M.F. is grateful to A.P. Veselov for posing a question of topological realisation of quasi-invariants, to  H. Khudaverdian for interest in our work and to Yu.~Berest for useful discussions. 
The work of M.F. was partially supported by the Engineering and Physical Sciences Research Council [grant number  EP/W013053/1].

%\bibliographystyle{plain}\bibliography{ref.bib}\nocite{*}
%\end{document}

%\subsection*{Appendix: duality of commutative ring spectra}

\bibliography{secondbibtexfile}{}

\providecommand{\bysame}{\leavevmode\hbox to3em{\hrulefill}\thinspace}
\providecommand{\MR}{\relax\ifhmode\unskip\space\fi MR }
% \MRhref is called by the amsart/book/proc definition of \MR.
\providecommand{\MRhref}[2]{%
  \href{http://www.ams.org/mathscinet-getitem?mr=#1}{#2}
}
\providecommand{\href}[2]{#2}
\begin{thebibliography}{BCHV21}

\bibitem[AF24]{AF23}
D.~Anderson and W.~Fulton, \emph{Equivariant cohomology in algebraic geometry}, Cambridge Studies in Advanced Mathematics, vol. 210, Cambridge University Press, Cambridge, 2024. \MR{4655919}

\bibitem[AG09]{AG09}
K.~K.~S. Andersen and J.~Grodal, \emph{The classification of 2-compact groups}, J. Amer. Math. Soc. \textbf{22} (2009), no.~2, 387--436. \MR{2476779}

\bibitem[And00]{An00}
M.~Ando, \emph{Power operations in elliptic cohomology and representations of loop groups}, Trans. Amer. Math. Soc. \textbf{352} (2000), no.~12, 5619--5666. \MR{1637129}

\bibitem[Arn96]{Arnold1996}
V.~Arnold, \emph{Topological content of the {M}axwell theorem on multipole representation of spherical functions}, Topol. Methods Nonlinear Anal. \textbf{7} (1996), no.~2, 205--217.

\bibitem[AS69]{AS69}
M.~F. Atiyah and G.~B. Segal, \emph{Equivariant {$K$}-theory and completion}, J. Differential Geometry \textbf{3} (1969), 1--18. \MR{259946}

\bibitem[Aud91]{Audin1991book}
M.~Audin, \emph{The topology of torus actions on symplectic manifolds}, Progress in Mathematics, vol.~93, Birkh\"auser Verlag, Basel, 1991, Translated from the French by the author. \MR{1106194}

\bibitem[BC11]{BC11}
Yu. Berest and O.~Chalykh, \emph{Quasi-invariants of complex reflection groups}, Compos. Math. \textbf{147} (2011), no.~3, 965--1002. \MR{2801407}

\bibitem[BCHV21]{BCHV21}
T.~Barthel, N.~Castellana, D.~Heard, and G.~Valenzuela, \emph{Local {G}orenstein duality for cochains on spaces}, J. Pure Appl. Algebra \textbf{225} (2021), no.~2, Paper No. 106495, 24. \MR{4125921}

\bibitem[BE90]{BE90}
P.~Bressler and S.~Evens, \emph{The {S}chubert calculus, braid relations, and generalized cohomology}, Trans. Amer. Math. Soc. \textbf{317} (1990), no.~2, 799--811. \MR{968883}

\bibitem[BEF20]{BEF20}
A.~Braverman, P.~Etingof, and M.~Finkelberg, \emph{Cyclotomic double affine {H}ecke algebras}, Ann. Sci. \'{E}c. Norm. Sup\'{e}r. (4) \textbf{53} (2020), no.~5, 1249--1312, With an appendix by Hiraku Nakajima and Daisuke Yamakawa. \MR{4174850}

\bibitem[BEG03]{BEG03}
Yu. Berest, P.~Etingof, and V.~Ginzburg, \emph{Cherednik algebras and differential operators on quasi-invariants}, Duke Math. J. \textbf{118} (2003), no.~2, 279--337. \MR{1980996 (2004f:16039)}

\bibitem[BG76]{BG76}
A.~K. Bousfield and V.~K. A.~M. Gugenheim, \emph{On {${\rm PL}$} de {R}ham theory and rational homotopy type}, Mem. Amer. Math. Soc. \textbf{8} (1976), no.~179, ix+94. \MR{425956}

\bibitem[BG14]{BG14}
D.~Benson and J.~Greenlees, \emph{Stratifying the derived category of cochains on {$BG$} for {$G$} a compact {L}ie group}, J. Pure Appl. Algebra \textbf{218} (2014), no.~4, 642--650. \MR{3133695}

\bibitem[BKR13]{BKR13}
Yu. Berest, G.~Khachatryan, and A.~Ramadoss, \emph{Derived representation schemes and cyclic homology}, Adv. Math. \textbf{245} (2013), 625--689. \MR{3084440}

\bibitem[BLR25]{BLR}
Yu. Berest, Yun Liu, and A.~C. Ramadoss, \emph{Quasi-flag manifolds and moment graphs}, arXiv:2509.23521, 2025.

\bibitem[BM08]{BM08}
J.~Bandlow and G.~Musiker, \emph{A new characterization for the {$m$}-quasiinvariants of {$S_n$} and explicit basis for two row hook shapes}, J. Combin. Theory Ser. A \textbf{115} (2008), no.~8, 1333--1357. \MR{2455582}

\bibitem[Bor53]{Bo53}
A.~Borel, \emph{Sur la cohomologie des espaces fibr\'{e}s principaux et des espaces homog\`enes de groupes de {L}ie compacts}, Ann. of Math. (2) \textbf{57} (1953), 115--207. \MR{51508}

\bibitem[Bor67]{Bo67}
\bysame, \emph{Topics in the homology theory of fibre bundles}, Lecture Notes in Mathematics, No. 36, Springer-Verlag, Berlin-New York, 1967, Lectures given at the University of Chicago, 1954, Notes by Edward Halpern. \MR{0221507}

\bibitem[Bou75]{Bou75}
A.~K. Bousfield, \emph{The localization of spaces with respect to homology}, Topology \textbf{14} (1975), 133--150. \MR{380779}

\bibitem[Bou82]{Bou82}
N.~Bourbaki, \emph{\'{E}l\'{e}ments de math\'{e}matique: groupes et alg\`ebres de {L}ie}, Masson, Paris, 1982. \MR{682756}

\bibitem[BZ00]{BrZ00}
J.-L. Brylinski and B.~Zhang, \emph{Equivariant {$K$}-theory of compact connected {L}ie groups}, vol.~20, 2000, Special issues dedicated to Daniel Quillen on the occasion of his sixtieth birthday, Part I, pp.~23--36. \MR{1798430}

\bibitem[CE74]{CE74}
A.~Clark and J.~Ewing, \emph{The realization of polynomial algebras as cohomology rings}, Pacific J. Math. \textbf{50} (1974), 425--434. \MR{0367979 (51 \#4221)}

\bibitem[Cha02]{Ch02}
O.~Chalykh, \emph{Macdonald polynomials and algebraic integrability}, Adv. Math. \textbf{166} (2002), no.~2, 193--259. \MR{1895562}

\bibitem[Che55]{Che55}
C.~Chevalley, \emph{Invariants of finite groups generated by reflections}, Amer. J. Math. \textbf{77} (1955), 778--782. \MR{0072877 (17,345d)}

\bibitem[CLOT03]{CLOT03}
O.~Cornea, G.~Lupton, J.~Oprea, and D.~Tanr\'{e}, \emph{Lusternik-{S}chnirelmann category}, Mathematical Surveys and Monographs, vol. 103, American Mathematical Society, Providence, RI, 2003. \MR{1990857}

\bibitem[CV90]{CV90}
O.~A. Chalykh and A.~P. Veselov, \emph{Commutative rings of partial differential operators and {L}ie algebras}, Comm. Math. Phys. \textbf{126} (1990), no.~3, 597--611. \MR{1032875 (91g:58106)}

\bibitem[CV93]{CV93}
\bysame, \emph{Integrability in the theory of {S}chr\"odinger operator and harmonic analysis}, Comm. Math. Phys. \textbf{152} (1993), no.~1, 29--40. \MR{1207668 (94a:58160)}

\bibitem[Dem73]{D73}
M.~Demazure, \emph{Invariants sym\'{e}triques entiers des groupes de {W}eyl et torsion}, Invent. Math. \textbf{21} (1973), 287--301. \MR{342522}

\bibitem[Dem74]{D74}
\bysame, \emph{D\'{e}singularisation des vari\'{e}t\'{e}s de {S}chubert g\'{e}n\'{e}ralis\'{e}es}, Ann. Sci. \'{E}cole Norm. Sup. (4) \textbf{7} (1974), 53--88. \MR{354697}

\bibitem[DGI06]{DGI06}
W.~G. Dwyer, J.~P.~C. Greenlees, and S.~Iyengar, \emph{Duality in algebra and topology}, Adv. Math. \textbf{200} (2006), no.~2, 357--402. \MR{2200850}

\bibitem[DMW87]{DMW87}
W.~G. Dwyer, H.~R. Miller, and C.~W. Wilkerson, \emph{The homotopic uniqueness of {$BS^3$}}, Algebraic topology, {B}arcelona, 1986, Lecture Notes in Math., vol. 1298, Springer, Berlin, 1987, pp.~90--105. \MR{928825}

\bibitem[Dug22]{D22}
D.~Dugger, \emph{Stable categories and spectra via model categories}, Stable categories and structured ring spectra, Math. Sci. Res. Inst. Publ., vol.~69, Cambridge Univ. Press, Cambridge, 2022, pp.~75--150. \MR{4439762}

\bibitem[DW98]{DW98}
W.~G. Dwyer and C.~W. Wilkerson, \emph{The elementary geometric structure of compact {L}ie groups}, Bull. London Math. Soc. \textbf{30} (1998), no.~4, 337--364. \MR{1620888}

\bibitem[Dwy98]{D98}
W.~G. Dwyer, \emph{Lie groups and {$p$}-compact groups}, Proceedings of the {I}nternational {C}ongress of {M}athematicians, {V}ol. {II} ({B}erlin, 1998), no. Extra Vol. II, 1998, pp.~433--442. \MR{1648093}

\bibitem[EG02]{EG02b}
P.~Etingof and V.~Ginzburg, \emph{On {$m$}-quasi-invariants of a {C}oxeter group}, Mosc. Math. J. \textbf{2} (2002), no.~3, 555--566, Dedicated to Yuri I. Manin on the occasion of his 65th birthday. \MR{1988972 (2004g:20052)}

\bibitem[FF04]{FF}
M.~Feigin and K.~E. Feldman, \emph{Quasi-invariants and {A}rnold-{M}axwell topological theorem}, preprint, 2004.

\bibitem[FRW21]{FRW}
L.~M. Feh\'{e}r, R.~Rim\'{a}nyi, and A.~Weber, \emph{Motivic {C}hern classes and {K}-theoretic stable envelopes}, Proc. Lond. Math. Soc. (3) \textbf{122} (2021), no.~1, 153--189. \MR{4210261}

\bibitem[FV02]{FV02}
M.~Feigin and A.~P. Veselov, \emph{Quasi-invariants of {C}oxeter groups and {$m$}-harmonic polynomials}, Int. Math. Res. Not. (2002), no.~10, 521--545. \MR{1883902 (2003j:20067)}

\bibitem[FV03]{FeV03}
G.~Felder and A.~P. Veselov, \emph{Action of {C}oxeter groups on {$m$}-harmonic polynomials and {K}nizhnik-{Z}amolodchikov equations}, Mosc. Math. J. \textbf{3} (2003), no.~4, 1269--1291. \MR{2058799}

\bibitem[Gan65]{Ga65}
T.~Ganea, \emph{A generalization of the homology and homotopy suspension}, Comment. Math. Helv. \textbf{39} (1965), 295--322. \MR{179791}

\bibitem[Gan67]{Ga67}
\bysame, \emph{Lusternik-{S}chnirelmann category and strong category}, Illinois J. Math. \textbf{11} (1967), 417--427. \MR{229240}

\bibitem[Gan14]{Ga14}
N.~Ganter, \emph{The elliptic {W}eyl character formula}, Compos. Math. \textbf{150} (2014), no.~7, 1196--1234. \MR{3230851}

\bibitem[GKM98]{GKM98}
M.~Goresky, R.~Kottwitz, and R.~MacPherson, \emph{Equivariant cohomology, {K}oszul duality, and the localization theorem}, Invent. Math. \textbf{131} (1998), no.~1, 25--83. \MR{1489894}

\bibitem[Gre18]{G18}
J.~P.~C. Greenlees, \emph{Homotopy invariant commutative algebra over fields}, Building bridges between algebra and topology, Adv. Courses Math. CRM Barcelona, Birkh\"{a}user/Springer, Cham, 2018, pp.~103--169. \MR{3793859}

\bibitem[Gre20]{G20}
\bysame, \emph{Borel cohomology and the relative {G}orenstein condition for classifying spaces of compact {L}ie groups}, J. Pure Appl. Algebra \textbf{224} (2020), no.~2, 806--818. \MR{3987977}

\bibitem[Gri23]{Gri21}
S.~Griffeth, \emph{The diagonal coinvariant ring of a complex reflection group}, Algebra Number Theory \textbf{17} (2023), no.~11, 2033--2053. \MR{4648856}

\bibitem[Gro07]{Gr07}
I.~Grojnowski, \emph{Delocalised equivariant elliptic cohomology}, Elliptic cohomology, London Math. Soc. Lecture Note Ser., vol. 342, Cambridge Univ. Press, Cambridge, 2007, pp.~114--121. \MR{2330510}

\bibitem[Gro10]{Gr10}
J.~Grodal, \emph{The classification of {$p$}-compact groups and homotopical group theory}, Proceedings of the {I}nternational {C}ongress of {M}athematicians. {V}olume {II}, Hindustan Book Agency, New Delhi, 2010, pp.~973--1001. \MR{2827828}

\bibitem[GW06]{GW06}
A.~M. Garsia and N.~Wallach, \emph{The non-degeneracy of the bilinear form of {$m$}-quasi-invariants}, Adv. in Appl. Math. \textbf{37} (2006), no.~3, 309--359. \MR{2261177 (2007k:20081)}

\bibitem[Hir03]{Hir03}
P.~S. Hirschhorn, \emph{Model categories and their localizations}, American Mathematical Society, Providence, RI, 2003. \MR{1944041}

\bibitem[Hsi75]{Hs75}
W.~Hsiang, \emph{Cohomology theory of topological transformation groups}, Springer-Verlag, New York-Heidelberg, 1975. \MR{0423384}

\bibitem[HSS00]{HHS00}
M.~Hovey, B.~Shipley, and J.~Smith, \emph{Symmetric spectra}, J. Amer. Math. Soc. \textbf{13} (2000), no.~1, 149--208. \MR{1695653}

\bibitem[Hus75]{Huse75}
D.~Husemoller, \emph{Fibre bundles}, Springer-Verlag, New York-Heidelberg, 1975. \MR{0370578}

\bibitem[JO99]{JO99}
A.~Jeanneret and A.~Osse, \emph{The {E}ilenberg-{M}oore spectral sequence in {$K$}-theory}, Topology \textbf{38} (1999), no.~5, 1049--1073. \MR{1688430}

\bibitem[Kan88]{K88}
R.~M. Kane, \emph{{H}omology of {H}opf spaces}, North-Holland Publishing Co., Amsterdam, 1988. \MR{961257}

\bibitem[KS19]{KS19}
P.~Koroteev and S.~Shakirov, \emph{The quantum {DELL} system}, Lett. Math. Phys. \textbf{110} (2019), no.~5, 969--999. \MR{4082203}

\bibitem[Loo77]{Lo77}
E.~Looijenga, \emph{Root systems and elliptic curves}, Invent. Math. \textbf{38} (1976/77), no.~1, 17--32. \MR{466134}

\bibitem[M\"92]{Mol92}
J.~M. M\"oller, \emph{The normalizer of the {W}eyl group}, Math. Ann. \textbf{294} (1992), no.~1, 59--80. \MR{1180450}

\bibitem[Man01]{Man01}
M.~A. Mandell, \emph{{$E_\infty$} algebras and {$p$}-adic homotopy theory}, Topology \textbf{40} (2001), no.~1, 43--94. \MR{1791268}

\bibitem[McC01]{McL01}
J.~McCleary, \emph{A user's guide to spectral sequences}, Cambridge University Press, Cambridge, 2001. \MR{1793722}

\bibitem[Mil56]{Mi56}
J.~Milnor, \emph{Construction of universal bundles. {II}}, Ann. of Math. (2) \textbf{63} (1956), 430--436. \MR{77932}

\bibitem[MP12]{MP12}
J.~P. May and K.~Ponto, \emph{More concise algebraic topology}, University of Chicago Press, Chicago, IL, 2012. \MR{2884233}

\bibitem[MR01]{MR01}
J.~C. McConnell and J.~C. Robson, \emph{Noncommutative {N}oetherian rings}, American Mathematical Society, Providence, RI, 2001. \MR{MR1811901 (2001i:16039)}

\bibitem[MT91]{MT78}
M.~Mimura and H.~Toda, \emph{Topology of {L}ie groups. {I}, {II}}, Translations of Mathematical Monographs, Vol. 91, American Mathematical Society, Providence, RI, 1991. \MR{1122592}

\bibitem[MV15]{MV15}
B.~Munson and I.~Voli\'c, \emph{Cubical homotopy theory}, New Mathematical Monographs, vol.~25, Cambridge University Press, Cambridge, 2015. \MR{3559153}

\bibitem[Not95]{Not95}
D.~Notbohm, \emph{Classifying spaces of compact {L}ie groups and finite loop spaces}, Handbook of algebraic topology, North-Holland, Amsterdam, 1995, pp.~1049--1094. \MR{1361906}

\bibitem[NS90]{NS90}
D.~Notbohm and L.~Smith, \emph{Fake {L}ie groups and maximal tori. {I}, {II}}, Math. Ann. \textbf{288} (1990), no.~4, 637--661, 663--673. \MR{1081269}

\bibitem[Qui69]{Qui69}
D.~Quillen, \emph{Rational homotopy theory}, Ann. of Math. (2) \textbf{90} (1969), 205--295. \MR{258031}

\bibitem[Rec71a]{Rec71}
D.~L. Rector, \emph{Loop structures on the homotopy type of {$S^{3}$}}, Lecture Notes in Math., Vol. 249, 1971, pp.~99--105. \MR{0339153}

\bibitem[Rec71b]{Rec71b}
\bysame, \emph{Subgroups of finite dimensional topological groups}, J. Pure Appl. Algebra \textbf{1} (1971), no.~3, 253--273. \MR{301734}

\bibitem[Ros01]{Ros01}
I.~Rosu, \emph{Equivariant elliptic cohomology and rigidity}, Amer. J. Math. \textbf{123} (2001), no.~4, 647--677. \MR{1844573}

\bibitem[Seg68]{Seg68}
G.~Segal, \emph{Classifying spaces and spectral sequences}, Inst. Hautes \'{E}tudes Sci. Publ. Math. (1968), no.~34, 105--112. \MR{232393}

\bibitem[Sel97]{Se97}
P.~Selick, \emph{Introduction to homotopy theory}, American Mathematical Society, Providence, RI, 1997. \MR{1450595}

\bibitem[Ser00]{Se00}
J.-P. Serre, \emph{Local algebra}, Springer-Verlag, Berlin, 2000. \MR{1771925 (2001b:13001)}

\bibitem[Smo72]{Sm72}
W.~Smoke, \emph{Dimension and multiplicity for graded algebras}, J. Algebra \textbf{21} (1972), 149--173. \MR{309920}

\bibitem[Sul77]{Su77}
D.~Sullivan, \emph{Infinitesimal computations in topology}, Inst. Hautes \'{E}tudes Sci. Publ. Math. (1977), no.~47, 269--331 (1978). \MR{646078}

\bibitem[SV03]{SV03}
Y.~Soibelman and V.~Vologodsky, \emph{Noncommutative compactifications and elliptic curves}, Int. Math. Res. Not. (2003), no.~28, 1549--1569. \MR{1976601}

\bibitem[Tsu10]{T10}
T.~Tsuchida, \emph{On quasiinvariants of {$S_n$} of hook shape}, Osaka J. Math. \textbf{47} (2010), no.~2, 461--485. \MR{2722369}

\bibitem[Tu17]{Tu}
L.~W. Tu, \emph{Computing the {G}ysin map using fixed points}, Algebraic geometry and number theory, Progr. Math., vol. 321, Birkh\"auser/Springer, Cham, 2017, pp.~135--160. \MR{3675380}

\bibitem[Tu20]{Tu_EqCoh}
\bysame, \emph{Introductory lectures on equivariant cohomology}, Annals of Mathematics Studies, vol. 204, Princeton University Press, Princeton, NJ, 2020, With appendices by Tu and Alberto Arabia. \MR{4205963}

\bibitem[vdPR07]{vdPR07}
M.~van~der Put and M.~Reversat, \emph{Galois theory of {$q$}-difference equations}, Ann. Fac. Sci. Toulouse Math. (6) \textbf{16} (2007), no.~3, 665--718. \MR{2379057}

\bibitem[Wil74]{Wil74}
C.~Wilkerson, \emph{Rational maximal tori}, J. Pure Appl. Algebra \textbf{4} (1974), 261--272. \MR{343264}

\bibitem[Yau04]{DY01}
D.~Yau, \emph{Maps to spaces in the genus of infinite quaternionic projective space}, Categorical decomposition techniques in algebraic topology, Progr. Math., vol. 215, Birkh\"{a}user, Basel, 2004, pp.~293--302. \MR{2039771}

\end{thebibliography}
\bibliographystyle{amsalpha}
\end{document}